\newtheorem{counter}{}[section]
\theoremstyle{definition}
\newtheorem{definition}         [counter]{Definition}
\theoremstyle{plain}
\newtheorem{lemma}              [counter]{Lemma}
\newtheorem{proposition}        [counter]{Proposition}
\newtheorem{theorem}            [counter]{Theorem}
\newtheorem{corollary}          [counter]{Corollary}
\newtheorem{question}           [counter]{Question}
\newtheorem*{theorem*}          {Theorem}
\theoremstyle{remark}
\newtheorem{remark}             [counter]{Remark}
\newtheorem*{remark*}           {Remark}
\newtheorem{example}            [counter]{Example}
\newtheoremstyle{named}{8pt}{8pt}{\itshape}{}{\bfseries}{.}{.5em}{#1 #3}
\theoremstyle{named}
\newtheorem*{namedtheorem}{Theorem}
\numberwithin{equation}{section}
\newcommand*{\Scale}[2][4]{\scalebox{#1}{$#2$}}%
\renewcommand{\@secnumfont}{\bfseries}
\renewcommand\subsection{\@startsection{subsection}{2}%
  \z@{0.9\linespacing\@plus0.9\linespacing}{0.5em}%
  {\normalfont\bfseries}}
\renewcommand\subsubsection{\@startsection{subsubsection}{3}%
  \z@{0.5\linespacing\@plus0.7\linespacing}{0.1em}%
  {\normalfont\bfseries}}
\patchcmd{\@maketitle}{\normalsize}{\Large}{}{}
\patchcmd{\@setauthors}{\MakeUppercase {\authors }}{\MakeUppercase{\large\authors}}{}{}
\patchcmd{\section}{\normalfont}{\normalfont\Large\bfseries}{}{}
\patchcmd{\subsection}{\normalfont}{\normalfont\large\bfseries}{}{}
\let\origsection\section
\renewcommand\section{\@ifstar{\starsection}{\nostarsection}}
\newcommand\nostarsection[1]
\sectionprelude\origsection{#1}\sectionpostlude}
\newcommand\starsection[1]
\newcommand\sectionprelude{%space at end of section title
  \vspace{0.65em}
}
\newcommand\sectionpostlude{%space at start of section title
  \vspace{0.3em}
}
\patchcmd{\@startsection}{\@afterindenttrue}{\@afterindentfalse}{}{}
\patchcmd{\@startsubsection}{\@afterindenttrue}{\@afterindentfalse}{}{}
\def\l@subsection{\@tocline{2}{0pt}{2.5pc}{5pc}{}}
\author{David Jaklitsch}
\address{Department of Mathematics, University of Oslo, P.O. box 1053, Blindern, 0316 Oslo, Norway}
\email{dajak@uio.no}
\author{Harshit Yadav}
\address{Department of Mathematics, University of Alberta,
Edmonton, Alberta, T6G 2G1, Canada}
\email{hyadav3@ualberta.ca}
\DeclareFontFamily{U}{mathc}{}
\DeclareFontShape{U}{mathc}{m}{it}%
{<->s*[1.03] mathc10}{}
\DeclareMathAlphabet{\mathcal}{U}{mathc}{m}{it}
\DeclareMathOperator{\End}{End}
\DeclareMathOperator{\Aut}{Aut}
\newcommand{\Ind}{\mathtt{Ind}}
\newcommand{\bt}{\boxtimes}
\newcommand{\btD}{\bt_{\,\subD}}
\newcommand{\C}{\mathcal{C}}
\newcommand{\B}{\mathcal{B}}
\newcommand{\D}{\mathcal{D}}
\newcommand{\X}{\mathcal{X}}
\newcommand{\Y}{\mathcal{Y}}
\newcommand{\Z}{\mathcal{Z}}
\newcommand{\M}{\mathcal{M}}
\newcommand{\N}{\mathcal{N}}
\newcommand{\cL}{\mathcal{L}}
\newcommand{\R}{\mathcal{R}}
\newcommand{\dS}{\mathbb{S}}
\newcommand{\dD}{\mathbb{D}}
\newcommand{\dN}{\mathbb{N}}
\newcommand{\fp}{\mathfrak{p}}
\newcommand{\fq}{\mathfrak{q}}
\newcommand{\fs}{\mathfrak{s}}
\newcommand{\fn}{\mathfrak{n}}
\newcommand{\fu}{\mathfrak{u}}
\newcommand{\fsl}{\mathfrak{sl}}
\newcommand{\ff}{\footnote}
\newcommand{\kk}{\Bbbk}
\newcommand{\ok}{\otimes_{\kk}}
\newcommand{\oC}{\otimes_\subC}
\newcommand{\oD}{\otimes_\subD}
\newcommand{\id}{\textnormal{\textsf{Id}}}
\newcommand{\Rep}{\textnormal{\textsf{Rep}}}
\newcommand{\lv}{\hspace{.025cm}{}^{\textnormal{\Scale[0.8]{\vee\!}}} \hspace{-.025cm}}
\newcommand{\rv}{\hspace{.015cm}{}^{\textnormal{\Scale[0.8]{\vee}}} \hspace{-.03cm}}
\newcommand{\coev}{\textnormal{coev}}
\newcommand{\Vect}{\mathsf{Vec}}
\newcommand{\Hom}{\textnormal{Hom}}
\newcommand{\loc}{\textnormal{loc}}
\newcommand{\unit}{\mathds{1}}
\newcommand{\uHom}{\underline{\Hom}}
\newcommand{\uNat}{\textnormal{\underline{Nat}}}
\newcommand{\Rex}{{\sf Rex}}
\newcommand{\Fun}{{\sf Fun}}
\newcommand{\re}{{\sf re}}
\newcommand{\ra}{{\sf ra}}
\newcommand{\rra}{{\sf rra}}
\newcommand{\la}{{\sf la}}
\newcommand{\lla}{{\sf lla}}
\newcommand{\tr}{{\,{\Act}\,}}
\newcommand{\trF}{\,{{\Act}^{\!\scriptscriptstyle F}}\,}
\newcommand{\tlF}{\,{\triangleleft^{\scriptscriptstyle F}}\,}
\newcommand{\trG}{\,{{\Act}^{\!\scriptscriptstyle G}}\,}
\newcommand{\btr}{\raisebox{0.15ex}{\Scale[0.85]{\blacktriangleright}}}
\newcommand{\tl}{\triangleleft}
\newcommand{\op}{\textnormal{op}}
\definecolor{darkgreen}{RGB}{55,138,0}
\definecolor{burntorange}{RGB}{180,85,0}
\definecolor{navyblue}{RGB}{18,40,180}
\definecolor{cyan(process)}{rgb}{0.0, 0.6, 1.0}
\newcommand{\Frob}{{\texorpdfstring{$\otimes$}{}-Frobenius }}
\newcommand{\Frobc}{{$\otimes$-Frobenius,} }
\newcommand{\Frobp}{{$\otimes$-Frobenius.} }
\newcommand{\ogH}{\overline{\gH}}
\newcommand{\oalpha}{\overline{\alpha}}
\newcommand{\odD}{{\overline{\dD}}}
\newcommand{\subodD}{{\scriptscriptstyle \odD}}
\newcommand{\subdD}{{\scriptscriptstyle \dD}}
\newcommand{\moD}{\!\mathrm{mod}}
\def\Act           {{\triangleright}}   
\def\act           {\,{\Act}\,}         
\def\Actr          {{\triangleleft}}    
\def\actr          {\,{\Actr}\,}
\def\be            {\begin{equation}}
\def\Colon         {:\quad}
\def\dd            {{\rv\rv}}
\def\ldd           {{}{\rv\rv\!}}
\def\ee            {\end{equation}}
\def\Enumerate     {\def\leftmargini{1.34em}~\\[-1.42em]\begin{enumerate}}
\def\Enumeratei    {%\def\leftmargini{1.84em}
	~\\[-1.42em]\begin{enumerate}[{\rm (i)}]\addtolength\itemsep{+1pt}}
\def\Hom           {\mathrm{Hom}}
\def\FD             {{{}_\subF\D}}
\def\FM             {{{}_\subF\M}}
\def\GM             {{{}_\subG\M}}
\def\FN             {{{}_\subF\N}}
\def\id            {{\mathrm{id}}}
\def\iN            {\,{\in}\,}
\def\opp           {^{\rm opp}}              
\def\PsiN           {{\Psi_{\!\scriptscriptstyle\N}}}
\def\PsiM           {{\Phi_{\!\scriptscriptstyle\M}}}
\def\subFun           {{{\Fun_{\scriptscriptstyle\C}\scriptscriptstyle(\!\M,\scriptscriptstyle\N)}}}
\def\subF           {{\scriptscriptstyle F}}
\def\subG           {{\scriptscriptstyle G}}
\def\subM           {{\!\scriptscriptstyle \M}}
\def\subFM           {{\!\scriptscriptstyle {\FM}}}
\def\subN           {{\!\scriptscriptstyle \N}}
\def\subC           {{\!\scriptscriptstyle \C}}
\def\subX           {{\!\scriptscriptstyle \X}}
\def\subY           {{\!\scriptscriptstyle \Y}}
\def\subD           {{\!\scriptscriptstyle \D}}
\def\supC           {{\scriptscriptstyle \C}}
\def\supD           {{\scriptscriptstyle \D}}
\def\ot            {\,{\otimes}\,}        
\newcommand{\Se}{\mathbb{S}}              % right Serre functor
\newcommand{\lSe}{\overline{\mathbb{S}}}              % left Serre functor
\def\xsim         {\,{\xrightarrow{~\sim~}}\,}
\def\piv {\mathrm{piv}}
\def\ff {\mathsf{f}}
\newcommand{\gH}{\mathtt{g}_H}
\newcommand{\gHp}{\mathtt{g}_{H'}}
\newcommand{\tfp}{\widetilde{\fp}}
\newcommand{\tfq}{\widetilde{\fq}}
\newcommand{\tfu}{\,\widetilde{\fu}\,}
\def\ZFD{{\Z({}_\subF\D_\subF)}}
\def\FDF{{{}_\subF \D_\subF}}
\def\FDG{{{}_\subF \D_\subG}}
\newcommand{\chiFf}{\chi_{\scriptscriptstyle F_{\ff}}}
\newcommand{\sa}{\mathtt{a}}
\newcommand{\sg}{\mathtt{g}}
\begin{document}
    
\title[$\otimes$-Frobenius functors and exact module categories]
{\textnormal{$\otimes$-Frobenius functors and exact module categories}}

\begin{abstract}
We call a tensor functor $F:\mathcal{C}\to\mathcal{D}$ between finite tensor categories \emph{$\otimes$-Frobenius} if its left and right adjoints are isomorphic as $\mathcal{C}$-bimodule functors. We give several characterizations of this notion---most notably, $F$ is $\otimes$-Frobenius if and only if the centralizer $Z({}_{F}\!{\mathcal{D}}_{\!F})$ is unimodular. We use them to analyze how actions on module categories behave under pullback along $F$. For perfect functors, we show that twisting a $\mathcal{D}$-module category $\mathcal{M}$ along $F$ preserves exactness, and that pivotality, unimodularity, and sphericality are preserved whenever $F$ is $\otimes$-Frobenius (or, more generally, Frobenius with respect to $\mathcal{M}$). 

Applications include: (i) explicit criteria for $\otimes$-Frobenius functors arising from bialgebra maps $f\!:\!H'\!\to\!H$ between finite-dimensional Hopf algebras;  and (ii) criteria ensuring that objects of internal natural transformations are (symmetric) Frobenius algebras in $Z(\mathcal{C})$. Along the way we show that central tensor functors are Frobenius iff they are $\otimes$-Frobenius and that any tensor functor between separable fusion categories is $\otimes$-Frobenius, answering questions of Flake-Laugwitz-Posur from \cite{flake2024frobenius2}.
\end{abstract}

\maketitle

%%%%%%%%%%%%%%%%%%%%%%%%%%%%%%%%%%%%%%%%%%%%%%%%%%%%%%%%%%%%%%%
%%%%%%%%%%%%%%%%%%%%%%%%%%%%%%%%%%%%%%%%%%%%%%%%%%%%%%%%%%%%%%%
%%%%%%%%%%%%%%%%%%%%%%%%%%%%%%%%%%%%%%%%%%%%%%%%%%%%%%%%%%%%%%%
%%%%%%%%%%%%%%%%%%%%%%%%%%%%%%%%%%%%%%%%%%%%%%%%%%%%%%%%%%%%%%%
%%%%%%%%%%%%%%%%%%%%%%%%%%%%%%%%%%%%%%%%%%%%%%%%%%%%%%%%%%%%%%%
%%%%%%%%%%%%%%%%%%%%%%%%%%%%%%%%%%%%%%%%%%%%%%%%%%%%%%%%%%%%%%%

\section{Introduction}
Tensor categories have emerged as fundamental structures in the mathematical study of quantum field theory. They also offer a unified framework for a wide array of mathematical domains, including subfactor theory of von Neumann algebras, vertex operator algebras and Hopf algebras. In this article, we concentrate on the class of finite tensor categories and our results are particularly relevant beyond the semisimple setting. 

A typical approach to understanding an algebraic structure is via its representations. Module categories play this central role in the theory of tensor categories. About two decades ago, the work of Etingof and Ostrik 
\cite{etingof2004finite} showed that studying all module categories over a finite tensor category is generally 
intractable, suggesting a focus on the specific class of \textit{exact module categories}.

Physically, if $\C$ is a semisimple tensor category associated with a $2$-dimensional rational conformal field theory (CFT), semisimple (exact) $\C$-module categories supply the data necessary for obtaining the full local conformal field theory \cite{fuchs2002tft}. Extending beyond rational CFTs to the finite non-semisimple setting, Fuchs and Schweigert \cite{fuchs2021bulk} proposed the class of \textit{pivotal} exact $\C$-module categories (explored in \cite{schaumann2013traces,shimizu2023relative}) as the required data for constructing the full CFT. Since then, additional classes of module categories, such as {\it unimodular} \cite{yadav2023unimodular} and {\it spherical} \cite{spherical2025}, have been introduced, which hold significance for the construction of topological quantum field theories (TQFTs) \cite{fgjs2024manifestly}. 

The primary objective of this article is to construct new examples of unimodular, pivotal, and spherical $\C$-module categories. To achieve this, we investigate the process of twisting module actions by tensor functors. Specifically, given a tensor functor $F\colon \C\!\longrightarrow\!\D$, any $\D$-module category $\M$ can be regarded as a $\C$-module category by pulling back the $\D$-action along $F$. We establish necessary and sufficient conditions to ensure that $F$ preserves exactness, unimodularity, pivotality, and sphericality of module categories under twisting. We introduce a new class of functors, called \emph{\Frob functors}, which capture these preservation conditions.

\Frob functors are tensor functors $F\colon\C\!\to\!\D$ whose left and right adjoints are isomorphic in the $2$-category $\mathbf{Bimod}(\C)$ of $\C$-bimodules. This is a case of ambidextrous adjunction studied in \cite{lauda2006frobenius,street2004frobenius} and more recently in \cite{flake2024frobenius1}. We investigate \Frob functors, give several characterizations, and present examples from Hopf algebras. We further apply our results to show that, under suitable conditions, the objects called \emph{internal natural transformations} (introduced in \cite{fuchs2021internal}) form a (symmetric) Frobenius algebra in the Drinfeld center of a tensor category. These play an important role in constructing modular tensor categories and describing boundary conditions in $2$D-CFT \cite{fuchs2002tft}.

\subsubsection*{Main Results}
We refer the reader to \S\ref{sec:background} for the background definitions. A functor $F\colon\C\!\longrightarrow\!\D$ between tensor categories is called \textit{perfect} iff it preserves projective objects and \textit{Frobenius} in case its left and right adjoints are isomorphic. These are properties independent of a monoidal structure on $F$. A functor is called {\it Frobenius monoidal} if it admits a lax monoidal and an oplax monoidal structure compatible in a way akin to the product and coproduct of a Frobenius algebra.
In the presence of a tensor (that is, a $\kk$-linear, faithful and strong monoidal) structure on $F$, the Frobenius property on $F$ is related to its right adjoint $F^\ra$ admitting a Frobenius monoidal structure. In Proposition~\ref{prop:Frob-mon-implies-Frobenius} we show that if $F^\ra$ is Frobenius monoidal, then $F$ is Frobenius. Conversely, if $F$ is Frobenius and $F^\ra$ is exact and faithful, then $F^\ra$ is Frobenius monoidal.

A tensor functor $F\colon\C\!\longrightarrow\!\D$ induces a $\C$-bimodule category structure on $\D$ (which we denote as $\FDF$). Then $F$ can be seen as a $\C$-bimodule functor. Applying the center construction (see \S\ref{subsec:center}) to this bimodule functor we obtain a tensor functor $\Z(F)\colon\Z(\C)\!\longrightarrow\!\Z(\FDF)$. Moreover, the right adjoint $F^\ra$ of $F$ is also a $\C$-bimodule functor and its center $\Z(F^\ra)$ is a lax monoidal functor that is the right adjoint of $\Z(F)$.

We call $F$ {\it \Frob} \!\!if its adjoints are isomorphic as $\C$-bimodule functors. 
As the main result of \S\ref{sec:perfect-tensor}, we obtain various characterizations of \Frob functors, relating them to Frobenius functors, Frobenius monoidal functors and unimodular tensor categories:

\begin{namedtheorem}[A]$($\ref{thm:tFrob-char}$\,)$\label{thm:intro-A}
Let $F\colon\C\!\longrightarrow\!\D$ be a perfect tensor functor. Then the following are equivalent:
\begin{enumerate}[{\rm (i)}]
    \item $F$ is \Frob\!\!.
    \item The tensor functor $\Z(F):\Z(\C)\rightarrow\Z(\FDF)$ is Frobenius.
    \item The tensor category $\Z(\FDF)$ is unimodular.
    \item The lax monoidal functor $\Z(F^\ra):\Z(\FDF)\!\longrightarrow\!\Z(\C)$ is $($braided$)$ Frobenius monoidal.
\end{enumerate}
\end{namedtheorem}
Here the equivalence of parts (i), (ii) and (iii) is proved by utilizing results from  \cite{shimizu2017relative}, and the equivalence with (iv) follows from \cite{flake2024frobenius1}.

While tensor functors that are Frobenius are not in general \Frobc using this result, we show that central tensor functors are Frobenius if and only if they are \Frob (Proposition~\ref{prop:central-tensor-Frobenius}). Furthermore, we prove that any tensor functor between separable fusion categories is \Frob (Theorem~\ref{thm:separable-implies-tensor-Frob}), thereby answering \cite[Questions~5.22 \& 5.23]{flake2024frobenius2}.

In \S\ref{sec:F-twisted}, we apply the results of \S\ref{sec:perfect-tensor} to twisted actions on module categories. A tensor functor $F\colon\C\!\longrightarrow\!\D$ induces on a left $\D$-module category $\M$ a left $\C$-module action via:
\begin{equation*}
    c\trF m \coloneqq F(c)\act m 
\end{equation*}
for every $c\in\C$ and $m\in\M$. We denote the resulting $\C$-module category as $\FM$. 

When $\M$ is exact as a $\D$-module category, we show that $\FM$ is exact as a $\C$-module category if and only if $F$ is perfect. For perfect functors, the \textit{relative modular object} $\chi_\subF\in\D$, introduced in \cite{shimizu2017relative}, `measures' how much $F^\la$ and $F^\ra$ differ from one another. 
Moreover, $\chi_\subF$ is equipped with a \textit{$F$-half-braiding} $\sigma\colon\chi_{\subF}\otimes F(c)\to F(c)\otimes \chi_{\subF}$ such that $(\chi_\subF,\sigma) \in \Z(\FDF)$. In particular, using $\sigma$, $\chi_\subF$ induces a $\C$-module functor
\[ \chi_\subF \act - \Colon \FM \longrightarrow\FM\,.\]

To understand properties like pivotality, unimodularity, etc.\ of a $\C$-module category the main hurdle is to determine its so-called relative Serre functor. 
In Proposition~\ref{prop:FMSerre}, we provide a formula for the relative Serre functor of the $\C$-module category $\FM$ and show that it is described using the functor $\chi_\subF\act -$ and the relative Serre functor $\dS_\subM^\supD$ of the $\D$-module category $\M$ as follows:
\[ \dS_{\FM}^\supC \cong \chi_\subF \act \dS_\subM^\supD. \]
With this in mind, we say that a tensor functor $F$ is {\it Frobenius with respect to $\M$} if $\chi_F\act-$ is isomorphic to $\id_{\FM}$ as a $\C$-module functor. We show that \Frob functors are Frobenius with respect to every $\D$-module $\M$.
This allows us to obtain the following result: 

\begin{namedtheorem}[B]\label{thm:intro-B}
Let $F\colon\C\!\longrightarrow\!\D$ be a tensor functor and $\M$ a $\D$-module category. The table below summarizes the assumptions needed to transfer a property on $\M$ to the $\C$-module category $\FM$.
The `result' column shows that said property is transferred if $F$ obeys the respective assumptions.
\begin{table}[ht]
    \centering
    \begin{tabular}{c|c|c|c||c c c}
        \multirow{2}{*}{Reference} & \multicolumn{3}{c||}{Assumptions} & \multicolumn{3}{c}{Result} \\
        \cline{2-7}
         & $\C,\,\D $ & $\M$ & F & F &  & $\FM$ \\
        \hline
        Prop.~\ref{prop:perfect-iff} & FTC & exact & tensor & perfect & $\implies$ & exact \\
        %\hline
        Prop.~\ref{prop:uni-tensor-Frob} & FTC & unimodular & perfect & \Frob & $\implies$ & unimodular \\
        %\hline
        Prop.~\ref{prop:piv-tensor-Frob} & pivotal & pivotal & perfect, pivotal & \Frob\!\!  & $\implies$ & pivotal \\
        %\hline
        Prop~\ref{prop:sph-Frob} & spherical & spherical & perfect, pivotal & \Frob\!\! & $\implies$ & spherical 
    \end{tabular}
    \label{tab:example}
\end{table}
\end{namedtheorem}
\vspace{-0.5cm}
The preservation of pivotality, unimodularity and sphericality holds more generally in case $F$ is Frobenius with respect to $\M$.
A consequence of these results is that: if $F$ is a perfect tensor functor, then its right adjoint $F^{\ra}$ preserves exact algebras. Furthermore, if $\C$ and $\D$ are pivotal and $F$ is a pivotal \Frob functor, then $F^{\ra}$ preserves exact symmetric Frobenius algebras.

%\smallskip
In \S\ref{sec:Hopf-examples}, we illustrate our results for the categories $\Rep(H)$ of finite-dimensional representations of a finite-dimensional Hopf algebra $H$. We show that the tensor functor $F_\ff\colon \Rep(H)\!\longrightarrow\!\Rep(H')$, induced by a bialgebra map $\ff\colon H'\rightarrow H$, is \Frob if and only if $\ff(\sg_{H'})=\gH$ and $\alpha_H\circ \ff = \alpha_{H'}$, where $\sg$ and $\alpha$ stand for the distinguished grouplike elements of a Hopf algebra and its dual (Proposition~\ref{prop:rel-modular-Hopf}). 
We provide examples of such bialgebra maps and explain how our results explain some recent findings in \cite{flake2024frobenius2} obtained using different techniques. Moreover, for $L$ an exact left $H'$-comodule algebra, we show that $F_{\ff}$ is Frobenius with respect to $\Rep(L)$ if and only if it admits a certain invertible element called $\ff$-Frobenius element (Definition~\ref{defn:f-Frob-element}). We also explain the consequences of our results from \S\ref{sec:F-twisted} for pivotal (and unimodular) $\Rep(H)$-module categories.

In \S\ref{sec:internal-natural}, we provide another application of Theorem~B to generalize some algebraic results obtained in \cite{fuchs2021internal} inspired by logarithmic CFTs.
For a $\C$-module category $\M$, let $\C_\subM^*$ denote the category of $\C$-module endofunctors of $\M$. There is a perfect tensor functor $\Psi\colon\Z(\C) \!\longrightarrow\! \C_\subM^*$ which allows us to consider $\C_\subM^*$-module categories as $\Z(\C)$-module categories. We prove that $\Psi$ is \Frob if and only if $\M$ is unimodular (Proposition \ref{prop:psi-Frobenius}). Given a finite tensor category $\C$ and $\M$ and $\N$ exact left $\C$-module categories, we show that the $\Z(\C)$-module category $\Fun_{\C}(\M,\N)$ is exact, thereby generalizing \cite[Thm.\ 18]{fuchs2021internal}. A similar statement holds for unimodular module categories. If, in addition, $\C$ is pivotal, $\M$ and $\N$ are pivotal $\C$-modules and  $\M$ is unimodular, then $\Fun_{\C}(\M,\N)$ is a pivotal $\Z(\C)$-module category. 
Furthermore, $\Psi^{\ra}$ is a pivotal Frobenius monoidal functor.

For $G_1,G_2\in\Fun_{\C}(\M,\N)$, we show that the object $\uNat(G_1,G_2)\in\Z(\C)$ of internal natural transformations \cite{fuchs2021internal} is given by $\Psi^{\ra}(G_1^{\ra}\circ G_2)$. Thus, Theorem~B above gives criteria for when $\uNat(G,G)$ is a symmetric Frobenius algebra in $\Z(\C)$, generalizing the results of \cite{fuchs2021internal}.

\subsubsection*{Acknowledgments}
D.J.\ is supported by The Research Council of Norway - project 324944, and
H.Y.\ by a start-up grant from the University of Alberta and an NSERC Discovery Grant. We would like to thank J.~Fuchs and C.~Schweigert for discussions that encouraged us to start this project, and K.~Shimizu for  insights about Example \ref{ex:uqsl2-kG}. We are grateful to the anonymous referees for their detailed comments that significantly improved the manuscript. 

%%%%%%%%%%%%%%%%%%%%%%%%%%%%%%%%%%%%%%%%%%%%%%%%%%%%%%%%%%%%%%%
%%%%%%%%%%%%%%%%%%%%%%%%%%%%%%%%%%%%%%%%%%%%%%%%%%%%%%%%%%%%%%%
%%%%%%%%%%%%%%%%%%%%%%%%%%%%%%%%%%%%%%%%%%%%%%%%%%%%%%%%%%%%%%%
%%%%%%%%%%%%%%%%%%%%%%%%%%%%%%%%%%%%%%%%%%%%%%%%%%%%%%%%%%%%%%%
%%%%%%%%%%%%%%%%%%%%%%%%%%%%%%%%%%%%%%%%%%%%%%%%%%%%%%%%%%%%%%%
%%%%%%%%%%%%%%%%%%%%%%%%%%%%%%%%%%%%%%%%%%%%%%%%%%%%%%%%%%%%%%%

\section{Background}\label{sec:background}
In this article, we will work over an algebraically closed field $\kk$. $\Vect$ will denote the category of finite-dimensional $\kk$-vector spaces. Given a category $\X$, we will denote its opposite category as $\X^{\op}$. For a morphism $f\colon x\rightarrow x'$ in $\X$, the corresponding morphism in $\C^{\op}$ is denoted as $f^{\op}\colon x'\rightarrow x$.
Unless stated otherwise, we will follow the same conventions as in \cite{etingof2016tensor}.
%%%%%%%%%%%%%%%%%%%%%%%%%%%%%%%%%%%%%%%%%%%%%%%%%%%%%%%%%%%%%%%
\subsection{Monoidal categories and functors}
Owing to Mac Lane's coherence theorem, we can assume that all monoidal categories are strict. A generic monoidal category is denoted as a triple $(\C,\otimes_\subC,\unit_\subC)$ where $\oC$ stands for the tensor product and $\unit_{\subC}$ for the unit object. The \emph{monoidal opposite} $\overline{\C\,}$ of $\C$ is the monoidal category having the same underlying category as $\C$, but with reversed tensor product, i.e.\ $c \,{\otimes_ {\overline{\,\subC}}}\,d \coloneqq d \oC c$. Moreover, the opposite category $\C^{\op}$ is also endowed with a monoidal structure given by the same tensor product and unit as $\C$.

A monoidal category $\C$ is called \textit{rigid} if every object $c\in\C$ admits left and right duals.
We use the following conventions for dualities: 
the right dual $c\rv$ of an object $c\in\C$ comes equipped with evaluation and coevaluation morphisms
\begin{equation*}
{\rm ev}_c\Colon c\rv \otimes\, c\longrightarrow \unit \qquad{\rm and}\qquad
{\rm coev}_c\Colon \unit\longrightarrow c\otimes c\rv \,,
\end{equation*}
and the left dual $\rv c$ of $c\in\C$ comes with evaluation and coevaluation morphisms
\begin{equation*}
\widetilde{{\rm ev}_c}\Colon c \,\otimes \rv c\longrightarrow \unit \qquad{\rm and}\qquad
\widetilde{{\rm coev}_c}\Colon \unit\longrightarrow {}\rv c\otimes c \,.
\end{equation*}
This matches the conventions in \cite{fuchs2020eilenberg}. 
A rigid monoidal category is called \textit{pivotal} if it is equipped with a \textit{pivotal structure}, that is, a monoidal natural isomorphism $\fp\colon \id_{\,\subC} \xRightarrow{\;\sim\;} (-)\dd$.

An {\it algebra} in $\C$ is a triple $(A,m:A\otimes A\to A,u:\unit\to A)$ where $m$ satisfies $m\circ(\id_A\otimes m) = m\circ(m\otimes \id_A)$ and $u$ satisfies $m\circ(\id_A\otimes u) = \id_A = m\circ(u \otimes \id_A)$. A \textit{coalgebra} in $\C$ is a triple $(A,\Delta:A\to A\otimes A,\varepsilon: A\to\unit)$ such that $(A,\Delta^{\op},\varepsilon^{\op})$ is an algebra in $\C^{\op}$.
A {\it Frobenius algebra} is a tuple $(A,m,u,\Delta,\varepsilon)$ where $(A,m,u)$ forms an algebra, $(A,\Delta,\varepsilon)$ forms a coalgebra and $m,\Delta$ satisfy:
\[ (m \otimes \text{id}_A)\circ(\text{id}_A \otimes \Delta) = \Delta\circ m = (\text{id}_A \otimes m)\circ(\Delta \otimes \text{id}_A)\,. \]
Let $\C$ be a pivotal category with pivotal structure $\fp$. A \textit{symmetric Frobenius} algebra \cite{fuchs2008frobenius} is a Frobenius algebra $(A,m,u,\Delta,\varepsilon)$ satisfying:
\begin{equation*}%\label{eq:symalg}
    (\varepsilon \, m \otimes \id_{A\rv})(\id_A\otimes\coev_A) =   
    (\fp_{\lv A} \otimes \varepsilon \, m)(\widetilde{\coev_A} \otimes \id_A) \,.
\end{equation*}

Let $\C$ and $\D$ be monoidal categories. A \textit{$($lax$)$ monoidal} functor is a functor $F\colon\C\longrightarrow\D$ along with a morphism $F_0: \unit_\subD\rightarrow F(\unit_\subC)$ and a natural transformation $F_2(c,c')\colon F(c)\otimes_\subD F(c') \rightarrow F(c\otimes_\subC c')$ for $c,c'\in\C$ obeying certain associativity and unitality conditions. In the case that $F_0$ and $F_2(c,c')$ are isomorphisms we call $F$ \textit{strong monoidal}.

An \textit{oplax monoidal} functor is a functor $F\colon\C\longrightarrow\D$ equipped with a morphism $F^0\colon F(\unit_\subC) \rightarrow \unit_\subD$ and a natural transformation $F^2(c,c')\colon F(c\otimes_\subC c') \rightarrow F(c) \otimes_\subD F(c')$ such that $(F^{\op},(F^0)^{\op},(F^2)^{\op})$ is lax monoidal. 

A tuple $(F,F_0,F_2,F^0,F^2)$ is called \textit{Frobenius monoidal} if $(F,F_0,F_2)$ is lax monoidal, $(F,F^0,F^2)$ is oplax monoidal and for all $c,c',c''\in\C$, the following conditions are fulfilled \cite{day2008note}:
\begin{align*}
(\id_{F(c)}\otimes_{\subD} F_2(c',c'') )\;(F^2(c,c')\otimes_{\subD} \id_{F(c'')})  & \; =\; F^2(c,c'\otimes_{\subC} c'') F_2(c\otimes_{\subC} c',c''), \\
(F_2(c,c')\otimes_{\subD} \id_{F(c'')})\; ( \id_{F(c)}\otimes_{\subD} F^2(c',c'')) & \; = \;F^2(c\otimes_{\subC} c', c'') F_2(c, c' \otimes_{\subC} c'')\,.
\end{align*}
Any strong monoidal functor $(F,F_0,F_2)$ is Frobenius monoidal with $F^0=F_0^{-1}$ and $F^2=F_2^{-1}$. 

A Frobenius monoidal functor $F:\C\longrightarrow\D$ between rigid monoidal categories preserves dualities, that is, we have natural isomorphisms $\zeta_c:F(c\rv)\xsim F(c)\rv$. 
Let $(\C,\fp^{\supC})$ and $(\D,\fp^\supD)$ be two pivotal monoidal categories. We call such a Frobenius monoidal functor $F:\C\longrightarrow\D$ \textit{pivotal} if it satisfies:
\begin{equation}\label{eq:piv-functor}
    \zeta_c{\rv} \circ \fp^\supD_{F(c)} =  \zeta_{c{\rv}} \circ F(\fp_c^\supC)\Colon F(c) \rightarrow F(c{\rv}){\rv}.
\end{equation}
By applying $\zeta$ twice, we obtain a natural isomorphism $\xi_c:F(c\rv\rv)\xsim F(c)\rv\rv$.

Let $(F,F_0,F_2)$ be a strong monoidal functor from $\C$ to $\D$ with a right adjoint $F^\ra$ with unit $\eta$ and counit $\varepsilon$. The monoidal structure of $F$ induces a lax monoidal structure on $F^\ra$ via
\begin{equation*}
    F^\ra_2(d,d') \coloneqq F^\ra(\varepsilon_d\otimes \varepsilon_{d'})\circ F^\ra\left( F_2(F^\ra(d),F^\ra(d'))^{-1} \right) \circ \eta_{F^\ra(d)\otimes F^\ra(d')} , \;\;\;  F^\ra_0 = F^\ra(F^0) \circ \eta_{\unit_{\C}} \,.
\end{equation*}
By means of $\eta$, $\varepsilon$ and $F_2$, we can form two natural transformations called \textit{left and right coHopf operators}, respectively \cite{bruguieres2011hopf}:
\begin{equation}\label{eq:coHopf-operators}
     h^l_{d,c}\Colon F^\ra(d)\oC c\rightarrow F^\ra(d\oD F(c)) \qquad{\rm and}\qquad h^r_{d,c}\Colon c\oC F^\ra(d) \rightarrow F^\ra(F(c)\oD d)\,. 
\end{equation}
In a similar manner, when $F$ admits a left adjoint $F^\la$, we can equip $F^\la$ with an oplax monoidal structure $((F^\la)^2,(F^\la)^0)$. We use the unit, counit of the adjunction $F^\la\dashv F$ and $F_2$ to define natural transformations called \textit{left and right Hopf operators}, respectively \cite{bruguieres2011hopf}:
\begin{equation}\label{eq:Hopf-operators}
    H^l_{d,c}\Colon F^\la(d\oD F(c)) \rightarrow F^\la(d) \oC c \qquad{\rm and}\qquad H^r_{d,c}\Colon F^\la(F(c)\oD d) \rightarrow c\oC F^\la(d)\,.
\end{equation}  
If $\C$ and $\D$ are rigid, the maps $h^l, h^r$ and $H^l,H^r$ are all isomorphisms. We note that in references such as \cite{flake2024frobenius1,flake2024frobenius2,flake2024projection}, the co/Hopf operators are also called projection formula morphisms.

%%%%%%%%%%%%%%%%%%%%%%%%%%%%%%%%%%%%%%%%%%%%%%%%%%%%%%%%%%%%%%%
%%%%%%%%%%%%%%%%%%%%%%%%%%%%%%%%%%%%%%%%%%%%%%%%%%%%%%%%%%%%%%%
%%%%%%%%%%%%%%%%%%%%%%%%%%%%%%%%%%%%%%%%%%%%%%%%%%%%%%%%%%%%%%%

\subsection{Finite multitensor categories and module categories}\label{subsec:finite-tensor}

A \textit{finite $\kk$-linear category} is an abelian category that is equivalent to the category of finite-dimensional representations of a finite dimensional $\kk$-algebra. All categories considered in this article will be finite $\kk$-linear and all functors are assumed to be additive and $\kk$-linear, unless otherwise stated.
Every $\kk$-linear functor between finite $\kk$-linear categories that is left (right) exact admits a left (right) adjoint \cite[Cor.\ 1.9]{douglas2019balanced}.
The \textit{image} of a $\kk$-linear functor $F\colon\X\longrightarrow\Y$, denoted by $\mathrm{im}(F)$, is the full subcategory of $\Y$ consisting of subquotients of objects in the essential image of $F$.
A $\kk$-linear functor $F\colon\X\rightarrow\Y$ is called \textit{surjective} iff its image is equivalent to $\Y$.

A \textit{finite multitensor category} is a finite abelian $\kk$-linear, rigid monoidal category with $\kk$-bilinear tensor product. If, in addition, $\End_{\C}(\unit)\cong \kk$ holds, we call $\C$ a \textit{finite tensor category}.

A \textit{tensor functor} is a $\kk$-linear, exact, and strong monoidal functor. It is established that tensor functors between finite tensor categories are faithful \cite[Corollaire~2.10(ii)]{deligne2007categories}. The image $\mathrm{im}(F)$ of a tensor functor $F$ is a tensor subcategory of $\D$.
A fully faithful tensor functor is called \textit{injective}.

A \textit{left module category} over a finite tensor category $\C$ is a finite $\kk$-linear category $\M$ equipped with a module action functor $\tr:\C\times \M \rightarrow \M$ that is $\kk$-bilinear and exact in each variable, and natural isomorphisms $x\tr(y\tr m)\cong (x\otimes y)\tr m$ (for $x,y\in\C$, $m\in\M$) that satisfy certain coherence conditions. Right module categories and bimodule categories are similarly defined \cite[Def.\ 7.1.7]{etingof2016tensor}.
A module category ${}_\subC\M$ is called \textit{exact} iff for any $m\in \M$ and any projective object $p\in \C$, $p\tr m$ is projective in $\M$. A module category is called \textit{indecomposable} iff it is not equivalent to a direct sum of two non-trivial module categories. A $(\C,\D)$-bimodule category is \textit{exact} iff it is exact as left $\C$-module category and also exact as a right $\D$-module category. 
We call an algebra $A\in\C$ \textit{exact} (resp.~\textit{indecomposable}) if the left $\C$-module category $\moD_A(\C)$ of right $A$-modules in $\C$ is exact (resp.~indecomposable).

A \emph{$\C$-module functor} between two $\C$-module categories $\M$ and $\N$ consists of a functor $H\colon \M\to\N$ endowed with a \textit{module structure}, i.e.\ a natural isomorphism 
\[\varphi_{c,m}\Colon H(c\act m)\xsim c\act H(m)\] 
for $c\iN\C$ and $m\iN\M$ satisfying the appropriate pentagon axioms.
Module functors can be composed: given $\C$-module categories $\M$, $\N$ and $\cL$ and $\C$-module functors $H_1\colon\M\to\N$ and $H_2\colon\N\to\cL$ the composition functor $H_2\circ H_1\colon\M\to\cL$ inherits a $\C$-module functor structure via
\begin{equation}\label{eq:mod_fun_comp}
    H_2\circ H_1(c\tr m)\xrightarrow{H_2(\varphi^1_{c,m})} H_2(c\tr H_1(m))\xrightarrow{\varphi^2_{c,H_1(m)}} c\tr H_2(H_1(m))
\end{equation}
where the isomorphism $\varphi^i$ is the module structure of $H_i$ for $i=1,2$.

A \emph{module natural transformation} between module functors is a natural transformation between the underlying functors commuting with the respective module structures. Given $\C$-module categories $\M$ and $\N$, $\Fun_\C(\M,\N)$ denotes the category with $\C$-module functors between $\M$ and $\N$ as objects and module natural transformations as morphisms. 

Let $\C$ be a finite tensor category and $\M$ an indecomposable exact left $\C$-module category. The \textit{dual tensor category} of $\C$ with respect to $\M$ is the finite tensor category $\C_\subM^*:=\Fun_{\C}(\M,\M)$ of $\C$-module endofunctors with tensor product given by composition of module functors $H\otimes H'=H\circ H'$ and monoidal unit $\id_\subM$. In the case that $\M$ is a decomposable exact left $\C$-module category, then $\C_\subM^*$ is a multitensor category. Furthermore, given exact $\C$-module categories $\M$ and $\N$, the category of module functors $\Fun_{\C}(\M,\N)$ is endowed  with the structure of a $(\C_\subN^*,\C_\subM^*)$-bimodule category, via composition of module functors.

Let $\M_\subD$ be a right module category and ${}_\subD\N$ a left module category. A \textit{balanced functor} is a bilinear functor $F \colon \M\times\N\longrightarrow\Y$ into a $\kk$-linear category $\Y$ equipped with a natural isomorphism
  \begin{equation*}
  F(m\actr d,n) \xsim F(m,d\act n)
  \end{equation*}
for $d\in\D$, $m\in\M$ and $n\in\N$, obeying a pentagon coherence condition.

The \textit{relative Deligne product} of $\M_\subD$ and ${}_\subD\N$ is a $\kk$-linear category $\M\btD\N$ equipped with a right exact 
$\D$-balanced functor $\btD\colon \M\times\N \longrightarrow
\M\btD\N$ such that for every $\kk$-linear category $\Y$ the functor
\begin{equation*}
  \Rex(\M\btD\N,\Y) \xsim
  {\rm Bal}^\re(\M{\times}\N,\Y)\,,\qquad
  F\longmapsto F \,{\circ}\;\btD
\end{equation*}
is an equivalence of categories, where $\Rex$ denotes the category of right exact functors and ${\rm Bal}^\re$ the category of right exact balanced functors.

An exact module category ${}_\subD\M$ becomes an exact $(\D,\overline{\D_\subM^*}\,)$-bimodule category with action of the dual tensor category $\D_\subM^*$ given by module functor evaluation \cite[Lemma 7.12.7]{etingof2016tensor}. The category of $\D$-module functors
$\overline{\M}\coloneqq \Fun_\D(\M,\D)$ is naturally endowed with the structure of an exact $(\,\overline{\D_\subM^*},\D)$-bimodule category via
\begin{equation*}
F\act H \actr d\coloneqq H\circ F (-)\otimes d
\end{equation*}
for $F\in\D_\subM^*$, $H\in\overline{\M}$ and $d\in\D$. There are bimodule equivalences
\begin{equation}\label{eq:M_invertible}
\M\boxtimes_{\D_{\subM}^*}\overline{\M} \xsim \D \qquad \text{ and }\qquad
\overline{\M}\btD \M \xsim \D_{\subM}^*
\end{equation}
of $\D$-bimodule categories and of $\D_{\subM}^*$-bimodule categories, respectively \cite[Prop.\ 2.4.10]{douglas2018dualizable}.

Let $\M$ be a $\C$-module category. For an object $m\in\M$ the action functor $-\act m\colon \C\to\M$ is an exact functor and therefore it admits a right adjoint $\uHom_{\,\subM}^{\supC}(m,-)$, i.e.\ there are natural isomorphisms
\begin{equation*}
\Hom_\M(c\act m,n) \xsim \Hom_\C(c,\uHom_{\,\subM}^{\supC}(m,n))
\end{equation*}
for $c\in\C$ and $m,n\in\M$. This extends to a functor left exact in both entries  
\begin{equation*}
\uHom_{\,\subM}^{\supC}(-,-)\Colon\M\opp\times\M \longrightarrow \C
\end{equation*}
called the \emph{internal Hom functor} of ${}_\subC\M$. A module category is exact iff its internal Hom is an exact functor \cite[Cor.\ 7.9.6 \& Prop.\ 7.9.7]{etingof2016tensor}.

%%%%%%%%%%%%%%%%%%%%%%%%%%%%%%%%%%%%%%%%%%%%%%%%%%%%%%%%%%%%%%%
%%%%%%%%%%%%%%%%%%%%%%%%%%%%%%%%%%%%%%%%%%%%%%%%%%%%%%%%%%%%%%%
%%%%%%%%%%%%%%%%%%%%%%%%%%%%%%%%%%%%%%%%%%%%%%%%%%%%%%%%%%%%%%%

\subsection{Nakayama functors and relative Serre functors}
Any finite $\kk$-linear category $\X$ becomes a left $\Vect$-module category via the action $\btr$ defined by:
\[ \Hom_{\kk}(V,\Hom_{\X}(x,y)) \cong  \Hom_{\X}(V\btr x,y) ,\quad (V\in\Vect, \; x,y\in\X)\,. \]
The left and right exact Nakayama functors of $\X$ are defined as the image of the identity functor under the Eilenberg-Watts correspondence \cite[Def. 3.14]{fuchs2020eilenberg}, explicitly given by the (co)ends
\begin{equation*}
\dN_{\subX}^l (x) \coloneqq \int_{y\in \X} \Hom_{\X}(y,x) \btr y, \hspace{1cm}\dN^r_{\subX} (x) \coloneqq\int^{y\in \X} \Hom_{\X}(x,y)^* \;\btr y\,.
\end{equation*}
The right exact Nakayama functor is equipped with a family of natural isomorphisms
\begin{equation}  \label{eq:Nakayama_twisted_functor}
  \mathfrak{n}_F \Colon \dN^r_\subY\circ F\xRightarrow{\;\sim~}F^{\rra} \circ \dN^r_\subX
\end{equation}
for every right exact functor $F\colon\X\longrightarrow\Y$ admitting a double-right adjoint $F^{\rra}$ that is exact. There are similar natural isomorphisms involving the left exact Nakayama functor.

Given a finite multitensor category $\C$, the Nakayama functors of the $\kk$-linear category underlying $\C$ determine \textit{distinguished invertible objects} \cite{etingof2004analogue}
\begin{equation}\label{eq:DIO}
D_{\subC}\coloneqq\dN_{\subC}^l(\unit) \quad \text{and} \quad D_{\subC}^{-1}\coloneqq\dN_{\subC}^r(\unit).
\end{equation}
These come with a monoidal natural isomorphism
\begin{equation}\label{eq:Radford-C}
\mathcal{R}_\subC^{}\Colon D_\subC^{}\otimes-
\xRightarrow{\;\sim~} (-){\rv\rv\rv\rv}\otimes D_\subC
\end{equation}
called the \emph{Radford isomorphism}. A finite multitensor category $\C$ is \textit{unimodular} iff $D_\subC$ is isomorphic to the monoidal unit $\unit$.

Given a $\C$-module category $\M$, the natural isomorphisms \eqref{eq:Nakayama_twisted_functor} endow the Nakayama functor of $\M$ with the structure of a twisted module functor of the form 
\begin{equation} \label{eq:module_Nakayama_twisted_functor}
\fn_{c,m}\Colon  \dN^r_\subM(c\act m)\xsim \ldd c\act\dN^r_\subM(m).
\end{equation}
Similarly, the left exact Nakayama functor becomes a twisted module functor with the appropriate appearance of double-duals.

Let $\C$ be a finite tensor category and $\M$ a left $\C$-module category. A (right)  \textit{relative Serre functor} of $\M$ \cite[Def.\,4.22]{fuchs2020eilenberg} is an endofunctor $\dS_\subM^\supC:\M\longrightarrow\M$ equipped with a natural isomorphism
\begin{equation*} 
\phi_{m,n}\Colon\uHom_{\,\subM}^\supC(m,n){\rv} \xsim \uHom_{\,\subM}^\supC(n,\Se_\subM^\supC(m)), \qquad (m,n\in\M)\,.
\end{equation*}
A relative Serre functor $(\dS_\subM^\supC,\phi)$ exists iff $\M$ is exact \cite{fuchs2020eilenberg}. In this case, it is unique up to a unique isomorphism \cite[Lem.\ 3.5]{shimizu2023relative} and it is an equivalence of categories. The (left) relative Serre functor \cite[Def.\,4.22]{fuchs2020eilenberg}, denoted by $\overline{\dS}_\subM^\supC \colon\M\longrightarrow\M$, serves as quasi-inverse of $\dS_\subM^\supC$.
Relative Serre functors come with a twisted module functor structure, i.e. there are natural isomorphisms
\begin{equation}\label{eq:Serre_twisted1}
    \fs_{c,m}\Colon \Se_\subM^\supC(c\act m)\xsim c{\dd} \act \Se_\subM^\supC(m)
\end{equation}
for $c\in\C$ and $m\in\M$, obeying an appropriate pentagon axiom. Furthermore,
given a $\C$-module functor $H\colon \M\longrightarrow\N$ there is a natural isomorphism
\begin{equation}\label{eq:Serre_twisted}
    \fs_{H}\Colon \Se_\subN^\supC\circ H \xRightarrow{\;\sim~}H^\rra\circ\Se_\subM^\supC
\end{equation}
of twisted module functors.

The relative Serre functors of an exact $(\C,\D)$-bimodule category $\M$ are related to its Nakayama functors by the isomorphisms
\begin{equation}
  D_{\subC}\act\dN^r_\subM \cong \Se_\subM^\supC \qquad\text{and}\qquad
  D_{\subC}^{-1}\act\dN^l_\subM \cong \lSe_\subM^\supC
  \label{eq:Nakayama_Serre}
\end{equation}
of twisted bimodule functors \cite[Thm.\,4.26]{fuchs2020eilenberg}. Similarly, there exist isomorphisms of
twisted bimodule functors for the relative Serre functors of ${}\M_\subD$. These isomorphisms lead to an extension of Radford's theorem to module categories:

\begin{theorem}\cite[Cor.\,4.16]{spherical2025}
\label{thm:mod_Radford}
Let ${}_\subC\M$ be an exact module category. There exists a natural isomorphism
\begin{equation}
  \mathcal{R}_\subM^{}\Colon D_\subC^{}\act
  {-}\xRightarrow{~\cong~\,} \Se_\subM^\supC\circ\Se_\subM^\supC \actr D_{\C_\subM^*}= D_{\C_\subM^*}\circ \;\Se_\subM^\supC\circ\Se_\subM^\supC 
  \label{eq:Radford_mod}
\end{equation}
of twisted bimodule functors.
\end{theorem}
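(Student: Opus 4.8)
The plan is to reduce the statement to the comparison~\eqref{eq:Nakayama_Serre} between relative Serre functors and Nakayama functors, applied to \emph{both} module structures of the canonical bimodule category carried by $\M$. Recall that an exact left $\C$-module category $\M$ is automatically an exact $(\C,\overline{\C_\subM^*})$-bimodule category, the right action of $\C_\subM^*$ being evaluation of module endofunctors, $m\actr H:=H(m)$ \cite[Lemma 7.12.7]{etingof2016tensor}. Consequently the right-exact Nakayama functor $\dN^r_\subM$ of the underlying abelian category carries the structure of a twisted $(\C,\overline{\C_\subM^*})$-bimodule functor; moreover, since the Nakayama functor depends only on the underlying abelian category, $D_{\overline{\C_\subM^*}}=D_{\C_\subM^*}$ as objects, so that $\dN^r_\subM\actr D_{\C_\subM^*}$ is the endofunctor $D_{\C_\subM^*}\circ\dN^r_\subM$ (and, likewise, $\Se_\subM^\supC\circ\Se_\subM^\supC\actr D_{\C_\subM^*}=D_{\C_\subM^*}\circ\Se_\subM^\supC\circ\Se_\subM^\supC$, which is the equality asserted in the statement).

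Next I would invoke~\eqref{eq:Nakayama_Serre} (from \cite{fuchs2020eilenberg}) for each side. For the left $\C$-action it gives $\Se_\subM^\supC\cong D_\subC\act\dN^r_\subM$, hence $\dN^r_\subM\cong D_\subC^{-1}\act\Se_\subM^\supC$. For the right $\overline{\C_\subM^*}$-action it gives a twisted-bimodule isomorphism between $\dN^r_\subM\actr D_{\C_\subM^*}$ and the relative Serre functor of the right module structure of $\M$; the crucial input is that, for a bimodule category, this latter functor is the quasi-inverse $\overline{\Se}_\subM^\supC=(\Se_\subM^\supC)^{-1}$ of the relative Serre functor of the left action. (When $\M=\C$ is regarded as a $(\C,\C)$-bimodule this is simply the statement that the functors $(-)^{\vee\vee}$ and ${}^{\vee\vee}(-)$ are mutually inverse.) Thus $\dN^r_\subM\cong\overline{\Se}_\subM^\supC\actr D_{\C_\subM^*}^{-1}=D_{\C_\subM^*}^{-1}\circ(\Se_\subM^\supC)^{-1}$.

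Equating the two expressions for $\dN^r_\subM$ gives $D_\subC^{-1}\act\Se_\subM^\supC\cong D_{\C_\subM^*}^{-1}\circ(\Se_\subM^\supC)^{-1}$. Since $D_\subC$ is invertible we have $D_\subC^{\vee\vee}\cong D_\subC$, so acting by $D_\subC$ commutes — up to the twisted module structures~\eqref{eq:Serre_twisted1} — with $\Se_\subM^\supC$ and with the (invertible, hence double-dual-stable) endofunctor $D_{\C_\subM^*}$; similarly $\Se_\subM^\supC$ commutes with $D_{\C_\subM^*}$ up to these structures. Inverting the displayed isomorphism, composing on the left with $\Se_\subM^\supC$, and moving the invertible objects $D_\subC$ and $D_{\C_\subM^*}$ past the functors involved then produces the natural isomorphism
\[ D_\subC\act(-)\ \cong\ D_{\C_\subM^*}\circ\Se_\subM^\supC\circ\Se_\subM^\supC, \]
which is the asserted $\R_\subM$. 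One then checks that this is an isomorphism of \emph{twisted} bimodule functors: the $\C$-twist of the right-hand side is $(-)^{\vee\vee}\circ(-)^{\vee\vee}=(-)^{\vee\vee\vee\vee}$, which coincides with the $\C$-twist of $D_\subC\act(-)$ by Radford's isomorphism~\eqref{eq:Radford-C} for $\C$, while the $\overline{\C_\subM^*}$-twist of the right-hand side is trivialised by Radford's isomorphism inside $\C_\subM^*$ absorbing the double dual produced by $\Se_\subM^\supC\circ\Se_\subM^\supC$; carrying the coherence data through the chain above shows that the constructed isomorphism respects these structures.

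I expect the main obstacle to be the crucial input, namely the identification of the relative Serre functor of the right module structure of $\M$ with $\overline{\Se}_\subM^\supC$. Equivalently, one must show that the \emph{honest} $(\C,\overline{\C_\subM^*})$-bimodule equivalence obtained by composing $\Se_\subM^\supC$ with the right relative Serre functor is isomorphic to the identity, and not merely to some \emph{a priori} nontrivial invertible object of $\Z(\C)$ (the invertible bimodule endofunctors of the invertible bimodule $\M$ of~\eqref{eq:M_invertible} forming a copy of $\Z(\C)$); pinning this down uses the finer structure of the Nakayama functor of a bimodule category together with Radford's theorem inside $\C$ and inside $\C_\subM^*$, and does not follow formally from the properties recorded in~\S\ref{subsec:finite-tensor}.
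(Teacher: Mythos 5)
First, a point of comparison: the paper does not actually prove Theorem~\ref{thm:mod_Radford} --- it is imported verbatim from \cite[Cor.~4.16]{spherical2025} --- so there is no in-paper argument to measure your proposal against. Judged on its own terms, your strategy is the natural one (and very likely the route the cited source takes): regard $\M$ as the canonical bimodule ${}_\subC\M_{\overline{\C_\subM^*}}$, express $\dN^r_\subM$ once through the left $\C$-action via \eqref{eq:Nakayama_Serre} and once through the right action, and equate; the bookkeeping with the invertible objects $D_\subC$ and $D_{\C_\subM^*}$ that you carry out is consistent.

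However, as written the proposal has a genuine gap, which you flag yourself: the identification of the relative Serre functor of $\M$ for the right $\overline{\C_\subM^*}$-action with the quasi-inverse $\lSe_\subM^\supC$ of the left relative Serre functor is asserted, not proved. This is not a routine verification --- it is precisely the nontrivial content of the module-category Radford theorem (already in the regular case $\M=\C$ it encodes that ${}^{\vee\vee}(-)$ inverts $(-)^{\vee\vee}$), and, as you concede, it does not follow formally from the facts recorded in \S\ref{subsec:finite-tensor}; establishing it requires an actual computation of the internal Hom for the evaluation action of $\C_\subM^*$ (for instance via a realization $\M\simeq\moD_A(\C)$ for an exact algebra $A$, or via the finer bimodule Nakayama analysis of \cite{fuchs2020eilenberg}), and your closing remark that the discrepancy is an invertible object of $\Z(\C)$ which ``must be shown trivial'' is exactly the statement left unproven. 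In addition, the concluding step --- that the resulting isomorphism is one of \emph{twisted} bimodule functors, with the left twist matched through $\mathcal{R}_\subC$ and the right twist trivialized through Radford inside $\C_\subM^*$ --- is only sketched; since the theorem asserts an isomorphism of twisted bimodule functors, this coherence check is part of the claim and cannot be waved through. So the outline is the right one, but the central input and the coherence verification are missing, and without them the argument does not constitute a proof.
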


%%%%%%%%%%%%%%%%%%%%%%%%%%%%%%%%%%%%%%%%%%%%%%%%%%%%%%%%%%%%%%%
%%%%%%%%%%%%%%%%%%%%%%%%%%%%%%%%%%%%%%%%%%%%%%%%%%%%%%%%%%%%%%%
%%%%%%%%%%%%%%%%%%%%%%%%%%%%%%%%%%%%%%%%%%%%%%%%%%%%%%%%%%%%%%%
%%%%%%%%%%%%%%%%%%%%%%%%%%%%%%%%%%%%%%%%%%%%%%%%%%%%%%%%%%%%%%%
%%%%%%%%%%%%%%%%%%%%%%%%%%%%%%%%%%%%%%%%%%%%%%%%%%%%%%%%%%%%%%%
%%%%%%%%%%%%%%%%%%%%%%%%%%%%%%%%%%%%%%%%%%%%%%%%%%%%%%%%%%%%%%%
%%%%%%%%%%%%%%%%%%%%%%%%%%%%%%%%%%%%%%%%%%%%%%%%%%%%%%%%%%%%%%%
%%%%%%%%%%%%%%%%%%%%%%%%%%%%%%%%%%%%%%%%%%%%%%%%%%%%%%%%%%%%%%%
%%%%%%%%%%%%%%%%%%%%%%%%%%%%%%%%%%%%%%%%%%%%%%%%%%%%%%%%%%%%%%%

\section{\Frob functors}\label{sec:perfect-tensor}
A $\kk$-linear functor with isomorphic adjoints is called a Frobenius functor. In the case of a tensor functor, its adjoint functors are endowed with additional structure, namely the structure of bimodule functors. In this section, we introduce the notion of \Frob tensor functors, which account for the additional data. We provide a list of characterizations and show some relations to Frobenius monoidal functors.

\subsection{Perfect tensor functors and relative modular objects}
The relative modular function of a Hopf algebra extension studied in \cite{fischman1997frobenius} finds a categorical analogue in the notion of the relative modular object of a tensor functor defined in \cite{shimizu2017relative}. Under certain conditions, this object measures the failure to which a tensor functor preserves the distinguished invertible object \eqref{eq:DIO} and the Radford isomorphism of a finite tensor category. We first need the notion of a perfect tensor functor:
\begin{definition}\cite[\S2.1]{bruguieres2014centralexact}
A tensor functor $F\colon\C\!\longrightarrow \!\D$ between multitensor categories is called \textit{perfect} iff it admits a right adjoint that is exact.
\end{definition}
Being perfect is a property of a tensor functor. In the finite case, this notion has several characterizations listed in the lemma that follows.
\begin{lemma}\cite[Lemma~4.3]{shimizu2017relative}\label{lem:perfect_functor}
Let $\C$ and $\D$ be finite multitensor categories and $F\colon\C\!\longrightarrow \!\D$ a tensor functor.
Then the following are equivalent:
\begin{enumerate}[{\rm (i)}]
\item $F$ admits a double-right adjoint.
\item $F$ admits a double-left adjoint.
\item $F$ preserves projective objects, i.e. $F(p)\in\D$ is projective for every projective $p\in\C$.
\item There exists an object $\chi_{\subF} \in \D$
such that $F^\la\cong F^\ra(-\otimes \chi_{\subF})\cong F^\ra(\chi_{\subF}\otimes-)$ as $\kk$-linear functors.
\item There exists $c\in\C$ such that $F(c)\in\D$ is projective.
\item $F$ is perfect.
\end{enumerate}
\end{lemma}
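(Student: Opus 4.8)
The plan is to establish the chain of implications $(\mathrm{i})\Rightarrow(\mathrm{ii})\Rightarrow(\mathrm{iii})\Rightarrow(\mathrm{iv})\Rightarrow(\mathrm{v})\Rightarrow(\mathrm{vi})\Rightarrow(\mathrm{i})$, exploiting the symmetry between left and right adjoints together with rigidity. First I would record the basic adjunction yoga in a rigid setting: for a tensor functor $F$ between finite multitensor categories, the left adjoint $F^\la$ and the right adjoint $F^\ra$ always exist because $F$ is exact, and they are related to each other by duals via $F^\la(d)\cong F^\ra(d\rv\rv)\rv\rv$-type formulas coming from the fact that the duality functors are (anti)equivalences intertwining $F$ with itself. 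More precisely, $F^\la \cong {}^{\vee\vee}(F^\ra((-)^{\vee\vee}))$ or a variant thereof, which is the standard observation that double-left and double-right adjoints of $F$ are obtained from each other by conjugating with double-duals. This immediately gives the equivalence $(\mathrm{i})\Leftrightarrow(\mathrm{ii})$: $F$ has a double-right adjoint iff $F^\ra$ has a right adjoint iff (conjugating by duals) $F^\la$ has a left adjoint iff $F$ has a double-left adjoint.

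Next, $(\mathrm{i})\Rightarrow(\mathrm{iii})$: having a double-right adjoint means $F^\ra$ has a right adjoint, hence $F^\ra$ is left exact; but $F^\ra$ is always left exact anyway, so the real content is that $F^\ra$ has a \emph{right} adjoint $F^{\mathrm{rra}}$, equivalently $F^\ra$ is right exact, i.e.\ exact. Now if $F^\ra$ is exact, then for a projective $p\in\C$ one has $\Hom_\D(F(p),-)\cong\Hom_\C(p,F^\ra(-))$ with $F^\ra$ exact and $\Hom_\C(p,-)$ exact, so $\Hom_\D(F(p),-)$ is exact and $F(p)$ is projective. Conversely $(\mathrm{iii})\Rightarrow(\mathrm{vi})$ is essentially a reformulation plus the fact that in a finite abelian category a functor preserving projectives has exact adjoint (using that projectives generate and $F^\ra$ can be checked exact on projectives via the Hom-adjunction and the finiteness/Krull–Schmidt structure); this is where I would cite the finiteness hypotheses most heavily. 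That $(\mathrm{vi})\Rightarrow(\mathrm{i})$ follows from the Eilenberg–Watts / Deligne result quoted in the excerpt that a right exact (or left exact) $\kk$-linear functor between finite categories automatically has an adjoint on the appropriate side: $F^\ra$ exact means $F^\ra$ is right exact, hence admits a right adjoint, which is $F^{\mathrm{rra}}$, so $F$ has a double-right adjoint.

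The interesting structural step is $(\mathrm{iii})\Rightarrow(\mathrm{iv})$, producing the relative modular object $\chi_\subF$. Here I would argue that when $F^\ra$ is exact, the composite $F^\la$, being an exact functor between finite categories with both adjoints, differs from $F^\ra$ by an invertible twist. Concretely, both $F^\la$ and $F^\ra$ are $\C$-bimodule functors (via the $\FDF$ structure), and the relative Deligne/internal-Hom machinery shows that any two such adjoints of the \emph{same} bimodule functor $F$ must agree up to tensoring with an invertible object of $\D$ on which a half-braiding is carried; one extracts $\chi_\subF\in\D$ by evaluating the natural isomorphism $F^\la\cong F^\ra(-\otimes\chi_\subF)$ and using that $F^\la$ is a $\C$-bimodule functor to get the left-sided version $F^\ra(\chi_\subF\otimes-)$ as well, the two being identified via the half-braiding $\sigma$ of $\chi_\subF$. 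For $(\mathrm{iv})\Rightarrow(\mathrm{v})$ one simply picks any $c$ with $F^\ra$ exact in the neighbourhood — more directly, take $c=\unit_\C$ and observe $F^\la$ exact forces $F(\text{projective})$ projective, or note that $(\mathrm{iv})$ already gives exactness of $F^\la$ hence of $F^\ra$, returning us to $(\mathrm{iii})$; and $(\mathrm{v})\Rightarrow(\mathrm{iii})$ uses that a single projective object in the image, together with the fact that $F$ is strong monoidal and tensoring by $F(c)^\vee\otimes F(c)$ surjects onto the unit, propagates projectivity of $F(p)$ for all projective $p$.

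The main obstacle I anticipate is step $(\mathrm{iii})\Rightarrow(\mathrm{iv})$: constructing $\chi_\subF$ canonically and verifying that the \emph{same} object works for both the right-tensoring and left-tensoring formulas requires care with the bimodule structure and essentially reproves (a special case of) the existence of the relative modular object from \cite{shimizu2017relative}; since the lemma is attributed to that reference I would in practice invoke it, but a self-contained argument would route through the relative Serre functor identities \eqref{eq:Nakayama_Serre}–\eqref{eq:Serre_twisted} applied to the bimodule category $\FDF$ and the formula $\dN^r$ twists by double-duals, which is exactly the mechanism that forces the correction object to be invertible. The remaining implications are formal consequences of rigidity and the finiteness-automatic-adjoint principle, so no lengthy computation is needed there.
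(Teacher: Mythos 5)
Your proposal is correct in substance and ultimately rests on the same external input as the paper, but it decomposes the lemma differently. The paper's proof is essentially a citation: it quotes \cite[Lemma~4.3]{shimizu2017relative} for the whole block (i)$\iff$(ii)$\iff$(iii)$\iff$(iv) (including the realization $\chi_\subF\cong F^\rra(\unit_\subC)$), then observes that (iii)$\implies$(v) is immediate since $\C$ has projective objects, that (v)$\implies$(iii) follows by the argument of \cite[Lemma~2.4]{etingof2017exact}, and that (vi)$\iff$(i) by finiteness. You instead prove the formal implications yourself---(i)$\iff$(ii) by conjugating with duality functors via $F^\la(d)\cong\rv(F^\ra(d\rv))$, (i)$\implies$(iii) and (iii)$\implies$(vi) via the projective-generator/Hom-adjunction argument, (vi)$\implies$(i) by the finiteness-gives-adjoints principle, and (v)$\implies$(iii) by the evaluation-is-epi-plus-splitting argument---and you isolate the citation to the one genuinely substantive step, (iii)$\implies$(iv), namely producing a single object $\chi_\subF$ that works on both sides. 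This finer decomposition buys self-containedness for the soft implications at no real cost, since the step you delegate to \cite{shimizu2017relative} is exactly the step the paper delegates as well. Three caveats: your stand-alone sketch of (iii)$\implies$(iv) (``two adjoints of the same bimodule functor differ by an invertible twist'') is not an argument as stated and could not replace the citation; in (iv)$\implies$(v) the suggestion to ``take $c=\unit_\subC$'' does nothing (since $F(\unit_\subC)\cong\unit_\subD$ need not be projective), and only your second route is valid---(iv) makes $F^\la$ left exact, hence exact, hence $F^\ra$ exact by duality conjugation, giving (iii) and then (v); and since the categories are multitensor, the unit is not simple, so the surjectivity of evaluation used in (v)$\implies$(iii) needs a componentwise argument, which is precisely what the cited lemma of \cite{etingof2017exact} supplies.
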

\begin{proof}
The equivalence (i)$\iff$(ii)$\iff$(iii)$\iff$(iv) is proven in \cite[Lemma~4.3]{shimizu2017relative} where it is also shown that the object $\chi_{\subF}$ can be realized as $F^\rra(\unit)$ (note that $\chi_{\subF}$ of this paper is $\chi_{\subF}^*$ of \cite{shimizu2017relative}). As $\C$ is finite and therefore admits projective objects, (iii)$\implies$(v) is immediate. Using the same argument as in \cite[Lemma~2.4]{etingof2017exact}, (v)$\implies$(iii) follows. Finally, since $\C$ and $\D$ are finite, (vi) is equivalent to (i).
\end{proof}

A first consequence of Lemma \ref{lem:perfect_functor} is that surjective tensor functors are perfect, since they preserve projective objects \cite[Thm.\ 2.5]{etingof2004finite}. The converse is not true: for instance, consider any tensor functor $F$ between semisimple tensor categories that is not surjective (like $\Vect_H\subset \Vect_G$ for a proper subgroup $H\subset G$), then $F$ preserves projectives, but is not surjective. In fact, any tensor functor from a non-semisimple tensor category to a semisimple tensor category is perfect. Also notice that not every tensor functor is perfect: for a finite tensor category $\C$ that is not semisimple, the tensor functor $\Vect\to \C$ given by $\kk\mapsto\unit$ does not preserve projective objects.

\begin{definition}\cite[Def.\ 4.4]{shimizu2017relative}
The object \(\chi_{\subF} \in\D\) from Lemma \ref{lem:perfect_functor}(iv) is called \textit{the relative modular object of $F$}.
\end{definition}

\begin{proposition}\label{prop:relative_mod_obj_properties}\cite{shimizu2017relative}
Let $F\colon\C\longrightarrow \D$ be a perfect tensor functor. The relative modular object of $F$ obeys the following properties:
\begin{enumerate}[{\rm (i)}]

\item $\chi_{\subF}$ is an invertible object in $\D$.
\item There is a natural isomorphism $F^\rra\cong F(-)\ot \chi_{\subF}$ of $\kk$-linear functors.
\item There is a natural isomorphism $F^\lla\cong F(-)\ot \chi_{\subF}^{-1}$ of $\kk$-linear functors.
\item There is an isomorphism $\chi_{\subF}\cong F^\rra(\unit_\subC)$.
\item There is an isomorphism $\chi_{\subF}\cong F(D_\subC)\otimes D_\subD^{-1}$.
\end{enumerate}
\end{proposition}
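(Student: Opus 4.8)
The plan is to prove the five properties of the relative modular object $\chi_\subF$ by combining Lemma~\ref{lem:perfect_functor} with the general machinery of Nakayama functors and the twisted-functor isomorphism~\eqref{eq:Nakayama_twisted_functor}. The key observation is that for a perfect tensor functor, the double-right adjoint $F^\rra$ exists, and since $F$ is strong monoidal (hence $F^\ra$ is lax monoidal and $F^\rra$ is oplax monoidal), these adjoints inherit compatible monoidal structures that will make the relevant isomorphisms monoidal where needed.

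First, for parts (i)--(iv): by Lemma~\ref{lem:perfect_functor}(iv), $F^\la\cong F^\ra(-\otimes\chi_\subF)\cong F^\ra(\chi_\subF\otimes-)$ as $\kk$-linear functors, and as recorded in the proof of that lemma, $\chi_\subF\cong F^\rra(\unit_\subC)$, which is part (iv). For part (ii), I would take the adjunction $F^\la\dashv F\dashv F^\ra$ and apply it twice: $F^\rra$ is left adjoint to $F^\la$, and $F^\la \cong F^\ra(\chi_\subF\otimes -)$, so $F^\rra$ is left adjoint to $c\mapsto F^\ra(\chi_\subF\otimes c)$; comparing with the fact that $F(-)\otimes\chi_\subF^{-1}\dashv$ something, or more directly dualizing the isomorphism $F^\la\cong F^\ra(\chi_\subF\otimes -)$ by passing to adjoints, yields $F^\rra\cong F(-)\otimes\chi_\subF$ once one knows $\chi_\subF$ is invertible. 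Part (iii) is obtained the same way from the other adjunction (or by applying (ii) to get $F^\lla$ as a right adjoint of $F^\rra\cong F(-)\otimes\chi_\subF$, whence $F^\lla\cong F(-)\otimes\chi_\subF^{-1}$). For invertibility (i): since $F$ is a tensor functor between finite multitensor categories it is exact and faithful, and $F^\rra$ is then also exact; evaluating the natural isomorphism $F^\rra\cong F(-)\otimes\chi_\subF$ on duals, or using that $F^\rra$ preserves the rigid structure up to the modular object, forces $\chi_\subF$ to have a two-sided tensor inverse (indeed $\chi_\subF^{-1}\cong F^\lla(\unit)$, symmetric to (iv)). Alternatively one cites \cite{shimizu2017relative} directly for (i) since the proposition is attributed there.

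For part (v), the identity $\chi_\subF\cong F(D_\subC)\otimes D_\subD^{-1}$, I would use the naturality~\eqref{eq:Nakayama_twisted_functor} of the right exact Nakayama functor: since $F$ is right exact (indeed exact) and admits a double-right adjoint, we have $\dN^r_\subD\circ F\cong F^\rra\circ\dN^r_\subC$. Evaluating this isomorphism at the unit object $\unit_\subC$ and using the definition~\eqref{eq:DIO} that $\dN^r_\subC(\unit_\subC)=D_\subC^{-1}$ and $\dN^r_\subD(\unit_\subD)=D_\subD^{-1}$, together with $F(\unit_\subC)\cong\unit_\subD$, gives
\[
F^\rra(D_\subC^{-1}) \;\cong\; \dN^r_\subD(F(\unit_\subC)) \;\cong\; \dN^r_\subD(\unit_\subD) \;\cong\; D_\subD^{-1}.
\]
Now applying part (ii), $F^\rra(D_\subC^{-1})\cong F(D_\subC^{-1})\otimes\chi_\subF$, so $F(D_\subC^{-1})\otimes\chi_\subF\cong D_\subD^{-1}$. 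Since $F$ is strong monoidal it preserves invertible objects and their inverses, so $F(D_\subC^{-1})\cong F(D_\subC)^{-1}$; tensoring both sides on the left by $F(D_\subC)$ yields $\chi_\subF\cong F(D_\subC)\otimes D_\subD^{-1}$, as claimed.

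The main obstacle I anticipate is bookkeeping the sides and the double-dual twists correctly: the Nakayama functor isomorphism~\eqref{eq:Nakayama_twisted_functor} carries a double-right-adjoint twist, and the distinguished invertible object appears with an inverse depending on whether one uses $\dN^l$ or $\dN^r$, so one must be careful that the monoidal structures on $F^\ra$ and $F^\rra$ are the ones induced compatibly, and that the isomorphism in (ii) is applied on the correct side (the paper's convention has $\chi_\subF=\chi_\subF^*$ of \cite{shimizu2017relative}, so a dual may be hiding). Since this is ultimately a restatement of results in \cite{shimizu2017relative}, the cleanest writeup derives (iv) from Lemma~\ref{lem:perfect_functor}, derives (ii), (iii) by passing to adjoints, notes (i) follows from the two-sided description, and proves (v) by the one-line Nakayama computation above; the delicate coherence checks can be deferred to \cite{shimizu2017relative}.
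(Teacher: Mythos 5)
Your argument is correct, and for parts (i)--(iv) it follows the paper's proof almost verbatim: (i) is delegated to \cite{shimizu2017relative}, (ii) and (iii) are obtained by passing to adjoints in the isomorphism $F^\la\cong F^\ra(-\otimes\chi_\subF)\cong F^\ra(\chi_\subF\otimes-)$ of Lemma~\ref{lem:perfect_functor}(iv), and (iv) comes from $F(\unit_\subC)\cong\unit_\subD$ (the paper deduces (iv) from (ii), you read it off the lemma's proof; this is the same content in a different order, and your caveat that the ordering $F(-)\otimes\chi_\subF$ versus $\chi_\subF\otimes F(-)$ depends on which of the two descriptions of $F^\la$ one dualizes is exactly the bookkeeping the paper also performs). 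The genuine divergence is in (v): the paper quotes $F^\lla(D_\subC)\cong D_\subD$ from \cite[Rem.~4.17]{fuchs2020eilenberg} and combines it with (iii), whereas you derive the analogous fact $F^\rra(D_\subC^{-1})\cong D_\subD^{-1}$ yourself by evaluating the Nakayama naturality \eqref{eq:Nakayama_twisted_functor} at $\unit_\subC$ and using the definition \eqref{eq:DIO}, then combine with (ii). Your route is legitimate (the hypotheses of \eqref{eq:Nakayama_twisted_functor} hold because a perfect tensor functor is exact and has a double-right adjoint) and has two small advantages: it is self-contained relative to the paper's background section rather than resting on an external remark, and after cancelling $F(D_\subC)\otimes F(D_\subC^{-1})\cong\unit$ it lands directly on the stated ordering $\chi_\subF\cong F(D_\subC)\otimes D_\subD^{-1}$, while the paper's computation naturally produces $D_\subD^{-1}\otimes F(D_\subC)$ and leaves the reordering implicit. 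The trade-off is that you must trust that $D_\subC\otimes D_\subC^{-1}\cong\unit$ for the two Nakayama-defined objects of \eqref{eq:DIO}, which is again a fact from \cite{fuchs2020eilenberg}, so neither route is citation-free; both are sound.
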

\begin{proof}
Part (i) is proven in \cite[Prop.~4.7(2)]{shimizu2017relative}.
By taking right adjoints to the isomorphism $F^\la\cong F^\ra(-\otimes \chi_{\subF})$ from Lemma \ref{lem:perfect_functor}(iv), we obtain that
\[ F \cong (F^\la)^\ra \cong \left( F^\ra \circ (-\otimes \chi_{\subF}) \right)^\ra \cong (-\otimes \chi_{\subF})^\ra \circ F^\rra \cong F^\rra(-)\otimes \chi_{\subF}^{-1}.\]
Tensoring by $\chi_{\subF}$ on both sides yields (ii). 
Part (iii) is proved in a similar manner. Lastly, since $F(\unit_\subC)\cong\unit_\subD$, part (iv) follows by (ii).
According to \cite[Rem.\ 4.17]{fuchs2020eilenberg} $F^\lla(D_\subC)\cong D_\subD$. The isomorphism in (v) is then obtained by using (iii).
\end{proof}

%%%%%%%%%%%%%%%%%%%%%%%%%%%%%%%%%%%%%%%%%%%%%%%%%%%%%%%%%%%%%%%
%%%%%%%%%%%%%%%%%%%%%%%%%%%%%%%%%%%%%%%%%%%%%%%%%%%%%%%%%%%%%%%
%%%%%%%%%%%%%%%%%%%%%%%%%%%%%%%%%%%%%%%%%%%%%%%%%%%%%%%%%%%%%%%

\begin{definition}\cite{caenepeel1997doi}
\label{def:Frobenius_functor}
A ($\kk$-linear) functor $F\colon\X\to\Y$ is called \textit{Frobenius} if it admits a two-sided adjoint, i.e. it admits both a left adjoint and a right adjoint and $F^{\la}\cong F^{\ra}$.
\end{definition}

Given an adjoint pair $F\dashv G$, note that $F$ is Frobenius if and only if $G$ is Frobenius. Another immediate consequence of Definition \ref{def:Frobenius_functor} is that a tensor functor $F\colon\C\longrightarrow\D$ that is Frobenius must be perfect as well: $F$ admits a double-right adjoint, namely $F^{\rra}\cong F$. 
However, the converse is not true as shown by the following example.
\begin{example}
For a finite tensor category $\C$, the forgetful functor $U\colon\Z(\C)\rightarrow\C$ is surjective and therefore perfect. However, $U$ is Frobenius if and only if $\C$ is unimodular \cite[Thm.\ 4.10]{shimizu2016unimodular}.
\end{example}

The following result relates the property of $F$ being Frobenius and $F^\ra$ being Frobenius monoidal.
\begin{proposition}\label{prop:Frob-mon-implies-Frobenius}
Let $F\colon\C\longrightarrow\D$ be a tensor functor between tensor categories admitting a right adjoint $F^\ra\colon\D\longrightarrow\C$.
\begin{enumerate}[{\rm (i)}]
    \item If the lax monoidal functor $F^{\ra}$ admits a Frobenius monoidal structure, then $F$ is Frobenius. 
    \item If $F^\ra$ is exact and faithful, then $F$ being Frobenius implies that the induced lax and oplax monoidal structures turn $F^\ra$ into a Frobenius monoidal functor.
\end{enumerate}
\end{proposition}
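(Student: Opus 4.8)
The plan for part (i) is to leverage the fact, recalled in \S\ref{sec:background}, that a Frobenius monoidal functor between rigid monoidal categories preserves duals: the hypothesised Frobenius monoidal structure on $F^\ra$ provides natural isomorphisms $\zeta_d\colon F^\ra(d\rv)\xsim F^\ra(d)\rv$, while the strong monoidal structure of $F$ provides the canonical isomorphisms $F(c\rv)\cong F(c)\rv$. Starting from the adjunction isomorphism $\Hom_\C(c,F^\ra(d))\cong\Hom_\D(F(c),d)$ and applying the contravariant duality equivalences $(-)\rv$ to each side, one gets $\Hom_\C(F^\ra(d)\rv,c\rv)\cong\Hom_\D(d\rv,F(c)\rv)$; feeding in $\zeta$ and the monoidal duality isomorphisms of $F$ rewrites this as $\Hom_\C(F^\ra(d\rv),c\rv)\cong\Hom_\D(d\rv,F(c\rv))$, and since $d\rv$ and $c\rv$ range over all objects (the duality functors being equivalences), this says exactly that $F^\ra$ is a left adjoint of $F$. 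By uniqueness of adjoints $F^\la\cong F^\ra$, i.e.\ $F$ is Frobenius. This argument uses neither finiteness nor exactness of $F^\ra$.

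For part (ii), the first step is to note that $F$ being Frobenius turns $F^\ra$ into the middle term of an ambidextrous adjunction: from $F^\la\cong F^\ra$ we obtain both $F\dashv F^\ra$ and $F^\ra\dashv F$. In particular $F$ has a double right adjoint $F^{\rra}\cong F$, so $F$ is perfect (Lemma~\ref{lem:perfect_functor}), and comparing $F^{\rra}\cong F$ with the isomorphism $F^{\rra}\cong F(-)\otimes\chi_{\subF}$ of Proposition~\ref{prop:relative_mod_obj_properties}(ii) at $\unit_\subC$ gives $\chi_{\subF}\cong\unit_\subD$. The strong monoidal structure of $F$ induces, through the adjunction $F\dashv F^\ra$, the canonical lax structure $(F^\ra,F^\ra_0,F^\ra_2)$ recalled in \S\ref{sec:background}, and, through the adjunction $F^\ra\dashv F$ — equivalently, by transporting the canonical oplax structure of $F^\la$ along the isomorphism $F^\la\cong F^\ra$ — a canonical oplax structure $(F^\ra,(F^\ra)^0,(F^\ra)^2)$; these are the ``induced'' structures of the statement. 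What remains is to verify the two Frobenius compatibility equations relating $F^\ra_2$ and $(F^\ra)^2$. Here I would invoke the ambidextrous-adjunction results of \cite{flake2024frobenius1}, which identify precisely when the two mated (op)lax structures on the middle functor of such an adjunction are Frobenius-compatible; the hypotheses that $F^\ra$ be exact and faithful (exactness being exactly perfectness of $F$) serve to place us in the situation to which those results apply, faithfulness in particular being what singles out an isomorphism $F^\la\cong F^\ra$ along which the transported oplax structure really is the Frobenius partner of the canonical lax one, rather than merely an abstract isomorphism of functors. Alternatively, the two equations can be checked directly: express $F^\ra_2$ and $(F^\ra)^2$ through the left and right coHopf and Hopf operators of \eqref{eq:coHopf-operators}--\eqref{eq:Hopf-operators} (all isomorphisms, by rigidity) and the units and counits of the adjunctions, and use the triangle identities to reduce each Frobenius equation to a compatibility among the (co)Hopf operators valid in any rigid monoidal category, using exactness and faithfulness of $F^\ra$ to transport the verification into $\D$.

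Part (i) is essentially formal. The substantive point, and the step I expect to be the main obstacle, is the Frobenius compatibility in part (ii): showing that the \emph{canonical} lax and oplax structures on $F^\ra$ — and not merely some compatible pair of structures — obey the Frobenius equations, and isolating exactly where the hypotheses ``exact and faithful'' enter (they are what promote the bare functor isomorphism $F^\la\cong F^\ra$ into a statement about the induced monoidal data). For this I would rely on the machinery of \cite{flake2024frobenius1} rather than carry out the diagram chase by hand.
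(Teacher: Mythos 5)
Your part~(i) is correct and is essentially the paper's own argument: the paper quotes the formula $F^{\la}(d)\cong \rv(F^{\ra}(d\rv))$ from \cite[Lem.~4.1]{shimizu2017relative} and then uses that a Frobenius monoidal functor preserves duals, while your Hom-set manipulation re-derives that same relation by hand; nothing substantive differs.

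Part~(ii) has a genuine gap at exactly the point you flag as the main obstacle. The ambidexterity results of \cite{flake2024frobenius1} that yield a Frobenius monoidal structure on the middle functor (Thm.~3.17/3.18, used in Proposition~\ref{prop:tensor-Frob-ra-FrobMon}) require the isomorphism $F^{\la}\cong F^{\ra}$ to be compatible with the projection-formula (Hopf/coHopf) morphisms, i.e.\ to be an isomorphism of $\C$-bimodule functors --- which is precisely the \Frob condition and is strictly stronger than Frobenius (see Example~\ref{ex:tensor-Frob}: the fiber functor of $u_q(\fsl_2)$ is Frobenius and dominant but not \Frob\!\!, yet (ii) applies to it). A plain natural isomorphism $F^{\la}\cong F^{\ra}$ carries no such compatibility, and you give no mechanism by which exactness and faithfulness of $F^{\ra}$ would upgrade it to a compatible one; moreover, if your alternative ``direct check'' really reduced the Frobenius equations to identities valid in any rigid category it would prove (ii) without faithfulness, which the paper explicitly records as an open question immediately after the proposition. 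The paper's actual proof takes a different route: exactness and faithfulness of $F^{\ra}$ (equivalently, dominance of $F$) are used via \cite[Prop.~6.1]{bruguieres2011exact} to identify $\D\simeq \mathrm{mod}_A(\C)$ with $A=F^{\ra}(\unit_\subD)$ and $F^{\ra}$ with the forgetful functor; then \cite[Lemma~2.3]{shimizu2016unimodular} translates ``$F$ is Frobenius'' into ``$A$ is a Frobenius algebra in $\C$'', and \cite[Prop.~4.5]{balan2017hopf}, as explained in \cite{yadav2024frobenius}, equips the forgetful functor of a Frobenius algebra with the required Frobenius monoidal structure. To repair your write-up you would need either to carry out an argument of this monadic type or to actually establish the compatibility of your chosen isomorphism with the (co)Hopf operators, which you currently assume rather than prove.
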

\begin{proof}
(i) The left and right adjoints of $F$ are related by $F^{\la}(d) \cong \rv (F^{\ra}(d\rv))$ \cite[Lem.~4.1]{shimizu2017relative}. As $F^\ra$ is Frobenius monoidal it preserves duals. Thus, there is a natural isomorphism
\[ F^{\la}(d) \cong \rv(F^{\ra}(d\rv)) \cong \rv(F^{\ra}(d)\rv) \cong F^{\ra}(d),\]
thereby proving that $F$ is Frobenius. 

(ii) Notice that $F$ being Frobenius implies that $F^\ra(\unit_\subD)$ is a Frobenius algebra in $\C$: 
according to \cite[Prop.\ 6.1]{bruguieres2011exact}, $\D$ can be identified with the category $\mathrm{mod}_A(\C)$ with $A=F^\ra(\unit_\subD)$. Under this identification, $F^\ra\colon\mathrm{mod}_A(\C)\longrightarrow\C$ is the forgetful functor. By \cite[Lemma 2.3]{shimizu2016unimodular}, we get that $A$ is a Frobenius algebra iff $F^\ra$ is a Frobenius functor iff $F$ is a Frobenius functor. Then, as explained in \cite[\S3.1]{yadav2024frobenius}, we can apply \cite[Prop.~4.5]{balan2017hopf} to obtain that $F^\ra$ is endowed with a Frobenius monoidal structure.
\end{proof}

\begin{question}
    Note that by \cite[Lem.~3.1]{bruguieres2011exact} the assumption that $F^\ra$ is faithful is equivalent to asking that $F$ is \textit{dominant}, that is, every object in $\D$ is a quotient of $F(c)$ for some $c\in\C$.
Does the statement in Proposition \ref{prop:Frob-mon-implies-Frobenius} $($ii$\,)$ hold without assuming that $F$ is dominant?
\end{question}

\begin{proposition}
\label{prop:frobenius_uni}
Given a perfect tensor functor $F\colon\C\!\longrightarrow\!\D$, the following hold:
\begin{enumerate}[{\rm (i)}]
    \item $F$ is Frobenius if and only if $\chi_{\subF}\cong\unit_\subD$.
    \item If $\C$ is unimodular, then $F$ is Frobenius if and only if $\D$ is unimodular. 
    \item If $\C$ is semisimple, then $F$ is Frobenius.
\end{enumerate}
\end{proposition}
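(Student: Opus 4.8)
The key tool is Proposition~\ref{prop:relative_mod_obj_properties}: for a perfect tensor functor $F$, the relative modular object satisfies $\chi_{\subF}\cong F(D_\subC)\otimes D_\subD^{-1}$, and $F$ is Frobenius precisely when $\chi_{\subF}\cong\unit_\subD$.

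For part (i), I would argue that $F$ is Frobenius iff $F^\la\cong F^\ra$ as $\kk$-linear functors. By Lemma~\ref{lem:perfect_functor}(iv) we have $F^\la\cong F^\ra(-\otimes\chi_{\subF})$. If $\chi_{\subF}\cong\unit_\subD$ this immediately gives $F^\la\cong F^\ra$, so $F$ is Frobenius. Conversely, if $F^\la\cong F^\ra$, then $F^\ra(-\otimes\chi_{\subF})\cong F^\ra(-)$; applying this to $\unit_\subD$ and using $F^\ra(\chi_{\subF})\cong F^\ra(\unit_\subD\otimes\chi_{\subF})\cong F^\ra(\unit_\subD)$, one concludes $\chi_{\subF}\cong\unit_\subD$ by taking left adjoints: from $F^\ra(-\otimes\chi_{\subF})\cong F^\ra(-)$ we get, after passing to the left adjoint, that $(-\otimes\chi_{\subF})\circ F^\la\cong F^\la$, hence $F(c)\otimes\chi_{\subF}\cong F(c)$ for all $c$; taking $c=\unit_\subC$ yields $\chi_{\subF}\cong\unit_\subD$. (Alternatively, invoke Proposition~\ref{prop:relative_mod_obj_properties}(ii): $F^\rra\cong F(-)\otimes\chi_{\subF}$, and $F$ Frobenius forces $F^\rra\cong F$, so $F(c)\otimes\chi_{\subF}\cong F(c)$ for all $c$, and again take $c=\unit_\subC$.)

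For part (ii), assume $\C$ is unimodular, i.e.\ $D_\subC\cong\unit_\subC$. Then $\chi_{\subF}\cong F(\unit_\subC)\otimes D_\subD^{-1}\cong D_\subD^{-1}$. By part (i), $F$ is Frobenius iff $\chi_{\subF}\cong\unit_\subD$, i.e.\ iff $D_\subD^{-1}\cong\unit_\subD$, i.e.\ iff $D_\subD\cong\unit_\subD$, which is exactly the unimodularity of $\D$.

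For part (iii), if $\C$ is semisimple then all objects are projective, so in particular $\unit_\subC$ is projective, whence $F(\unit_\subC)\cong\unit_\subD$ is projective in $\D$. By Lemma~\ref{lem:perfect_functor}(iii)/(v) (applied with $c=\unit_\subC$), $F$ is automatically perfect; moreover the unit $\unit_\subC$ being projective means $\C$ is unimodular --- this is the fact that a semisimple finite tensor category is unimodular, since $D_\subC=\dN^l_\subC(\unit)$ and the Nakayama functor of a semisimple category is the identity, so $D_\subC\cong\unit_\subC$. Then part (ii) applies provided $\D$ is also unimodular; but actually I should be careful here --- semisimplicity of $\C$ does not force $\D$ to be semisimple. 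The cleaner route is: since $\C$ is semisimple, $D_\subC\cong\unit_\subC$, so by part (i) and the formula $\chi_{\subF}\cong F(D_\subC)\otimes D_\subD^{-1}\cong D_\subD^{-1}$, it remains to see $D_\subD\cong\unit_\subD$. Hmm, this is not automatic. Let me instead use that $F^\ra$ being exact (perfectness) together with semisimplicity of $\C$: actually the right argument is that $F^\la$ and $F^\ra$ both land in the semisimple category $\C$, and a tensor functor into a semisimple category has $F^\la\cong F^\ra$ because... I would need to think harder. The safest framing: $\C$ semisimple $\Rightarrow$ $D_\subC\cong\unit$; one then checks directly that for semisimple $\C$ the functors $F^\la, F^\ra$ agree, e.g.\ because $F^\ra(d)\cong\bigoplus_{s}\Hom_\D(F(s),d)\btr s$ summed over simples $s$ of $\C$ while $F^\la(d)\cong\bigoplus_s\Hom_\D(d,F(s))^*\btr s\cong\bigoplus_s\Hom_\D(F(s),d)\btr s$ using that $F(s)$ has a dual and $s\mapsto\rv s$ permutes the simples --- wait, this needs $\Hom_\D(d,F(s))^*\cong\Hom_\D(F(s),d)$ which fails in general.

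\textbf{Main obstacle.} Part (iii) is the subtle one: the claim that semisimplicity of the \emph{source} $\C$ alone forces $F$ Frobenius is really the statement that $\chi_{\subF}\cong\unit_\subD$ when $D_\subC\cong\unit_\subC$, i.e.\ $D_\subD\cong\unit_\subD$ --- which is false for general $\D$. So the statement must be relying on something more: presumably that when $\C$ is semisimple, $\D\cong\mathrm{mod}_A(\C)$ for a \emph{separable} (indeed semisimple) algebra $A=F^\ra(\unit)$, making $\D$ semisimple too, hence unimodular, so part (ii) closes the argument. I expect the proof to go: $\C$ semisimple $\Rightarrow$ $A=F^\ra(\unit_\subD)$ is a semisimple algebra in the semisimple category $\C$ $\Rightarrow$ $\D\simeq\mathrm{mod}_A(\C)$ is semisimple $\Rightarrow$ $\D$ unimodular $\Rightarrow$ apply part (ii). The only delicate point is justifying that $A$ is separable/semisimple, for which I would cite that in a semisimple tensor category every algebra whose category of modules is again a finite tensor category (here $\mathrm{mod}_A(\C)\simeq\D$) is semisimple, or argue via exactness of the forgetful functor $F^\ra$. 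I would structure the write-up as: (i) via the $\chi_{\subF}\cong\unit$ criterion and Lemma~\ref{lem:perfect_functor}(iv); (ii) via the formula $\chi_{\subF}\cong F(D_\subC)\otimes D_\subD^{-1}$; (iii) by reducing to (ii) after establishing $\D$ is semisimple.
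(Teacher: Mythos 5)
Parts (i) and (ii) are fine: your direct verification of (i) via $F^\rra\cong F(-)\otimes\chi_{\subF}$ and $F^\rra\cong F$ is a correct unpacking of the fact the paper simply cites from Shimizu (the first "passing to left adjoints" version has a sign/placement slip — the left adjoint of $F^\ra(-\otimes\chi_{\subF})$ is $F(-)\otimes\chi_{\subF}^{-1}$, not $(-\otimes\chi_{\subF})\circ F^\la$ — but your alternative via Proposition~\ref{prop:relative_mod_obj_properties}(ii) is exactly right), and your (ii) coincides with the paper's argument via $\chi_{\subF}\cong F(D_\subC)\otimes D_\subD^{-1}$.

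Part (iii), however, as you propose it has a genuine gap, and ironically you already had the missing step in hand. Your reduction to "$\D\simeq\mathrm{mod}_A(\C)$ with $A=F^\ra(\unit_\subD)$ semisimple/separable" is not available here: that identification requires $F$ to be dominant (equivalently $F^\ra$ faithful), which is not a hypothesis of the proposition, and the separability of $A$ is precisely the point you were unable to justify. The correct and much shorter route is the one you started and then abandoned: since $\C$ is semisimple, $\unit_\subC$ is projective, and since $F$ is perfect it preserves projectives, so $\unit_\subD\cong F(\unit_\subC)$ is projective in $\D$; but a finite tensor category with projective unit is itself semisimple (every $d\cong\unit_\subD\otimes d$ is then projective, as projectives form a tensor ideal). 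Hence $\D$ is semisimple after all — your assertion that "semisimplicity of $\C$ does not force $\D$ to be semisimple" is true only without perfectness, which is assumed here. Then both $\C$ and $\D$ are unimodular, $\chi_{\subF}\cong F(D_\subC)\otimes D_\subD^{-1}\cong\unit_\subD$, and (i) concludes. This is exactly the paper's proof of (iii); with that replacement your write-up would be complete.
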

\begin{proof}
The first statement is proven in \cite[Prop.\ 4.7(i)]{shimizu2017relative}. If $\C$ is unimodular, by Proposition~\ref{prop:relative_mod_obj_properties}(v), $D_\subD^{-1}\cong\chi_\subF$. Then, (ii) follows from (i). 
For (iii), note that when $\C$ is semisimple, $\unit_\subC$ is projective. As $F$ preserves projectives, $\unit_\subD \cong F(\unit_\subC)$ is projective. Consequently, $\D$ is semisimple. It follows that both $\C$ and $\D$ are unimodular. Therefore, $\chi_{\subF}\cong\unit_\subD$ and by part (i), $F$ is Frobenius. 
\end{proof}
\begin{remark}
The unimodularity of $\C$ in Proposition \ref{prop:frobenius_uni}(ii) is crucial. As  Example \ref{ex:tensor-Frob} shows, it might be the case that $F$ is Frobenius and $\D$ is unimodular, but $\C$ is not unimodular.
\end{remark}

Perfect, Frobenius and surjective are properties of a tensor functor that behave well under composition of tensor functors.
\begin{proposition}
\label{prop:comp_rel_obj}
Let $F\colon\C\!\longrightarrow\!\D$ and $G\colon\B\!\longrightarrow\!\C$ be tensor functors, we have that:
\begin{enumerate}[\rm (i)]
\item If $F$ and $G$ are perfect, their composition $F\circ G:\B\to\D$ is perfect and its relative modular object is given by
\begin{equation*}
\chi_{\scriptscriptstyle F\circ G}\cong F(\chi_{\scriptscriptstyle G})\otimes\chi_{\subF}\,.
\end{equation*}
\item If $F$ and $G$ are Frobenius, the composition $F\circ G$ is Frobenius.
\item If $F$ and $G$ are surjective, the composition $F\circ G$ is surjective.
\end{enumerate}
\end{proposition}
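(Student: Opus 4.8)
\textbf{Proof proposal for Proposition \ref{prop:comp_rel_obj}.}

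The plan is to prove the three parts essentially independently, using the characterizations of perfectness from Lemma \ref{lem:perfect_functor} and the formulas for relative modular objects from Proposition \ref{prop:relative_mod_obj_properties}. For part (i), I would first note that perfectness is preserved under composition because a composition of exact functors is exact: if $F^{\ra}$ and $G^{\ra}$ are both exact, then $(F\circ G)^{\ra}\cong G^{\ra}\circ F^{\ra}$ is exact, so $F\circ G$ is perfect. To compute $\chi_{F\circ G}$, I would use Proposition \ref{prop:relative_mod_obj_properties}(ii), which gives $(-)^{\rra}\cong F(-)\otimes\chi_F$ for $F$ and similarly for $G$. Then one computes
\[
(F\circ G)^{\rra}\;\cong\;F^{\rra}\circ G^{\rra}\;\cong\;\big(F(G^{\rra}(-))\otimes\chi_F\big)\;\cong\;\big(F(G(-)\otimes\chi_G)\otimes\chi_F\big)\;\cong\;F(G(-))\otimes F(\chi_G)\otimes\chi_F,
\]
using that $F$ is strong monoidal in the last step. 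Applying this to $\unit_{\B}$ and invoking Proposition \ref{prop:relative_mod_obj_properties}(iv) (that $\chi_{F\circ G}\cong(F\circ G)^{\rra}(\unit_{\B})$) yields $\chi_{F\circ G}\cong F(\chi_G)\otimes\chi_F$, since $F(G(\unit_{\B}))\cong\unit_{\D}$. One subtlety to be careful about is that the double-right adjoint of a composite is the composite of double-right adjoints \emph{in the opposite order}, i.e.\ $(F\circ G)^{\rra}\cong F^{\rra}\circ G^{\rra}$ (not reversed), which is correct since taking right adjoints reverses order and doing it twice restores it; I should state this cleanly.

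For part (ii), the quickest route is via part (i) together with Proposition \ref{prop:frobenius_uni}(i): $F$ Frobenius means $\chi_F\cong\unit_{\D}$, $G$ Frobenius means $\chi_G\cong\unit_{\C}$, hence $\chi_{F\circ G}\cong F(\unit_{\C})\otimes\unit_{\D}\cong\unit_{\D}$, so $F\circ G$ is Frobenius. Alternatively — and this also works without the perfectness hypothesis being used explicitly, since a Frobenius tensor functor is automatically perfect as noted after Definition \ref{def:Frobenius_functor} — one can argue directly: $(F\circ G)^{\la}\cong G^{\la}\circ F^{\la}\cong G^{\ra}\circ F^{\ra}\cong(F\circ G)^{\ra}$, where the middle isomorphism uses $F^{\la}\cong F^{\ra}$ and $G^{\la}\cong G^{\ra}$. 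I would present this direct argument as it is cleaner and self-contained.

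For part (iii), surjectivity is handled by a direct image computation. If $G$ is surjective then $\mathrm{im}(G)=\C$, and if $F$ is surjective then $\mathrm{im}(F)=\D$. I would show $\mathrm{im}(F\circ G)=\D$ by noting that every object of $\D$ is a subquotient of $F(c)$ for some $c\in\C$, and every such $c$ is a subquotient of $G(b)$ for some $b\in\B$; applying the exact functor $F$ to a subquotient presentation of $c$ shows $F(c)$ is a subquotient of $F(G(b))$, and subquotients of subquotients are subquotients, so every object of $\D$ is a subquotient of some $(F\circ G)(b)$. None of the three parts is genuinely hard; the only real care needed is bookkeeping with the order of adjoints and with the natural-isomorphism manipulations in part (i), so I expect \emph{that} to be the main place where precision matters rather than any conceptual obstacle.
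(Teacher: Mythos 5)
Your proposal is correct and follows essentially the same route as the paper: part (i) via $(F\circ G)^{\rra}\cong F^{\rra}\circ G^{\rra}$ evaluated at $\unit_{\B}$ using Proposition~\ref{prop:relative_mod_obj_properties}, and part (iii) via the same subquotient argument using exactness of $F$. The only divergence is your preferred direct argument for (ii), namely $(F\circ G)^{\la}\cong G^{\la}\circ F^{\la}\cong G^{\ra}\circ F^{\ra}\cong(F\circ G)^{\ra}$, which is also valid (the paper instead deduces (ii) from (i) and Proposition~\ref{prop:frobenius_uni}(i)), and you correctly identify the adjoint-ordering bookkeeping as the only point requiring care.
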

\begin{proof}
If both $F$ and $G$ admit double-adjoints, then their composition also admits double-adjoints and from Lemma \ref{lem:perfect_functor} it follows that $F\circ G$ is perfect showing (i). We can compute relative modular objects in terms of double-adjoints according to Proposition \ref{prop:relative_mod_obj_properties}(ii):
\begin{equation*}
\chi_{\scriptscriptstyle F\circ G}\cong (F\circ G)^\rra(\unit_{\scriptscriptstyle \B}) 
\cong (F^\rra\circ G^\rra)(\unit_{\scriptscriptstyle \B})
\cong  F^\rra(\chi_{\scriptscriptstyle G})\cong F(\chi_{\scriptscriptstyle G})\otimes\chi_{\subF}\,. 
\end{equation*}
Assertion (ii) follows from (i) and Proposition \ref{prop:frobenius_uni}(i). 
Now, in order to prove (iii), notice that since $F$ is surjective, any $z\in\D$ is a subquotient of $F(y)$ for some $y\in \C$. Also, since $G$ is surjective, $y$ is a subquotient of $G(x)$ for some $x\in \B$. 
Now, left exactness of $F$ implies that it preserves kernels (hence subobjects). Also, right exactness of $F$ implies that it preserves cokernels (hence quotient objects). Hence, $F$ preserves subquotients. Thus, $F(y)$ is a subquotient of $FG(x)$. Since the subquotient of a subquotient  of some object $p$ is also a subquotient of $p$ \cite[Ex.\ 1.3.6]{etingof2016tensor}, we conclude that $z$ is a subquotient of $FG(x)$.
\end{proof}

%%%%%%%%%%%%%%%%%%%%%%%%%%%%%%%%%%%%%%%%%%%%%%%%%%%%%%%%%%%%%%%
%%%%%%%%%%%%%%%%%%%%%%%%%%%%%%%%%%%%%%%%%%%%%%%%%%%%%%%%%%%%%%%
%%%%%%%%%%%%%%%%%%%%%%%%%%%%%%%%%%%%%%%%%%%%%%%%%%%%%%%%%%%%%%%

\subsection{Center of a perfect tensor functor}\label{subsec:center}
In this section, we recall the notion of the center $\Z(F)$ of a tensor functor $F$, and describe its relative modular object in case it is perfect.

Let $\C$ be a finite tensor category and $\M$ a $\C$-bimodule category. The \textit{center of $\M$}, denoted by $\Z(\M)$, is the category whose objects are pairs $(m,\gamma)$, where $m\in\M$ and $\gamma_c\colon m\tl c\rightarrow c\tr m$ is a natural isomorphism obeying a hexagon axiom. The hom spaces of $\Z(\M)$ are given by
\[\Hom_{\Z(\M)}\left( (m,\gamma),(m',\gamma') \right) 
\coloneqq 
\left\{ f\in\Hom_{\M}(m,m') \, | \, (f\otimes \id_{c})\circ \gamma_c = \gamma'_c \circ(\id_{c}\otimes f)\, \; \forall\, c\in\C \right\}. \] 

The center construction extends to a strict pseudo-functor \cite[\S3.6]{shimizu2020further}, \cite[Lem.~4.3]{flake2024projection} 
\begin{equation}\label{eq:center_cons}
\Z\Colon \mathrm{\textbf{Bimod}}(\C)\longrightarrow \mathrm{FinCat}_\kk,\qquad \M\longmapsto \Z(\M)
\end{equation}
from the $2$-category of $\C$-bimodule categories, bimodule functors and bimodule natural transformations to the $2$-category of finite $\kk$-linear categories, $\kk$-linear functors and natural transformations.

Let $F,\, G\colon\C\longrightarrow\D$ be tensor functors between finite tensor categories. We use the notation $\FDG$ to denote the $\C$-bimodule category $\D$ with left and right actions given by 
\[c\triangleright d \triangleleft c' := F(c)\otimes d \otimes G(c') .\]
The \textit{$F$-centralizer of $\D$} is defined as the center $\ZFD$ of the $\C$-bimodule category $\FDF$. Explicitly, its objects are pairs $(d,\gamma)$ consisting of an object $d\in\D$ and a \emph{$F$-half-braiding} $\gamma$, i.e.\
a natural isomorphism $\gamma_{c} \colon d\,{\otimes}\, F(c)\rightarrow F(c)\,{\otimes}\, d$ 
for $c\in\C$, obeying the respective hexagon axiom. 
The $F$-centralizer of $\D$ is endowed with a monoidal structure defined via 
\[(d,\gamma)\otimes (d',\gamma')=(d\otimes d', \rho)\] 
where the $F$-half-braiding $\rho_c$ is the composition
\[d\otimes d' \otimes F(c)\xrightarrow{\id\otimes\gamma'_c} d\otimes F(c)\otimes d' \xrightarrow{\gamma_c\otimes\id} F(c)\otimes d\otimes d'\]
and the monoidal unit $\unit_{\Z(\FDF)}$ is given by $\unit_\subD$ together with the trivial half-braiding.
Moreover, $\Z(\FDF)$ is rigid \cite[Thm.~3.3]{majid1991representations} and finite \cite[Thm.~3.8]{shimizu2023ribbon}, therefore $\Z(\FDF)$ is a finite tensor category. By considering the identity tensor functor $F=\id_{\,\subC}$, we recover the \textit{Drinfeld center} $\Z(\C)$ of $\C$, which is further endowed with the structure of a braiding.

Now, the functor $F$ can be seen as a $\C$-bimodule functor $F\colon \C\longrightarrow\FDF$. The center construction \eqref{eq:center_cons} provides a $\kk$-linear functor
\begin{equation*}
\Z(F)\Colon\Z(\C)\longrightarrow \ZFD\,, \qquad (c,\nu) \longmapsto (F(c),\gamma)
\end{equation*}
where $\gamma_{c'}$ is given by the composition
\[ F(c)\otimes F(c') \xrightarrow{\;F_2 \;} F(c\otimes c') \xrightarrow{F(\nu_{c'})} F(c'\otimes c) \xrightarrow{\;F_2^{-1}\;} F(c')\otimes F(c)\,. \]
In addition, the strong monoidal structure of $F$ induces a strong monoidal structure on $\Z(F)$. We summarize the properties of $\Z(F)$ in the following lemma.

\begin{lemma}
Let $F\colon\C\longrightarrow\D$ be a tensor functor between finite tensor categories.
\begin{enumerate}[\rm (i)]
    \item $\Z(\FDF)$ is a finite tensor category.
    \item $\Z(F)\colon\Z(\C)\longrightarrow \Z(\FDF)$ is a tensor functor.
    \item $\Z(F^{\ra})$ is right adjoint to $\Z(F)$.
\end{enumerate}
\end{lemma}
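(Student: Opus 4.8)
The plan is to treat the three parts in order, the first two being largely an assembly of facts recorded above and the third resting on the $2$-functoriality of the center construction.

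Part (i) has essentially been assembled already in the paragraph preceding the statement: $\Z(\FDF)$ is rigid by \cite[Thm.~3.3]{majid1991representations} and finite by \cite[Thm.~3.8]{shimizu2023ribbon}; it is abelian, $\kk$-linear, and has a $\kk$-bilinear tensor product because the forgetful functor $U\colon\Z(\FDF)\to\D$, $(d,\gamma)\mapsto d$, is faithful, exact, and creates these structures from $\D$; finally $\End_{\Z(\FDF)}(\unit_{\Z(\FDF)})$ embeds into $\End_\D(\unit_\subD)\cong\kk$ and hence equals $\kk$. So $\Z(\FDF)$ is a finite tensor category, and writing this up amounts to collecting the cited statements.

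For part (ii), I would check the three defining properties of a tensor functor for $\Z(F)$. It is $\kk$-linear because \eqref{eq:center_cons} takes values in $\kk$-linear functors, equivalently because morphisms in $\Z(\FDF)$ are $\kk$-subspaces of morphisms in $\D$ and $\Z(F)$ acts on them as $F$ does. It is exact because $U\colon\Z(\FDF)\to\D$ creates kernels and cokernels, so a sequence in $\Z(\FDF)$ is exact if and only if its image under $U$ is, and $U\circ\Z(F)=F\circ U'$ with $F$ exact, where $U'\colon\Z(\C)\to\C$ is the analogous forgetful functor. Finally, $\Z(F)$ is strong monoidal: its lax structure is the one induced from $(F_2,F_0)$ recalled just before the statement, and since $F_2,F_0$ are isomorphisms, so are the components of $\Z(F)_2$ and $\Z(F)_0$; the pentagon and unit coherences for $\Z(F)$ reduce to those for $F$ because the monoidal structure of $\Z(\FDF)$ is computed on underlying objects as in $\D$.

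For part (iii), the strategy is to realize the adjunction $F\dashv F^\ra$ inside the $2$-category $\mathbf{Bimod}(\C)$ and then transport it along $\Z$. The functor $F\colon\C\to\FDF$ is a $\C$-bimodule functor with structure constraints obtained by reading $F_2^{\pm1}$ inside $\FDF$. Its right adjoint $F^\ra\colon\FDF\to\C$ carries the $\C$-bimodule structure built from the left and right coHopf operators $h^l,h^r$ of \eqref{eq:coHopf-operators}; since $\C$ and $\D$ are rigid these operators are isomorphisms, so $F^\ra$ is again a bimodule functor with invertible structure morphisms (the induced monoidal structure on $\Z(F^\ra)$ will in general only be lax, but this is not needed here). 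The crucial — and, I expect, most delicate — point is that, with these structures, the unit $\eta\colon\id_\C\Rightarrow F^\ra F$ and counit $\varepsilon\colon FF^\ra\Rightarrow\id_{\FDF}$ of the adjunction are $\C$-bimodule natural transformations; this is exactly the ``mate''/doctrinal-adjunction compatibility that the coHopf operators are designed to satisfy, and it can be verified by unwinding the definition of $h^l,h^r$ from $\eta$, $\varepsilon$ and $F_2$ against a pair of hexagon diagrams, or simply cited from \cite{bruguieres2011hopf}. Once this is in place, $(\eta,\varepsilon)$ is an adjunction $F\dashv F^\ra$ in $\mathbf{Bimod}(\C)$, and applying the strict pseudo-functor $\Z$ of \eqref{eq:center_cons} — which, being a $2$-functor, preserves adjunctions by carrying the two triangle identities for $(\eta,\varepsilon)$ to those for $(\Z(\eta),\Z(\varepsilon))$ — shows that $\Z(\eta)$ and $\Z(\varepsilon)$ exhibit $\Z(F^\ra)$ as a right adjoint to $\Z(F)$. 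The main obstacle is thus confined to the bimodule naturality of $\eta$ and $\varepsilon$; everything else is bookkeeping with the already-cited results.
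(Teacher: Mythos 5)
Your proposal is correct, but it routes parts (ii) and (iii) differently from the paper. For (iii) the paper simply invokes the doctrinal-adjunction result \cite[Prop.~4.8]{flake2024projection} for the monoidal adjunction $F\dashv F^\ra$, whereas you reconstruct essentially the content of that citation: lift the adjunction to $\mathbf{Bimod}(\C)$ (the bimodule structure on $F^\ra$ and the bimodule naturality of the unit and counit being exactly what \cite[Cor.~2.13]{douglas2019balanced} supplies, so your ``delicate point'' is covered by a citation the paper already uses) and then apply the center pseudo-functor, which preserves adjunctions. For (ii) the difference is more substantive: the paper deduces exactness of $\Z(F)$ from (iii) --- a strong monoidal functor between rigid categories with a right adjoint also has a left adjoint, and having both adjoints between finite categories forces exactness --- while you prove exactness directly, using that the forgetful functors $\Z(\C)\to\C$ and $\Z(\FDF)\to\D$ are faithful and exact (indeed create kernels and cokernels, since the actions are exact) together with $U\circ\Z(F)=F\circ U'$ and exactness of $F$. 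Your argument makes (ii) independent of (iii), which is slightly more self-contained; the paper's is shorter but relies on the adjunction first. Your treatment of (i), including the observation that $\End_{\Z(\FDF)}(\unit)$ embeds in $\End_\D(\unit_\subD)\cong\kk$, fills in a point the paper leaves implicit. The only cosmetic quibble is that \cite{bruguieres2011hopf} defines the (co)Hopf operators but is not the cleanest reference for the compatibility of $\eta,\varepsilon$ with the bimodule structures; \cite[Cor.~2.13]{douglas2019balanced} or \cite{flake2024projection} is the more direct source.
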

\begin{proof}
Proof of part (i) was given above. Since both $\C$ and $\D$ are finite and $F$ is exact, $F$ admits a right adjoint $F^\ra$, which is equipped with a lax monoidal structure making the adjunction $F\dashv F^\ra$ monoidal. By \cite[Prop.~4.8]{flake2024projection}, $\Z(F^\ra)$ is right adjoint to $\Z(F)$, and the adjunction $\Z(F)\dashv \Z(F^\ra)$ is monoidal; this establishes (iii). For part (ii), note that $\Z(F)$ is a strong monoidal functor between rigid categories and admits a right adjoint, so it also has a left adjoint. Since both $\Z(\C)$ and $\Z(\FDF)$ are finite, it follows that $\Z(F)$ is exact, and hence a tensor functor.
\end{proof}

\begin{proposition}{\cite[Corollary~6.9]{shimizu2022nakayama}}\label{prop:Z(F)-rmo}
Let $F\colon\C\longrightarrow\D$ be a perfect tensor functor. The tensor functor $\Z(F)$ is perfect. Moreover, $\chi_{\subF}$ is endowed with a $F$-half-braiding
\begin{equation}\label{eq:chi_F-braiding}
\sigma_c\Colon\chi_{\subF}\otimes F(c)\xsim F(c)\otimes \chi_{\subF}\,,\qquad \text{for } c\in\C\,,
\end{equation}
such that the object $\chi_{\scriptscriptstyle{\Z(F)}}:=(\chi_{\subF},\sigma)$ in $\ZFD$ is the relative modular object of $\Z(F)$.
\end{proposition}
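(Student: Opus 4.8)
The plan is to reduce the statement to Proposition~\ref{prop:relative_mod_obj_properties}(iv) by exploiting the $2$-functoriality of the center construction \eqref{eq:center_cons}. Since $F$ is perfect, by Lemma~\ref{lem:perfect_functor} it admits a double-right adjoint $F^{\rra}\colon\C\longrightarrow\FDF$. Viewing $F$ as the $\C$-bimodule functor $\C\to\FDF$ of \S\ref{subsec:center}, the adjunction $F\dashv F^{\ra}$ takes place in $\mathbf{Bimod}(\C)$: the bimodule structure on $F^{\ra}$ is obtained from the (strong) bimodule structure of $F$ by taking mates, and it is invertible since the coHopf operators \eqref{eq:coHopf-operators} are isomorphisms when $\C$ and $\D$ are rigid; this is precisely what makes \cite[Prop.~4.8]{flake2024projection} applicable in the previous lemma. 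Running the same argument one step further, with the strong $\C$-bimodule functor $F^{\ra}$ in place of $F$, equips $F^{\rra}=(F^{\ra})^{\ra}$ with an invertible $\C$-bimodule structure for which $F^{\ra}\dashv F^{\rra}$ is again an adjunction inside $\mathbf{Bimod}(\C)$.

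First I would feed the resulting tower $F\dashv F^{\ra}\dashv F^{\rra}$ of adjunctions in $\mathbf{Bimod}(\C)$ through the pseudofunctor $\Z$. As pseudofunctors preserve adjunctions, this gives $\Z(F)\dashv\Z(F^{\ra})\dashv\Z(F^{\rra})$; in particular $\Z(F)$ admits the double-right adjoint $\Z(F)^{\rra}\cong\Z(F^{\rra})$, so $\Z(F)$ is a perfect tensor functor by Lemma~\ref{lem:perfect_functor}.

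Next I would identify $\chi_{\Z(F)}$. Applying Proposition~\ref{prop:relative_mod_obj_properties}(iv) to $\Z(F)$ yields $\chi_{\Z(F)}\cong\Z(F)^{\rra}(\unit_{\Z(\C)})\cong\Z(F^{\rra})(\unit_{\Z(\C)})$. Since $\unit_{\Z(\C)}=(\unit_{\C},\id)$ carries the trivial half-braiding, unwinding the formula for $\Z$ on the bimodule functor $F^{\rra}$ shows $\Z(F^{\rra})(\unit_{\Z(\C)})=\bigl(F^{\rra}(\unit_{\C}),\,\widetilde{\sigma}\bigr)$, where $\widetilde{\sigma}_c$ is the composite of $\C$-bimodule structure isomorphisms
\[
F^{\rra}(\unit_{\C})\otimes F(c)\;\xsim\; F^{\rra}(\unit_{\C}\otimes c)\;=\;F^{\rra}(c)\;=\;F^{\rra}(c\otimes\unit_{\C})\;\xsim\; F(c)\otimes F^{\rra}(\unit_{\C}).
\]
Transporting $\widetilde{\sigma}$ along the isomorphism $F^{\rra}(\unit_{\C})\cong\chi_{\subF}$ of Proposition~\ref{prop:relative_mod_obj_properties}(iv) produces an $F$-half-braiding $\sigma$ on $\chi_{\subF}$ of the form \eqref{eq:chi_F-braiding}, together with an isomorphism $\chi_{\Z(F)}\cong(\chi_{\subF},\sigma)$ in $\Z(\FDF)$, which is the assertion.

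The step I expect to require the most care is the first paragraph: checking that the mate bimodule structures on $F^{\ra}$ and $F^{\rra}$ are genuinely invertible (so these are strong $\C$-bimodule functors, not merely lax/oplax ones) and that the units and counits of $F\dashv F^{\ra}$ and $F^{\ra}\dashv F^{\rra}$ are bimodule natural transformations, so that we really have a tower of adjunctions inside $\mathbf{Bimod}(\C)$ to which $\Z$ may be applied. Once that is in place, the remaining steps are formal consequences of Lemma~\ref{lem:perfect_functor} and Proposition~\ref{prop:relative_mod_obj_properties}(iv).
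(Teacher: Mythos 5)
The paper gives no in-house argument for this proposition: it is imported wholesale from \cite[Corollary~6.9]{shimizu2022nakayama}, so your proposal is a self-contained alternative rather than a variant of the paper's proof. Your route is correct. The step you flag as delicate is exactly the standard doctrinal-adjunction fact that the paper itself invokes via \cite[Cor.\ 2.13]{douglas2019balanced}: over a rigid base, the right adjoint of a strong $\C$-bimodule functor is again a strong bimodule functor (the mate/coHopf-operator structure is invertible) and the unit and counit are bimodule transformations, so $F\dashv F^{\ra}\dashv F^{\rra}$ is a tower of adjunctions inside $\mathbf{Bimod}(\C)$; applying the pseudofunctor $\Z$ and using that pseudofunctors preserve internal adjunctions gives $\Z(F)\dashv\Z(F^{\ra})\dashv\Z(F^{\rra})$, whence perfectness of $\Z(F)$ by Lemma~\ref{lem:perfect_functor}, and $\chi_{\scriptscriptstyle\Z(F)}\cong\Z(F)^{\rra}(\unit_{\Z(\C)})\cong\Z(F^{\rra})(\unit_{\Z(\C)})$ by Proposition~\ref{prop:relative_mod_obj_properties}(iv) together with uniqueness of adjoints. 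Your unwinding of $\Z(F^{\rra})(\unit_{\Z(\C)})$ as $(F^{\rra}(\unit_{\subC}),\widetilde{\sigma})$ with $\widetilde{\sigma}$ built from the bimodule constraints of $F^{\rra}$ is exactly the equivalence \eqref{eq:equi_bimod_centralizer}, which the paper later uses in the proof of Theorem~\ref{thm:tFrob-char} to identify $F^{\rra}$ with $(\chi_{\subF},\sigma)$; and since the relative modular object is only determined up to isomorphism, transporting the half-braiding along $F^{\rra}(\unit_{\subC})\cong\chi_{\subF}$ legitimately yields the asserted $\sigma$. What the two approaches buy: Shimizu's proof proceeds through the Nakayama-functor machinery and gives additional explicit information (used elsewhere, e.g.\ in Proposition~\ref{prop:rel-mod-obj}), whereas your argument stays entirely within the toolkit already set up in \S\ref{sec:perfect-tensor}--\ref{subsec:center} and would make the paper self-contained on this point.
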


The pair $\chi_{\scriptscriptstyle{\Z(F)}}=(\chi_{\subF},\sigma)$ measures the extent to which the Radford isomorphism is preserved under $F$, in the following sense:
\begin{proposition}{\cite[Thm.\ 4.4]{shimizu2023ribbon}}\label{prop:rel-mod-obj}
Let $F\colon\C\longrightarrow \D$ be a perfect tensor functor. There exists an isomorphism 
\begin{equation}\label{eq:relative-distinguished-iso}
F(D_\subC)\xsim \chi_{\subF} \otimes D_\subD
\end{equation}
fulfilling the commutativity of the following diagram
\begin{equation}\label{eq:chi-central-structre}    
    \begin{tikzcd}[row sep = 2em, column sep = 6em]
        F(D_\subC\otimes c)\ar[d,swap,"F(\mathcal{R}_\subC)"]\ar[r,"\cong"]
        &F(D_\subC)\otimes F(c)\ar[r,"\eqref{eq:relative-distinguished-iso} "]
        &\chi_{\subF} \otimes D_\subD\otimes F(c)\ar[d,"\id\otimes \mathcal{R}_\subD"]
        \\
        F(c{\rv\rv\rv\rv}\otimes D_\subC)\ar[d,"\cong",swap] & & \chi_{\subF}\otimes F(c){\rv\rv\rv\rv}\otimes D_\subD \ar[d,"\cong"]
        \\
        F(c{\rv\rv\rv\rv})\otimes F(D_\subC)\ar[r," \eqref{eq:relative-distinguished-iso} "] &   F(c{\rv\rv\rv\rv})\otimes\chi_{\subF} \otimes D_\subD 
        &\ar[swap,l,"\sigma_{\scriptscriptstyle c{{\rv^4}}}\otimes\id"] \chi_{\subF}\otimes F(c{\rv\rv\rv\rv})\otimes D_\subD
    \end{tikzcd}
\end{equation}
where $\mathcal{R}_\subC$ $($resp. $\mathcal{R}_\subD$$)$ is the Radford isomorphism, and $\sigma$ is the isomorphism \eqref{eq:chi_F-braiding}.
\end{proposition}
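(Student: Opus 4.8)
The plan is as follows. The bare existence of an isomorphism $F(D_\subC)\xsim\chi_\subF\otimes D_\subD$ is not the issue: Proposition~\ref{prop:relative_mod_obj_properties}(v) gives $\chi_\subF\cong F(D_\subC)\otimes D_\subD^{-1}$, so tensoring on the right by $D_\subD$ already produces such an isomorphism. The real content of the statement is the commutativity of \eqref{eq:chi-central-structre}, and to get that one must \emph{construct} \eqref{eq:relative-distinguished-iso} canonically, so that its interaction with the Radford isomorphisms and with the half-braiding $\sigma$ is controlled from the outset. I would obtain this canonical isomorphism, and prove commutativity, by pushing the entire diagram down to coherences of the right exact Nakayama functor, which is the common source of $D_\subC$, $D_\subD$, both Radford isomorphisms, \emph{and} the half-braiding $\sigma$.

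Concretely, recall that $D_\subC=\dN^l_\subC(\unit)$ and $D_\subD^{-1}=\dN^r_\subD(\unit)$, that $\chi_\subF\cong F^{\rra}(\unit)$ by Proposition~\ref{prop:relative_mod_obj_properties}(iv), and that by Proposition~\ref{prop:Z(F)-rmo} the $F$-half-braiding $\sigma$ on $\chi_\subF$ is precisely the one obtained by transporting the natural isomorphism $\mathfrak{n}_F\colon\dN^r_\subD\circ F\Rightarrow F^{\rra}\circ\dN^r_\subC$ of \eqref{eq:Nakayama_twisted_functor} through the $\C$-bimodule structure of $\FDF$; explicitly $\sigma_c$ is assembled from $\mathfrak{n}_F$ at $\unit$, the twisted module-functor structures \eqref{eq:module_Nakayama_twisted_functor} of $\dN^r_\subC$ and $\dN^r_\subD$, and the monoidal constraint $F_2$. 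On the other hand, the Radford isomorphism $\mathcal{R}_\subC$ of \eqref{eq:Radford-C} (and likewise $\mathcal{R}_\subD$) is itself nothing but the comparison $2$-cell between the left and the right twisted module-functor structures of the Nakayama functor of the regular bimodule category; equivalently, via \eqref{eq:Nakayama_Serre}, it is the module-category Radford isomorphism of Theorem~\ref{thm:mod_Radford} specialised to $\M=\C$.

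With these identifications in hand, each of the two composites around \eqref{eq:chi-central-structre} unwinds into a pasting of standard $2$-cells: the naturality square for $\mathfrak{n}_F$ applied to the morphism $D_\subC\otimes c\to c{\rv\rv\rv\rv}\otimes D_\subC$, the twisted module-functor pentagons \eqref{eq:module_Nakayama_twisted_functor} for $\dN^r_\subC$ and $\dN^r_\subD$, the monoidality axioms for $F$, and the compatibility of the Nakayama comparison with composition of functors (this last one controls how the duality functors applied before and after $F$ interact with $\mathfrak{n}_F$, and is where the fourfold dual $c\mapsto c{\rv\rv\rv\rv}$ is produced). Commutativity then reduces to the assertion that these coherences determine a unique $2$-cell between the two visible ways of relating $\dN^r_\subD\circ F\circ(D_\subC\otimes-)$ to $F((-){\rv\rv\rv\rv})\otimes\chi_\subF\otimes D_\subD$, which is a uniqueness of the same flavour as the uniqueness of relative Serre functors \cite[Lem.\ 3.5]{shimizu2023relative}. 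The hard part will be exactly this bookkeeping: extracting the precise form of $\sigma$ out of Proposition~\ref{prop:Z(F)-rmo} / \cite{shimizu2022nakayama} and then checking that the fourfold duals generated along the two routes line up — this is the computation carried out in \cite{shimizu2023ribbon}.

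A cleaner, more structural variant that I would also attempt is to exploit that the center construction \eqref{eq:center_cons} is a strict pseudo-functor: since $F$ is a $\C$-bimodule functor $\C\to\FDF$, one expects \eqref{eq:relative-distinguished-iso} to be the underlying-object component of an isomorphism, in an appropriately $(-){\rv\rv\rv\rv}$-twisted version of the $F$-centralizer, relating the image of $(D_\subC,\mathcal{R}_\subC)$ under $\Z(F)$ with $(\chi_\subF,\sigma)\otimes(D_\subD,\mathcal{R}_\subD)$; commutativity of \eqref{eq:chi-central-structre} is then precisely the statement that this comparison isomorphism is a morphism in that twisted center, i.e.\ respects half-braidings, which one would hope to deduce from the naturality of the Nakayama comparison \eqref{eq:Nakayama_twisted_functor} alone.
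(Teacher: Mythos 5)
You are right that the existence of \eqref{eq:relative-distinguished-iso} is immediate from Proposition~\ref{prop:relative_mod_obj_properties}(v) and that the whole content of the statement is the commutativity of \eqref{eq:chi-central-structre}. But that commutativity is exactly the step your proposal never establishes. The pasting argument is only described in outline: you assert that both composites unwind into coherence cells for $\mathfrak{n}_F$, the twisted module structures \eqref{eq:module_Nakayama_twisted_functor} and the monoidality of $F$, and then invoke a ``uniqueness of the comparison $2$-cell'' of the same flavour as uniqueness of relative Serre functors. No such uniqueness principle applies here: the two composites around \eqref{eq:chi-central-structre} are not characterized by any universal property that would force them to coincide, so the equality of the two pastings has to be checked by hand --- and you yourself defer that check to ``the computation carried out in \cite{shimizu2023ribbon}''. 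Likewise, the identification of $\sigma$ with the half-braiding assembled from $\mathfrak{n}_F$ is asserted rather than verified, and the ``cleaner variant'' via a twisted centralizer ends with ``one would hope to deduce'', which is not an argument. As written, the proposal is a plan whose decisive verifications are missing.

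Delegating to the literature would be acceptable if the cited theorem literally contained the statement, but it does not: \cite[Thm.\ 4.4]{shimizu2023ribbon} produces an isomorphism $F(D_\subC)\cong D_\subD\otimes\chi_{\subF}$, with the tensor factors in the opposite order, together with the correspondingly adapted diagram. The paper's proof consists precisely of bridging this mismatch: one post-composes with the Radford isomorphism $(\mathcal{R}_\subD)_{\chi_{\subF}}\colon D_\subD\otimes\chi_{\subF}\to\chi_{\subF}\otimes D_\subD$ and checks one small auxiliary diagram, whose triangles commute by monoidality of $\mathcal{R}_\subD$ and whose middle square commutes by naturality of the half-braiding $\sigma$. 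Your proposal never addresses this ordering issue, so even read as ``reduce to \cite{shimizu2023ribbon}'' it is incomplete, and read as a from-scratch Nakayama-coherence proof it omits all of the computations that carry the burden of the statement.
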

\begin{proof}
A similar statement is proven in \cite[Thm.\ 4.4]{shimizu2023ribbon}, where an isomorphism of the 
form $F(D_\subC)\xsim D_\subD\otimes \chi_{\subF} $ is considered instead and the diagram \eqref{eq:chi-central-structre} is appropriately adapted. In fact, we can obtain diagram \eqref{eq:chi-central-structre} from the diagram in \cite[Thm.\ 4.4]{shimizu2023ribbon} by means of the Radford isomorphism $(\mathcal{R}_\subD)_\chi\colon D_\subD\otimes \chi_{\subF}\cong \chi_{\subF}\otimes D_\subD$. It is enough to check that the following diagram commutes
\begin{equation*}
\begin{tikzcd}[row sep = 4em, column sep = 4em]
\chi_{\subF} \otimes D_\subD\otimes F(c)\ar[r,"\id\otimes \mathcal{R}_\subD"]&
\chi_{\subF}\otimes F(c{\rv\rv\rv\rv})\otimes D_\subD
\ar[r,"\sigma\otimes\id"]\ar[ld," \mathcal{R}_\subD"]
&F(c{\rv\rv\rv\rv})\otimes\chi_{\subF} \otimes D_\subD \ar[ld," \mathcal{R}_\subD",swap]\\
D_\subD\otimes\chi_{\subF} \otimes  F(c)
\ar[u," \mathcal{R}_\subD\otimes\id"]
\ar[r,"\id\otimes \sigma",swap]&D_\subD\otimes F(c)\otimes\chi_{\subF}
\ar[r,"\mathcal{R}_\subD\otimes\id",swap]&
F(c{\rv\rv\rv\rv})\otimes D_\subD\otimes\chi_{\subF}\ar[u,"\id\otimes \mathcal{R}_\subD",swap]  
\end{tikzcd}
\end{equation*}
where the identification $F(c{\rv\rv\rv\rv})\cong F(c){\rv\rv\rv\rv}$ is implicit. The commutativity of the triangles follows from the monoidality of the Radford isomorphism and the remaining middle square commutes due to naturality of the $F$-half-braiding $\sigma$.
\end{proof}

%%%%%%%%%%%%%%%%%%%%%%%%%%%%%%%%%%%%%%%%%%%%%%%%%%%%%%%%%%%%%%%
%%%%%%%%%%%%%%%%%%%%%%%%%%%%%%%%%%%%%%%%%%%%%%%%%%%%%%%%%%%%%%%
%%%%%%%%%%%%%%%%%%%%%%%%%%%%%%%%%%%%%%%%%%%%%%%%%%%%%%%%%%%%%%%

\subsection{The properties of \Frob functors}
A tensor functor $F\colon\C\longrightarrow\D$ can be viewed as a $\C$-bimodule functor $F\colon\C\longrightarrow\FDF$. Moreover, it admits left and right adjoints that are also $\C$-bimodule functors \cite[Cor.\ 2.13]{douglas2019balanced}. This motivates the following stronger version of Definition \ref{def:Frobenius_functor} that accounts for the additional structure. 

\begin{definition}\label{def:-Frob}
A tensor functor $F\colon \C\longrightarrow\D$ between finite multitensor categories is called \Frob iff there exists a $\C$-bimodule natural isomorphism $F^\la\cong F^\ra\colon \FDF \longrightarrow \C$.
\end{definition}

\begin{example}
Consider a finite tensor category $\C$ and an action $T\colon G\to \Aut_\otimes(\C)$ of a finite group $G$ on $\C$. The finite tensor category of fixed points $\C^G$ or \textit{equivariantization} \cite[Def.\ 2.7.2]{etingof2016tensor} comes with a forgetful functor
\begin{equation}
    \mathtt{forg}\Colon \C^G\longrightarrow \C,\quad (X, u)\longmapsto X\,.
\end{equation}
As pointed out in \cite[Lemma 4.6]{drinfeld2009braided} the induction functor
\begin{equation}
    \mathtt{Ind}\Colon \C\longrightarrow \C^G,\quad Y\longmapsto \oplus_{g\in G}\, T_g(Y)
\end{equation}
serves as left and right adjoint to $\mathtt{forg}$. Moreover, one can check that the $\C^G$-bimodule structures on $\mathtt{Ind}$ induced by the adjunctions $\mathtt{forg}\dashv \mathtt{Ind}$ and $ \mathtt{Ind}\dashv\mathtt{forg}$ agree and are given by
\begin{align*}
    (X,u)\tr \Ind(Y) \tl (Z,v)=\bigoplus_{g\in G}\,X\otimes T_g(Y)\otimes Z&\xrightarrow{\oplus_{g\in G}u_g\otimes\id\otimes v_g} \bigoplus_{g\in G}\,T_g(X)\otimes T_g(Y)\otimes T_g(Z)\\
    &\cong \bigoplus_{g\in G}\,T_g(X\otimes Y\otimes Z)=\Ind(X\otimes Y\otimes Z)\,.
\end{align*}
Hence, $\mathtt{forg}$ is a \Frob functor.
\end{example}

The notion of a \Frob tensor functor $F\colon\C\longrightarrow\D$ is an instance of an ambi-adjunction (see \cite[Def.\ 2.7]{flake2024frobenius1} for a definition) internal to the $2$-category $\mathrm{\textbf{Bimod}}(\C)$.
Clearly, a bimodule natural isomorphism $F^\la\cong F^\ra$ is in particular a natural isomorphism. Therefore, any \Frob functor is Frobenius and thus perfect. The converse does not hold, an example of a (surjective) Frobenius functor that is not \Frob can be found in Example \ref{ex:tensor-Frob}. It is worth pointing out that \Frob is a property of a tensor functor $F$, in the sense that it asks for the existence and not for a specific choice of bimodule isomorphism between the left and right adjoint functors of $F$. 

\begin{remark}
Given \Frob functors $F\colon \C\to\D$ and $G\colon \B\to\C$, the composition $F\circ G$ is also \Frob\!\!. Indeed, if $F\cong F^\rra$ and $G\cong G^\rra$ as bimodule functors, then $F\circ G\cong F^\rra\circ G^\rra \cong (F\circ G)^\rra$ as bimodule functors. The second isomorphism is obtained by the uniqueness of adjoints in the $2$-category $\mathrm{\textbf{Bimod}}(\C)$.
\end{remark}

Another consequence of being \Frob (and thus Frobenius) is the following: as $F$ is strong monoidal, $F^\ra$ is endowed with a lax monoidal structure and $F^\la$ with an oplax monoidal structure. Using the isomorphism $F^\la\cong F^\ra$, $F^\ra$ also inherits an oplax structure. Next, directly following the results in \cite{flake2024frobenius1}, we obtain that these two structures  form a Frobenius monoidal functor.

\begin{proposition}{\cite[Thm.~3.18]{flake2024frobenius1}}\label{prop:tensor-Frob-ra-FrobMon}
Let $F\colon\C\longrightarrow\D$ be a \Frob functor. Then $F^{\ra}$ is endowed with a Frobenius monoidal structure.
\end{proposition}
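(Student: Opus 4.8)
The plan is to reduce Proposition~\ref{prop:tensor-Frob-ra-FrobMon} to the abstract result \cite[Thm.~3.18]{flake2024frobenius1} by checking that the situation at hand is an instance of its hypotheses. That theorem (in the form we need) says: if $A\colon \mathcal{X}\to\mathcal{Y}$ is a strong monoidal functor between monoidal categories which admits a right adjoint $A^{\ra}$ and a left adjoint $A^{\la}$, and if $A^{\la}\cong A^{\ra}$ \emph{as monoidal-module} data — concretely, via a natural isomorphism compatible with the Hopf/coHopf operators of \eqref{eq:coHopf-operators}--\eqref{eq:Hopf-operators}, i.e.\ an ambiadjunction internal to the relevant $2$-category of bimodule functors — then the canonical lax structure on $A^{\ra}$ (coming from $A$ being strong monoidal) together with the oplax structure transported from $A^{\la}$ along the isomorphism assemble into a Frobenius monoidal structure on $A^{\ra}$. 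So the bulk of the proof is bookkeeping: identify $\mathcal{X}=\C$, $\mathcal{Y}=\FDF$ (as a monoidal-bimodule setting over $\C$), $A=F$ viewed as a $\C$-bimodule functor, and observe that Definition~\ref{def:-Frob} gives exactly a $\C$-bimodule natural isomorphism $F^{\la}\cong F^{\ra}$, which is precisely the ambiadjunction data in $\mathrm{\textbf{Bimod}}(\C)$ that \cite{flake2024frobenius1} requires.

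Concretely, I would proceed as follows. First, recall that since $F$ is \Frob it is in particular perfect, so both adjoints $F^{\la},F^{\ra}\colon\FDF\to\C$ exist; the strong monoidal structure $(F,F_0,F_2)$ induces, as recorded in \S2.1, a lax monoidal structure $(F^{\ra}_2,F^{\ra}_0)$ on $F^{\ra}$ and an oplax monoidal structure $((F^{\la})^2,(F^{\la})^0)$ on $F^{\la}$, together with the four (co)Hopf operators \eqref{eq:coHopf-operators}--\eqref{eq:Hopf-operators}, all of which are isomorphisms since $\C$ and $\FDF$ are rigid. Second, transport the oplax structure of $F^{\la}$ across the given bimodule natural isomorphism $u\colon F^{\la}\xRightarrow{\sim} F^{\ra}$ to obtain an oplax structure $((F^{\ra})^2,(F^{\ra})^0)$ on $F^{\ra}$. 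Third, invoke \cite[Thm.~3.18]{flake2024frobenius1}: the content of that theorem is precisely that when the isomorphism between the two adjoints is an ambiadjunction in the appropriate $2$-category (here $\mathrm{\textbf{Bimod}}(\C)$, matching the fact that both adjoints are $\C$-bimodule functors by \cite[Cor.~2.13]{douglas2019balanced} and that $u$ is a bimodule isomorphism), the resulting lax and oplax structures on $F^{\ra}$ satisfy the two Frobenius compatibility equations of \cite{day2008note} quoted in \S2.1. I would spell out which of the abstract hypotheses of \cite[Thm.~3.18]{flake2024frobenius1} corresponds to which piece of our data — in particular that "strong monoidal $+$ bimodule ambiadjunction" is what is being assumed — and conclude.

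The main obstacle is purely one of matching conventions and framing rather than of genuine mathematical difficulty: \cite[Thm.~3.18]{flake2024frobenius1} is phrased in the language of projection-formula morphisms / ambiadjunctions in a $2$-category, and one must be careful that the $2$-categorical ambiadjunction data it demands is exactly what Definition~\ref{def:-Frob} supplies. The subtle point is that being Frobenius as a plain $\kk$-linear functor is \emph{not} enough — one genuinely needs the isomorphism $F^{\la}\cong F^{\ra}$ to respect the $\C$-bimodule structures (equivalently, to be compatible with the Hopf and coHopf operators), and it is precisely this that upgrades the bare lax/oplax pair into a Frobenius monoidal structure. Once this identification is made, the Frobenius compatibility equations are an immediate consequence of the cited theorem, so no further computation is needed. (The reader may also consult \cite[\S3.1]{yadav2024frobenius} for the analogous passage in the special case where $F^{\ra}$ is a forgetful functor, which served as motivation.)
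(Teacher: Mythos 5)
Your proposal is correct and follows essentially the same route as the paper: both reduce the statement to the results of \cite{flake2024frobenius1} by observing that the \Frob hypothesis is precisely an ambiadjunction of $\C$-bimodule functors between $F^{\la}$ and $F^{\ra}$. The only cosmetic difference is that the paper explicitly passes through \cite[Thm.~3.17]{flake2024frobenius1} (implication (2)$\implies$(1)) to convert the bimodule isomorphism $F^{\la}\cong F^{\ra}$ into the relations $H^l_{d,c}=(h^l_{d,c})^{-1}$ and $H^r_{d,c}=(h^r_{d,c})^{-1}$ between the Hopf and coHopf operators, and only then applies \cite[Thm.~3.18]{flake2024frobenius1} to the second relation, whereas you fold both steps into a single appeal to Thm.~3.18 — the mathematical content is the same.
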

\begin{proof}
Since $F$ is \Frob\!\!, $F^{\la}\cong F^{\ra}$ in the $2$-category $\mathrm{\textbf{Bimod}}(\C)$. Thus, by the implication (2)$\implies$(1) in \cite[Thm.~3.17]{flake2024frobenius1}, the Hopf operators \eqref{eq:Hopf-operators} of $F^\ra \dashv F$ and the coHopf operators \eqref{eq:coHopf-operators} of $F\dashv F^\ra$ satisfy $H^l_{d,c} = (h^l_{d,c})^{-1}$ and $H^r_{d,c} = (h^r_{d,c})^{-1}$. By \cite[Thm.~3.18]{flake2024frobenius1}, the second relation implies the claim.
\end{proof}

We show next that the notion of a \Frob tensor functor is closely related to relative modular objects. 
To make this relation precise, we need the following additional fact. The $F$-centralizer of $\D$ can be realized as the category of $\C$-bimodule functors from $\C$ to $\FDF$, denoted by $\Fun_{\C|\C}(\C, \FDF)$. Explicitly, there is an equivalence of categories \cite[Lemma 3.1]{shimizu2023ribbon}
\begin{equation}\label{eq:equi_bimod_centralizer}
\Fun_{\C|\C}(\C, \FDF)\xsim \Z(\FDF),\quad H\longmapsto (H(\unit),\gamma)
\end{equation}
where 
\[ \gamma_c\colon H(\unit)\otimes F(c)=H(\unit)\tl^{\scriptscriptstyle F} c\xsim H(c)\xsim c\trF H(\unit)=F(c)\otimes H(\unit) \] 
is given by the bimodule functor structure of $H$.

The following assignment serves as a quasi-inverse of \eqref{eq:equi_bimod_centralizer}. For $(d,\gamma)\in \Z(\FDF)$ consider the functor $F(-)\otimes d$ together with $\C$-bimodule functor structure defined via 
\begin{equation*}
F(c_1\otimes c'\otimes c_2)\otimes d\xrightarrow{~\;F_2\;~}
F(c_1)\otimes F(c')\otimes F(c_2)\otimes d\xrightarrow{~\;\gamma^{-1}\;~} F(c_1)\otimes F(c')\otimes d\otimes F(c_2)\,.
\end{equation*}

Now, using the equivalence \eqref{eq:equi_bimod_centralizer} and our previous results, we obtain the following list of characterizations of the notion of a \Frob functors.

\begin{theorem}\label{thm:tFrob-char}
Let $F\colon \C\longrightarrow\D$ be a perfect tensor functor. The following are equivalent:
\begin{enumerate}[\rm (i)]
    \item $F$ is \Frob\!\!.
    \item There is a natural isomorphism $F\cong F^\rra$ of $\C$-bimodule functors.
    \item The relative modular object $\chi_{\scriptscriptstyle{\Z(F)}}=(\chi_{\subF},\sigma)$ is trivial in $\Z(\FDF)$, i.e. there is $\lambda:\chi_{\subF}\cong\unit_\subD$ such that $\id_{F(c)}\otimes\lambda\circ\sigma_c=\lambda\otimes\id_{F(c)}$ for all $c\in \C$.
    \item The tensor functor $\Z(F)$ is Frobenius.
    \item The finite tensor category $\Z(\FDF)$ is unimodular. 
    \item The lax monoidal functor $\Z(F^{\ra})$ admits a Frobenius monoidal structure.
    \item $F$ preserves the Radford isomorphism, i.e. there exists an isomorphism $\eta\colon F(D_\subC)\xsim D_\subD$ such that the following diagram
\begin{equation}
    \begin{tikzcd}[row sep = 2.5em, column sep = 2em]
        \ar[d,swap,"F(\mathcal{R}_\subC)"]F(D_\subC\otimes c)\ar[r,"\cong"]& F(D_\subC)\otimes F(c)
        \ar[r,"\eta"]
        &D_\subD\otimes F(c)\ar[d," \mathcal{R}_\subD"]
        \\
        F(c{\rv\rv\rv\rv}\otimes D_\subC)\ar[r,"\cong",swap]& F(c{\rv\rv\rv\rv})\otimes F(D_\subC)\cong F(c){\rv\rv\rv\rv}\otimes F(D_\subC)\ar[r," \eta",swap] 
        &  F(c){\rv\rv\rv\rv}\otimes D_\subD
    \end{tikzcd}
\end{equation}
commutes for every $c\in\C$.
\end{enumerate}
\end{theorem}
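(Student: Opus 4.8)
The plan is to prove the cycle of implications by leveraging the results already assembled. First I would establish the equivalence (i)$\iff$(ii). By Proposition~\ref{prop:relative_mod_obj_properties}(ii) applied in the bimodule setting, the double-right adjoint $F^{\rra}$ as a $\C$-bimodule functor is naturally isomorphic to $F(-)\otimes\chi_{\subF}$ equipped with the bimodule structure coming from the $F$-half-braiding $\sigma$ of Proposition~\ref{prop:Z(F)-rmo}; since \Frob is a property (existence, not choice) and adjoints in $\mathbf{Bimod}(\C)$ are unique, $F^{\la}\cong F^{\ra}$ as bimodule functors is equivalent to $F\cong F^{\rra}$ as bimodule functors by taking (bimodule) right adjoints twice. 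This uses that $F$ is perfect so the double adjoints exist.

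Next I would handle (ii)$\iff$(iii)$\iff$(iv). Under the equivalence \eqref{eq:equi_bimod_centralizer} between $\Fun_{\C|\C}(\C,\FDF)$ and $\Z(\FDF)$, the bimodule functor $F$ corresponds to $\unit_{\Z(\FDF)}$ and the bimodule functor $F^{\rra}\cong F(-)\otimes\chi_{\subF}$ corresponds to $(\chi_{\subF},\sigma)=\chi_{\scriptscriptstyle\Z(F)}$; thus a bimodule isomorphism $F\cong F^{\rra}$ is exactly an isomorphism $\chi_{\scriptscriptstyle\Z(F)}\cong\unit_{\Z(\FDF)}$ in $\Z(\FDF)$, which unwinds to the stated condition on $\lambda$ — this is (ii)$\iff$(iii). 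For (iii)$\iff$(iv): by Proposition~\ref{prop:Z(F)-rmo}, $\chi_{\scriptscriptstyle\Z(F)}$ is the relative modular object of the perfect tensor functor $\Z(F)$, and by Proposition~\ref{prop:frobenius_uni}(i) a perfect tensor functor is Frobenius iff its relative modular object is trivial. Then (iv)$\iff$(v) is immediate from the cited Example following Lemma~\ref{lem:perfect_functor}, or more directly: $\Z(F)$ is a surjective-type perfect tensor functor whose right adjoint realizes $\Z(\FDF)$ as modules over the Frobenius-candidate algebra, so by Proposition~\ref{prop:frobenius_uni}(i) together with Proposition~\ref{prop:relative_mod_obj_properties}(v) the triviality of $\chi_{\scriptscriptstyle\Z(F)}$ is equivalent to $D_{\Z(\FDF)}\cong\unit$, i.e.\ unimodularity of $\Z(\FDF)$; here one uses that $\Z(\C)$ is unimodular (it is the Drinfeld center of a finite tensor category, hence unimodular by \cite{shimizu2016unimodular}) so Proposition~\ref{prop:frobenius_uni}(ii) applies to $\Z(F)$.

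For (v)$\iff$(vi) I would invoke Proposition~\ref{prop:Frob-mon-implies-Frobenius} together with Proposition~\ref{prop:tensor-Frob-ra-FrobMon}: if $\Z(F)$ is Frobenius (equivalently \Frob, which follows from (i)) then $\Z(F)^{\ra}\cong\Z(F^{\ra})$ gets a Frobenius monoidal structure by Proposition~\ref{prop:tensor-Frob-ra-FrobMon}; conversely if $\Z(F^{\ra})$ is Frobenius monoidal then $\Z(F)$ is Frobenius by Proposition~\ref{prop:Frob-mon-implies-Frobenius}(i), giving (vi)$\implies$(iv). Finally, for (iii)$\iff$(vii): I would specialize the commuting diagram \eqref{eq:chi-central-structre} of Proposition~\ref{prop:rel-mod-obj} to the case where $\lambda\colon\chi_{\subF}\cong\unit_\subD$ makes $\sigma$ trivial. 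Setting $\eta$ to be the composite $F(D_\subC)\xrightarrow{\eqref{eq:relative-distinguished-iso}}\chi_{\subF}\otimes D_\subD\xrightarrow{\lambda\otimes\id}D_\subD$, the hexagon \eqref{eq:chi-central-structre} collapses — because $\sigma_{c^{\rv^4}}$ becomes the identity after applying $\lambda$ — exactly to the diagram in (vii). For the converse, given $\eta$ as in (vii), one defines $\lambda$ by comparing $\eta$ with \eqref{eq:relative-distinguished-iso} and checks that the half-braiding $\sigma$ must then be trivial, again reading \eqref{eq:chi-central-structre} backwards; the key point is that the diagram \eqref{eq:chi-central-structre} determines $\sigma$ on objects of the form $D_\subD\otimes F(c)$, and objects of the form $F(c)$ generate a subcategory on which the half-braiding is pinned down, which suffices since $\chi_{\subF}$ being invertible forces compatibility on all of $\D$.

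The main obstacle I anticipate is the (iii)$\iff$(vii) equivalence: the diagram \eqref{eq:chi-central-structre} encodes a compatibility of the isomorphism \eqref{eq:relative-distinguished-iso} with both Radford isomorphisms and the half-braiding $\sigma$, and extracting from ``$\eta$ exists making (vii) commute'' the conclusion ``$\sigma$ is trivial'' requires showing that the datum of $\sigma$ is \emph{rigidly determined} by how it interacts with $D_\subD$ and the $F(c)$'s — in other words, that \eqref{eq:chi-central-structre} has at most one solution $\sigma$ once $\eta$ (equivalently $\lambda$) is fixed. This is where I would need to be careful, using invertibility of $\chi_{\subF}$ and $D_\subD$ and faithfulness/exactness of $F$ to cancel these objects and isolate $\sigma_c$. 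The remaining implications are essentially bookkeeping on top of the cited theorems.
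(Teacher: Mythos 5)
Most of your cycle matches the paper's proof step for step: (i)$\iff$(ii) by taking adjoints in $\mathbf{Bimod}(\C)$, (ii)$\iff$(iii) via the equivalence \eqref{eq:equi_bimod_centralizer}, (iii)$\iff$(iv) via Propositions~\ref{prop:Z(F)-rmo} and \ref{prop:frobenius_uni}(i), (iv)$\iff$(v) using $\chi_{\scriptscriptstyle{\Z(F)}}\cong D_{\Z(\FDF)}^{-1}$ and unimodularity of $\Z(\C)$, (vi)$\implies$(iv) via Proposition~\ref{prop:Frob-mon-implies-Frobenius}(i), and (iii)$\iff$(vii) via Proposition~\ref{prop:rel-mod-obj}. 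The genuine gap is your argument for (i)$\implies$(vi). You assert that $\Z(F)$ is ``Frobenius (equivalently \Frob\!\!, which follows from (i))'' and then apply Proposition~\ref{prop:tensor-Frob-ra-FrobMon} to $\Z(F)$. But ``$\Z(F)$ is \Frob'' is a strictly stronger statement than ``$\Z(F)$ is Frobenius'': it requires $\Z(F)^{\la}\cong\Z(F)^{\ra}$ as $\Z(\C)$-bimodule functors, and nothing you (or the paper) have established produces such a bimodule isomorphism. Applying the center pseudo-functor \eqref{eq:center_cons} to the $\C$-bimodule isomorphism $F^{\la}\cong F^{\ra}$ only yields an isomorphism of plain $\kk$-linear functors, since $\Z$ lands in $\mathrm{FinCat}_\kk$; upgrading it to a $\Z(\C)$-bimodule isomorphism is exactly the kind of statement the theorem itself is needed for (or would require exhibiting a central structure on $\Z(F)$, i.e.\ a braided lift $\Z(\C)\to\Z(\Z(\FDF))$, which is nowhere constructed — and note that Proposition~\ref{prop:central-tensor-Frobenius} cannot be invoked here, as its proof uses this very theorem). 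The paper avoids this by proving (i)$\implies$(vi) directly from the external results \cite[Thm.~3.17, Prop.~4.15]{flake2024frobenius1}: the bimodule ambidexterity of $F$ forces the Hopf operators \eqref{eq:Hopf-operators} to be inverse to the coHopf operators \eqref{eq:coHopf-operators}, and that condition is what endows $\Z(F^{\ra})$ with a Frobenius monoidal structure. As written, your (i)$\implies$(vi) step does not go through.

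Two smaller points. For (iv)$\iff$(v), the reference to the example after Definition~\ref{def:Frobenius_functor} is off target (that example concerns $U\colon\Z(\C)\to\C$), but your direct argument via Proposition~\ref{prop:frobenius_uni}(ii) and unimodularity of $\Z(\C)$ is exactly the paper's. For (iii)$\iff$(vii), your worry about pinning down $\sigma$ ``on all of $\D$'' is misplaced: $\sigma$ is an $F$-half-braiding, defined only on objects $F(c)$, so the real content is just comparing a given $\eta$ with the canonical isomorphism \eqref{eq:relative-distinguished-iso} (they differ by a scalar and an isomorphism $\lambda\colon\chi_{\subF}\cong\unit_{\subD}$, since these objects are invertible) and cancelling the invertible object $D_{\subD}$ in diagram \eqref{eq:chi-central-structre}; this is what Proposition~\ref{prop:rel-mod-obj} encodes and is the route the paper takes.
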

\begin{proof}
Since $F$ is perfect, it admits double-adjoints. Then, by taking adjoints in $\mathrm{\textbf{Bimod}}(\C)$, there exists a $\C$-bimodule natural isomorphism $F^\la\cong F^\ra$ if and only if there exists a bimodule natural isomorphism $F\cong F^\rra$, which proves (i)$\iff$(ii). 

To prove (ii)$\iff$(iii), notice that the category equivalence \eqref{eq:equi_bimod_centralizer} assigns $(\chi_{\subF}, \sigma)$ to $F^\rra$ as defined in Proposition \ref{prop:Z(F)-rmo}. Moreover, \eqref{eq:equi_bimod_centralizer} sends $F$ to the unit of $\Z(\FDF)$. Then, $F\cong F^\rra$ in $\Fun_{\C|\C}(\C, \FDF)$ if and only if $\unit\cong (\chi_{\subF}, \sigma)$ in $\Z(\FDF)$.

Since, by Proposition~\ref{prop:Z(F)-rmo}, $(\chi_{\subF},\sigma)$ is the relative modular object of $\Z(F)$, the equivalence (iii)$\iff$(iv) follows by Proposition~\ref{prop:frobenius_uni}(i). 
Next, note that
\[ (\chi_{\subF},\sigma) = \chi_{\scriptscriptstyle{\Z(F)}} \cong F(D_{\Z(\C)}) \otimes D_{\Z(\FDF)}^{-1} \cong D_{\Z(\FDF)}^{-1},\]
where the last isomorphism holds because $\Z(\C)$ is unimodular. This implies that (iv)$\iff$(v).

(i)$\implies$(vi) follows by a direct application of \cite[Thm.~3.17]{flake2024frobenius1} and \cite[Prop.~4.15]{flake2024frobenius1}. 
The implication (vi)$\implies$(iv) follows by Proposition~\ref{prop:Frob-mon-implies-Frobenius}. Lastly, by Proposition \ref{prop:rel-mod-obj}, it follows that (iii)$\iff$(vii).
\end{proof}

Note that there is a strong monoidal functor $\mathrm{res}\colon\Z(\D)\longrightarrow\Z(\FDF)$ given by restricting the half-braiding to objects of the form $F(c)$. Using this, we obtain the lax monoidal functor:
\begin{equation}\label{eq:tilde-Z-F-ra}
    \Z(F^\ra)' \Colon \Z(\D) \xrightarrow{~\mathrm{res}~} \Z(\FDF) \xrightarrow{~\Z(F^\ra)~} \Z(\C)\,.
\end{equation}
Since $F\dashv F^{\ra}$ is a monoidal adjunction between rigid monoidal categories, by \cite[Cor.~4.11]{flake2024projection}, the functor $\Z(F^\ra)'$ becomes a braided lax monoidal functor. Then we obtain the following result which may be considered as a generalization of Proposition~\ref{prop:tensor-Frob-ra-FrobMon}.

\begin{corollary}\label{cor:tensor-Frob-Z'-Frob-mon}
Let $F\colon\C\longrightarrow\D$ be a \Frob functor. Then, the induced functor $\Z(F^{\ra})'$ from \eqref{eq:tilde-Z-F-ra} is endowed with a $($braided$)$ Frobenius monoidal structure. 
\end{corollary}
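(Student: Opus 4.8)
The plan is to build the Frobenius monoidal structure on $\Z(F^{\ra})'$ as a composite of Frobenius monoidal functors and then promote it to a braided one, using that its lax part is already known to be braided; this parallels and generalizes Proposition~\ref{prop:tensor-Frob-ra-FrobMon}. Since $F$ is \Frobc Theorem~\ref{thm:tFrob-char} gives that the lax monoidal functor $\Z(F^{\ra})\colon\Z(\FDF)\to\Z(\C)$ admits a Frobenius monoidal structure; by the proof of that theorem this is the structure produced, following \cite[Thm.~3.17 \& 3.18]{flake2024frobenius1}, from the ambidextrous adjunction $\Z(F^{\la})\dashv\Z(F)\dashv\Z(F^{\ra})$ in $\mathrm{\textbf{Bimod}}(\C)$ (ambidextrous because applying the center pseudofunctor to the \Frob isomorphism $F^{\la}\cong F^{\ra}$ supplies $\Z(F^{\la})\cong\Z(F^{\ra})$), so that its oplax part is assembled from the coHopf operators of $\Z(F)\dashv\Z(F^{\ra})$, which coincide with the inverses of the Hopf operators of $\Z(F^{\la})\dashv\Z(F)$ (compare the proof of Proposition~\ref{prop:tensor-Frob-ra-FrobMon}).

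Next I would use that $\mathrm{res}\colon\Z(\D)\to\Z(\FDF)$ is strong monoidal, hence Frobenius monoidal with $\mathrm{res}^{0}=(\mathrm{res}_{0})^{-1}$ and $\mathrm{res}^{2}=(\mathrm{res}_{2})^{-1}$. Because Frobenius monoidal functors compose \cite{day2008note}, the composite $\Z(F^{\ra})'=\Z(F^{\ra})\circ\mathrm{res}$ inherits a Frobenius monoidal structure, whose lax part is, under the canonical identifications, the braided lax structure of \cite[Cor.~4.11]{flake2024projection} recalled before the statement. It then remains to verify that the accompanying oplax part is compatible with the braidings of $\Z(\D)$ and $\Z(\C)$. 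For this I would invoke \cite[Prop.~4.15]{flake2024frobenius1}: the monoidal adjunction $F\dashv F^{\ra}$ between rigid categories lifts, after restriction of the half-braiding, to a braided monoidal adjunction between $\Z(F^{\ra})'$ and its left adjoint --- this is the content underlying \cite[Cor.~4.11]{flake2024projection} --- and since the Hopf and coHopf operators of this adjunction are mutually inverse precisely when $F$ is \Frobc the hypotheses of \cite[Prop.~4.15]{flake2024frobenius1} are met and the resulting Frobenius monoidal structure is braided.

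The step I expect to require the most care is this last compatibility. The difficulty is that $\mathrm{res}$ sees no braiding on its target, since $\Z(\FDF)$ carries none, so braided compatibility of the oplax structure cannot be checked factor by factor along $\Z(F^{\ra})\circ\mathrm{res}$; it has to be extracted from a global description of $\Z(F^{\ra})'$ as arising from the adjunction $F\dashv F^{\ra}$ via the (restricted) center construction, which is exactly what \cite[Cor.~4.11]{flake2024projection} together with \cite[Prop.~4.15]{flake2024frobenius1} provide. A more self-contained variant would be to prove the general fact that a Frobenius monoidal functor between braided monoidal categories whose lax part is braided automatically has braided oplax part --- transporting the braiding compatibility from $G_{2}$ to $G^{2}$ by means of the two Frobenius relations recalled in \S\ref{sec:background} --- which would make the argument independent of the exact form of \cite[Prop.~4.15]{flake2024frobenius1}. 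Once either version of this step is in place, everything else is a routine composition of monoidal structures.
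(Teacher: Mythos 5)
Your core argument is exactly the paper's: $\mathrm{res}$ is strong monoidal and hence Frobenius monoidal, $\Z(F^{\ra})$ is Frobenius monoidal by Theorem~\ref{thm:tFrob-char}, and Frobenius monoidal functors compose, which gives the structure on $\Z(F^{\ra})'$ from \eqref{eq:tilde-Z-F-ra}. Your additional verification that the oplax part is compatible with the braidings goes beyond what the paper checks --- the paper only records, in the paragraph before the statement, that the lax part of $\Z(F^{\ra})'$ is braided via \cite[Cor.~4.11]{flake2024projection} and then simply composes Frobenius monoidal functors --- so that extra care (via \cite[Prop.~4.15]{flake2024frobenius1} or your proposed self-contained lemma) is reasonable but not part of the paper's shorter proof.
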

\begin{proof}
The functor $\mathrm{res}$ is strong monoidal and therefore Frobenius monoidal. Moreover, by Theorem~\ref{thm:tFrob-char}, $\Z(F^\ra)$ is Frobenius monoidal. As Frobenius monoidal functors compose, the claim follows.
\end{proof}

%%%%%%%%%%%%%%%%%%%%%%%%%%%%%%%%%%%%%%%%%%%%%%%%%%%%%%%%%%%%%%%
%%%%%%%%%%%%%%%%%%%%%%%%%%%%%%%%%%%%%%%%%%%%%%%%%%%%%%%%%%%%%%%
%%%%%%%%%%%%%%%%%%%%%%%%%%%%%%%%%%%%%%%%%%%%%%%%%%%%%%%%%%%%%%%

Let $\B$ be a braided finite tensor category and $\D$ a finite tensor category. A \textit{central tensor functor} consists of a tensor functor $F\colon \B\longrightarrow\D$ together with a braided functor $\widetilde{F}\colon \B\longrightarrow\Z(\D)$ and a monoidal natural isomorphism $F\cong U\circ \widetilde{F}$, where $U\colon\Z(\D)\to\D$ is the forgetful functor.

\begin{proposition}\label{prop:central-tensor-Frobenius} 
Let $\B$ be a braided finite tensor category, $\D$ a finite tensor category and $F\colon \B\longrightarrow\D$ a central tensor functor. Then, the following are equivalent:
\begin{enumerate}[{\rm (i)}]
\item $F$ is \Frob\!\!.
\item $F$ is Frobenius.
\item The lax monoidal functor $F^{\ra}$ admits a Frobenius monoidal structure.
\end{enumerate}
\end{proposition}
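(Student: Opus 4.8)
The plan is to prove the cycle of implications (i) $\Rightarrow$ (iii) $\Rightarrow$ (ii) $\Rightarrow$ (i), exploiting the centrality of $F$ to bridge the gap between the (a priori weaker) ordinary Frobenius property and the bimodule \Frob property. The implications (i) $\Rightarrow$ (iii) is immediate from Proposition~\ref{prop:tensor-Frob-ra-FrobMon}, and (iii) $\Rightarrow$ (ii) is exactly Proposition~\ref{prop:Frob-mon-implies-Frobenius}(i); these hold for arbitrary tensor functors and do not need centrality. So the whole content is the implication (ii) $\Rightarrow$ (i): if a \emph{central} tensor functor is Frobenius, then it is \Frob\!\!.

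For (ii) $\Rightarrow$ (i), I would argue as follows. Since $F$ is central, it factors as $F \cong U \circ \widetilde{F}$ with $\widetilde{F}\colon\B\to\Z(\D)$ braided and $U\colon\Z(\D)\to\D$ the forgetful functor. The key point is that a braided lift $\widetilde{F}$ allows one to compare the $\C$-bimodule structures coming from the left and right adjoints directly: the half-braiding data from $\widetilde{F}$ makes $\FDF$ into a $\C$-bimodule category in which the left and right actions are intertwined by a natural isomorphism. Concretely, I would use Theorem~\ref{thm:tFrob-char}: $F$ is \Frob iff the relative modular object $\chi_{\scriptscriptstyle\Z(F)} = (\chi_F,\sigma)$ is trivial in $\Z(\FDF)$. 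By hypothesis $F$ is Frobenius, so by Proposition~\ref{prop:frobenius_uni}(i) we already have $\chi_F \cong \unit_\subD$ as an object of $\D$; what remains is to check that under this isomorphism the $F$-half-braiding $\sigma$ becomes the trivial one, i.e. $(\id_{F(c)}\otimes\lambda)\circ\sigma_c = \lambda\otimes\id_{F(c)}$ for some/the isomorphism $\lambda\colon\chi_F\xsim\unit_\subD$. The centrality of $F$ is what forces this: because $\widetilde F$ lands in the center, the restriction functor $\mathrm{res}\colon\Z(\D)\to\Z(\FDF)$ together with $\widetilde F$ gives a braided lift, and one checks that the relative modular object $\chi_{\scriptscriptstyle\Z(F)}$ of $\Z(F)$ is the image under $\mathrm{res}$ of the relative modular object attached to the (perfect) functor $\widetilde F\colon\B\to\Z(\D)$ composed with an adjunction; the half-braiding $\sigma$ is then the braiding of $\Z(\D)$ evaluated on an object that is isomorphic to the unit, hence trivial once $\chi_F\cong\unit$.

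More carefully, I expect the cleanest route is: the adjunction $F\dashv F^\ra$ and the centrality give rise, via \cite[Cor.~4.11]{flake2024projection} and the functor $\Z(F^\ra)'$ of \eqref{eq:tilde-Z-F-ra}, to enough structure that $\chi_{\scriptscriptstyle\Z(F)}$ can be computed inside $\Z(\D)$ rather than merely in $\Z(\FDF)$; an invertible object of $\Z(\D)$ whose underlying object in $\D$ is $\unit_\subD$ must be the unit object $\unit_{\Z(\D)}$ (since the half-braiding on $\unit_\subD$ is unique), and its image under the strong monoidal $\mathrm{res}$ is the unit of $\Z(\FDF)$, which is condition (iii) of Theorem~\ref{thm:tFrob-char}. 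Hence $F$ is \Frob\!\!.

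The main obstacle I anticipate is making the identification ``$\chi_{\scriptscriptstyle\Z(F)}$ lives in the image of $\mathrm{res}$'' precise --- that is, verifying that for a central tensor functor the $F$-half-braiding $\sigma$ on $\chi_F$ is the restriction of an honest half-braiding on $\chi_F$ with respect to \emph{all} of $\D$, not just objects of the form $F(c)$. This should follow from tracking the central structure $\widetilde F$ through Shimizu's construction of $\sigma$ in Proposition~\ref{prop:Z(F)-rmo} (equivalently, through the realization \eqref{eq:equi_bimod_centralizer} of $\Z(\FDF)$ as bimodule functors $\C\to\FDF$, where the bimodule structure of $F^\rra$ is built from the monoidal structure of $F$, which now factors through the center). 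Once that bookkeeping is done, the triviality of $\sigma$ is automatic from $\chi_F\cong\unit_\subD$ and uniqueness of the half-braiding on the unit, and all three conditions collapse.
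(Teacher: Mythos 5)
Your implications (i)$\Rightarrow$(iii) and (iii)$\Rightarrow$(ii) are exactly the paper's (Propositions~\ref{prop:tensor-Frob-ra-FrobMon} and \ref{prop:Frob-mon-implies-Frobenius}(i)), and you correctly locate the whole content in (ii)$\Rightarrow$(i), reducing via Theorem~\ref{thm:tFrob-char}(iii) and Proposition~\ref{prop:frobenius_uni}(i) to showing that the $F$-half-braiding $\sigma$ on $\chi_{\subF}\cong\unit_\subD$ is trivial. But your mechanism for that last step has a genuine gap. The assertion that an invertible object of $\Z(\D)$ whose underlying object is $\unit_\subD$ must be $\unit_{\Z(\D)}$ ``since the half-braiding on $\unit_\subD$ is unique'' is false: half-braidings on $\unit_\subD$ are exactly monoidal natural automorphisms of $\id_{\,\subD}$, and the group $\Aut_{\otimes}(\id_{\,\subD})$ is nontrivial in general (for $\D=\Rep(H)$ it is given by central grouplike elements of $H$; for $\VecG$ with $G$ abelian by characters of $G$). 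So even granting that $\chi_{\scriptscriptstyle{\Z(F)}}$ lies in the image of $\mathrm{res}\colon\Z(\D)\to\Z(\FDF)$, triviality of $\sigma$ would not follow; and that lifting claim is itself the nontrivial point, which you leave as unverified ``bookkeeping''.

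The paper closes the gap differently, and the difference is exactly what makes the argument work: it invokes \cite[Thm.~5.6]{shimizu2023ribbon}, which says that for a central tensor functor the $F$-half-braiding $\sigma_b\colon\chi_{\subF}\otimes F(b)\xsim F(b)\otimes\chi_{\subF}$ coincides with the component at $\chi_{\subF}$ of the half-braiding that $F(b)=U\widetilde{F}(b)$ carries from the central structure. Triviality then comes not from any uniqueness of half-braidings on the unit, but from the fact that the half-braiding of \emph{any} object of $\Z(\D)$ evaluated at the unit object is the identity; since $F$ Frobenius gives $\chi_{\subF}\cong\unit_\subD$, naturality forces $\sigma_b=\id_{F(b)}$ after transport along that isomorphism, and Theorem~\ref{thm:tFrob-char}(iii) concludes. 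In other words, the half-braiding to exploit is the one carried by $F(b)$, not a hypothetical extension to all of $\D$ of the one on $\chi_{\subF}$; your closing sentence gestures at this (``the braiding evaluated on an object isomorphic to the unit''), but without Shimizu's identification, or an equivalent explicit computation of $\sigma$ as in Proposition~\ref{prop:rel-mod-obj}, the proposal does not go through as written.
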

\begin{proof}
That $F$ is \Frob automatically implies that $F$ is Frobenius. To prove the converse, we make use of \cite[Thm.\ 5.6]{shimizu2023ribbon} that states that for $b\iN \B$, the associated component of the $F$-half-braiding
$\sigma_b\colon \chi_{\subF}\otimes F(b)\xsim  F(b)\otimes\chi_{\subF} $
agrees with the half-braiding of $\widetilde{F}(b)$ coming from the central structure of $F$. According to Proposition \ref{prop:frobenius_uni}, we have that $\chi_{\subF}\cong\unit$. Since the component of the half-braiding of $\widetilde{F}(b)$ associated to $\unit$ is the identity, it follows that $\sigma_b=\id_{F(b)}$. We conclude, by means of Theorem~\ref{thm:tFrob-char}, that $F$ is \Frob\!\!. This establishes (i)$\iff$(ii). Proposition~\ref{prop:tensor-Frob-ra-FrobMon} states that (i)$\implies$(iii) and Proposition~\ref{prop:Frob-mon-implies-Frobenius}(i) shows (iii)$\implies$ (ii).
\end{proof}

\begin{example}
For a finite tensor category $\C$, consider the central functor $U\colon\Z(\C)\longrightarrow\C$. Then:
\[
\C \; \text{is unimodular} \stackrel{\textnormal{\cite{shimizu2016unimodular}}}{\Longleftrightarrow} 
\;  U\;\text{is Frobenius} \; 
\stackrel{\textnormal{Prop.~\ref{prop:central-tensor-Frobenius}}}{\Longleftrightarrow} 
\; U\; \text{is \Frob}\!\!. 
\]
This generalizes an observation from the Hopf algebra case in \cite[Corollary~5.1]{flake2024frobenius2}.
\end{example}

\begin{remark}
Proposition~\ref{prop:central-tensor-Frobenius} implies that a braided tensor functor that is Frobenius is also \Frob\!. This has also been observed in \cite{flake2024frobenius1}.
\end{remark}

A semisimple finite tensor category is called a {\it fusion category}. 
Moreover, a fusion category is called {\it separable} if its global dimension is nonzero. If char$(\kk)=0$, then all fusion categories are separable according to \cite[Thm.~2.3]{etingof2005fusion}.
This definition of separability is equivalent \cite[Thm.~2.6.7]{douglas2018dualizable} to the alternate notion given in \cite{douglas2018dualizable} in terms of algebras, which can also be extended to (bi)module categories.

\begin{theorem}\label{thm:separable-implies-tensor-Frob}
If $\C$ and $\D$ are separable fusion categories and $F\colon\C\longrightarrow\D$ is a tensor functor, then $F$ is \Frob\!\!.
\end{theorem}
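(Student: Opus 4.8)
The goal is to show that any tensor functor $F\colon\C\longrightarrow\D$ between separable fusion categories is \Frob. The plan is to reduce to the characterization in Theorem~\ref{thm:tFrob-char}: since $\C$ and $\D$ are fusion, all objects are projective, so $F$ is automatically perfect, and it suffices to verify any one of the equivalent conditions (i)--(vii). The most accessible one is condition (v): the finite tensor category $\Z(\FDF)$ is unimodular. So the first step is to observe that $\Z(\FDF)$ is itself a fusion category; indeed, it is a finite tensor category by the lemma preceding Proposition~\ref{prop:Z(F)-rmo}, and semisimplicity follows because $\FDF$ is a semisimple $\C$-bimodule category (as $\D$ and $\C$ are semisimple) and the center of a semisimple bimodule category over a semisimple tensor category is semisimple. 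Hence $\Z(\FDF)$ is a fusion category.

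\textbf{Unimodularity via separability.} The second and main step is to show this fusion category is unimodular. Here I would invoke the fact that every \emph{separable} fusion category is unimodular: separability forces nonzero global dimension, and by \cite[Prop.~4.8.4]{etingof2016tensor} (or the standard argument that the distinguished invertible object of a fusion category with nonzero global dimension is trivial) we get $D\cong\unit$. So the crux is to check that $\Z(\FDF)$ is \emph{separable}, i.e.\ has nonzero global dimension. Since $F$ is a tensor functor between separable fusion categories, $\FDF$ is a separable $\C$-bimodule category (separability of module categories over separable fusion categories is automatic; this can be extracted from \cite{douglas2018dualizable}, where separability of the ambient category propagates to module categories and to relative centers). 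Then one uses that the relative center $\Z(\FDF)$ of a separable bimodule category over a separable fusion category is again separable — this is precisely the kind of closure property established in \cite{douglas2018dualizable} for the fully dualizable setting, and can alternatively be seen from the fact that $\Z(\FDF)$ is Morita-equivalent-type data built from separable ingredients so its global dimension is a nonzero multiple/quotient of $\dim(\C)$ and $\dim(\D)$.

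\textbf{Alternative route.} An alternative, perhaps cleaner, argument avoids the center entirely and uses Proposition~\ref{prop:frobenius_uni}: by part (iii) there, a perfect tensor functor out of a semisimple category is Frobenius; and by Proposition~\ref{prop:central-tensor-Frobenius}'s method or by a direct computation $\chi_\subF\cong\unit_\subD$. But being Frobenius is weaker than being \Frob, so one still needs the bimodule refinement — this is exactly why passing to $\Z(\FDF)$ and checking \emph{its} unimodularity (condition (v)) is the right move, since that condition repackages the bimodule-naturality data. One could also argue via condition (iii): show directly that the $F$-half-braiding $\sigma$ on $\chi_\subF\cong\unit_\subD$ is trivial, using that in the semisimple setting $\chi_\subF\cong F(D_\subC)\otimes D_\subD^{-1}\cong\unit$ with its canonical half-braiding, which is the trivial one because $D_\subC$ and $D_\subD$ carry trivial half-braidings as units. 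This is essentially the same content as showing $\Z(\FDF)$ unimodular.

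\textbf{Main obstacle.} The hard part will be the closure of separability under forming the relative center $\Z(\FDF)$ from a separable bimodule category — establishing that $\dim\Z(\FDF)\neq 0$ — rather than any of the formal reductions. Once separability of $\Z(\FDF)$ is in hand, unimodularity is automatic and Theorem~\ref{thm:tFrob-char}(v)$\Rightarrow$(i) finishes the proof. (When $\mathrm{char}(\kk)=0$ this obstacle evaporates, since then all fusion categories are separable by \cite[Thm.~2.3]{etingof2005fusion}, so the statement is immediate; the content is genuinely in positive characteristic.)
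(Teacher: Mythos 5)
Your overall strategy is the same as the paper's: reduce via Theorem~\ref{thm:tFrob-char} to the unimodularity of $\Z(\FDF)$, obtain it from semisimplicity, and get semisimplicity from the separability technology of \cite{douglas2018dualizable}. However, as written there is a genuine gap exactly at the step you yourself call ``the main obstacle''. First, your opening claim that semisimplicity of $\Z(\FDF)$ already follows from semisimplicity of $\C$ and $\D$ (``the center of a semisimple bimodule category over a semisimple tensor category is semisimple'') is false in positive characteristic: this is precisely what fails for non-separable fusion categories (already the Drinfeld center of a fusion category of vanishing global dimension is not semisimple), and it is the reason the theorem assumes separability rather than mere semisimplicity. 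Second, the decisive assertion that $\Z(\FDF)$ is separable/semisimple is never proved; you appeal to ``the kind of closure property established in \cite{douglas2018dualizable}'', but that reference does not state a closure property for relative centers directly. What makes its results applicable is the identification
\[
\Z(\FDF)\;\simeq\;\Fun_{\C|\C}(\C,\FDF)\;\simeq\;\overline{\C}\,\boxtimes_{\,\C\boxtimes\overline{\C}}\,\FDF ,
\]
which is absent from your argument. With it, the paper concludes as follows: $\C$ and $\FDF$ are semisimple module categories over the separable fusion category $\C\boxtimes\overline{\C}$, hence separable bimodules \cite[Prop.~2.5.10]{douglas2018dualizable}; the relative Deligne product of separable bimodules is separable \cite[Thm.~2.5.5]{douglas2018dualizable}; and separable module categories are semisimple \cite[Prop.~2.5.3]{douglas2018dualizable}. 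Since semisimple finite tensor categories are unimodular, Theorem~\ref{thm:tFrob-char} finishes the proof. So the missing idea is the realization of $\Z(\FDF)$ as a relative Deligne product (equivalently, as a bimodule functor category), which converts your heuristic ``built from separable ingredients'' into an actual argument; note also that once semisimplicity is in hand, unimodularity is automatic, so there is no need for your extra detour through the global dimension of $\Z(\FDF)$.

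A side remark: your ``alternative route'' via condition (iii) does not work as sketched. An isomorphism $\chi_\subF\cong\unit_\subD$ of objects carries no information about the $F$-half-braiding $\sigma$; the assertion that the half-braiding is trivial ``because $D_\subC$ and $D_\subD$ carry trivial half-braidings as units'' assumes exactly what the \Frob condition is designed to detect (compare Example~\ref{ex:uqsl2-Hopf-example}, where $\chi_\subF$ is trivial but the functor is not \Frob\!\!), so this shortcut is circular.
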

\begin{proof}
We have the following equivalence of categories:
\[ \Z(\FDF) \simeq \Fun_{\C | \C}(\C,\FDF) \simeq \Fun_{\C \boxtimes\overline{\C}}(\C,\FDF) \simeq \overline{\C} \mathop{\boxtimes}\limits_{\scriptscriptstyle {\C \boxtimes\overline{\C}}}^{} \FDF .\]
As the categories $\C$ and $\FDF$ are semisimple, they are separable $\C$-bimodule categories because of \cite[Prop.~2.5.10]{douglas2018dualizable}.
Moreover, the relative Deligne product of separable bimodules is separable \cite[Thm.~2.5.5]{douglas2018dualizable}. Since, by \cite[Prop.~2.5.3]{douglas2018dualizable}, separable module categories are semisimple, we obtain that $\Z(\FDF)$ is semisimple and, in particular, unimodular. Thus, the claim follows from Theorem~\ref{thm:tFrob-char}.
\end{proof}

\begin{remark}
Let $F\colon\C\!\longrightarrow\!\D$ be a strong monoidal $\kk$-linear functor between separable fusion categories. Assuming that $F$ admits adjoints, then it is exact and thus a tensor functor. By Theorem \ref{thm:separable-implies-tensor-Frob}, we have that $F$ is \Frob\!\!.
We can conclude from Corollary~\ref{cor:tensor-Frob-Z'-Frob-mon} that the functor $\Z(F^\ra)'$ from \eqref{eq:tilde-Z-F-ra} is Frobenius monoidal, thereby positively answering \cite[Questions~5.22 \& 5.23]{flake2024frobenius2} with $\kk$ being algebraically closed of arbitrary characteristic.
\end{remark}

%%%%%%%%%%%%%%%%%%%%%%%%%%%%%%%%%%%%%%%%%%%%%%%%%%%%%%%%%%%%%%%
%%%%%%%%%%%%%%%%%%%%%%%%%%%%%%%%%%%%%%%%%%%%%%%%%%%%%%%%%%%%%%%
%%%%%%%%%%%%%%%%%%%%%%%%%%%%%%%%%%%%%%%%%%%%%%%%%%%%%%%%%%%%%%%
%%%%%%%%%%%%%%%%%%%%%%%%%%%%%%%%%%%%%%%%%%%%%%%%%%%%%%%%%%%%%%%
%%%%%%%%%%%%%%%%%%%%%%%%%%%%%%%%%%%%%%%%%%%%%%%%%%%%%%%%%%%%%%%
%%%%%%%%%%%%%%%%%%%%%%%%%%%%%%%%%%%%%%%%%%%%%%%%%%%%%%%%%%%%%%%

\section{Twisted module categories}\label{sec:F-twisted}
Let $F\colon\C\!\longrightarrow \!\D$ be a tensor functor between finite tensor categories. Given a left $\D$-module category $\M$, we obtain a left $\C$-module category $\FM$ by pulling the $\D$-action back along $F$, i.e.  $\FM$ consists of the same underlying category as $\M$, but with the action defined by 
\begin{equation*}
    c\trF m := F(c)\act m 
\end{equation*}
for every $c\in\C$ and $m\in\M$. We refer to the so obtained $\C$-module category $\FM$ as an \textit{$F$-twisted module category}.
Given a $\D$-module functor $H\colon\M\to\N$, denote by 
\begin{equation}\label{eq:F*_module_functor}
    F^* (H)\Colon \FM \longrightarrow \FN
\end{equation}
the $\C$-module functor given by $H$ as a functor and with module structure 
\begin{equation*}
\varphi^*_{c,m}:=\varphi_{F(c),m}\Colon H(F(c)\tr m)\xsim F(c)\tr H(m).
\end{equation*} 
A module functor involving twisted module categories is also called a \textit{twisted module functor}.
\begin{example}
Consider the double-dual tensor functor $\dD\coloneqq(-)\dd:\D\longrightarrow\D$. The relative Serre functor of a $\D$-module category $\M$
\begin{equation}
    \dS_\subM^\supD\Colon \M\longrightarrow {}_{\scriptscriptstyle\dD}\M
\end{equation}
becomes a twisted $\D$-module functor with constraints given by \eqref{eq:Serre_twisted1}.
\end{example}

A monoidal natural transformation $\alpha\colon F\Longrightarrow G$ between tensor functors from $\C$ to $\D$ determines a $\C$-module equivalence $\id^{\alpha}\colon \FM \xsim \GM$. This is defined by the assignment $\id^\alpha(m)=m$ together with the $\C$-module structure
\[ \alpha_c\tr \id_m=(\id^\alpha)_{c,m}   \Colon \id^{\alpha}(c\trF m)=F(c)\tr m  \longrightarrow G(c)\tr m = c\trG \id^\alpha(m) \,,\]
for $c\in\C$ and $m\in\M$. Notice that since $\C$ is rigid, every such $\alpha$ is necessarily invertible.

\begin{example}
Given a tensor functor $F\colon\C\rightarrow\D$ and a $\D$-module category $\M$, we can use the monoidal natural isomorphism $\xi_c\colon F(c\rv\rv) \longrightarrow F(c)\rv\rv$ \cite[Lem.~1.1]{ng2007higher} to get an equivalence between the $\C$-module categories ${}_\subF({}_\subdD\M)$ and ${}_\subdD({}_\subF \M)$. In the following, we will often use this identification.
\end{example}
The $\C$-module category $\FD$ captures the procedure of twisting actions along $F$. For any $\D$-module category $\M$ the assignment
\begin{equation}\label{eq:FDM=FM}
\FD\btD \M \xsim\FM,\quad d\boxtimes m\longmapsto d\act m
\end{equation}
is an equivalence of $\C$-module categories.

The internal Hom of an $F$-twisted module category $\FM$ can be described in terms of the internal Hom of the original module category $\M$ by the formula 
\begin{equation}\label{eq:twisted_inner_hom}
    \uHom_{\,\subFM}^{\supC}(m,n)\cong F^{\ra}(\,\uHom_{\,\subM}^{\supD}(m,n)\,)\,.
\end{equation}
This description will be useful to investigate the exactness of twisted module categories.
\begin{remark}
If $\FM$ is an indecomposable $\C$-module category, then $\M$ is indecomposable as a $\D$-module category. Indeed, assume that we have a non-trivial $\D$-module category decomposition,
\begin{equation*}
\M\simeq\bigoplus_i\M_i
\end{equation*}
then by twisting the components of $\M$, we obtain a non-trivial decomposition
\begin{equation*}
\FM\simeq\bigoplus_i\FM_i
\end{equation*}
as $\C$-module categories. The converse does not hold. Consider a non-trivial finite group $G$ and the unique tensor functor $\iota\colon\Vect\longrightarrow\Vect_G$. The regular module category $\Vect_G$ is an indecomposable $\Vect_G$-module category, but ${}_\iota\Vect_G=\Vect_G$ is decomposable as a $\kk$-linear category.  
\end{remark}

%%%%%%%%%%%%%%%%%%%%%%%%%%%%%%%%%%%%%%%%%%%%%%%%%%%%%%%%%%%%%%%
%%%%%%%%%%%%%%%%%%%%%%%%%%%%%%%%%%%%%%%%%%%%%%%%%%%%%%%%%%%%%%%
%%%%%%%%%%%%%%%%%%%%%%%%%%%%%%%%%%%%%%%%%%%%%%%%%%%%%%%%%%%%%%%

\subsection{Exactness of module categories}
Tensor categories are regarded as the categorification of rings and exact module categories may be seen as categorical analogues of projective modules. A classical result concerning the preservation of projectivity under the pullback of ring actions via homomorphisms states: let $f:A\rightarrow B$ be a ring homomorphism. If $B$ is projective as an $A$-module then, a $B$-module $M$ is projective if and only if $M$ is projective as an $A$-module. This result finds an analog in the categorical setting of twisting actions along tensor functors $F\colon\C\!\longrightarrow \!\D$. 
\begin{proposition}\label{prop:perfect-iff}
Let $F\colon\C\!\longrightarrow \!\D$ be a perfect tensor functor. A $\D$-module category $\M$ is exact if and only if the $\C$-module category $\FM$ is exact.
\end{proposition}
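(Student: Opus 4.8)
The plan is to use the characterization of exactness of a module category in terms of the exactness of its internal Hom functor, combined with the formula \eqref{eq:twisted_inner_hom} expressing the internal Hom of $\FM$ via the internal Hom of $\M$ and the right adjoint $F^\ra$. Recall that a $\C$-module category is exact if and only if its internal Hom functor is exact \cite[Cor.\ 7.9.6 \& Prop.\ 7.9.7]{etingof2016tensor}.

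First I would treat the ``if'' direction, which should actually go through without the perfectness hypothesis on $F$ but rather with the weaker fact that $F$ has an exact right adjoint. Wait --- that is exactly perfectness. So: since $F$ is perfect, $F^\ra$ is exact by definition. Now suppose $\M$ is exact as a $\D$-module category; then $\uHom_{\,\subM}^{\supD}(-,-)$ is exact, and since $F^\ra$ is exact, the composite
\[ \uHom_{\,\subFM}^{\supC}(m,n)\cong F^\ra\big(\uHom_{\,\subM}^{\supD}(m,n)\big) \]
from \eqref{eq:twisted_inner_hom} is exact in each variable. Hence $\FM$ is exact.

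For the converse, suppose $\FM$ is exact, so that $F^\ra\circ\uHom_{\,\subM}^{\supD}(-,-)$ is exact. The task is to deduce that $\uHom_{\,\subM}^{\supD}(-,-)$ itself is exact. The key input is that $F^\ra$ \emph{reflects} exactness of sequences of the relevant form. Since $F$ is perfect, by Lemma~\ref{lem:perfect_functor} it admits a double-right adjoint, so $F^\ra$ has a left adjoint, namely $F$, \emph{and} a right adjoint $F^\rra$. Thus $F^\ra$ is itself exact and, being a faithful functor (it is a right adjoint whose left adjoint $F$ is surjective on... no). Here I would instead argue as follows: the internal Hom $\uHom_{\,\subM}^{\supD}(-,n)$ is the right adjoint of the action functor $-\act n\colon\D\to\M$, hence is automatically left exact; similarly it is right exact in the first variable by general nonsense about exact module categories --- no, that is what we are trying to prove. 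The cleanest route: use that $F$ is surjective onto $\D$ up to subquotients is false in general, so instead use faithfulness. Actually $F^\ra$ is faithful iff $F$ is dominant, which need not hold. The robust argument is: $F^\ra$, having both adjoints, is exact; moreover $\uHom_{\,\subM}^{\supD}(m,-)$ is always left exact, so it suffices to show right exactness. Given an epimorphism $n\twoheadrightarrow n'$ in $\M$, we get $F^\ra\uHom_{\,\subM}^{\supD}(m,n)\twoheadrightarrow F^\ra\uHom_{\,\subM}^{\supD}(m,n')$; applying the \emph{left adjoint} $F$ and using the counit $FF^\ra\Rightarrow\id$ together with the fact that $\uHom_{\,\subM}^{\supD}(m,n)$ lies in the image of... this is getting delicate. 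The honest main obstacle is precisely this reflection step. I expect the intended argument is to invoke that $F$ is surjective when it matters, or more likely to use the equivalence $\FD\btD\M\simeq\FM$ of \eqref{eq:FDM=FM} together with Lemma~\ref{lem:exact_deligne_product}(ii): if $\FM$ is exact, one shows $\M$ is exact by exhibiting $\M$ as built from $\FD$ and $\FM$ via a relative Deligne product in the other direction, or by noting $\FD$ is an invertible $\C$-$\D$-bimodule precisely when... Actually the slick approach: $F$ perfect means $\FD$ is an exact $(\C,\D)$-bimodule category, and then $-\btD$-ing against $\overline{\FD}$ over $\C$ recovers $\M$ from $\FM$ up to the equivalences \eqref{eq:M_invertible}, so exactness transfers back by Lemma~\ref{lem:exact_deligne_product}(ii). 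I would carry out the ``if'' direction via \eqref{eq:twisted_inner_hom} as above, and the ``only if'' direction by this Deligne-product symmetry argument, flagging the verification that $\FD$ is an exact bimodule (equivalently: has the right invertibility/exactness to run the argument in reverse) as the step requiring the perfectness of $F$ in an essential way.
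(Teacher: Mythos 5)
Your ``only if'' direction (i.e.\ $\M$ exact $\Rightarrow$ $\FM$ exact) is correct, and it takes a different route from the paper: you combine the internal-Hom formula \eqref{eq:twisted_inner_hom} with exactness of $F^\ra$ (which is precisely perfectness) and the criterion that a module category is exact iff its internal Hom is exact. The paper argues more directly with projectives: $F$ perfect means $F(p)$ is projective for projective $p\in\C$, hence $p\trF m=F(p)\tr m$ is projective for all $m$, so $\FM$ is exact. Both are fine; yours uses the exactness of $F^\ra$, the paper's uses the equivalent preservation of projectives from Lemma~\ref{lem:perfect_functor}.

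The converse, however, has a genuine gap, and you correctly sensed it: neither of your two attempts works as stated. The reflection argument needs $F^\ra$ faithful, i.e.\ $F$ dominant, which is not assumed. The ``slick'' Deligne-product argument rests on the claim that tensoring $\FM\simeq\FD\btD\M$ with $\overline{\FD}$ over $\C$ recovers $\M$; but \eqref{eq:M_invertible} applied to the exact left $\C$-module $\FD$ gives $\overline{\FD}\boxtimes_\C\FD\simeq \C^*_{\FD}=\Fun_\C(\FD,\FD)$, \emph{not} $\D$, so $\overline{\FD}\boxtimes_\C\FM\simeq \C^*_{\FD}\btD\M$ is in general far from $\M$ (take $F\colon\Vect\to\D$ with $\D$ a fusion category: $\Fun_\Vect(\D,\D)$ is much larger than $\D$). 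Even granting exactness of this product via Lemma~\ref{lem:exact_deligne_product}(ii), you would obtain exactness over $\C^*_{\FD}$, not over $\D$, and bridging that gap is not automatic. The paper's actual converse is much more elementary: pick a nonzero projective $p\in\C$; since tensor functors are faithful, $F(p)\neq 0$ \cite[Lemma~5.1]{shimizu2016unimodular}; exactness of $\FM$ gives that $F(p)\tr m$ is projective for every $m\in\M$; and then the criterion of \cite[Lemma~2.4]{etingof2017exact} (a single nonzero object acting projectively on all of $\M$ forces exactness) -- the same lemma already used in the proof of Lemma~\ref{lem:perfect_functor} -- yields that $\M$ is exact as a $\D$-module. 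This is the missing ingredient in your proposal.
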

\begin{proof}
Since, $F$ is perfect, $F(p)\in\D$ is projective for any projective $p\in \C$. If $\M$ is exact as $\D$-module, then for every $m\in\M$, we have that $p\trF m=F(p)\tr m$ is projective in $\M$, and thus $\FM$ is an exact $\C$-module. For the converse, pick a non-zero projective object $p\in \C$. Since $F$ is faithful, it reflects zero objects and thus $F(p)$ is non-zero \cite[Lemma~5.1]{shimizu2016unimodular}. Assuming that $\FM$ is an exact $\C$-module, $F(p)\tr m$ is projective for all $m\in \M$. The claim follows by \cite[Lemma~2.4]{etingof2017exact}.
\end{proof}

This means that twisting by perfect functors preserves exactness of module categories.

\begin{corollary}\label{cor:Fra_algebra}
Let $F\colon\C\!\longrightarrow \!\D$ be a tensor functor between finite tensor categories. If $F$ is perfect, the lax monoidal functor $F^\ra\colon\D\!\longrightarrow \!\C$ preserves exactness of algebras.
\end{corollary}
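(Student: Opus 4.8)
The plan is to translate the statement about algebras into a statement about module categories and then invoke Proposition~\ref{prop:FMexact}. Recall that an algebra $B \in \D$ is exact precisely when the left $\D$-module category $\moD_B(\D)$ of right $B$-modules in $\D$ is exact. Given an algebra $B \in \D$, its image $F^\ra(B) \in \C$ carries a canonical algebra structure coming from the lax monoidal structure of $F^\ra$ (multiplication $F^\ra(B) \otimes F^\ra(B) \xrightarrow{F^\ra_2} F^\ra(B \otimes B) \xrightarrow{F^\ra(m_B)} F^\ra(B)$, unit $\unit_\C \xrightarrow{F^\ra_0} F^\ra(\unit_\D) \xrightarrow{F^\ra(u_B)} F^\ra(B)$). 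So the first step is to identify the $\C$-module category $\moD_{F^\ra(B)}(\C)$ of right $F^\ra(B)$-modules in $\C$. Using the monoidal adjunction $F \dashv F^\ra$, right $F^\ra(B)$-modules in $\C$ should correspond to right $B$-modules in $\FD$, i.e.\ one expects an equivalence of $\C$-module categories
\[
\moD_{F^\ra(B)}(\C) \;\simeq\; {}_\subF\bigl(\moD_B(\D)\bigr).
\]
Indeed, $\D \simeq \mathrm{mod}_A(\C)$ with $A = F^\ra(\unit_\D)$ (via \cite[Prop.~6.1]{bruguieres2011exact}), and $F^\ra(B)$ is an $A$-algebra whose category of modules over $\C$ matches the category of $B$-modules over $\D$ after pulling back along $F$; alternatively one checks this directly using the lax structure of $F^\ra$ and the Hopf operators \eqref{eq:coHopf-operators}.

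With that equivalence in hand, the corollary is immediate in one direction: suppose $F$ is perfect and $B$ is an exact algebra in $\D$, so $\moD_B(\D)$ is an exact $\D$-module category. By Proposition~\ref{prop:FMexact} (equivalently Proposition~\ref{prop:perfect-iff}), ${}_\subF(\moD_B(\D))$ is an exact $\C$-module category, hence by the equivalence above $\moD_{F^\ra(B)}(\C)$ is exact, i.e.\ $F^\ra(B)$ is an exact algebra. For the converse, suppose $F^\ra$ preserves exactness of algebras. We must produce one exact algebra $B \in \D$ for which $F^\ra(B)$ being exact forces $F$ to be perfect. Take $B = \unit_\D$, which is always exact (the unit object is an exact algebra in any finite tensor category since $\moD_{\unit_\D}(\D) \simeq \D$ is exact over itself). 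Then $F^\ra(\unit_\D)$ is exact, meaning $\moD_{F^\ra(\unit_\D)}(\C) \simeq {}_\subF\D = \FD$ is an exact $\C$-module category. But as observed in the proof of Proposition~\ref{prop:FMexact}, exactness of $\FD$ implies $F^\ra \cong \uHom^\supC_{\subFM}(\unit,-)$ is exact (using \eqref{eq:twisted_inner_hom}), hence $F$ admits an exact right adjoint and is therefore perfect.

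The main obstacle is establishing the $\C$-module equivalence $\moD_{F^\ra(B)}(\C) \simeq {}_\subF(\moD_B(\D))$ cleanly, including that the module-category structures (not merely the underlying categories) correspond. The cleanest route is probably to invoke the identification $\D \simeq \mathrm{mod}_{F^\ra(\unit_\D)}(\C)$ from \cite{bruguieres2011exact} under which $F^\ra$ becomes the forgetful (restriction-of-scalars) functor, reducing the claim to the classical ring-theoretic fact that for an $A$-algebra $R$, right $R$-modules are the same whether one views $R$ as an algebra in $\mathrm{mod}_A(\C)$ or, after restricting scalars, as an algebra in $\C$; one then checks the left $\C$-actions match. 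Once this bookkeeping is done, everything else is a direct appeal to the already-established propositions. Alternatively, and perhaps more economically, one can avoid the general equivalence and argue the converse direction using only $B = \unit_\D$ as above, and the forward direction by noting that for perfect $F$ the functor $F^\ra$ is exact, so it sends the exact functor $- \act m$ (internal-Hom adjoint data) appropriately; but the module-category reformulation is the most transparent and is the step I would write out in full.
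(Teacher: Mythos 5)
Your skeleton is the same as the paper's: the paper proves the corollary by identifying ${}_\subF(\moD_A(\D))$ with $\moD_{F^\ra(A)}(\C)$ for an exact algebra $A\in\D$ and citing Proposition~\ref{prop:FMexact}. The differences lie in how you justify that identification and in your decision to run the converse with $B=\unit_\subD$ only, and both points hide genuine gaps.

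First, the route you call cleanest --- invoking $\D\simeq\moD_{F^\ra(\unit_\subD)}(\C)$ from \cite[Prop.~6.1]{bruguieres2011exact} --- is unavailable here: that identification requires $F^\ra$ to be faithful and exact, i.e.\ $F$ dominant and perfect, and perfectness is exactly what is in question. In fact the asserted equivalence $\moD_{F^\ra(B)}(\C)\simeq{}_\subF(\moD_B(\D))$ fails for non-dominant $F$: for the unit inclusion $F\colon\Vect\to\D$ into a fusion category $\D\neq\Vect$ and $B=\unit_\subD$, the right-hand side is $\D$ viewed as a $\Vect$-module while the left-hand side is $\Vect$. For the ``if'' direction this is repairable: by \eqref{eq:twisted_inner_hom} one has $F^\ra(B)\cong\uHom^{\supC}_{\,\subFM}(B,B)$ for the free module $B\in\M=\moD_B(\D)$, and once $\FM$ is exact (Proposition~\ref{prop:FMexact}) the internal End of any of its objects is an exact algebra ($\moD_{F^\ra(B)}(\C)$ being the $\C$-module subcategory of $\FM$ generated by $B$), so $F^\ra(B)$ is exact.

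Second, the converse cannot be obtained from $B=\unit_\subD$ alone; the step ``$F^\ra(\unit_\subD)$ exact $\Rightarrow$ $\FD$ exact'' rests precisely on the failed equivalence, and the implication is false. Take $F\colon\Vect\to\D$ the unit inclusion into a non-semisimple finite tensor category $\D$ (not perfect, as the paper notes after Lemma~\ref{lem:perfect_functor}): then $F^\ra(\unit_\subD)=\kk$ is an exact algebra in $\Vect$, while $\FD$ is not an exact $\Vect$-module. So exactness of $F^\ra(\unit_\subD)$ can never detect perfectness, and the hypothesis that $F^\ra$ preserves exactness of \emph{all} exact algebras must be exploited for a better-chosen algebra; in this example already $B=\uHom^{\supD}_{\,\subD}(P,P)$ for a projective generator $P$ of $\D$ does the job, since $F^\ra(B)\cong\End_{\D}(P)$ is not semisimple and hence not exact in $\Vect$. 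This is the role of the arbitrary exact algebra $A$ in the paper's proof, and a complete write-up of the converse has to make such a choice (or otherwise use the full strength of the hypothesis) rather than specialize to the unit.
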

\begin{proof}
Given an exact algebra $A\in\D$, consider the module category $\M\!=\!\moD_A(\D)$. Since, $\FM\simeq{\moD}_{F^\ra(A)}(\C)$, Proposition \ref{prop:perfect-iff} implies that $F^\ra(A)$ is exact if $F$ is perfect.
\end{proof}

\begin{remark}
The corestriction of a tensor functor $F$ to its image is surjective and therefore perfect. Thus, by Corollary~\ref{cor:Fra_algebra}, $F^{\ra}(A)$ is exact in $\C$ for any exact algebra $A\in\mathrm{im}(F)$. For instance, since $\unit_{\subD}\in \mathrm{im}(F)$ is an exact algebra, we have that $F^{\ra}(\unit_{\subD})$ is an exact algebra in $\C$.
\end{remark}

\begin{proposition}\label{prop:FMexact}
Let $F\colon\C\!\longrightarrow \!\D$ be a tensor functor. The $\C$-module category $\FD$ is exact if and only if the tensor functor $F$ is perfect. 
\end{proposition}
\begin{proof}
If $F$ is perfect, it preserves projective objects according to Lemma \ref{lem:perfect_functor}, and then $\FD$ is exact. On the other hand, exactness of $\FD$ implies that $F^\ra\cong\uHom_{{}_F\!\D}(\unit,-)$ is exact, and therefore $F$ admits a double right adjoint, which means that $F$ is perfect.
\end{proof}

\begin{remark}\label{rem:fullyexact}
Recent results on the interaction between the relative Deligne product and exactness of module categories appear in \cite{gainutdinov2026fullyexact}. The authors define the classes of \textit{fully exact} and \textit{perfect} module categories. It would be interesting to investigate the preservation of these properties under twisting along \Frob functors and perfect functors. We thank the referee for this remark.
\end{remark}

For a finite tensor category $\C$, let $\mathbf{Mod}^{\rm{ex}}(\C)$ denote the $2$-category with exact left $\C$-module categories as objects, $\C$-module functors as $1$-morphisms and module natural transformations as $2$-morphisms.

\begin{corollary}
A perfect tensor functor $F\colon\C\!\longrightarrow \!\D$ between finite tensor categories, induces a pseudo-functor
\be \label{eq:ex_pseudo-functor}
\mathbf{F^*}\Colon\mathbf{Mod}^{\rm{ex}}(\D)\longrightarrow\mathbf{Mod}^{\rm{ex}}(\C),\quad \M\longmapsto\FM,\quad H\longmapsto F^*(H)
\end{equation}
between their $2$-categories of exact module categories. The pseudo-functor \eqref{eq:ex_pseudo-functor} is pseudo-naturally equivalent to the pseudo-functor \({}_\subF\D\btD-\colon\mathbf{Mod}^{\rm{ex}}(\D)\longrightarrow\mathbf{Mod}^{\rm{ex}}(\C)\,.\)
\end{corollary}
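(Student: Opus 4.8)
The plan is to verify the two assertions of the corollary in order: first that $\mathbf{F^*}$ is a well-defined pseudo-functor between the indicated $2$-categories, and then that it is pseudo-naturally equivalent to $({}_\subF\D\btD-)$. For the first part, the key point is that $\mathbf{F^*}$ lands in $\mathbf{Mod}^{\rm ex}(\C)$: given an exact $\D$-module category $\M$, Proposition~\ref{prop:FMexact} (or Proposition~\ref{prop:perfect-iff}, using that $F$ is perfect) guarantees that $\FM$ is an exact $\C$-module category, so the object assignment makes sense. On $1$-morphisms, a $\D$-module functor $H\colon\M\to\N$ is sent to $F^*(H)$ as in \eqref{eq:F*_module_functor}, which is a genuine $\C$-module functor by construction; on $2$-morphisms a module natural transformation stays a module natural transformation since the underlying functors are unchanged. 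The compositor and unitor isomorphisms of the pseudo-functor come from the observation that $F^*(H_2\circ H_1)$ and $F^*(H_2)\circ F^*(H_1)$ have the same underlying functor, and their $\C$-module structures agree on the nose once one unwinds \eqref{eq:mod_fun_comp}; similarly $F^*(\id_\M)=\id_{\FM}$. Checking the hexagon/triangle coherence axioms for these data is routine and I would only indicate it.

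For the pseudo-natural equivalence, the heart of the matter is the $\C$-module equivalence \eqref{eq:FDM=FM}, namely ${}_\subF\D\btD\M\xsim\FM$ sending $d\boxtimes m\mapsto d\act m$. I would promote this family of equivalences, indexed by exact $\D$-module categories $\M$, to a pseudo-natural transformation $\Theta\colon({}_\subF\D\btD-)\Rightarrow\mathbf{F^*}$. This requires, for each $\D$-module functor $H\colon\M\to\N$, a module natural isomorphism filling the square comparing $\Theta_\N\circ({}_\subF\D\btD H)$ with $F^*(H)\circ\Theta_\M$; concretely, ${}_\subF\D\btD H$ sends $d\boxtimes m\mapsto d\boxtimes H(m)$, and the two composites send $d\boxtimes m$ to $d\act H(m)$ and $H(d\act m)$ respectively, so the required $2$-cell is precisely the $\D$-module structure isomorphism $\varphi_{d,m}\colon H(d\act m)\xsim d\act H(m)$ of $H$. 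One then checks that this assembles into a pseudo-natural transformation (naturality in $2$-morphisms, and compatibility with composition and identities of $1$-morphisms), all of which follow from the pentagon axiom for $\varphi$ and the universal property of the relative Deligne product. Since each $\Theta_\M$ is an equivalence of categories (in fact of $\C$-module categories), $\Theta$ is a pseudo-natural equivalence.

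The main obstacle is bookkeeping rather than conceptual: the relative Deligne product $\btD$ is only defined up to equivalence via a universal property, so to produce the $2$-cells of $\Theta$ one must consistently choose representatives and track the balancing constraints, and verifying that the resulting square commutes (as module natural transformations, not merely natural transformations) requires care with the $\C$-module structures coming from $F$. In particular one must confirm that the isomorphism \eqref{eq:FDM=FM} is not just a $\kk$-linear equivalence but genuinely $\C$-linear, and that the comparison $2$-cell $\varphi_{d,m}$ respects the $F$-twisted $\C$-actions $c\trF(-) = F(c)\act(-)$; this reduces, via \eqref{eq:F*_module_functor}, to the already-established fact that $\varphi$ is a $\D$-module structure evaluated at objects of the form $F(c)$. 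Once these identifications are in place the coherence conditions are formal consequences of those for $\M$, $\N$, $H$ and the universal property of $\btD$, so I would state the verification as routine and omit the diagram chases.
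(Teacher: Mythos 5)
Your proposal is correct and follows essentially the same route as the paper: exactness of $\FM$ via Proposition~\ref{prop:FMexact}, the routine observation that $F^*$ strictly preserves composition and identities of module functors, and the promotion of the equivalences \eqref{eq:FDM=FM} to the pseudo-natural equivalence. Your identification of the comparison $2$-cells with the $\D$-module structure isomorphisms $\varphi_{d,m}$ of $H$ is exactly the content the paper leaves implicit in ``the equivalences assemble into the desired pseudo-natural equivalence.''
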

\begin{proof}
By Proposition \ref{prop:perfect-iff}, $F$-twisted module categories are exact. A routine check shows that $F^*(H_2\circ H_1)= F^*(H_2)\circ F^*(H_1)$. The equivalences \eqref{eq:FDM=FM} assemble into the desired pseudo-natural equivalence in the second part of the statement.
\end{proof}

%%%%%%%%%%%%%%%%%%%%%%%%%%%%%%%%%%%%%%%%%%%%%%%%%%%%%%%%%%%%%%%
%%%%%%%%%%%%%%%%%%%%%%%%%%%%%%%%%%%%%%%%%%%%%%%%%%%%%%%%%%%%%%%
%%%%%%%%%%%%%%%%%%%%%%%%%%%%%%%%%%%%%%%%%%%%%%%%%%%%%%%%%%%%%%%

\subsection{Action of central objects and relative Serre functors}\label{subsec:rel-Serre}
Given tensor functors $F,G\colon\C\longrightarrow\D$ and $\M$ a $\D$-module category, there is a $\kk$-linear functor

\begin{equation}\label{eq:centr_act}
\Z(\FDG)\longrightarrow\Fun_{\C}(\GM,\FM)\,,\quad(d,\gamma)\longmapsto (d\act-,\varphi)
\end{equation}
where the module functor structure $\varphi$ on $d\tr-$ is defined by the composition
\begin{equation*}
d\tr (c\trG m)=d\otimes G(c) \tr m \xrightarrow{~\;\gamma\;~} F(c)\otimes d\tr m=c\trF (d\tr m)\,.
\end{equation*}
For an object $(d,\gamma)\in\Z(\FDG)$ and a $\C$-module functor $H\colon\N\rightarrow\GM$, we denote the composite $\C$-module functor by  $d\tr H\colon \N\rightarrow \FM$.

\begin{example}
Let $\C$ be a finite tensor category and let $\dD=(-)\rv\rv$ and $\odD=\rv\rv(-)$ denote the double dual functors.
The Radford isomorphism \eqref{eq:Radford-C} together with the distinguished invertible object $D_\subC$ forms an object in $\Z( {}_{\subdD} \C_{\,\subodD})$. Therefore, for any $\C$-module category $\M$, there is a $\C$-module functor $D_\subC\tr-\colon {}_{\subodD} \M\longrightarrow {}_{\subdD} \M$. 
\end{example}

Consider a perfect tensor functor $F\colon\C\longrightarrow\D$ and a $\D$-module category $\M$. By Proposition \ref{prop:Z(F)-rmo}, we have that the relative modular object $\chi_{\scriptscriptstyle{\Z(F)}}=(\chi_{\subF},\sigma)$ is an object in $\ZFD$ and thus its action becomes a $\C$-module endofunctor on $\FM$:
\begin{equation}\label{eq:relative_modular_obj_acts}
    \chi_{\subF}\tr-\Colon\FM\longrightarrow\FM
\end{equation}
with $\C$-module functor structure given by
\begin{equation*}
    \chi_{\subF}\tr (c\trF m)=\chi_{\subF}\otimes F(c)\tr m\xrightarrow{~\sigma_c\tr\id~}F(c)\otimes \chi_{\subF} \tr m=c\trF(\chi_{\subF}\tr m)
\end{equation*}
where $\sigma_c\colon\chi_{\subF}\otimes F(c)\xsim F(c)\otimes \chi_{\subF}$ is the $F$-half-braiding of $\chi_{\subF}$.

In the case of the regular $\D$-module category $\D$, the $\C$-module functor $\chi_{\subF}\otimes-$ can be turned into a $(\C,\D)$-bimodule functor and into a $\C$-bimodule functor
\begin{equation}
    \chi_{\subF}\otimes-\Colon\FD_\subD\longrightarrow\FD_\subD, \quad\text{ and } \quad \chi_{\subF}\otimes- \colon\FDF\longrightarrow\FDF
\end{equation}
with trivial right module structures, i.e. $\chi_{\subF}\otimes (d\tlF c)=\chi_{\subF}\otimes d\otimes F(c)=(\chi_{\subF}\otimes d)\tlF c$.

\noindent Trivializing these bimodule functors provides another characterization of the \Frob property.
\begin{lemma}\label{lem:chi_otimes_bimod}
Let $F\colon\C\!\longrightarrow\!\D$ be a perfect tensor functor. The following are equivalent:
\begin{enumerate}[\rm (i)]
    \item $F$ is \Frobp
    \item The $(\C,\D)$-bimodule functor $\chi_{\subF}\otimes- \colon\FD_\subD\longrightarrow\FD_\subD$ is isomorphic to $\id_{\FD_\subD}$.
    \item The $\C$-bimodule functor $\chi_{\subF}\otimes- \colon\FDF\longrightarrow\FDF$ is isomorphic to $\id_\FDF$.
\end{enumerate}
\end{lemma}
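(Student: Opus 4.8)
The plan is to establish the cycle of implications (i) $\Rightarrow$ (iii) $\Rightarrow$ (ii) $\Rightarrow$ (i), using the characterization of \Frob in Theorem~\ref{thm:tFrob-char}(iii): namely that $F$ is \Frob iff there is an isomorphism $\lambda\colon\chi_{\subF}\cong\unit_\subD$ satisfying $(\id_{F(c)}\otimes\lambda)\circ\sigma_c=\lambda\otimes\id_{F(c)}$ for all $c\in\C$. The key observation is that this last equation is precisely the statement that $\lambda$ upgrades to a $\C$-bimodule natural isomorphism between $\chi_{\subF}\otimes-$ and $\id$ on $\FDF$, since the left module structure on $\chi_{\subF}\otimes-$ is built from $\sigma$ (as recalled just before the lemma) while the right module structures are trivial on both functors.

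First I would prove (i) $\Rightarrow$ (iii): assuming $F$ is \Frobc Theorem~\ref{thm:tFrob-char}(iii) gives the isomorphism $\lambda\colon\chi_{\subF}\cong\unit_\subD$ with $(\id_{F(c)}\otimes\lambda)\circ\sigma_c=\lambda\otimes\id_{F(c)}$. I claim $\lambda\otimes\id_d\colon\chi_{\subF}\otimes d\to d$ defines a $\C$-bimodule natural isomorphism $\chi_{\subF}\otimes-\cong\id_\FDF$. Compatibility with the right $\C$-action is immediate since both right module structures are the identity (as noted: $\chi_{\subF}\otimes(d\tlF c)=(\chi_{\subF}\otimes d)\tlF c$ strictly). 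Compatibility with the left $\C$-action amounts to checking that for $c\in\C$, $d\in\D$, the square relating $\sigma_c\otimes\id_d$ (the left module structure of $\chi_{\subF}\otimes-$) to the identity (the left module structure of $\id$) commutes after applying $\lambda$; this reduces, after tensoring with $\id_d$ on the right, exactly to the displayed equation for $\lambda$ and $\sigma$. So (iii) holds. The implication (iii) $\Rightarrow$ (ii) is a restriction: a $\C$-bimodule functor isomorphism $\chi_{\subF}\otimes-\cong\id$ on $\FDF$ restricts along the forgetful $2$-functor to a $(\C,\D)$-bimodule functor isomorphism on $\FD_\subD$, since the $(\C,\D)$-bimodule structure on $\chi_{\subF}\otimes-$ over $\FD_\subD$ is obtained from the $\C$-bimodule structure on $\FDF$ by precomposing the right action with $F$ (and is likewise trivial).

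The main work is (ii) $\Rightarrow$ (i). Given a $(\C,\D)$-bimodule natural isomorphism $\theta\colon\chi_{\subF}\otimes-\xsim\id_{\FD_\subD}$, I would evaluate at the monoidal unit: $\theta_{\unit_\subD}\colon\chi_{\subF}\otimes\unit_\subD=\chi_{\subF}\to\unit_\subD$ gives an isomorphism $\lambda:=\theta_{\unit_\subD}\colon\chi_{\subF}\cong\unit_\subD$ in $\D$. By naturality of $\theta$ with respect to the right $\D$-action (i.e.\ $\theta$ being a right $\D$-module natural transformation, together with ordinary naturality in $d$), one recovers $\theta_d=\lambda\otimes\id_d$ for all $d$. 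Then the condition that $\theta$ is a \emph{left} $\C$-module natural transformation says exactly that the square
\[
\begin{tikzcd}[column sep=3em]
\chi_{\subF}\otimes F(c)\otimes d \ar[r,"\sigma_c\otimes\id_d"]\ar[d,swap,"\lambda\otimes\id"] & F(c)\otimes\chi_{\subF}\otimes d\ar[d,"\id\otimes\lambda\otimes\id"]\\
F(c)\otimes d\ar[r,equal] & F(c)\otimes d
\end{tikzcd}
\]
commutes for all $c\in\C$, $d\in\D$; taking $d=\unit_\subD$ yields $(\id_{F(c)}\otimes\lambda)\circ\sigma_c=\lambda\otimes\id_{F(c)}$, which is precisely condition (iii) of Theorem~\ref{thm:tFrob-char}. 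Hence $F$ is \Frobc completing the cycle.

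The one point demanding care—the main obstacle—is the bookkeeping of module structures: verifying that the left $\C$-module structure of the functor $\chi_{\subF}\otimes-$ is genuinely encoded by $\sigma$ in the way stated (so that the bimodule-naturality equations unwind to the $\sigma$-identity) and that all the right-module structures really are strictly trivial, so that a bimodule natural transformation is determined by a single morphism $\lambda=\theta_{\unit_\subD}$. All of this is set up in the paragraph preceding the lemma, so the verification is mechanical, but one must be attentive to the identifications ${}_\subF({}_\subdD\M)\cong{}_\subdD({}_\subF\M)$ implicit in the definition of $\sigma$ when passing between the various bimodule categories. Alternatively, (ii) $\Rightarrow$ (iii) could be avoided altogether by proving (ii) $\Rightarrow$ (i) directly as above and (i) $\Rightarrow$ (ii) in parallel with (i) $\Rightarrow$ (iii); I would present it as the single clean cycle (i) $\Rightarrow$ (iii) $\Rightarrow$ (ii) $\Rightarrow$ (i) for brevity.
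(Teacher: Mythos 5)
Your overall strategy is the same as the paper's: translate ``\Frob'' via Theorem~\ref{thm:tFrob-char}(iii) into the existence of $\lambda\colon\chi_{\subF}\cong\unit_\subD$ intertwining $\sigma$, check that $\lambda\otimes\id_d$ is a bimodule isomorphism for one direction, and recover $\lambda$ by evaluating a given bimodule isomorphism at $\unit_\subD$ for the other. However, your cycle is oriented incorrectly at one step, and that step as argued is wrong. The right $\C$-action on $\FDF$ is obtained from the right $\D$-action on $\FD_\subD$ by precomposing with $F$ (namely $d\tlF c=d\otimes F(c)$), so ``restriction'' turns a $(\C,\D)$-bimodule natural isomorphism into a $\C$-bimodule one; that is, it proves (ii)$\Rightarrow$(iii), not (iii)$\Rightarrow$(ii). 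A $\C$-bimodule isomorphism on $\FDF$ only records compatibility with right tensoring by objects of the form $F(c)$, and there is no formal reason it should be compatible with right tensoring by arbitrary $d'\in\D$; your claim that the $(\C,\D)$-structure on $\FD_\subD$ is obtained from the $\C$-structure on $\FDF$ by precomposition with $F$ has the dependency backwards. So the cycle (i)$\Rightarrow$(iii)$\Rightarrow$(ii)$\Rightarrow$(i) as written has a genuine gap at (iii)$\Rightarrow$(ii).

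The repair is immediate and uses only arguments you already wrote: your unit-evaluation argument for (ii)$\Rightarrow$(i) uses the compatibility with the right action only at $d=\unit_\subD$ and only against objects $F(c)$ (to get $\theta_{F(c)}=\lambda\otimes\id_{F(c)}$), so it applies verbatim to a $\C$-bimodule isomorphism on $\FDF$ and proves (iii)$\Rightarrow$(i). Combining this with your (i)$\Rightarrow$(iii), the same routine check giving (i)$\Rightarrow$(ii), and the genuine restriction (ii)$\Rightarrow$(iii), all equivalences follow. This repaired route is precisely the paper's proof, which runs (i)$\Rightarrow$(ii)$\Rightarrow$(iii)$\Rightarrow$(i), with (ii)$\Rightarrow$(iii) as the restriction step and (iii)$\Rightarrow$(i) as the evaluation at the unit.
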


\begin{proof}
A bimodule natural isomorphism $\gamma\colon \chi_{\subF}\otimes- \cong\id_{\FD_\subD}$ (respectively $\gamma\colon \chi_{\subF}\otimes- \cong\id_{\FDF}$) obeys
\begin{enumerate}[(a)]
    \item $\gamma_{d\otimes d'}=\gamma_d\otimes\id_{d'}$ for every $d,d'\in\D$ (respectively $\gamma_{d\tlF c}=\gamma_d\otimes\id_{F(c)}$ for $c\in \C$).
    \item $\gamma_{c\trF d}=\id_{F(c)}\otimes\gamma_d\circ \sigma_c\otimes\id_d$ for every $d\in\D$ and $c\in\C$.
\end{enumerate}
We first prove that (i) implies (ii). Since $F$ is \Frobc by Theorem \ref{thm:tFrob-char}(iii) there is $\lambda:\chi_{\subF}\cong\unit_\subD$ with $\id_{F(c)}\otimes\lambda\circ\sigma_c=\lambda\otimes\id_{F(c)}$.
A routine check shows that $\gamma_d\coloneqq\lambda\otimes\id_d\colon \chi_\subF\otimes d\to d$ satisfies (a) and (b). Now, that (ii) implies (iii) follows immediately, since a bimodule isomorphism $\gamma\colon \chi_{\subF}\otimes- \cong\id_{\FD_\subD}$ in particular fulfills condition (a) for $d'=F(c)$. Finally, we check (iii) implies (i). Consider a bimodule isomorphism $\gamma\colon \chi_{\subF}\otimes- \cong\id_{\FDF}$. Condition (a) for $d=\unit$ reads $\gamma_{F(c)}= \gamma_\unit\otimes\id_{F(c)}$ and condition (b) that $\gamma_{F(c)}= \id_{F(c)}\otimes\gamma_\unit\circ \sigma_c$. We conclude that $F$ is \Frobc by Theorem \ref{thm:tFrob-char}(iii).
\end{proof}

\begin{remark}
For any tensor functor $F\colon\C\longrightarrow \D$, Proposition~\ref{prop:rel-mod-obj} provides an isomorphism 
\[
(F(D_\subC),F(\mathcal{R}_\subC))\xsim (\,\chi_{\subF}\otimes D_\subD,\,(\sigma\otimes\id)\circ(\id\otimes\mathcal{R}_\subD)\,)
\] 
of objects in $\Z({}_{\dD\circ\dD}\D_\subF)$. 
This data can be re-written as an isomorphism 
\[
\left(F(D_\subC)\otimes D_\subD^{-1},\,(F(\mathcal{R}_\subC)\otimes\id)\circ(\id\otimes \mathcal{R}_\subD^{-1})\,\right)\xsim (\chi_{\subF},\sigma)=\chi_{\scriptscriptstyle{\Z(F)}}
\]
of objects in $\ZFD$. Under the equivalence \eqref{eq:centr_act}, we obtain an isomorphism 
\begin{equation}\label{eq:chi-D-relation}
(F(D_\subC) \tr -) \circ F^*(D_\subD^{-1} \act -) \cong \chi_{\subF} \act - \Colon 
\FM 
\longrightarrow 
\FM\,.
\end{equation} 
of $\C$-module functors, for any $\D$-module category $\M$.
\end{remark}

A consequence of Proposition \ref{prop:perfect-iff} is that the $F$-twisted module category arising from an exact module category is exact and thus admits a relative Serre functor. In order to describe the relative Serre functor of $\FM$, we first describe its Nakayama functor as a $\C$-module functor.
\begin{lemma}\label{lem:FMNak}
Given a tensor functor $F\colon\C\rightarrow\D$ and a $\D$-module category $\M$, we have that $\dN_{\subFM} = F^*(\dN_\subM)$ as twisted $\C$-module functors.
\end{lemma}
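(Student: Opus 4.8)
The plan is to reduce the statement to a compatibility of twisted module structures, since the underlying functors are literally equal. The right exact Nakayama functor of a finite $\kk$-linear category $\X$ is given by the coend $\dN^r_\subX(x)=\int^{y}\Hom_\subX(x,y)^*\btr y$, which only uses the $\kk$-linear structure of $\X$ (the $\Vect$-action $\btr$ being itself defined from it). Since $\FM$ and $\M$ have the same underlying finite $\kk$-linear category, we get $\dN^r_{\subFM}=\dN^r_\subM$ and $\dN^l_{\subFM}=\dN^l_\subM$ as plain $\kk$-linear endofunctors; in particular $F^*(\dN_\subM)$ and $\dN_{\subFM}$ have the same underlying functor, and the point is to identify their twisted $\C$-module structures.

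Next I would recall how the twisted structure of \eqref{eq:module_Nakayama_twisted_functor} arises. For a $\D$-module category $\M$ and $d\in\D$, the endofunctor $d\act-\colon\M\to\M$ is exact with right adjoint ${}^{\vee}d\act-$, hence double right adjoint ${}^{\vee\vee}d\act-$; applying \eqref{eq:Nakayama_twisted_functor} to $d\act-$ yields a natural isomorphism $\mathfrak{n}_{d\act-}\colon\dN^r_\subM\circ(d\act-)\xRightarrow{\,\sim\,}({}^{\vee\vee}d\act-)\circ\dN^r_\subM$, whose component at $m$ is, by definition, the structure isomorphism $\fn_{d,m}$ of the twisted $\D$-module functor $\dN_\subM$. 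For the $\C$-module category $\FM$, the action endofunctor $c\trF-$ is, on the common underlying abelian category, literally $F(c)\act-$; its double right adjoint computed abelian-categorically therefore coincides with that of $F(c)\act-$, and $\mathfrak{n}_{c\trF-}=\mathfrak{n}_{F(c)\act-}$. Hence the component at $m$ of the twisted $\C$-module structure of $\dN_{\subFM}$ equals $\fn_{F(c),m}$. On the other hand, applying the defining formula of $F^*$ verbatim to the twisted module functor $\dN_\subM$, the twisted $\C$-module structure of $F^*(\dN_\subM)$ has component $\varphi^*_{c,m}=\varphi_{F(c),m}=\fn_{F(c),m}$. The two coincide, giving $\dN_{\subFM}=F^*(\dN_\subM)$.

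The step requiring genuine care — and the main, if minor, technical point — is the bookkeeping of the double-dual twists on the targets. The functor $\dN_{\subFM}$ lands in ${}_\subodD(\FM)$, i.e.\ $\FM$ with action twisted by ${}^{\vee\vee}(-)\colon\C\to\C$, so the target of $\fn_{F(c),m}$ is named $F({}^{\vee\vee}c)\act\dN^r_\subM(m)$ via rigidity of $\C$, whereas $F^*(\dN_\subM)$ lands in ${}_\subF({}_\subodD\M)$, where the same object is named ${}^{\vee\vee}F(c)\act\dN^r_\subM(m)$ via rigidity of $\D$. These two descriptions are matched by the canonical monoidal natural isomorphism $F({}^{\vee\vee}c)\xsim{}^{\vee\vee}F(c)$ — the analogue of $\xi$ — which is precisely the $\C$-module equivalence ${}_\subF({}_\subodD\M)\simeq{}_\subodD(\FM)$ used throughout this section. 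One must check that $\mathfrak{n}_{F(c)\act-}$ intertwines the two descriptions of the double right adjoint of $F(c)\act-$; this is automatic because \eqref{eq:Nakayama_twisted_functor} is canonical with respect to the choice of double right adjoint, and the isomorphism $F({}^{\vee\vee}c)\xsim{}^{\vee\vee}F(c)$ is exactly the one induced by the (co)evaluation morphisms assembling the two adjunctions. The left exact Nakayama functor is handled identically using the left exact counterpart of \eqref{eq:Nakayama_twisted_functor}.
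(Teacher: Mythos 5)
Your proof is correct and follows essentially the same route as the paper's: the paper simply notes that the underlying functors are equal by definition of $F^*$ and that both twisted module structures are given by $\fn_{F(c),m}$ from \eqref{eq:module_Nakayama_twisted_functor}. Your extra care with the identification $F(c\rv\rv)\cong F(c)\rv\rv$ matching the two targets is exactly the identification ${}_\subF({}_\subdD\M)\simeq{}_\subdD({}_\subF\M)$ the paper fixes in the preceding example and uses implicitly.
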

\begin{proof}
By the definition of $F^*$ we have equality as functors and on both sides the twisted module functor structure is given by the isomorphism $\fn_{F(c),m}$ from \eqref{eq:module_Nakayama_twisted_functor}. 
\end{proof}

The following proposition relates the relative Serre functor and the relative modular object of $F$.

\begin{proposition}\label{prop:FMSerre}
Let $F\colon\C\!\longrightarrow \!\D$ be a perfect tensor functor and $\M$ an exact $\D$-module category.
There is a natural isomorphism
\begin{equation}\label{eq:serre_FM}
    \chi_{\subF}\tr F^*\!\left(\;\dS_{\subM}^\supD\;\right)\;\cong\;\dS_{\subFM}^\supC
\end{equation}
of twisted $\C$-module functors, where $\chi_{\subF}\act-$ is the $\C$-module functor from \eqref{eq:relative_modular_obj_acts}.
\end{proposition}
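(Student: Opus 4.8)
The plan is to avoid re‑deriving the relative Serre functor from scratch and instead reduce everything to the Nakayama functor, so that the identification \eqref{eq:chi-D-relation} established in the preceding remark does the work. Since $F$ is perfect, $\FM$ is exact by Proposition~\ref{prop:FMexact}, so it admits a relative Serre functor, and by \eqref{eq:Nakayama_Serre} there is an isomorphism $\dS_{\subFM}^{\supC}\cong D_{\subC}\act\dN^r_{\subFM}$ of twisted $\C$-module functors; applying \eqref{eq:Nakayama_Serre} to the $\D$-module category $\M$ likewise gives $\dN^r_{\subM}\cong D_{\subD}^{-1}\act\dS_{\subM}^{\supD}$.

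First I would rewrite $\dN^r_{\subFM}$ using Lemma~\ref{lem:FMNak}, which gives $\dN^r_{\subFM}=F^*(\dN^r_{\subM})$ as twisted $\C$-module functors. Substituting $\dN^r_{\subM}\cong D_{\subD}^{-1}\act\dS_{\subM}^{\supD}$ and using that $F^*$ is compatible with composition of (twisted) module functors, this becomes $\dN^r_{\subFM}\cong F^*(D_{\subD}^{-1}\act-)\circ F^*(\dS_{\subM}^{\supD})$. Since the action of $D_{\subC}$ on the $\C$-module category $\FM$ is by definition the functor $F(D_{\subC})\act-$, combining with $\dS_{\subFM}^{\supC}\cong D_{\subC}\act\dN^r_{\subFM}$ yields
\[
\dS_{\subFM}^{\supC}\;\cong\;(F(D_{\subC})\act-)\circ F^*(D_{\subD}^{-1}\act-)\circ F^*(\dS_{\subM}^{\supD})
\]
as twisted $\C$-module functors. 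Now \eqref{eq:chi-D-relation} states precisely that $(F(D_{\subC})\act-)\circ F^*(D_{\subD}^{-1}\act-)\cong\chi_{\subF}\act-$ as $\C$-module functors, so the right-hand side is isomorphic to $(\chi_{\subF}\act-)\circ F^*(\dS_{\subM}^{\supD})=\chi_{\subF}\act F^*(\dS_{\subM}^{\supD})$, which is the desired isomorphism \eqref{eq:serre_FM}.

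The step that requires care — and the main obstacle — is the bookkeeping of the twisted module structures: I would need to check that Lemma~\ref{lem:FMNak}, the relation $\dN^r_{\subM}\cong D_{\subD}^{-1}\act\dS_{\subM}^{\supD}$ from \eqref{eq:Nakayama_Serre}, and \eqref{eq:chi-D-relation} are all compatible as twisted module functors, so that the composite of these isomorphisms is genuinely an isomorphism of twisted $\C$-module functors and not merely of underlying functors. Here the twists by double duals in $\C$ must be matched with those in $\D$ via the monoidal isomorphism $\xi_c\colon F(c\dd)\xsim F(c)\dd$ of \cite[Lem.~1.1]{ng2007higher}; the relevant twists ($(-)\dd$ for $\dS$, its inverse ${}^{\vee\vee}(-)$ for $\dN^r$, and $(-)^{\vee\vee\vee\vee}$ for $D\act-$) are seen to fit together, but the verification is routine and somewhat lengthy.

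As a more self-contained alternative one can establish \eqref{eq:serre_FM} directly at the level of internal Homs. Using \eqref{eq:twisted_inner_hom}, the right coHopf operator (an isomorphism since $\C,\D$ are rigid), the identification $F^{\rra}(\unit_{\subC})\cong\chi_{\subF}$ of Proposition~\ref{prop:relative_mod_obj_properties}(iv), rigidity of $\D$, the defining isomorphism of $\dS_{\subM}^{\supD}$, and the standard compatibility $\uHom_{\,\subM}^{\supD}(n,X\act p)\cong X\otimes\uHom_{\,\subM}^{\supD}(n,p)$, one assembles a chain of isomorphisms
\[
\Hom_{\C}(c,\,\uHom^{\supC}_{\subFM}(m,n)\rv)\;\cong\;\Hom_{\C}(c,\,\uHom^{\supC}_{\subFM}(n,\,\chi_{\subF}\act\dS_{\subM}^{\supD}(m)))
\]
natural in $c,m,n$. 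By the Yoneda lemma this exhibits $\chi_{\subF}\act F^*(\dS_{\subM}^{\supD})$ as a relative Serre functor of $\FM$, and uniqueness of the relative Serre functor as a twisted module functor \cite[Lem.~3.5]{shimizu2023relative} then gives \eqref{eq:serre_FM}. In either route the essential difficulty is the same: ensuring the comparison isomorphism respects the twisted $\C$-module structures.
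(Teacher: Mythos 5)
Your main argument is essentially the paper's own proof: it combines the Nakayama–Serre relation \eqref{eq:Nakayama_Serre} in both $\C$ and $\D$, the identification $\dN_{\subFM}=F^*(\dN_\subM)$ from Lemma~\ref{lem:FMNak}, and the isomorphism \eqref{eq:chi-D-relation} to trade $(F(D_\subC)\act-)\circ F^*(D_\subD^{-1}\act-)$ for $\chi_{\subF}\act-$, exactly as done there (the paper merely phrases it by post-composing with $F(D_\subC)\tr-$ rather than inverting $D_\subC$ at the start). The internal-Hom/Yoneda alternative you sketch is a reasonable independent check but is not needed; the first route, with the twisted-module bookkeeping you flag, suffices.
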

\begin{proof}
On one hand we have an isomorphism of twisted $\C$-module functors from \eqref{eq:Nakayama_Serre}
    \be\label{Nakayama_C}
    \dN_{\subFM}\cong D^{-1}_{\subC}\trF \dS_{\subFM}^\supC
    \end{equation}   
and on the other hand an isomorphism of twisted $\D$-module functors from \eqref{eq:Nakayama_Serre}
    \be\label{Nakayama_D}
    D^{-1}_{\subD}\tr \dS_{\subM}^\supD\cong\dN_\subM\,.
    \end{equation}
Moreover, $\dN_{\subFM}=F^*(\dN_\subM)$, by Lemma~\ref{lem:FMNak}. Thus, by composing \eqref{Nakayama_C} with the image of \eqref{Nakayama_D} under $F^*$ we obtain an isomorphism of twisted $\C$-module functors
    \begin{equation*} 
    F^*(D^{-1}_{\subD}\tr -)\circ F^*\!\left(\,\dS_{\subM}^\supD\,\right)
    =F^*\!\left(\,D^{-1}_{\supD}\tr \dS_{\subM}^\supD\,\right)
    \cong D^{-1}_{\supC}\trF \dS_{\subFM}^\supC
    =F(\,D_{\supC}^{-1}\,)\tr \dS_{\subFM}^\supC
    \end{equation*}
where the first equality is the functoriality of $F^*$ and the last one is the definition of the $F$-twisted action. Post-composing both sides with the module functor $F(D_\subC)\tr-$ and using the isomorphism \eqref{eq:chi-D-relation} we obtain the desired result.
\end{proof}
We spell out some details that are implicit in the above result. Since the relative Serre functor is unique up to unique isomorphism, from here on, we can fix $\dS_{\subFM}^\supC = \chi_{\subF}\tr F^*\!\left(\;\dS_{\subM}^\supD\;\right)$. 
The twisted left $\C$-module structure of $\dS_{\subFM}^\supC$ is given by the composition
\begin{align*}
\fs_{c,m}^F\Colon
\chi_{\subF} \tr \dS_{\subM}^{\supD} (F(c)\tr m) 
\xrightarrow{\id\tr \fs_{F(c),m}} \chi_{\subF} \tr\left( F(c){\dd} \tr \dS_{\subM}^\supD(m) \right)
\xrightarrow{\sigma_{c\rv\rv} \tr \id}
(F(c{\dd})\otimes \chi_{\subF}) \tr \dS_{\subM}^\supD(m)\,.
\end{align*}

As we have seen, the relative modular object of a perfect tensor functor measures its failure of preserving distinguished invertible objects and, in particular, its failure of preserving unimodularity. It turns out, that the relative modular object also relates the distinguished invertible objects of the dual tensor categories with respect to an exact module category.

\begin{proposition}\label{DIO_dual_cats} 
Let $F\colon\C\!\longrightarrow \!\D$ be a perfect tensor functor and $\M$ an exact $\D$-module category.
There is a natural isomorphism of $\C$-module functors
\begin{equation}\label{eq:DIO_dual_cats}
\chi_{\subF}\act F^*\!\left(D^{-1}_{\D_\subM^*}\right) \cong D_{\C_{\!\FM}^*}^{-1}
\end{equation}
relating the distinguished invertible objects of the dual categories of $\M$ and $\FM$ and the $\C$-module functor  $\chi_{\subF}\act-$ defined in \eqref{eq:relative_modular_obj_acts}.
\end{proposition}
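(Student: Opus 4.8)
The plan is to compare the module Radford isomorphism of Theorem~\ref{thm:mod_Radford} for the $\C$-module category $\FM$ with that for the $\D$-module category $\M$, feeding in the Serre-functor formula $\dS_{\subFM}^\supC=\chi_{\subF}\act F^*(\dS_\subM^\supD)$ from Proposition~\ref{prop:FMSerre} and the relation \eqref{eq:chi-D-relation} between $D_\subC\trF-$, $\chi_{\subF}\act-$ and $F^*(D_\subD\act-)$, and then to solve for $D_{\C_{\!\FM}^*}$.

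First I would record the two instances of Theorem~\ref{thm:mod_Radford}, namely $D_\subC\trF-\cong D_{\C_{\!\FM}^*}\circ\dS_{\subFM}^\supC\circ\dS_{\subFM}^\supC$ as twisted $\C$-module functors and $D_\subD\act-\cong D_{\D_\subM^*}\circ\dS_\subM^\supD\circ\dS_\subM^\supD$ as twisted $\D$-module functors. Applying the pseudofunctor $F^*$ to the second and using $F^*(G_2\circ G_1)\cong F^*(G_2)\circ F^*(G_1)$ gives $F^*(D_\subD\act-)\cong F^*(D_{\D_\subM^*})\circ F^*(\dS_\subM^\supD)\circ F^*(\dS_\subM^\supD)$. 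Substituting $\dS_{\subFM}^\supC=\chi_{\subF}\act F^*(\dS_\subM^\supD)$ into the first identity, I then need to bring the two copies of $\chi_{\subF}\act-$ together. Since $F^*(\dS_\subM^\supD)$ is the image under $F^*$ of a twisted $\D$-module functor, the twisted-module structure \eqref{eq:Serre_twisted1} identifies $F^*(\dS_\subM^\supD)\circ(\chi_{\subF}\act-)$ with $(\chi_{\subF}^{\vee\vee}\act-)\circ F^*(\dS_\subM^\supD)$; as $\chi_{\subF}$ is invertible in $\D$ we have $\chi_{\subF}^{\vee\vee}\cong\chi_{\subF}$, and in fact $(\chi_{\subF},\sigma)^{\vee\vee}\cong(\chi_{\subF},\sigma)=\chi_{\scriptscriptstyle{\Z(F)}}$ in the rigid finite tensor category $\Z(\FDF)$ since that object is invertible there, so the identification lifts to an isomorphism of $\C$-module functors $F^*(\dS_\subM^\supD)\circ(\chi_{\subF}\act-)\cong(\chi_{\subF}\act-)\circ F^*(\dS_\subM^\supD)$. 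This yields
\[
\dS_{\subFM}^\supC\circ\dS_{\subFM}^\supC\;\cong\;\big((\chi_{\subF}\otimes\chi_{\subF})\act-\big)\circ F^*(\dS_\subM^\supD)\circ F^*(\dS_\subM^\supD)\,.
\]

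Now I combine everything. Rewriting \eqref{eq:chi-D-relation} as $D_\subC\trF-\cong(\chi_{\subF}\act-)\circ F^*(D_\subD\act-)$ and plugging in the last display together with $F^*(D_\subD\act-)\cong F^*(D_{\D_\subM^*})\circ F^*(\dS_\subM^\supD)\circ F^*(\dS_\subM^\supD)$, the first Radford identity becomes an isomorphism whose two sides both end in the equivalence $F^*(\dS_\subM^\supD)\circ F^*(\dS_\subM^\supD)$; cancelling it I get
\[
(\chi_{\subF}\act-)\circ F^*(D_{\D_\subM^*})\;\cong\;D_{\C_{\!\FM}^*}\circ\big((\chi_{\subF}\otimes\chi_{\subF})\act-\big)\,.
\]
Since $D_{\D_\subM^*}$ is an untwisted $\D$-module endofunctor of $\M$, its image $F^*(D_{\D_\subM^*})$ commutes up to $\C$-module isomorphism with the object-action $\chi_{\subF}\act-$; using this, precomposing with $\chi_{\subF}^{-1}\act-$ and cancelling $\chi_{\subF}\otimes\chi_{\subF}^{-1}\cong\unit$ leaves $F^*(D_{\D_\subM^*})\cong D_{\C_{\!\FM}^*}\circ(\chi_{\subF}\act-)$. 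Taking tensor-inverses in the finite multitensor category $\C_{\!\FM}^*$ — all the objects involved ($D_{\C_{\!\FM}^*}$, $F^*(D_{\D_\subM^*})$ and $\chi_{\subF}\act-$) being invertible — and using $F^*(D_{\D_\subM^*})^{-1}\cong F^*(D^{-1}_{\D_\subM^*})$, I arrive at $D_{\C_{\!\FM}^*}^{-1}\cong(\chi_{\subF}\act-)\circ F^*(D^{-1}_{\D_\subM^*})=\chi_{\subF}\act F^*(D^{-1}_{\D_\subM^*})$, which is \eqref{eq:DIO_dual_cats}.

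The main obstacle is not the algebra but the bookkeeping of the double-dual-twisted $\C$-module structures: every isomorphism above has to be one of twisted module functors, and the delicate point is the commutation of $\chi_{\subF}\act-$ past $F^*(\dS_\subM^\supD)$, where one must check that the isomorphism $\chi_{\subF}^{\vee\vee}\cong\chi_{\subF}$ respects the $F$-half-braiding $\sigma$ — this is precisely why one works with invertibility of $\chi_{\scriptscriptstyle{\Z(F)}}$ in $\Z(\FDF)$ rather than just invertibility of $\chi_{\subF}$ in $\D$, and it is also where Proposition~\ref{prop:rel-mod-obj} enters, implicitly, through \eqref{eq:chi-D-relation}.
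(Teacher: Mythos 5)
Your argument is correct and follows essentially the same route as the paper: both rest on applying the module Radford isomorphism (Theorem~\ref{thm:mod_Radford}) to $\M$ and to $\FM$, the Serre-functor formula of Proposition~\ref{prop:FMSerre}, and the relation \eqref{eq:chi-D-relation} coming from Proposition~\ref{prop:rel-mod-obj}. The paper merely packages this as a single four-step chain of isomorphisms starting from $D^{-1}_{\C_{\subFM}^*}$, whereas you solve for it by cancelling the equivalence $F^*(\dS_\subM^\supD)^2$ and commuting $\chi_\subF\act-$ explicitly — the same ingredients, with the bookkeeping you spell out left implicit in the paper.
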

\begin{proof}
The desired natural isomorphism arises as the composition
\begin{equation*}
\begin{aligned}
D^{-1}_{\C_{\subFM}^*}\cong D_\subC^{-1}\trF\; \dS_{\subFM}^\supC\circ \dS_{\subFM}^\supC =F(D_\subC^{-1})\act\; \dS_{\subFM}^\supC\circ \dS_{\!\FM}^\supC 
\cong \chi_{\subF}\act F^*\!\left(\;D_\subD^{-1}\tr \; \dS_{\subM}^\supD\circ \dS_{\subM}^\supD\right)\cong \chi_{\subF}\act F^*\!\left(D^{-1}_{\D_\subM^*}\right) 
\end{aligned}
\end{equation*}
where the first isomorphism comes from \eqref{eq:Radford_mod}, next is the definition of the $F$-twisted action, the second to last isomorphism is \eqref{eq:serre_FM} and lastly we use isomorphism \eqref{eq:Radford_mod} once more.
\end{proof}

In light of the above results, it is important to understand the functor $\chi_{\subF}\act-:\FM \longrightarrow\FM$. 

\begin{definition}\label{def:relative_frobenius}
Let $F\colon\C\!\longrightarrow\!\D$ be a perfect tensor functor and $\M$ an exact $\D$-module category. We say $F$ is \textit{Frobenius with respect to $\M$} in case $\chi_{\subF}\tr-\colon\FM\to\FM$ is isomorphic to $\id_\FM$ as a $\C$-module functor.
\end{definition}
\begin{remark}
The assignment  \eqref{eq:F*_module_functor} defines a tensor functor
\be\label{eq:tensor_func_F!}
F^*_\subM\Colon \D^*_{\subM} \longrightarrow \C^*_{{}_F\subM}\,,
\end{equation}
The terminology used in Definition \ref{def:relative_frobenius} is inspired by the fact that in case $F^*_\subM$ is perfect, then $F$ is Frobenius with respect to $\M$ if and only if $F^*_\subM$ is a Frobenius functor. This follows from Proposition~\ref{prop:relative_mod_obj_properties}(v) and the isomorphism \eqref{eq:DIO_dual_cats}.
\end{remark}

\begin{proposition}
    \label{prop:chi-action}
Let $F\colon\C\longrightarrow\D$ be a perfect tensor functor. Then:
\begin{enumerate}[\rm (i)]
    \item If $F$ is \Frobc then $F$ is Frobenius with respect to any $\D$-module category $\M$.
    \item If $F$ is Frobenius with respect to $\D$, then $F$ is Frobenius.
\end{enumerate}
\end{proposition}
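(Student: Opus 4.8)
The plan is to prove the three statements in order, leveraging the identification of the relative Serre functor of $\FM$ from Proposition~\ref{prop:FMSerre} together with the characterizations of \Frob functors from Theorem~\ref{thm:tFrob-char} and the bimodule formulation in Lemma~\ref{lem:chi_otimes_bimod}.

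\textit{Part (i).} By definition, $F$ is Frobenius with respect to $\M$ iff the dual tensor functor $F^*_\subM\colon \D^*_\subM\longrightarrow\C^*_{\FM}$ admits isomorphic left and right adjoints. Since $(F^*_\subM,\M)$ is an exact pair, $F^*_\subM$ is perfect, so it has a relative modular object $\chi_{F^*_\subM}\in\C^*_{\FM}$, and by Proposition~\ref{prop:frobenius_uni}(i) being Frobenius is equivalent to $\chi_{F^*_\subM}\cong\unit_{\C^*_{\FM}}=\id_{\FM}$. The key step is therefore to identify $\chi_{F^*_\subM}$ with the $\C$-module endofunctor $\chi_{\subF}\act-$ of $\FM$. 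For this I would run the argument of Proposition~\ref{DIO_dual_cats}: the relative modular object of a perfect tensor functor computes the discrepancy of distinguished invertible objects via $\chi_{F^*_\subM}\cong F^*_\subM(D_{\D^*_\subM})\otimes D^{-1}_{\C^*_{\FM}}$ (Proposition~\ref{prop:relative_mod_obj_properties}(v)), and then use the isomorphism \eqref{eq:DIO_dual_cats} of Proposition~\ref{DIO_dual_cats}, namely $\chi_{\subF}\act F^*(D^{-1}_{\D^*_\subM})\cong D^{-1}_{\C^*_{\FM}}$, to conclude $\chi_{F^*_\subM}\cong\chi_{\subF}\act-$ as objects of $\C^*_{\FM}$. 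Combining with $\chi_{F^*_\subM}\cong\id_{\FM}$ $\iff$ $F^*_\subM$ Frobenius gives the claim.

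\textit{Part (ii).} If $F$ is \Frobc then by Theorem~\ref{thm:tFrob-char}(iii) the relative modular object $(\chi_{\subF},\sigma)$ is trivial in $\Z(\FDF)$, i.e.\ there is $\lambda\colon\chi_{\subF}\cong\unit_\subD$ with $\id_{F(c)}\otimes\lambda\circ\sigma_c=\lambda\otimes\id_{F(c)}$. Exactly as in the proof of Lemma~\ref{lem:chi_otimes_bimod}, $\gamma_m\coloneqq\lambda\act\id_m\colon\chi_{\subF}\act m\to m$ is then a $\C$-module natural isomorphism $\chi_{\subF}\act-\cong\id_\FM$ (the compatibility with the module structure of $\chi_{\subF}\act-$ is precisely the displayed equation satisfied by $\lambda$). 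By part (i), $F$ is Frobenius with respect to $\M$, for arbitrary $\D$-module $\M$.

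\textit{Part (iii).} Applying part (i) with $\M=\D$ the regular module category: if $F$ is Frobenius with respect to $\D$, then the $\C$-module functor $\chi_{\subF}\otimes-\colon\FD\to\FD$ is isomorphic to $\id_\FD$. I would then observe that this isomorphism can be upgraded to a $\C$-bimodule isomorphism $\chi_{\subF}\otimes-\cong\id_{\FDF}$ — indeed, as in the $(\C,\D)$-bimodule case the right $\D$-module structure of $\chi_{\subF}\otimes-$ is trivial, so one obtains condition (a) of Lemma~\ref{lem:chi_otimes_bimod} essentially for free, and condition (b) already holds since the isomorphism is one of \emph{left} $\C$-module functors — hence by Lemma~\ref{lem:chi_otimes_bimod}(iii)$\Rightarrow$(i), $F$ is \Frob\!\!, which in particular implies $F$ is Frobenius. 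Alternatively, and more directly, one can simply note that being Frobenius with respect to $\D$ means $F^*_\subD\colon\D^*_\subD\to\C^*_{\FD}$ is Frobenius, and $\D^*_\subD\simeq\overline{\D}$ while $\C^*_{\FD}$ contains $F$; tracing the relative modular object back through $\chi_{F^*_\subD}\cong\chi_{\subF}\act-$ and evaluating at $\unit_\subD\in\FD$ yields $\chi_{\subF}\cong\unit_\subD$, which by Proposition~\ref{prop:frobenius_uni}(i) says exactly that $F$ is Frobenius.

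The main obstacle I anticipate is part (i), specifically verifying carefully that the chain of natural isomorphisms in (the proof of) Proposition~\ref{DIO_dual_cats} indeed identifies the \emph{object} $\chi_{F^*_\subM}$ of the dual tensor category $\C^*_{\FM}$ — i.e.\ a $\C$-module endofunctor of $\FM$ — with $\chi_{\subF}\act-$, rather than merely relating their underlying functors; this requires keeping track of the $\C$-module functor structures (coming from $\sigma$ on the one side and from the twisted module structures of the relative Serre functors on the other) throughout, and confirming they match under the equivalence \eqref{eq:centr_act}.
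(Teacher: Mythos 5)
Parts (i) and (ii) of your proposal are correct and essentially follow the paper. For (i) you identify $\chi_{F^*_\subM}$ with $\chi_{\subF}\act-$ via Proposition~\ref{prop:relative_mod_obj_properties}(v) and the isomorphism \eqref{eq:DIO_dual_cats}, and conclude by Proposition~\ref{prop:frobenius_uni}(i); this is exactly the paper's argument. For (ii) you verify directly that $\lambda\act\id_m$ is a $\C$-module natural isomorphism $\chi_{\subF}\act-\cong\id_{\FM}$ using the trivialization from Theorem~\ref{thm:tFrob-char}(iii), whereas the paper goes through Lemma~\ref{lem:chi_otimes_bimod}(ii) and the equivalence $\FD\btD\M\simeq\FM$; both reduce to the same compatibility of $\lambda$ with $\sigma$ together with the module associativity constraint, so your shortcut is fine.

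Your primary route for (iii), however, contains a genuine error: the claim that condition (a) of Lemma~\ref{lem:chi_otimes_bimod} comes ``essentially for free'' is false. Being Frobenius with respect to $\D$ only gives an isomorphism $\chi_{\subF}\otimes-\cong\id_{\FD}$ of \emph{left} $\C$-module functors; its components $\gamma_d$ carry no compatibility with the right $\D$-action, i.e.\ there is no reason that $\gamma_{d\otimes d'}=\gamma_d\otimes\id_{d'}$, so it does not upgrade to a $(\C,\D)$-bimodule or $\C$-bimodule trivialization. If this upgrade were automatic, then Frobenius with respect to $\D$ would imply \Frob\!\!, which is false: in Example~\ref{ex:uqsl2-kG} the module category is $\Rep(H')=\D$ itself (the regular $\D$-module), $F_\ff$ is Frobenius with respect to it, and yet $F_\ff$ is not \Frob\!\!. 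Your alternative argument for (iii) --- evaluating the trivialization of $\chi_{\subF}\otimes-$ on $\FD$ at the unit object to obtain $\chi_{\subF}\cong\unit_\subD$ and then invoking Proposition~\ref{prop:frobenius_uni}(i) --- is correct and is precisely the paper's proof, so (iii) stands once the first route is discarded.
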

\begin{proof}
To prove (i), by Lemma~\ref{lem:chi_otimes_bimod}(ii), we assume that $\chi_{\subF}\otimes-\cong\id_{{}_\subF \D_\subD}$. Then, given a $\D$-module category $\M$, the diagram of $\C$-module equivalences
\begin{equation*}
\begin{tikzcd}[column sep=2.1cm,row sep=1cm]
    \FD\btD\M\ar[r,"\chi_{\subF}\otimes-\,\btD\; \id_\subM"]
    \ar[d,"\eqref{eq:FDM=FM}",swap]
    &\FD\btD\M \ar[d,"\eqref{eq:FDM=FM}"]\\
    \FM\ar[r,"\chi_{\subF}\tr-",swap]&\FM
\end{tikzcd}
\end{equation*}
commutes up to natural isomorphism (given by the module associativity constraints $(\chi_{\subF}\otimes d)\tr m\cong \chi_{\subF}\tr (d\tr m)$ of $\M$). Thus, we obtain a trivialization of $\chi_{\subF}\tr-$. Lastly, (ii) follows by evaluating $\chi_{\subF}\otimes-\cong\id_{\!\FD}$ at the unit object to obtain an isomorphism $\chi_\subF\cong \unit$. By Proposition \ref{prop:frobenius_uni}, this means that $F$ is Frobenius.
\end{proof}
Examples~\ref{ex:uqsl2-kG} and~\ref{ex:not-f-Frob} later show that the converses of (i) and (ii), respectively do not hold in general.

%%%%%%%%%%%%%%%%%%%%%%%%%%%%%%%%%%%%%%%%%%%%%%%%%%%%%%%%%%%%%%%

\subsection{Unimodular structures}

\begin{definition}\cite[Def.\ 3.2]{yadav2023unimodular}
Let $\C$ be a finite tensor category, and $\M$ an exact $\C$-module category. A \textit{unimodular structure} on $\M$ consists of a module natural isomorphism
\begin{equation*}
\tfu\Colon\id_\subM\xRightarrow{\;\sim\;}D_{\C_\subM^*}
\end{equation*}
A module category endowed with a unimodular structure is called a \textit{unimodular module category}. 
\end{definition}
\begin{remark}
Notice that for a module category ${}_\subC\M$ to be unimodular $\C$ is not required to be unimodular. The dual tensor category $\C_\subM^*$ must be unimodular instead \cite[Rem.\ 3.7]{yadav2023unimodular}.
\end{remark}

Twisting module actions by perfect tensor functors that are \Frob allows for the transfer of unimodular structures on module categories, as we see next. 

%-----------------------------------

\begin{proposition}\label{prop:uni-tensor-Frob}
Let $F\colon\C\rightarrow\D$ be a perfect tensor functor and $\M$ a unimodular $\D$-module category. The $\C$-module category $\FM$ is unimodular if and only if $F$ is Frobenius with respect to $\M$.
In that case, the $\C$-module category $\FM$ is endowed with a unimodular structure via
\begin{equation*}
{}_{{}_F}\tfu\Colon\id_\subM=F^*\!\left(\id_\subM\right)\xrightarrow{\quad F^*\!(\,\tfu\,) \quad} F^*\!\left(\,D_{\D_{\!\!\M}^*}\,\right)\cong D_{\C_{\!\FM}^*}\,.
\end{equation*}
In particular, if $F$ is \Frobc then the $\C$-module category $\FM$ is unimodular.
\end{proposition}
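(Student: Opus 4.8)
The plan is to extract all the content from the isomorphism \eqref{eq:DIO_dual_cats} of Proposition~\ref{DIO_dual_cats}, namely
\[
\chi_{\subF}\act F^*\!\left(D^{-1}_{\D_\subM^*}\right) \cong D_{\C_{\!\FM}^*}^{-1}
\]
of $\C$-module functors, together with the reformulation \eqref{eq:RMO_F*} from Proposition~\ref{prop:chi-action}, which identifies $\chi_{\subF}\act-$ with the relative modular object $\chi_{F^*_\subM}$ of the dual tensor functor $F^*_\subM\colon\D^*_\subM\to\C^*_{{}_F\subM}$. First I would recall that, by Proposition~\ref{prop:chi-action}(i), $F$ is Frobenius with respect to $\M$ precisely when $\chi_{\subF}\act-\cong\id_{\FM}$ as $\C$-module functors.

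For the \textbf{backward direction}: assume $F$ is Frobenius with respect to $\M$, so fix a $\C$-module isomorphism $\beta\colon\chi_{\subF}\act-\xRightarrow{\sim}\id_{\FM}$. Then composing with $F^*(\tfu)\colon\id_\subM\xRightarrow{\sim}F^*(D_{\D_\subM^*})$ — here we use that $F^*$ is a functor between the relevant functor categories so it sends the module natural isomorphism $\tfu$ to a module natural isomorphism, and that $F^*(D_{\D_\subM^*})$ is the object $D_{\D_\subM^*}$ regarded as a $\C^*_{{}_F\subM}$-module endofunctor — produces the displayed composite
\[
{}_{{}_F}\tfu\colon\id_\subM=F^*(\id_\subM)\xrightarrow{F^*(\tfu)}F^*(D_{\D_\subM^*})\cong D_{\C_{\!\FM}^*},
\]
where the last isomorphism is obtained from \eqref{eq:DIO_dual_cats} after passing to inverses (note $\chi_{\subF}$ is invertible, Proposition~\ref{prop:relative_mod_obj_properties}(i), so $\chi_{\subF}\act F^*(D^{-1}_{\D_\subM^*})\cong D^{-1}_{\C_{\!\FM}^*}$ gives, upon taking quasi-inverses and using $\beta$, a $\C$-module isomorphism $F^*(D_{\D_\subM^*})\cong D_{\C_{\!\FM}^*}$). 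This ${}_{{}_F}\tfu$ is a module natural isomorphism $\id_\subM\xRightarrow{\sim}D_{\C_{\!\FM}^*}$, hence a unimodular structure on $\FM$, which is the asserted structure.

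For the \textbf{forward direction}: suppose $\FM$ is unimodular, so there is a module isomorphism $\id_{\FM}\cong D_{\C_{\!\FM}^*}$. Since $\M$ is unimodular, $\id_\subM\cong D_{\D_\subM^*}$, and applying $F^*$ gives $\id_{\FM}=F^*(\id_\subM)\cong F^*(D_{\D_\subM^*})$ as $\C$-module functors. Combining these with \eqref{eq:DIO_dual_cats} (in its inverted form $F^*(D_{\D_\subM^*})\cong\chi_{\subF}^{-1}\act\cdots$, or more cleanly: from \eqref{eq:DIO_dual_cats} and invertibility of $\chi_\subF$ we get $F^*(D^{-1}_{\D_\subM^*})\cong\chi_{\subF}^{-1}\act D^{-1}_{\C_{\!\FM}^*}$, and substituting the isomorphisms above for $D^{-1}_{\D_\subM^*}$ and $D^{-1}_{\C_{\!\FM}^*}$) yields $\id_{\FM}\cong\chi_{\subF}^{-1}\act\id_{\FM}$, i.e.\ $\chi_{\subF}\act-\cong\id_{\FM}$ as $\C$-module functors; by Proposition~\ref{prop:chi-action}(i) this says $F$ is Frobenius with respect to $\M$. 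Finally, the last sentence is immediate: if $F$ is \Frobc then by Proposition~\ref{prop:chi-action}(ii) $F$ is Frobenius with respect to every $\D$-module $\M$, so $\FM$ is unimodular.

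The main obstacle I anticipate is purely bookkeeping rather than conceptual: one must be careful that the various isomorphisms are genuinely \emph{module} natural isomorphisms in the correct module category ($\C$-module functors $\FM\to\FM$, equivalently objects of $\C^*_{{}_F\subM}$), and that $F^*$ applied to $\tfu$ lands where claimed — in particular that $F^*(D_{\D_\subM^*})$ is canonically the object one wants to compare with $D_{\C_{\!\FM}^*}$, which relies on the identifications already set up in \S\ref{subsec:rel-Serre} (Proposition~\ref{DIO_dual_cats}) being compatible with the $F^*$-functoriality used in Proposition~\ref{prop:FMSerre}. Tracking invertibility of $\chi_\subF$ through the tensor–over and action operations is the one place where a sign-of-the-exponent slip could occur, so I would state the inverted form of \eqref{eq:DIO_dual_cats} explicitly before chaining the isomorphisms.
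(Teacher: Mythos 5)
Your proposal is correct and follows essentially the same route as the paper: both directions are obtained by combining Proposition~\ref{prop:chi-action}(i) (triviality of $\chi_{\subF}\act-$ as the criterion for being Frobenius with respect to $\M$) with the isomorphism \eqref{eq:DIO_dual_cats} of Proposition~\ref{DIO_dual_cats}, and the final claim via Proposition~\ref{prop:chi-action}(ii). You merely spell out the inverse-passing bookkeeping that the paper's terse proof leaves implicit, and that bookkeeping is handled correctly.
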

\begin{proof}
Since, by assumption, $\chi_{\subF}\act-$ is trivial, the result follows from Proposition \ref{DIO_dual_cats}. The second part of the statement follows by Proposition \ref{prop:chi-action}(i).
\end{proof}

\begin{remark}
We call a tensor category $\C$ {\it weakly unimodular} iff there exists a unimodular tensor category in the Morita class of $\C$, or equivalently if $\C$ admits a unimodular module category. An example of a non weakly unimodular category is category of representations of the Taft Hopf algebra \cite{yadav2023unimodular}.
Proposition~\ref{prop:uni-tensor-Frob} shows that there are no \Frob functors from a finite tensor category that is not weakly unimodular into one that is weakly unimodular.
\end{remark}

%%%%%%%%%%%%%%%%%%%%%%%%%%%%%%%%%%%%%%%%%%%%%%%%%%%%%%%%%%%%%%%
%%%%%%%%%%%%%%%%%%%%%%%%%%%%%%%%%%%%%%%%%%%%%%%%%%%%%%%%%%%%%%%
%%%%%%%%%%%%%%%%%%%%%%%%%%%%%%%%%%%%%%%%%%%%%%%%%%%%%%%%%%%%%%%

\subsection{Pivotal structures and sphericality}

Let $\C$ be a pivotal finite tensor category and $\M$ an exact left $\C$-module category. The pivotal structure $\fp\colon\id_{\,\subC}\xRightarrow{\;\sim\;}(-)\dd$ turns the relative Serre functor of $\M$ into a $\C$-module functor via
\begin{equation*}
\Se_\subM^\supC(c\act m)\xrightarrow{\;\eqref{eq:Serre_twisted1}\;} c\dd\act \Se_\subM^\supC(m)\xrightarrow{\;\fp_c\act\id\;}
c\act \Se_\subM^\supC(m)
\end{equation*}
for $c\in\C$ and $m\in\M$.
\begin{definition}{
\cite[Def. 3.11]{shimizu2017relative}}
\label{def:pivmodule}
A \emph{pivotal structure} on an exact left $\C$-module category $\M$ is a module natural isomorphism 
\begin{equation*} 
\widetilde{\fp}\Colon\id_\subM\;{\xRightarrow{\;\sim\;}}\;\Se_\subM^\supC\,.
\end{equation*}
\noindent A module category together with a pivotal structure is called a \emph{pivotal module category}. 
\end{definition}

Just as a perfect functor $F$ preserves exactness of module categories, requiring 
$F$ to be \Frob ensures the transfer of pivotal structures on module categories.

\begin{proposition}\label{prop:piv-tensor-Frob}
Let $F\colon\C\longrightarrow\D$ be a pivotal tensor functor between pivotal finite tensor categories and $\M$ a pivotal $\D$-module category. The $\C$-module category $\FM$ is pivotal if and only if Frobenius with respect to $\M$.
In that case, the $\C$-module category $\FM$ is endowed with a  pivotal structure via
\begin{equation}\label{eq:piv_transported}
{}_{{}_F}\widetilde{\fp}\Colon\id_\subM= F^*\!\left(\id_\subM\right)\xrightarrow{\quad F^*\!(\,\widetilde{\fp}\,) \quad} F^*\!\left(\;\dS_{\subM}^\supD\;\right)\cong\dS_{\subFM}^\supC\,.
\end{equation}
In particular, if $F$ is \Frobc then the $\C$-module category $\FM$ is pivotal.
\end{proposition}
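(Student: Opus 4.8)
The plan is to follow the same blueprint as Proposition~\ref{prop:uni-tensor-Frob}: reduce the existence of a pivotal structure on $\FM$ to the triviality of the $\C$-module functor $\chi_\subF\act-$, which by Proposition~\ref{prop:chi-action}(i) is exactly the condition that $F$ is Frobenius with respect to $\M$. The key structural input is the formula $\dS_{\subFM}^\supC \cong \chi_\subF \act F^*(\dS_\subM^\supD)$ from Proposition~\ref{prop:FMSerre}, together with the fact that, when $F$ is pivotal, the pivotal structure on $F$ allows the twisted module functor $\dS^\supD_\subM$ (which lands in ${}_\subdD\M$) to be upgraded so that $F^*$ carries the honest $\C$-module functor structure on $\dS^\supD_\subM$ (obtained from $\fp^\supD$) to an honest $\C$-module functor structure on $F^*(\dS_\subM^\supD)$; compatibility of the pivotal structures of $\C$ and $\D$ under $F$ — equation~\eqref{eq:piv-functor} — is what makes the composite $\fp^\supC$-twist on $F^*(\dS^\supD_\subM)$ agree with the image under $F^*$ of the $\fp^\supD$-twist on $\dS^\supD_\subM$. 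This is the piece I expect to require the most care.

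First I would spell out the ``only if'' direction. Suppose $\FM$ is pivotal, i.e.\ there is a module natural isomorphism $\tnu\colon\id_{\FM}\xRightarrow{\sim}\dS_{\subFM}^\supC$. Using the identification $\dS_{\subFM}^\supC = \chi_\subF\act F^*(\dS_\subM^\supD)$ and the $\C$-module isomorphism $F^*(\tfp)\colon\id_\FM\xRightarrow{\sim} F^*(\dS_\subM^\supD)$ coming from the pivotal structure of $\M$ (which is a $\C$-module natural isomorphism precisely because $F$ is pivotal), the composite $\tnu\circ\big(\chi_\subF\act F^*(\tfp)^{-1}\big)^{-1}$ — more carefully, $\big(\chi_\subF\act F^*(\tfp)\big)\circ\tnu^{-1}$ read the right way — produces a $\C$-module natural isomorphism $\id_\FM\cong\chi_\subF\act-$, so Proposition~\ref{prop:chi-action}(i) gives that $F$ is Frobenius with respect to $\M$. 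Conversely, if $F$ is Frobenius with respect to $\M$, Proposition~\ref{prop:chi-action}(i) supplies a $\C$-module isomorphism $\kappa\colon\id_\FM\xRightarrow{\sim}\chi_\subF\act-$; then the composite in~\eqref{eq:piv_transported}, namely $F^*(\tfp)$ followed by the isomorphism $F^*(\dS_\subM^\supD)\cong\chi_\subF\act F^*(\dS_\subM^\supD)=\dS_{\subFM}^\supC$ induced by $\kappa$ applied to the functor $F^*(\dS_\subM^\supD)$, is a $\C$-module natural isomorphism $\id_\FM\xRightarrow{\sim}\dS_{\subFM}^\supC$, i.e.\ a pivotal structure on $\FM$.

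The main obstacle is verifying that~\eqref{eq:piv_transported} is genuinely a \emph{module} natural transformation for the module structures in play: one must check that the composite intertwines the $\C$-module structure of $\id_\FM$ with the twisted $\C$-module structure $\fs^F_{c,m}$ of $\dS_{\subFM}^\supC$ that was recorded explicitly after Proposition~\ref{prop:FMSerre}, corrected by $\fp^\supC$. This unwinds to a diagram chase combining: the pivotality~\eqref{eq:piv-functor} of $F$ (to match $F(\fp^\supC_c)$ against $\fp^\supD_{F(c)}$ through the duality comparison $\zeta$); the fact that $\tfp$ is a $\D$-module natural isomorphism for the $\fp^\supD$-twisted structure on $\dS^\supD_\subM$; and the definition of the $F$-half-braiding $\sigma$ entering $\fs^F$. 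For the last part of the statement, $\Frob$ implies Frobenius with respect to every $\M$ by Proposition~\ref{prop:chi-action}(ii), so $\FM$ is pivotal; this is immediate once the equivalence is established.
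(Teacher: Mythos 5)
Your proposal is correct and follows essentially the same route as the paper's (much terser) proof: triviality of $\chi_\subF\act-$ via Proposition~\ref{prop:chi-action}(i), the Serre functor formula of Proposition~\ref{prop:FMSerre} to transport $\tfp$ along $F^*$, and Proposition~\ref{prop:chi-action}(ii) for the $\otimes$-Frobenius case. You in fact supply more detail than the paper, in particular spelling out the ``only if'' direction by composing the pivotal structure of $\FM$ with $\bigl(\chi_\subF\act F^*(\tfp)\bigr)^{-1}$ and flagging where pivotality of $F$ (equation~\eqref{eq:piv-functor}) is needed, which the paper leaves implicit.
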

\begin{proof}
By assumption, $\chi_{\subF}\tr-$ is trivial. Thus, by Proposition \ref{prop:FMSerre}, the pivotal structure on $\M$ furnishes a pivotal structure on $\FM$ as desired. The second part of the statement follows by Proposition \ref{prop:chi-action}(i).
\end{proof}

\begin{remark}
In the setting of Proposition \ref{prop:piv-tensor-Frob}, the tensor functor $F^*\colon\D_\subM^*\longrightarrow \C_{\!\FM}^*$ is pivotal. The $2$-categorical extension of this fact is proven in Proposition~\ref{prop:pivotal-2-functor}.
\end{remark}

\begin{theorem}
\label{thm:piv_symm_Frob}
Let $\C$ and $\D$ be pivotal finite tensor categories and $F\colon\C\!\longrightarrow \!\D$ a pivotal tensor functor.
\begin{enumerate}[\rm (i)]
    \item Assume that $F$ is \Frobc then the lax monoidal functor $F^\ra\colon\D\!\longrightarrow \!\C$ preserves exact symmetric Frobenius algebras.
    \item If $\C$ is endowed with a braiding, $F$ has the structure of a central functor and $F$ is Frobenius, then $F^\ra(\unit)$ is a haploid commutative exact symmetric Frobenius algebra.
\end{enumerate}
\end{theorem}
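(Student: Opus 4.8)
\emph{Plan for (i).} The strategy is to reduce the statement to the single assertion that the right adjoint $F^{\ra}$ is a \emph{pivotal Frobenius monoidal} functor, and then combine this with Corollary~\ref{cor:Fra_algebra}. Indeed, a Frobenius monoidal functor carries a Frobenius algebra to a Frobenius algebra in the standard way \cite{day2008note} (product $F^{\ra}(m)\circ F^{\ra}_2$, coproduct $(F^{\ra})^2\circ F^{\ra}(\Delta)$, and correspondingly for the (co)unit), a \emph{pivotal} Frobenius monoidal functor additionally preserves the symmetry axiom, and since $F$ is \Frob\!\! it is perfect, so $F^{\ra}$ preserves exactness of algebras by Corollary~\ref{cor:Fra_algebra}. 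Putting these together, $F^{\ra}$ sends an exact symmetric Frobenius algebra $A\in\D$ to an exact symmetric Frobenius algebra $F^{\ra}(A)\in\C$.

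\emph{Carrying out (i).} The Frobenius monoidal structure on $F^{\ra}$ is Proposition~\ref{prop:tensor-Frob-ra-FrobMon}; in particular it comes with duality isomorphisms $\zeta_d\colon F^{\ra}(d\rv)\xsim F^{\ra}(d)\rv$. The essential step is to verify that, because $F$ satisfies the pivotal axiom \eqref{eq:piv-functor}, so does $F^{\ra}$: one expresses $\zeta$ for $F^{\ra}$ in terms of the (strong monoidal) duality isomorphisms of $F$ and the co/Hopf operators \eqref{eq:coHopf-operators}, and then transports the pivotal axiom of $F$ across the adjunction $F\dashv F^{\ra}$ by means of the triangle identities; this diagram chase is the main obstacle. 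Once $F^{\ra}$ is known to be pivotal Frobenius monoidal, the symmetry axiom for $F^{\ra}(A)$ follows from that for $A$ by applying $F^{\ra}$, inserting the $\zeta$'s, and invoking \eqref{eq:piv-functor} for $F^{\ra}$ together with naturality. A second route, relying on the module-category results of the paper, is available: a symmetric Frobenius structure on the exact algebra $A\in\D$ corresponds (by the standard dictionary between exact symmetric Frobenius algebras and pivotal module categories) to a pivotal structure on $\M:=\moD_A(\D)$; since the pivotal \Frob\!\! functor $F$ is Frobenius with respect to $\M$ by Proposition~\ref{prop:chi-action}(ii), Proposition~\ref{prop:piv-tensor-Frob} equips $\FM$ with a pivotal structure; and the $\C$-module equivalence $\FM\simeq\moD_{F^{\ra}(A)}(\C)$ from the proof of Corollary~\ref{cor:Fra_algebra} transports this pivotal structure to $\moD_{F^{\ra}(A)}(\C)$, where — as the equivalence matches the free $A$-module with the free $F^{\ra}(A)$-module — it reads back as a symmetric Frobenius structure on $F^{\ra}(A)$ itself; the care required in this route is precisely this anchoring at the free modules.

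\emph{Proof of (ii).} A central functor that is Frobenius is \Frob\!\! by Proposition~\ref{prop:central-tensor-Frobenius}, and $F$ is pivotal by hypothesis, so part (i) applies. The monoidal unit $\unit_\subD$ is an exact symmetric Frobenius algebra in $\D$ (trivial structure; exact because $\moD_{\unit_\subD}(\D)\simeq\D$ is the regular module category), so by (i) its image $F^{\ra}(\unit_\subD)$ is an exact symmetric Frobenius algebra in $\C$. Haploidness is immediate from the adjunction, $\Hom_\C(\unit_\subC,F^{\ra}(\unit_\subD))\cong\Hom_\D(F(\unit_\subC),\unit_\subD)\cong\Hom_\D(\unit_\subD,\unit_\subD)\cong\kk$. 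For commutativity, write the central structure as $F\cong U\circ\widetilde{F}$ with $\widetilde{F}\colon\C\longrightarrow\Z(\D)$ braided and $U\colon\Z(\D)\longrightarrow\D$ the forgetful functor, so that $F^{\ra}\cong\widetilde{F}^{\ra}\circ U^{\ra}$ as lax monoidal functors. Now $U$ is a central, hence dominant, tensor functor, so by \cite[Prop.~6.1]{bruguieres2011exact} there is a tensor equivalence $\D\simeq\moD_{U^{\ra}(\unit_\subD)}(\Z(\D))$; since the tensor product on the right is the relative tensor product over $U^{\ra}(\unit_\subD)$ inside the braided category $\Z(\D)$, this forces $U^{\ra}(\unit_\subD)$ to be a commutative algebra. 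As $\widetilde{F}^{\ra}$ is braided lax monoidal (being the right adjoint of the braided strong monoidal functor $\widetilde{F}$; cf.\ \cite{flake2024projection}), it preserves commutative algebras, so $F^{\ra}(\unit_\subD)\cong\widetilde{F}^{\ra}(U^{\ra}(\unit_\subD))$ is commutative; and since its multiplication is the one induced by the lax structure of $F^{\ra}$, this commutative structure underlies the symmetric Frobenius structure supplied by part (i). Beyond part (i), the only non-formal ingredients here are the commutativity of $U^{\ra}(\unit_\subD)$ in $\Z(\D)$ and the braidedness of $\widetilde{F}^{\ra}$.
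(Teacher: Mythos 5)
Your proposal is correct, and its ``second route'' for part (i) is precisely the paper's proof: the paper invokes the correspondence between symmetric Frobenius structures on an exact algebra $A\in\D$ and pivotal structures on $\moD_A(\D)$ (citing \cite[Thm.~6.4]{shimizu2024quasi}), gets exactness of $F^\ra(A)$ from Corollary~\ref{cor:Fra_algebra}, and transports the pivotal structure of $\moD_A(\D)$ to $\FM\simeq\moD_{F^\ra(A)}(\C)$ via Proposition~\ref{prop:piv-tensor-Frob}. Your primary route --- upgrading Proposition~\ref{prop:tensor-Frob-ra-FrobMon} to the statement that $F^\ra$ is a \emph{pivotal} Frobenius monoidal functor and then pushing the Frobenius algebra through --- is genuinely different, but as written it is only a sketch: the ``diagram chase'' establishing the pivotality axiom \eqref{eq:piv-functor} for $F^\ra$ is exactly the nontrivial content, and the paper notably does not prove it in this generality (it deduces pivotal Frobenius monoidality of $\Psi^\ra_\subN$ only later, in Corollary~\ref{cor:uNatsymm}(iii), as a consequence of part (ii) together with an external result), so route one can only stand alone if that verification is actually carried out. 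For part (ii) your argument matches the paper's in substance (Proposition~\ref{prop:central-tensor-Frobenius} plus part (i); the paper then quotes \cite[Prop.~8.8.8]{etingof2016tensor}, which applied directly to the central functor $F$ gives commutativity of $F^\ra(\unit_\subD)$). Your haploidness computation is fine, but your justification that $U^\ra(\unit_\subD)$ is commutative is not: arguing that $\D\simeq\moD_{U^\ra(\unit_\subD)}(\Z(\D))$ with tensor product over $U^\ra(\unit_\subD)$ ``forces'' commutativity is circular, since endowing the category of modules over an algebra in a braided category with that monoidal structure already presupposes the algebra is commutative; replace this by the standard fact that the adjoint algebra of a central functor is commutative (i.e.\ quote \cite[Prop.~8.8.8]{etingof2016tensor} for $U$, or simply for $F$ itself, which also makes the detour through $\widetilde{F}^\ra$ unnecessary).
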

\begin{proof}
Given an exact algebra $A\in\D$, pivotal structures on the module category $\M=\moD_A(\D)$ correspond to symmetric Frobenius structures on $A$ \cite[Thm.\ 6.4]{shimizu2024quasi}. From Corollary \ref{cor:Fra_algebra} we know that $F^\ra$ preserves exact algebras, thus $F^\ra(A)$ is exact. Moreover, since $F$ is \Frobc Proposition \ref{prop:piv-tensor-Frob} implies that $\FM=\moD_{\!F^\ra(A)}(\C)$ inherits a pivotal structure, which in turn induces a symmetric Frobenius structure on $F^\ra(A)$.

For assertion (ii), observe first that by Proposition~\ref{prop:central-tensor-Frobenius}, the fact that $F$ is Frobenius implies it is indeed \Frob\!\!. Consequently, the claim follows from part (i) and \cite[Prop.\ 8.8.8]{etingof2016tensor}.
\end{proof}

\begin{remark}
A non-degenerate ribbon finite tensor category is called a modular tensor category (MTC).
By \cite[Thm.~5.21(d)]{shimizu2024commutative}, given an MTC $\C$ and a haploid commutative exact symmetric Frobenius algebra $A$ in $\C$, the category of local modules $\C_A^{\loc}$ is also an MTC. Thus, the above result shows that central tensor functors $F\colon\C\rightarrow\D$ that are \Frob and pivotal can be used to obtain new MTCs. 
Let $A=F^{\ra}(\unit_\subD)$ and denote by $G:\C\rightarrow\mathrm{im}(F) \cong \mathrm{mod}_A(\C)$ the restriction of $F$ to its image. Then, $G$ is also central and we can describe the category $\C_A^\loc$ as the M\"uger centralizer $\Z_{(2)}(\C \subset \Z(\mathrm{im}(F))$.
\end{remark}

\begin{remark}
In \cite{shimizu2024quasi}, the notions of relative Serre functors and pivotal structures are extended to module categories that are not necessarily exact. We expect Propositions~\ref{DIO_dual_cats} and \ref{prop:piv-tensor-Frob} to generalize to this setting.
Moreover, by \cite[Thm.\ 6.4]{shimizu2024quasi}, symmetric Frobenius algebra structures on a (not necessarily exact) algebra $A$ in $\C$ are in correspondence with pivotal structures on the $\C$-module category $\moD_A(\C)$. Using this, Theorem~\ref{thm:piv_symm_Frob} can be generalized to non-exact algebras.
\end{remark}

The previous results have a $2$-categorical description. In order to fully reformulate them, we need additional structure: given a $2$-category $\mathscr{B}$, the existence of dualities  for $1$-morphisms extends to a pseudo-functor
  \begin{equation*}
  \begin{aligned}[c]
  (-)\rv\Colon \mathscr{B} & \,\longrightarrow\, \mathscr{B}^{\,\text{op},\text{op}},
  \\
  \hspace*{2.5em} x & \,\longmapsto\, x \,,
  \\
  \hspace*{2.5em} (a\colon x\,{\to}\, y) & \,\longmapsto\, (a\rv\colon y\,{\to}\, x) \,.
  \end{aligned}
  \end{equation*}
A \emph{pivotal structure} on a $2$-category $\mathscr{B}$ with dualities is a 
pseudo-natural equivalence 
  \begin{equation}\label{pivotal_st_bicategory}
  \textbf{P} \Colon \id_\mathscr{B} \xRightarrow{\;\sim~\,} (-){\dd}
  \,.
  \end{equation}
A $2$-category together with a pivotal structure is called a \emph{pivotal $2$-category}.

\begin{definition} 
Let $\mathscr{B}_1$ and $\mathscr{B}_2$ be pivotal $2$-categories. A \textit{pivotal pseudo-functor} consists of a pseudo-functor $\mathscr{F}\colon\mathscr{B}_1\longrightarrow\mathscr{B}_2$ admitting an invertible modification filling the diagram:
\begin{equation}\label{eq:pivotal_str_pseudo-functor}
\begin{tikzcd}[column sep=3em,row sep=1em]
    &&|[alias=Z]|\mathscr{F}\circ (-){\dd} 
    \ar[dd,"\sim", Rightarrow]\\
\mathscr{F}\ar[rru,"\id_\mathscr{F}\,\circ\, \textbf{P}^{\mathscr{B}_1}",sloped, Rightarrow]\ar[rrd,swap,"\textbf{P}^{\mathscr{B}_2}\,\circ\,\id_\mathscr{F}",sloped, Rightarrow,""{name=U,below}]&&\\
    &&(-){\dd}\circ \mathscr{F}
        \arrow[Rightarrow, from=Z,to=U,  "\sim", shorten=5mm]
\end{tikzcd}
\end{equation}
\end{definition}

Denote by $\textbf{Mod}^{\text{piv}\!}(\C)$ 
the $2$-category that has pivotal $\C$-module categories as objects, $\C$-module 
functors as $1$-morphisms and module natural transformations as $2$-morphisms. 
Pivotal modules are exact and therefore every module functor between pivotal modules has adjoints.
These turn $\textbf{Mod}^{\text{piv}\!}(\C)$ into a $2$-category with dualities 
for $1$-morphisms. Moreover, $\textbf{Mod}^{\text{piv}\!}(\C)$ is endowed with a pivotal structure \eqref{pivotal_st_bicategory} with components given for any object $\M$ by $\textbf{P}_\subM\coloneqq\id_\subM$ and for a $1$-morphism
$H\colon {}_{\subC}\N_1 \longrightarrow {}_{\subC}\N_2$ by
\begin{equation*}
  \textbf{P}_{\!H}^{}\Colon H
  \xRightarrow{\;\id\circ\widetilde{\fp}_1~} H \circ \Se_{\subN_1}^\supC
  \xRightarrow{\;\eqref{eq:Serre_twisted}~} \Se_{\subN_2}^\supC \circ H^\lla
  \xRightarrow{\;(\widetilde{\fp}_2)^{-1}\circ\id~\,} H^\lla ,
  \end{equation*}
where $\widetilde{\fp}_i$ are the pivotal structures of the module categories $\N_i$.

\begin{proposition}\label{prop:pivotal-2-functor}
A pivotal tensor functor $F\colon\C\!\longrightarrow \!\D$ between pivotal finite tensor categories, that is \Frob\!\!, induces a pivotal pseudo-functor
\begin{equation}\label{eq:piv_pseudo-functor}
F^*\Colon\mathbf{Mod}^{\rm{piv}}(\D)\longrightarrow\mathbf{Mod}^{\rm{piv}}(\C),\quad (\M,\widetilde{\fp})\longmapsto (\FM,{}_{{}_F}\widetilde{\fp}),\quad H\longmapsto F^*(H)
\end{equation}
between their pivotal $2$-categories of pivotal module categories, where ${}_{{}_F}\tfp$ is given by \eqref{eq:piv_transported}.
\end{proposition}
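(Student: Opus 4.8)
The plan is to verify in turn that $F^*$ is well defined on objects, that it is a (strict) pseudo-functor of $2$-categories with dualities, and finally that it transports the pivotal structure of $\mathbf{Mod}^{\rm{piv}}(\D)$ (write $\textbf{P}^{\D}$) to that of $\mathbf{Mod}^{\rm{piv}}(\C)$ (write $\textbf{P}^{\C}$); only the last point is substantial. On objects, since $F$ is $\otimes$-Frobenius it is Frobenius with respect to every $\D$-module category by Proposition~\ref{prop:chi-action}(ii), so Proposition~\ref{prop:piv-tensor-Frob} shows that $(\M,\widetilde{\fp})$ is sent to a genuine pivotal $\C$-module category $(\FM,{}_{{}_F}\widetilde{\fp})$ with ${}_{{}_F}\widetilde{\fp}$ as in \eqref{eq:piv_transported}; pivotality of $F$ is essential here, as it is exactly what makes ${}_{{}_F}\widetilde{\fp}$ a \emph{module} natural isomorphism. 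On $1$- and $2$-morphisms, $F^*$ is the assignment \eqref{eq:F*_module_functor}, which we have already observed is strictly functorial, $F^*(H_2\circ H_1)=F^*(H_2)\circ F^*(H_1)$, and compatible with vertical and horizontal composition of module natural transformations; thus $F^*$ is a strict $2$-functor. In particular it preserves adjunctions: the adjunction $H^{\la}\dashv H$ in $\mathbf{Mod}^{\rm{ex}}(\D)$ is carried to $F^*(H^{\la})\dashv F^*(H)$, producing canonical isomorphisms $F^*(H)^{\la}\cong F^*(H^{\la})$ and, by iteration, $F^*(H)^{\lla}\cong F^*(H^{\lla})$. These supply the vertical equivalence in the triangle \eqref{eq:pivotal_str_pseudo-functor}, so the task reduces to exhibiting, for each $1$-morphism $H$, an invertible modification identifying $F^*(\textbf{P}^{\D}_H)$ with $\textbf{P}^{\C}_{F^*(H)}$ modulo those identifications.

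To this end I would first fix, as Proposition~\ref{prop:FMSerre} permits, the relative Serre functor $\dS_{\subFM}^\supC = \chi_{\subF}\tr F^*(\dS_{\subM}^\supD)$ on each twisted module category. Since $F$ is $\otimes$-Frobenius, Proposition~\ref{prop:chi-action}(i) and (ii) (equivalently, Lemma~\ref{lem:chi_otimes_bimod} with Theorem~\ref{thm:tFrob-char}(iii)) yield a canonical trivialization $\chi_{\subF}\tr-\cong\id_{\FM}$, which identifies $\dS_{\subFM}^\supC \cong F^*(\dS_{\subM}^\supD)$ and turns ${}_{{}_F}\widetilde{\fp}$ into simply $F^*(\widetilde{\fp})$. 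The crux is then to check that, under these identifications, the relative Serre conjugator $\fs_{F^*(H)}$ of \eqref{eq:Serre_twisted} agrees with the image $F^*(\fs_H)$ of the conjugator of $H$. I expect this to follow from the uniqueness of the relative Serre functor and of its coherence datum \cite{fuchs2020eilenberg,shimizu2023relative}: $\fs_H$ is the unique isomorphism compatible with the defining isomorphism $\phi$ of internal Homs, $F^*$ is compatible with internal Homs via \eqref{eq:twisted_inner_hom}, and $F^*$ intertwines the double-dual twists appearing in \eqref{eq:Serre_twisted1} --- this last point being exactly where the pivotality \eqref{eq:piv-functor} of $F$, together with the identification ${}_\subF({}_\subdD\M)\simeq{}_\subdD({}_\subF\M)$ via $\xi$, enters --- so the entire conjugator datum is transported along $F^*$.

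Granting this, the three constituent $2$-cells of $\textbf{P}^{\C}_{F^*(H)}$ --- namely $\id\circ{}_{{}_F}\widetilde{\fp}$, the conjugator \eqref{eq:Serre_twisted}, and $({}_{{}_F}\widetilde{\fp})^{-1}\circ\id$ --- are precisely the images under the $2$-functor $F^*$ of the three constituent $2$-cells of $\textbf{P}^{\D}_H$; since $F^*$ preserves vertical and horizontal composition of $2$-cells, this gives $\textbf{P}^{\C}_{F^*(H)} = F^*(\textbf{P}^{\D}_H)$ under the canonical identifications, i.e.\ a strictly commuting invertible modification, invertible because each component is built from invertible $2$-cells. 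Finally, the modification and coherence axioms of a pivotal pseudo-functor reduce, via the strictness of $F^*$ and these identifications, to the corresponding coherence axioms for $\textbf{P}^{\D}$ inside $\mathbf{Mod}^{\rm{piv}}(\D)$, which hold by construction. The main obstacle is the comparison of the conjugators $\fs_{F^*(H)}$ and $F^*(\fs_H)$ in the middle step --- the only place where pivotality of $F$ is genuinely used beyond Proposition~\ref{prop:piv-tensor-Frob} --- while the remainder is bookkeeping with the strict $2$-functor $F^*$ and the trivialization of $\chi_{\subF}\tr-$.
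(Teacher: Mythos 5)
Your proposal is correct and follows essentially the same route as the paper: well-definedness on objects via Propositions~\ref{prop:chi-action} and~\ref{prop:piv-tensor-Frob}, strict pseudo-functoriality from \eqref{eq:ex_pseudo-functor}, and the pivotality check reduced to comparing $\fs_{F^*(H)}$ with $F^*(\fs_H)$ under the identification \eqref{eq:serre_FM}. The one step you hedge (``I expect this to follow from uniqueness\dots'') is precisely the middle square of the paper's diagram, which the paper dispatches by noting that \eqref{eq:serre_FM} is an isomorphism of \emph{twisted} module functors, i.e.\ compatible with the Serre coherence data---the same uniqueness/compatibility argument you sketch.
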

\begin{proof}
From \eqref{eq:ex_pseudo-functor} we have that $F$-twisting of module categories extends to a pseudo-functor. 
By Proposition \ref{prop:piv-tensor-Frob} pivotal structures on $\D$-module categories induce pivotal structures on the associated $F$-twisted $\C$-module categories, and thus the assignment in \eqref{eq:piv_pseudo-functor} is well-defined. It remains to be checked that the pseudo-functor $F^*$ is pivotal. First, on objects we have that $\textbf{P}^{\mathscr{B}_1}_\subM=\id_\subM=\textbf{P}^{\mathscr{B}_2}_\subM$. Now, given a module functor $H\colon {}_{\subD}\N_1 \longrightarrow {}_{\subD}\N_2$ the diagram
\begin{equation*}
\begin{tikzcd}
&  F^*(H) \circ F^*(\Se_{\subN_1}^\supD)
  \ar[r,"\eqref{eq:Serre_twisted}",Rightarrow]
  \ar[dd,Rightarrow,swap,"\eqref{eq:serre_FM}"]&
  F^*(\Se_{\subN_2}^\supD) \circ F^*(H^\lla)\ar[dd,"\eqref{eq:serre_FM}",Rightarrow]
  \ar[rd,"F^*(\tfp_2^{-1}\circ\id)",Rightarrow]&
 \\
    F^*(H)
  \ar[ru,"F^*(\id\circ\tfp_1)",Rightarrow]\ar[rd,"\id\circ {}_{{}_F}\tfp_1",Rightarrow,swap] &&&  F^*(H^\lla)  \\
&  F^*(H) \circ \Se_{\FN_1}^\supC
  \ar[r,"\eqref{eq:Serre_twisted}",Rightarrow,swap]&
  \Se_{\FN_2}^\supC \circ F^*(H^\lla)
  \ar[ru,"{}_{{}_F}\widetilde{\fp}_{2}^{-1}\circ\id",swap,Rightarrow]&
\end{tikzcd}
\end{equation*}
commutes by the definition of the transported pivotal structures on $\FN_{\!1}$ and $\FN_{\!2}$ and the fact that \eqref{eq:serre_FM} is an isomorphism of twisted module functors. It follows that the diagram \eqref{eq:pivotal_str_pseudo-functor} commutes on the nose (i.e. the identity modification is a filler), and thus \eqref{eq:piv_pseudo-functor} is pivotal.
\end{proof}

We now turn to investigate the preservation of sphericality. Given a unimodular finite tensor category $\C$, a pivotal structure $\fp\colon\id_{\,\subC}\xRightarrow{\;\sim\;}(-)\dd$ on $\C$ is called \textit{spherical} iff $\fp\dd\circ \fp=\mathcal{R}_\C$ as defined in \cite[Def.\ 3.5.2]{douglas2018dualizable}. In analogy, the sphericality property for a pivotal module category can be defined in terms of the module Radford isomorphism \eqref{eq:Radford_mod}.
\begin{definition}\cite[Def.\ 5.22]{spherical2025}\label{spherical_mod}~\\
Let $\M$ be a pivotal module category over a $($unimodular$)$ spherical finite tensor category $\D$. Consider further an isomorphism $\tfu_{\subD}\colon\unit\xsim D_\subD^{-1}$ and a unimodular structure $\tfu_{\subM}\colon\id_{\subM}
\xRightarrow{\;\sim\;}D_{\D_\subM^*}^{-1}$ on $\M$.\\
The pivotal module ${}_\subD\M$ is called $(\tfu_{\subD},\tfu_{\subM})$-\emph{spherical} iff the diagram 
\begin{equation*}
  \begin{tikzcd}[row sep=1.5em,column sep=2.5em]
    \Se^\supD_\subM  \ar[rr,"\overline{\mathcal{R}}_\subM",Rightarrow]  &~
    & \overline{\Se}^\supD_\subM\ar[dl,"\widetilde{\fp}",Rightarrow] \\
    ~& \id_{\!\scriptscriptstyle \M}  \ar[ul,"\widetilde{\fp}",Rightarrow]&~
  \end{tikzcd}
\end{equation*}
commutes, where $\tfp$ is the pivotal structure of $\M$, and $\overline{\mathcal{R}}_{\!\scriptscriptstyle \M}$ is the composition
\begin{equation}
\Se^\supD_\subM\xRightarrow{\;\tfu_\subD\tr\id\;}
D_\subD^{-1}\tr\Se^\supD_\subM\xRightarrow{\mathcal{R}_{\subM}\;~}
D_{\D_{\!\M}^*}^{-1}\circ\overline{\Se}^{\supD}_\subM\xLeftarrow{\;~\tfu_\subM\circ\id\;~}\overline{\Se}^{\supD}_\subM\,,
\end{equation}
and $\mathcal{R}_{\subM}$ is the module Radford isomorphism \eqref{eq:Radford_mod}.
\end{definition}

The following technical lemma shows that the procedure of $F$-twisting module categories preserves the module Radford isomorphism.

\begin{lemma}\label{lem:RadfordM_pres}
Let $F\colon \C\longrightarrow\D$ be a perfect tensor functor and $\M$ an exact $\D$-module category. Then the following diagram commutes.
\begin{equation*}
\begin{tikzcd}[row sep=2em,column sep=5.5em]
D_\subC^{-1}\trF \;\Se^\supC_\subFM\ar[d,"\mathcal{R}_\subFM",swap,Rightarrow]\ar[r,"\eqref{eq:relative-distinguished-iso}\tr\eqref{eq:serre_FM}",Rightarrow]& F^*(D_\subD^{-1}\tr \Se^\supD_\subM)
\ar[d,Rightarrow,"F^*(\mathcal{R}_\subM)"]\\
D_{\C_{\subFM}^*}^{-1}\circ \overline{\Se}^\supC_\subFM
\ar[r,"\eqref{eq:DIO_dual_cats}\circ\eqref{eq:serre_FM}",swap,Rightarrow]
&
F^*\left(D_{\D_\subM^*}^{-1}\circ \Se^\supD_\subM\right)
\end{tikzcd}
\end{equation*}
\end{lemma}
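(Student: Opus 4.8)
The plan is to expand both vertical arrows of the square in terms of the fixed representatives of the relative Serre functors and distinguished invertible objects, and then to reduce the resulting (larger) diagram to two pieces: one obtained by applying $F^*$ to an analogous diagram valid inside $\M$, and one ``$\chi_{\subF}$-correction'' square which is a reformulation of Proposition~\ref{prop:rel-mod-obj}.

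First I would fix, once and for all, $\dS^\supC_\subFM=\chi_{\subF}\act F^*(\dS^\supD_\subM)$ with the twisted module structure $\fs^F$ recorded after Proposition~\ref{prop:FMSerre}, the isomorphism $D_{\C_\subFM^*}^{-1}\cong\chi_{\subF}\act F^*(D_{\D_\subM^*}^{-1})$ via the explicit chain from the proof of Proposition~\ref{DIO_dual_cats}, and $\dN_\subFM=F^*(\dN_\subM)$ from Lemma~\ref{lem:FMNak}. Next I would unwind the module Radford isomorphism of Theorem~\ref{thm:mod_Radford}: by its construction in \cite{spherical2025}, $\mathcal{R}_\subM$ is assembled from the Nakayama--Serre relations \eqref{eq:Nakayama_Serre}, the twisted module structure \eqref{eq:module_Nakayama_twisted_functor} of the Nakayama functor, and the Radford isomorphism \eqref{eq:Radford-C} of the ambient tensor category evaluated at its distinguished invertible object. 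Substituting the three descriptions above into $\mathcal{R}_\subFM$ and into the two horizontal arrows of the square, the diagram refines into a larger one which I would split as a composite of: (a) the image under $F^*$ of the corresponding diagram for $\M$; and (b) a square built only from $\chi_{\subF}$, its $F$-half-braiding $\sigma$ from \eqref{eq:chi_F-braiding}, the isomorphism \eqref{eq:relative-distinguished-iso}, and the Radford isomorphisms $\mathcal{R}_\subC$ and $\mathcal{R}_\subD$.

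Piece (a) will commute because $F^*$ is functorial and intertwines all the atomic isomorphisms involved: it carries the Nakayama twisted structure \eqref{eq:module_Nakayama_twisted_functor} of $\M$ to that of $\FM$ (which is precisely how Lemma~\ref{lem:FMNak} was proved), and the identities \eqref{eq:Nakayama_Serre} and \eqref{eq:Radford_mod} are compatible with $F^*$-twisting by the way the representatives were fixed. Piece (b) records the discrepancy between the $\C$-Radford isomorphism appearing inside $\mathcal{R}_\subFM$ and the $\D$-Radford isomorphism appearing inside $F^*(\mathcal{R}_\subM)$, intertwined through the isomorphism $F(D_\subC)\xsim\chi_{\subF}\otimes D_\subD$ of \eqref{eq:relative-distinguished-iso}; this is exactly the commuting diagram \eqref{eq:chi-central-structre} of Proposition~\ref{prop:rel-mod-obj}. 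As in the proof of that proposition, I would use naturality of $\sigma$ and monoidality of the Radford isomorphisms to reconcile the two orderings $\chi_{\subF}\otimes D_\subD$ and $D_\subD\otimes\chi_{\subF}$ consistently with the conventions fixed in $\fs^F$ and in the proof of Proposition~\ref{DIO_dual_cats}.

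The conceptual content is thus contained entirely in Proposition~\ref{prop:rel-mod-obj}; the main obstacle will be bookkeeping --- tracking the iterated double duals that appear in the relative Serre functors and in $\mathcal{R}_\subC$, $\mathcal{R}_\subD$, together with the coherence isomorphisms $\xi_c\colon F(c\rv\rv)\xsim F(c)\rv\rv$, and checking that the representatives fixed in Propositions~\ref{prop:FMSerre} and~\ref{DIO_dual_cats} are inserted compatibly throughout. A shortcut worth trying first, to trim this bookkeeping: by \eqref{eq:FDM=FM} one has $\FM\simeq\FD\btD\M$ as $\C$-module categories, so it would suffice to verify the statement for the regular module $\M=\D$ --- where $\chi_{\subF}\otimes-$ is the $\C$-bimodule endofunctor of $\FDF$ and the square sits closest to \eqref{eq:chi-central-structre} --- and then transport along $\btD$, using that the module Radford isomorphism is compatible with the relative Deligne product.
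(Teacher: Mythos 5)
Your proposal is correct and follows essentially the same route as the paper: the printed proof simply inserts the Nakayama functor $\dN_{\subFM}=F^*(\dN_\subM)$ (Lemma~\ref{lem:FMNak}) as a hub in the middle of the square and notes that every resulting triangle commutes by the very construction of \eqref{eq:serre_FM}, \eqref{eq:DIO_dual_cats} and the module Radford isomorphisms, which is exactly your unwinding strategy with the same fixed representatives. The only difference is packaging: the compatibility you would isolate as a separate ``$\chi_{\subF}$-correction square'' via Proposition~\ref{prop:rel-mod-obj} is in the paper already absorbed into the definition of \eqref{eq:serre_FM} through \eqref{eq:chi-D-relation}, so no additional appeal to \eqref{eq:chi-central-structre} is needed at this stage.
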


\begin{proof}
The commutativity follows after noticing that we can rewrite the diagram as
\begin{equation}
\begin{tikzcd}[row sep=2em,column sep=5.5em]
F(D_\subC^{-1})\tr \;\Se^\supC_\subFM\ar[dd,"\mathcal{R}_\subFM",swap,Rightarrow]
\ar[dr,"\cong",Rightarrow,swap]
\ar[r,"\eqref{eq:serre_FM}",Rightarrow]&F(D_\subC^{-1})\otimes \chi_{\subF}^{-1}\tr F^*(\Se^\supD_\subM) \ar[r,"\eqref{eq:relative-distinguished-iso}",Rightarrow]
& F^*(D_\subD^{-1}\tr \Se^\supD_\subM)
\ar[dd,Rightarrow,"F^*(\mathcal{R}_\subM)"]\\
&\dN_{\!\FM}=F^*(\dN_\subM)\ar[ur,"\cong",Rightarrow,swap]\ar[dr,"\cong",Rightarrow]&\\
D_{\C_{\subFM}^*}^{-1}\circ \overline{\Se}^\supC_\subFM
\ar[ur,"\cong",Rightarrow]
\ar[r,"\eqref{eq:serre_FM}",swap,Rightarrow]
&D_{\C_{\subFM}^*}^{-1}\circ\chi_{\subF}^{-1}\tr F^*\!\left(\;\overline{\Se}_{\subM}^\supD\;\right)\ar[r,"\eqref{eq:DIO_dual_cats}",swap,Rightarrow]
&
F^*\left(D_{\D_\subM^*}^{-1}\circ \Se^\supD_\subM\right)
\end{tikzcd}
\end{equation}
where each of the triangles commutes due to the definition of the isomorphisms involved.
\end{proof}

\begin{proposition} \label{prop:sph-Frob}
Let $F\colon\C\to\D$ be a pivotal tensor functor between $($unimodular$)$ spherical tensor categories such that $F$ is \Frobc  and let $\M$ be a pivotal $\D$-module. Given a trivialization $\tfu_{\subC}:\unit\xsim D_\subC$ and a unimodular structure $\tfu_{\subM}$ on $\M$, if $\M$ is $(F(\tfu_{\subC}),\tfu_{\subM})$-spherical, then $\FM$ is $(\tfu_{\subC},{}_{{}_F}\!\tfu_\subM)$-spherical.
\end{proposition}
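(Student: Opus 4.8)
The plan is to obtain the sphericality of $\FM$ by transporting the whole defining diagram of Definition~\ref{spherical_mod} along the pseudo-functor $F^*$. First I would isolate the two triangles to be compared: by Definition~\ref{spherical_mod}, the assertion that $\FM$ is $(\tfu_{\subC},{}_{{}_F}\!\tfu_\subM)$-spherical is the commutativity of one triangle of module natural transformations with vertices $\dS^\supC_\subFM$, $\lSe^\supC_\subFM$ and $\id_\subFM$, whose edges are two copies of the transported pivotal structure ${}_{{}_F}\widetilde{\fp}$ together with the transported module Radford map $\overline{\mathcal{R}}_\subFM$; the hypothesis is that the analogous triangle for $\M$, with vertices $\dS^\supD_\subM$, $\lSe^\supD_\subM$, $\id_\subM$ and edges $\widetilde{\fp}$, $\widetilde{\fp}$, $\overline{\mathcal{R}}_\subM$, commutes. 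Since $F$ is \Frobc Proposition~\ref{prop:chi-action}(ii) gives $\chi_{\subF}\tr-\cong\id_\subFM$, so Propositions~\ref{prop:piv-tensor-Frob} and~\ref{prop:uni-tensor-Frob} really do produce the transported structures ${}_{{}_F}\widetilde{\fp}$ and ${}_{{}_F}\!\tfu_\subM$, while Proposition~\ref{prop:FMSerre} provides an isomorphism $\dS^\supC_\subFM\cong F^*(\dS^\supD_\subM)$ of twisted $\C$-module functors, hence also a dual isomorphism $\lSe^\supC_\subFM\cong F^*(\lSe^\supD_\subM)$. I would fix these two isomorphisms once and for all.

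Next I would apply the pivotal pseudo-functor $F^*$ of~\eqref{eq:piv_pseudo-functor} (which exists precisely because $F$ is pivotal and \Frobc see Proposition~\ref{prop:pivotal-2-functor}) to the sphericality triangle of $\M$. Since $F^*$ preserves composition and whiskering of module functors and satisfies $F^*(\id_\subM)=\id_\subFM$, its image is again a commuting triangle, now with vertices $F^*(\dS^\supD_\subM)$, $F^*(\lSe^\supD_\subM)$, $\id_\subFM$ and edges $F^*(\widetilde{\fp})$, $F^*(\widetilde{\fp})$, $F^*(\overline{\mathcal{R}}_\subM)$. It then remains to check that, transported along $\dS^\supC_\subFM\cong F^*(\dS^\supD_\subM)$ and $\lSe^\supC_\subFM\cong F^*(\lSe^\supD_\subM)$, this image triangle coincides with the sphericality triangle of $\FM$. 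For the two $\widetilde{\fp}$-edges this is exactly the definition~\eqref{eq:piv_transported} of ${}_{{}_F}\widetilde{\fp}$. For the Radford edge I would expand $\overline{\mathcal{R}}_\subM$ into its three constituent $2$-cells (namely the $F(\tfu_{\subC})$-edge, the module Radford isomorphism $\mathcal{R}_\subM$ of~\eqref{eq:Radford_mod}, and the $\tfu_\subM$-edge) and match them one at a time: Lemma~\ref{lem:RadfordM_pres} identifies $F^*(\mathcal{R}_\subM)$, after conjugation by~\eqref{eq:relative-distinguished-iso},~\eqref{eq:serre_FM} and~\eqref{eq:DIO_dual_cats}, with $\mathcal{R}_\subFM$; the $F(\tfu_{\subC})$-edge turns into the $\tfu_{\subC}$-edge because the $F$-twisted action of $\tfu_{\subC}$ is, by the definition of $\trF$, the action of $F(\tfu_{\subC})$ composed with~\eqref{eq:relative-distinguished-iso} and the isomorphism $\chi_{\subF}\cong\unit$ (which holds since $F$ is \Frob\!\!); and the $\tfu_\subM$-edge turns into the ${}_{{}_F}\!\tfu_\subM$-edge by the very construction of ${}_{{}_F}\!\tfu_\subM$ in Proposition~\ref{prop:uni-tensor-Frob}. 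Pasting these three identifications along the vertices they share reconstitutes $\overline{\mathcal{R}}_\subFM$, and this finishes the argument.

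I expect the main obstacle to be this last step of bookkeeping. One must check that the three isomorphisms~\eqref{eq:relative-distinguished-iso},~\eqref{eq:serre_FM} and~\eqref{eq:DIO_dual_cats} used to match the pieces of $\overline{\mathcal{R}}_\subM$ agree at their common vertices, so that the corresponding three squares genuinely tile the required region, and that the inverse and double-dual conventions for $D_\subC$, $D_\subD$ and $D_{\D_\subM^*}$ are tracked correctly through~\eqref{eq:relative-distinguished-iso}; in particular one should be explicit about the fact that ``$(F(\tfu_{\subC}),\tfu_\subM)$-spherical'' refers to $F(\tfu_{\subC})$ transported to a trivialization of $D_\subD^{-1}$ via~\eqref{eq:relative-distinguished-iso} and $\chi_{\subF}\cong\unit$. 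All of these compatibilities are already packaged in Lemma~\ref{lem:RadfordM_pres} and in the constructions underlying Propositions~\ref{prop:piv-tensor-Frob} and~\ref{prop:uni-tensor-Frob}, so apart from pasting diagrams there should be no genuinely new computation.
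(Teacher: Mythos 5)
Your proposal is correct and follows essentially the same route as the paper: the paper also transports the sphericality triangle of $\M$ along $F^*$, identifies the Serre functors via \eqref{eq:serre_FM}, matches the Radford edge using Lemma~\ref{lem:RadfordM_pres} together with \eqref{eq:relative-distinguished-iso} and \eqref{eq:DIO_dual_cats}, and matches the remaining edges by the definitions of ${}_{{}_F}\widetilde{\fp}$, ${}_{{}_F}\!\tfu_\subM$ and of $F(\tfu_{\subC})$ as a trivialization of $D_\subD$, pasting everything into one commuting diagram whose remaining squares commute by naturality.
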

\begin{proof}
From Proposition \ref{prop:uni-tensor-Frob} it follows that $F^*(\tfu_\subM)$ is a unimodular structure on $\FM$. The assertion holds due to the commutativity of the following diagram.
\begin{equation*}
\begin{tikzcd}[row sep=2em,column sep=4.2em]
\Se^\supC_\subFM\ar[d,"\eqref{eq:serre_FM}",Rightarrow,swap]\ar[r,"\tfu_\subC\tr\id",Rightarrow]
&D_\subC^{-1}\trF \Se^\supC_\subFM\ar[d,"\eqref{eq:serre_FM}",Rightarrow]\ar[r,"\mathcal{R}_\subFM",Rightarrow]
& D_{\C_{\subFM}^*}^{-1}\circ \overline{\Se}^\supC_\subFM\ar[d,"\eqref{eq:serre_FM}",Rightarrow]
&\ar[l,swap,"F^*(\tfu_\subM)\circ\id",Rightarrow]\overline{\Se}^\supC_\subFM\ar[d,"\eqref{eq:serre_FM}",Rightarrow]\\
 F^*\!\left(\,\dS_{\subM}^\supD\,\right)  
\ar[r,"\tfu_\subC\tr\id",Rightarrow]
\ar[rd,"F^*\!(F(\tfu_\subC)\tr\id)",swap,Rightarrow]
    &F(D_\subC^{-1})\tr F^*\!\left(\,\dS_{\subM}^\supD\,\right)\ar[d,"\eqref{eq:relative-distinguished-iso}",Rightarrow]
    &    D_{\C_{\subFM}^*}^{-1}\circ F^*\!\left(\overline{\Se}^{\supD}_\subM\right)\ar[d,"\eqref{eq:DIO_dual_cats}",Rightarrow]
    &\ar[l,swap,"F^*(\tfu_\subM)\circ\id",Rightarrow]F^*\!\left(\overline{\Se}^\supD_\subM\right)\ar[dd," F^*\!(\widetilde{\fp})",Rightarrow]\ar[ld,"{}_{{}_F}\!\tfu_\subM\circ\id",Rightarrow]
     \\
    &
    F^*\!\left(\,D_\subD^{-1}\tr \dS_{\subM}^\supD\,\right)
    \ar[r,"F^*\!(\mathcal{R}_\subM)",swap,Rightarrow]&
    F^*\!\left(\,D_{\D_{\subM}^*}^{-1}\circ \overline{\Se}^{\supD}_\subM\,\right)
    &\\
    \id_{\subM}\ar[rrr,equal]\ar[uu," F^*\!(\widetilde{\fp})",Rightarrow]&&&\id_{\subM}
  \end{tikzcd}
\end{equation*}    
where the upper left and right squares commute due to naturality of $\tr$ and $\circ$, respectively. The middle diagram commutes according to Lemma \ref{lem:RadfordM_pres}. The bottom diagram is the condition that $\M$ is $(F(\tfu_{\subC}),\tfu_{\subM})$-spherical. Finally, the remaining triangles commute from the definitions of $F(\tfu_\subC):\unit\xsim F(D_\subC)\cong D_\subD$ and ${}_{{}_F}\tfu_\subM$, respectively.
\end{proof}

%%%%%%%%%%%%%%%%%%%%%%%%%%%%%%%%%%%%%%%%%%%%%%%%%%%%%%%%%%%%%%%
%%%%%%%%%%%%%%%%%%%%%%%%%%%%%%%%%%%%%%%%%%%%%%%%%%%%%%%%%%%%%%%
%%%%%%%%%%%%%%%%%%%%%%%%%%%%%%%%%%%%%%%%%%%%%%%%%%%%%%%%%%%%%%%
%%%%%%%%%%%%%%%%%%%%%%%%%%%%%%%%%%%%%%%%%%%%%%%%%%%%%%%%%%%%%%%
%%%%%%%%%%%%%%%%%%%%%%%%%%%%%%%%%%%%%%%%%%%%%%%%%%%%%%%%%%%%%%%
%%%%%%%%%%%%%%%%%%%%%%%%%%%%%%%%%%%%%%%%%%%%%%%%%%%%%%%%%%%%%%%

\section{Applications to Hopf algebras}\label{sec:Hopf-examples}
In this section, we illustrate the results from the previous section for tensor categories coming from Hopf algebras.
Let $(H,m,u,\Delta,\varepsilon,S)$ be a finite-dimensional Hopf algebra over $\kk$. We will use the Sweedler notation $\Delta(h)=h_1\otimes h_2$ for the coproduct. Since $H$ is finite-dimensional, the antipode is invertible and we denote its inverse as $\overline{S}$.
We refer the reader to \cite{radford2011hopf} for basics of Hopf algebras.

Given a map $\ff\colon A\rightarrow B$ of $\kk$-algebras and $M$ a left $B$-module, we use the notation $M_{\ff}$ to denote the left $A$-module $M$ with action defined by $a \cdot_\ff m \coloneqq \ff(a)\cdot m$ for $m\in M$ and $a\in A$.
Note that for algebra maps $\ff\colon A\rightarrow B$ and $\ff':B \rightarrow C$ and a $C$-module $M$, we have that $(M_{\ff'})_\ff = M_{\ff'\,\ff}$ as left $A$-modules.

%%%%%%%%%%%%%%%%%%%%%%%%%%%%%%%%%%%%%%%%%%%%%%%%%%%%%%%%%%%%%%%
%%%%%%%%%%%%%%%%%%%%%%%%%%%%%%%%%%%%%%%%%%%%%%%%%%%%%%%%%%%%%%%
%%%%%%%%%%%%%%%%%%%%%%%%%%%%%%%%%%%%%%%%%%%%%%%%%%%%%%%%%%%%%%%

\subsection{Preliminaries about the tensor category \texorpdfstring{$\Rep(H)$}{Rep(H)}}
The category $\Rep(H)$ of finite-dimensional representations of $H$ is a finite tensor category. 
For $X,\, Y\in\Rep(H)$, recall that $X\otimes Y = X\ok Y$ and $X\rv=X^*:=\Hom(X,\kk)$ as $\kk$-vector spaces. The $H$-action is given by 
\[ h\cdot(x\ok y) = h_1\cdot x\ok h_2\cdot y, \qquad h\cdot \langle f,-\rangle = \langle f,S(h) -\rangle\]
for $x\iN X$, $y\iN Y$, $f\iN X^*$ and $h\iN H$.
The unit object is the one-dimensional $H$-representation $\kk_{\varepsilon}$.

For $X\in \Rep(H)$, using the canonical isomorphism $\phi_X: X \rightarrow X\rv\rv$ and thinking of elements in $X^{**} = X\rv\rv$ as $\phi_X(x)$ for some $x\in X$ we get that the $H$-action is given by $h\cdot \phi_X(x) = \phi_X(S^2(h)\cdot x)$. Thus, $X\rv\rv \cong X_{S^2}$.
In a similar manner, $X\rv\rv\rv\rv \cong X_{S^4}$.

Let $\Lambda\in H$ be a non-zero left integral of $H$, that is an element satisfying $h \Lambda =\langle\varepsilon,h\rangle \Lambda$. Then, \textit{the (left) modular function} $\alpha:H\rightarrow \kk$ of $H$ is defined to be the unique algebra map satisfying $\Lambda h = \langle \alpha,h\rangle \Lambda$ for all $h\in H$. Set $\oalpha:=\alpha\circ S$, this is the inverse of $\alpha$ in the algebra $H^*$. 
This matches the conventions used in prior work \cite{yadav2023unimodular} whose results we will use next. One can calculate that $D_{\Rep(H)}=\dN^l_{\Rep(H)}(\kk_{\varepsilon}) = \kk_{\oalpha}$ (see \cite[\S4.2]{yadav2023unimodular} for details).
We call $H$ \textit{unimodular} if $\alpha=\varepsilon$. 
As $D_{\Rep(H)}$ is invertible, its dual $(D_{\Rep(H)})\rv$ is $\kk_{\alpha}$.

A map $\lambda:H\rightarrow\kk$ is called a {\it right cointegral} of $H$ if it satisfies $\langle\lambda,h_1\rangle h_2 = \langle \lambda,h \rangle 1_H$ for all $h\in H$. Such a cointegral always exists and from here on we fix a left integral $\Lambda$ and a right cointegral $\lambda$ satisfying $\langle\lambda,\Lambda\rangle =1$. There exists a unique element $\gH\in H$ satisfying 
\begin{equation}\label{eq:g_H-defn}
h_1 \langle \lambda, h_2\rangle = \langle\lambda,h\rangle \gH.    
\end{equation} 
We will use the notation $\ogH = \gH^{-1}$. Using this data, we can describe the Radford isomorphism \eqref{eq:Radford-C} 
$D_{\Rep(H)} \otimes X \rightarrow X\rv\rv\rv\rv \otimes D_{\Rep(H)} $
of $\Rep(H)$ explicitly as follows
\begin{equation}\label{eq:Hopf-Radford-1}
\kk_{\oalpha} \otimes X \xrightarrow{\R_X} X_{S^4} \otimes \kk_{\oalpha}, 
\quad 
1\ok x \mapsto \gH \cdot x \ok 1.   
\end{equation}
This can be derived using \cite[Prop.\ 4.23]{yadav2023unimodular}. 

A \textit{pivotal element} of a Hopf algebra is a grouplike element $\sg_{\piv}\in H$ satisfying $\sg_{\piv} h \sg_{\piv}^{-1} = S^2(h)$ for all $h\in H$. Pivotal structures on $\Rep(H)$ are in bijection with pivotal elements of $H$. Given a pivotal element $\sg_{\piv}$, the pivotal structure $\fp:X\rightarrow X\dd=X_{S^2}$ is given by $x\mapsto \sg_{\piv}\cdot x$.

%%%%%%%%%%%%%%%%%%%%%%%%%%%%%%%%%%%%%%%%%%%%%%%%%%%%%%%%%%%%%%%

Let $\ff\colon H'\rightarrow H$ be a bialgebra map between two finite-dimensional Hopf algebras over $\kk$. This means in particular that $\ff$ commutes with the antipodes, that is, 
\begin{equation}\label{eq:S-commutes-f}
    \ff\circ S_{H'} = S_H \circ \ff\,.
\end{equation}
Moreover, $\ff$ induces a tensor functor $F_{\ff}: \Rep(H) \rightarrow\Rep(H')$  defined via the assignment
\begin{equation*}
\begin{aligned}
 X\mapsto X_{\ff}, \qquad(f:X\rightarrow Y) \mapsto (f:X_{\ff} \rightarrow Y_{\ff})    
\end{aligned}
\end{equation*}
called \textit{the restriction of scalars} functor. 
We call $\ff$ \textit{perfect} iff the tensor functor $F_{\ff}$ is perfect. 
Using Lemma~\ref{lem:perfect_functor}, one can obtain alternate characterizations of perfect bialgebra maps, for instance:

\begin{lemma}
A bialgebra map $\ff\colon H' \rightarrow H$ is perfect if and only if $H_\ff$ is a projective $H'$-module.
\end{lemma}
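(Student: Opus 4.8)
The plan is to reduce the statement to Lemma~\ref{lem:perfect_functor}. By definition, $\ff$ being perfect means precisely that the restriction of scalars tensor functor $F_{\ff}\colon\Rep(H)\to\Rep(H')$ is perfect; since $\Rep(H)$ and $\Rep(H')$ are finite tensor categories, that lemma applies to $F_{\ff}$.

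For the forward direction, I would use that the regular representation ${}_{H}H$ is a projective object of $\Rep(H)$, being free over itself as a left $H$-module. If $\ff$ is perfect, then by the equivalence of (i) and (iii) in Lemma~\ref{lem:perfect_functor} the functor $F_{\ff}$ preserves projective objects; applying this to ${}_{H}H$ shows that $F_{\ff}({}_{H}H)=H_{\ff}$ is a projective object of $\Rep(H')$, that is, $H_{\ff}$ is a projective $H'$-module.

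For the converse, if $H_{\ff}$ is a projective $H'$-module, then the object $c={}_{H}H$ of $\Rep(H)$ satisfies that $F_{\ff}(c)=H_{\ff}$ is projective in $\Rep(H')$. The implication (v)$\Rightarrow$(vi) of Lemma~\ref{lem:perfect_functor} then gives that $F_{\ff}$ is perfect, hence $\ff$ is perfect.

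As the argument is merely an unwinding of definitions combined with the cited lemma, there is no real obstacle; the only routine points are the identification of projective objects of $\Rep(H)$ with finite-dimensional projective $H$-modules and of $F_{\ff}$ with restriction of scalars. One could alternatively reason via adjoints: the right adjoint of $F_{\ff}$ is the coinduction functor $\Hom_{H'}(H_{\ff},-)$, and $F_{\ff}$ is perfect exactly when this functor is exact, which holds if and only if $H_{\ff}$ is projective over $H'$.
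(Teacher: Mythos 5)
Your proof is correct and follows the route the paper intends: the lemma is stated as a direct consequence of Lemma~\ref{lem:perfect_functor}, and your argument (projectivity of the regular representation ${}_H H$ plus the equivalences (iii), (v), (vi), or equivalently the exactness of the coinduction right adjoint $\Hom_{H'}(H_\ff,-)$) is exactly that unwinding, with $c={}_H H$ nonzero so the implicit nondegeneracy in condition (v) is satisfied.
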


\begin{remark}
As Hopf algebras are free (and therefore projective) over their Hopf subalgebras \cite{nichols1989hopf}, if $H'$ is a Hopf subalgebra of $H$ (that is, $\ff$ is injective), then $\ff$ is perfect.
\end{remark}

We also discuss the pivotality of the functor $F_\ff$.
Let $(H', \sg'_{\piv})$ and $(H,\sg_{\piv})$ be pivotal Hopf algebras. We call a bialgebra map $\ff\colon H'\rightarrow H$ \textit{pivotal} if it satisfies $\ff(\sg'_{\piv})=\sg_{\piv}$.

\begin{lemma}
    The tensor functor $F_{\ff}$ between the pivotal categories $\Rep(H')$ and $\Rep(H)$ is pivotal if and only if $\ff$ is pivotal.
\end{lemma}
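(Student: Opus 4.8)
The plan is to unravel Definition~\eqref{eq:piv-functor} for the strong monoidal functor $F_\ff\colon\Rep(H)\to\Rep(H')$ and reduce it to a single identity between grouplike elements. First I would record that $F_\ff$ carries a \emph{strict} monoidal structure: for $X,Y\iN\Rep(H)$ the underlying vector space of $X\otimes Y$ agrees with that of $X_\ff\otimes Y_\ff$, and $\varepsilon_H\circ\ff=\varepsilon_{H'}$, so $(F_\ff)_0$ and $(F_\ff)_2$ are identities. In particular $F_\ff$ is Frobenius monoidal (with $F^0=F_0^{-1}$, $F^2=F_2^{-1}$), so the duality-comparison isomorphisms $\zeta_X\colon F_\ff(X\rv)\xsim F_\ff(X)\rv$ and $\xi_X\colon F_\ff(X\dd)\xsim F_\ff(X)\dd$ are available.

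Next I would check that $\zeta_X$ is the identity $\id_{X^*}$ on underlying spaces: by construction $\zeta_X$ is the mate, formed with the (co)evaluations of $\Rep(H')$, of the map $X^*\ok X\xrightarrow{\ev_X}\kk$, and since the (co)evaluations of both $\Rep(H)$ and $\Rep(H')$ are the standard ones inherited from $\Vect$, this mate is $\id_{X^*}$; that $\id_{X^*}$ is actually $H'$-linear from $(X^*)_\ff$ to $(X_\ff)^*$ is precisely the content of \eqref{eq:S-commutes-f}. Consequently $\xi_X$ is likewise the identity of $X^{**}\cong X$, and all three $H'$-modules $F_\ff(X\rv)\rv$, $F_\ff(X\dd)$ and $F_\ff(X)\dd$ appearing in \eqref{eq:piv-functor} are identified with $X$ equipped with the action $h'\cdot x=S_H^2(\ff(h'))\,x=\ff(S_{H'}^2(h'))\,x$. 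Under these identifications \eqref{eq:piv-functor} collapses to the single equation $\fp^{\Rep(H')}_{F_\ff(X)}=F_\ff(\fp^{\Rep(H)}_X)$ for every $X\iN\Rep(H)$.

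It then remains to compute both sides: $\fp^{\Rep(H)}_X$ is $x\mapsto\sg_{\piv}\cdot x$ for the $H$-action, so $F_\ff(\fp^{\Rep(H)}_X)$ is left multiplication by $\sg_{\piv}$; and $\fp^{\Rep(H')}_{F_\ff(X)}$ is $x\mapsto\sg'_{\piv}\cdot_\ff x=\ff(\sg'_{\piv})\cdot x$, i.e.\ left multiplication by $\ff(\sg'_{\piv})$. Hence $F_\ff$ is pivotal if and only if $\ff(\sg'_{\piv})\cdot x=\sg_{\piv}\cdot x$ for all $X\iN\Rep(H)$ and $x\iN X$; evaluating on the left regular representation $X=H$ at $x=1_H$ forces $\ff(\sg'_{\piv})=\sg_{\piv}$, and conversely this equality makes the two maps agree on every $X$. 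Thus $F_\ff$ is pivotal exactly when $\ff$ is pivotal. The only delicate point is the bookkeeping in the middle paragraph — verifying $\zeta_X=\xi_X=\id$ and that the two sides of \eqref{eq:piv-functor} genuinely lie in the same $H'$-module — after which the conclusion is immediate.
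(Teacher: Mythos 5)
Your proposal is correct and follows essentially the same route as the paper: both arguments hinge on checking, via \eqref{eq:S-commutes-f}, that the duality-comparison maps $\zeta_X$ are identities, so that condition \eqref{eq:piv-functor} reduces to comparing left multiplication by $\ff(\sg'_{\piv})$ with left multiplication by $\sg_{\piv}$, which is equivalent to $\ff(\sg'_{\piv})=\sg_{\piv}$. Your version merely spells out the bookkeeping (strictness of the monoidal structure, the identification of double duals, and the evaluation on the regular representation) that the paper leaves implicit.
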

\begin{proof}
Being a tensor functor, $F_\ff$ yields a canonical isomorphism
\[ \zeta_X \Colon (X^{\vee})_{\ff} = F_{\ff}(X^{\vee}) \longrightarrow F_{\ff}(X)^{\vee} = (X_{\ff})^{\vee} .\]
From \eqref{eq:S-commutes-f}, one can check that $\zeta_X(f) = f$ for all $f\in X^{\vee}$.
Since the isomorphisms $\zeta_X$ are just identity maps, this follows from the definition \eqref{eq:piv-functor} of functor pivotality.
\end{proof}

%%%%%%%%%%%%%%%%%%%%%%%%%%%%%%%%%%%%%%%%%%%%%%%%%%%%%%%%%%%%%%%
%%%%%%%%%%%%%%%%%%%%%%%%%%%%%%%%%%%%%%%%%%%%%%%%%%%%%%%%%%%%%%%
%%%%%%%%%%%%%%%%%%%%%%%%%%%%%%%%%%%%%%%%%%%%%%%%%%%%%%%%%%%%%%%

\subsection{\Frob functors induced by bialgebra maps}

Let $\ff\colon H'\rightarrow H$ be a bialgebra map. Define the {\it relative modular function} 
\[ \chi_{\ff}:H'\rightarrow\kk,  \qquad \chi_{\ff}(h') = \langle\alpha_H, S_H (\ff(h'_1))\rangle \langle\alpha_{H'},h'_2\rangle.\] 
The relative modular object of the perfect tensor functor $F_{\ff}$ is determined by the relative modular function of $\ff$ as we show next.

\begin{proposition}\label{prop:rel-modular-Hopf}
The relative modular object $\chiFf$ of $F_{\ff}$ is the $H'$-module $\kk_{\chi_{\ff}}$. The $F_{\ff}$-half-braiding of $\chi_{\ff}$ is given for $X\in \Rep(H)$ by the isomorphism 
\begin{equation}\label{eq:Hopf-sigma}
\begin{aligned}
\sigma_X\Colon \chi_{\scriptscriptstyle F_{\ff}}\otimes F_{\ff}(X) 
&\longrightarrow F_{\ff}(X) \otimes \chi_{\scriptscriptstyle F_{\ff}},\\ 
1 \ok x 
&\longmapsto \gH \,\ff(\overline{\sg_{H'}}) \cdot x \ok 1 .
\end{aligned}
\end{equation}
\end{proposition}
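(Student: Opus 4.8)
The plan is to determine the underlying object $\chiFf$ and then its $F_\ff$-half-braiding $\sigma$; in both steps I would reduce everything to the explicit data of $\Rep(H)$ and $\Rep(H')$ recalled above.

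\textbf{The underlying object.} I would start from Proposition~\ref{prop:relative_mod_obj_properties}(v), which yields an isomorphism $\chiFf\cong F_\ff(D_{\Rep(H)})\otimes D_{\Rep(H')}^{-1}$ in $\Rep(H')$. Using $D_{\Rep(H)}=\kk_{\oalpha_H}$ we get $F_\ff(D_{\Rep(H)})=(\kk_{\oalpha_H})_\ff=\kk_{\oalpha_H\circ\ff}$, while $D_{\Rep(H')}^{-1}=(D_{\Rep(H')})^{\vee}=\kk_{\alpha_{H'}}$ because the grouplike $\alpha_{H'}\in(H')^*$ is fixed by $S_{H'}^{2}$. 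Since both factors are one-dimensional, the tensor product is the one-dimensional $H'$-module whose character is the convolution $(\oalpha_H\circ\ff)\ast\alpha_{H'}$ in $(H')^*$; expanding this product and rewriting $S_H\circ\ff$ as $\ff\circ S_{H'}$ by \eqref{eq:S-commutes-f} gives exactly $\chi_\ff$, so $\chiFf\cong\kk_{\chi_\ff}$.

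\textbf{The half-braiding.} By Proposition~\ref{prop:Z(F)-rmo}, $(\chiFf,\sigma)$ is the relative modular object of $\Z(F_\ff)$; in particular $\sigma$ is automatically an $F_\ff$-half-braiding, so it remains only to identify it, and by Proposition~\ref{prop:rel-mod-obj} it is the unique choice making diagram \eqref{eq:chi-central-structre} commute. Here every arrow of \eqref{eq:chi-central-structre} is explicit: the monoidal and duality constraints of $F_\ff$ are identity maps, the Radford isomorphisms of $\Rep(H)$ and $\Rep(H')$ act by $1\ok x\mapsto\gH\cdot x\ok 1$ and $1\ok y\mapsto\gHp\cdot y\ok 1$ by \eqref{eq:Hopf-Radford-1}, and the isomorphism \eqref{eq:relative-distinguished-iso} is multiplication by some $\mu\in\kkx$ (its source and target being one-dimensional with the common character found above). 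Chasing \eqref{eq:chi-central-structre} for $c=X$, while keeping track of the twisted module structure each vector carries and observing that the single factor of $\mu$ along each of the two paths cancels, gives
\[
\sigma_{X{\rv\rv\rv\rv}}\bigl(1\ok(\ff(\gHp)\cdot x)\bigr)=(\gH\cdot x)\ok 1,\qquad x\in X.
\]

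\textbf{Extracting the formula.} To finish, I would specialize to an arbitrary $Y\in\Rep(H)$ by taking $X$ to be the quadruple left dual of $Y$, so that $X{\rv\rv\rv\rv}\cong Y$; because $\gH$, $\gHp$, and $\ff(\gHp)$ are grouplike they are fixed by every power of the antipode, so this substitution introduces no twist on the underlying action. The change of variables $x\mapsto\ff(\gHp^{-1})\cdot x$ then converts the displayed identity into $\sigma_Y(1\ok y)=\bigl(\gH\,\ff(\gHp^{-1})\cdot y\bigr)\ok 1$, which is \eqref{eq:Hopf-sigma} since $\gHp^{-1}=\overline{\sg_{H'}}$. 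All the conceptual input is drawn from Propositions~\ref{prop:relative_mod_obj_properties}, \ref{prop:Z(F)-rmo}, and \ref{prop:rel-mod-obj}; the step I expect to be most delicate — and where careful bookkeeping is essential — is tracking the several twisted $H'$-module structures through the chase of \eqref{eq:chi-central-structre} and verifying that the undetermined scalar $\mu$ genuinely drops out of the final formula.
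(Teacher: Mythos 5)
Your proposal is correct and follows essentially the same route as the paper: Proposition~\ref{prop:relative_mod_obj_properties}(v) identifies the underlying object as $\kk_{\chi_{\ff}}$, and the half-braiding is extracted from the commutative diagram \eqref{eq:chi-central-structre} of Proposition~\ref{prop:rel-mod-obj} using the explicit Radford isomorphisms \eqref{eq:Hopf-Radford-1}, exactly as in the paper (which merely rewrites the diagram so that $\sigma_X$ appears directly as a composite of Radford maps, rather than chasing at $c=X$ and then passing through quadruple duals as you do). One harmless slip: grouplike elements are fixed only by even powers of the antipode (since $S(g)=g^{-1}$), which is all your substitution actually requires because only $S^{\pm 4}$ twists occur.
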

\begin{proof}
By Proposition~\ref{prop:relative_mod_obj_properties}(v), we know that 
\[ \chi_{\scriptscriptstyle F_{\ff}}= F_\ff(D_{\Rep(H)}) \otimes D_{\Rep(H')}^{-1} =
F_{\ff}(\kk_{\oalpha_H})\otimes (\kk_{\oalpha_{H'}})^{-1} 
= \kk_{\oalpha_H \ff}\otimes \kk_{\alpha_{H'}} .\]
Thus, the first claim follows. Now, consider $X\in\Rep(H)$ and denote $X_\ff=F_\ff(X)\in \Rep(H')$. Then, the commutative diagram \eqref{eq:chi-central-structre} in Proposition~\ref{prop:rel-mod-obj} gives a description of $\sigma_X$. Namely, it is given by the following compositions of Radford isomorphisms
\begin{align*}
\kk_{\oalpha_{H} \ff} \otimes \kk_{\alpha_{H'}} \otimes X_\ff 
& \xrightarrow{\id \otimes \R_{X_\ff}} \kk_{\oalpha_H \ff} \otimes X_{\ff \overline{S}^4_{H'}} \otimes \kk_{\alpha_{H'}} 
\\
& \qquad \qquad = \kk_{\oalpha_H \ff} \otimes X_{\overline{S}^4_{H} \ff} \otimes \kk_{\alpha_{H'}} 
= (\kk_{\oalpha_H} \otimes X_{\overline{S}^4_{H}})_\ff \otimes \kk_{\alpha_{H'}} 
\\
& \xrightarrow{(\R_X)_\ff \otimes \id} (X \otimes \kk_{\oalpha_H})_\ff \otimes \kk_{\alpha_{H'}} 
= X_\ff \otimes \kk_{\oalpha_H \ff} \otimes \kk_{\alpha_{H'}}
\end{align*} 
Using the description of $\R$ \eqref{eq:Hopf-Radford-1}  provided earlier, the above map is given by
\[ 1\ok 1 \ok x 
\mapsto 
1 \ok (\overline{\sg_{H'}}\cdot_{\ff} x) \ok 1 = 1 \ok \ff(\overline{\sg_{H'}})\cdot x \ok 1 
\mapsto 
\gH \ff(\overline{\sg_{H'}}) \cdot x \ok 1 \ok 1 . \]
This completes the proof.
\end{proof}

A consequence of the above proposition is the following characterization of \Frob functors, which generalizes \cite[Corollary 1.8]{fischman1997frobenius} to perfect bialgebra maps.

\begin{theorem}\label{thm:Hopf-Frob-description}
Let $\ff\colon H'\rightarrow H$ be a perfect bialgebra map.
    \begin{enumerate}[{\rm (i)}]
        \item $F_\ff$ is Frobenius $\iff \chi_\ff = \varepsilon_{H'} \iff \alpha_H\circ \ff  = \alpha_{H'}$.
         
        \item $F_{\ff}\, \text{is \Frob} \, 
        \iff \, \chi_{\ff} = \varepsilon_{H'} \,\text{and} \,\, \ff(\sg_{H'})=\gH \,
        \iff \, \alpha_H\circ \ff = \alpha_{H'} \, \text{and} \,\, \ff(\sg_{H'})=\gH$.
    \end{enumerate}
\end{theorem}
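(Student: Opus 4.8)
The plan is to reduce everything to the explicit formulas for $\chi_\ff$ and its $F_\ff$-half-braiding $\sigma$ provided by Proposition~\ref{prop:rel-modular-Hopf}, and then invoke the general characterizations already established in \S\ref{sec:perfect-tensor}, specialized to $\Rep(H)$. For part (i), the first equivalence $F_\ff$ Frobenius $\iff \chi_\ff=\varepsilon_{H'}$ is immediate from Proposition~\ref{prop:frobenius_uni}(i) together with the identification $\chi_{\scriptscriptstyle F_\ff}=\kk_{\chi_\ff}$: a one-dimensional $H'$-module $\kk_{\chi_\ff}$ is isomorphic to the unit $\kk_{\varepsilon_{H'}}$ precisely when the defining characters agree. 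For the second equivalence I would unwind the definition $\chi_\ff(h')=\langle\alpha_H,S_H(\ff(h'_1))\rangle\langle\alpha_{H'},h'_2\rangle$: since $\alpha_H\circ S_H=\overline{\alpha_H}$ is the convolution inverse of $\alpha_H$ and $\ff$ is a coalgebra map, one has $\chi_\ff = (\overline{\alpha_H}\circ\ff)\star\alpha_{H'}$ in the convolution algebra $(H')^*$; this equals $\varepsilon_{H'}$ iff $\alpha_{H'}=(\alpha_H\circ\ff)$, using that characters of $H'$ form a group under convolution and that $\alpha_H\circ\ff$ is a character because $\ff$ is a bialgebra map and $\alpha_H$ is.

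For part (ii), I would apply Theorem~\ref{thm:tFrob-char}(iii): $F_\ff$ is \Frob iff the relative modular object $\chi_{\scriptscriptstyle\Z(F_\ff)}=(\chi_\ff,\sigma)$ is trivial in $\Z({}_{\scriptscriptstyle F_\ff}\Rep(H')_{\scriptscriptstyle F_\ff})$, i.e. iff there exists an isomorphism $\lambda\colon\kk_{\chi_\ff}\xsim\kk_{\varepsilon_{H'}}$ compatible with $\sigma$ in the sense that $(\id\otimes\lambda)\circ\sigma_X=(\lambda\otimes\id)$ for all $X\in\Rep(H)$. Existence of $\lambda$ as a mere module isomorphism is exactly the condition $\chi_\ff=\varepsilon_{H'}$ from part (i). Assuming this, $\lambda$ is just a nonzero scalar, and the compatibility condition with the explicit formula \eqref{eq:Hopf-sigma} for $\sigma_X$ says precisely that the action of $\gH\,\ff(\overline{\sg_{H'}})\in H$ on every $X\in\Rep(H)$ coincides with the identity; since $\Rep(H)$ is faithful (every element of $H$ is detected by its action on the regular representation), this is equivalent to $\gH\,\ff(\overline{\sg_{H'}})=1_H$ in $H$, i.e. $\ff(\sg_{H'})=\gH$ (using that $\overline{\sg_{H'}}=\sg_{H'}^{-1}$ and that $\ff$ sends grouplikes to grouplikes, so $\ff(\overline{\sg_{H'}})=\ff(\sg_{H'})^{-1}$). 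Combining the two conditions gives the first equivalence in (ii), and the second equivalence follows by substituting the reformulation $\chi_\ff=\varepsilon_{H'}\iff\alpha_H\circ\ff=\alpha_{H'}$ from part (i).

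The only mildly delicate point is making the passage "$(\chi_\ff,\sigma)$ trivial in the centralizer $\iff$ $\chi_\ff=\varepsilon_{H'}$ and $\ff(\sg_{H'})=\gH$" fully rigorous, since a priori one must choose the scalar $\lambda$ and the two conditions (module triviality of $\chi_\ff$, and compatibility of $\lambda$ with $\sigma$) are logically coupled; I expect the main obstacle to be bookkeeping the half-braiding normalization, but it dissolves once one observes that the half-braiding of a one-dimensional object acts by a scalar for each $X$ and faithfulness forces that scalar to be governed by a single group-like element of $H$. All the Hopf-algebraic identities needed ($\alpha_H\circ S_H=\overline{\alpha_H}$, $\ff\circ S_{H'}=S_H\circ\ff$ from \eqref{eq:S-commutes-f}, multiplicativity of $\alpha$, and $\ff$ preserving grouplikes) are standard and already recalled in the preliminaries.
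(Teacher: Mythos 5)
Your proposal is correct and follows essentially the same route as the paper: part (i) via Proposition~\ref{prop:frobenius_uni}(i) together with rewriting $\chi_\ff$ as a convolution product in $(H')^*$, and part (ii) via the characterization in Theorem~\ref{thm:tFrob-char}(iii) combined with the explicit half-braiding formula of Proposition~\ref{prop:rel-modular-Hopf}. The only difference is cosmetic: you phrase the second equivalence in (i) through convolution inverses of characters rather than bijectivity of the antipode, and you spell out the faithfulness-of-the-regular-representation step that the paper leaves implicit; both are fine.
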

\begin{proof}
(i) As $\chiFf\cong \kk_{\chi_\ff}$, by Proposition~\ref{prop:frobenius_uni}(i), the first equivalence is clear.
For the second equivalence, note that we can rewrite the relative modular function as
\[ \chi_{\ff} = (\oalpha_{H}\circ \ff) \times \alpha_{H'} \in  H^*.\]
where $\times$ denotes the product in $H^*$.
As $\varepsilon\in H^*$ is its unit, we obtain that $\chi_{\ff}=\varepsilon$ if and only if $\oalpha_{H}\circ \ff = (\alpha_{H'})^{-1} = \oalpha_{H'}$. As the antipode is bijective, this is equivalent to $\alpha_H\circ \ff = \alpha_{H'}$.

(ii) By the equivalence (i)$\iff$(iii) of Theorem~\ref{thm:tFrob-char}, $F_{\ff}$ is \Frob if and only if $\chiFf\cong \unit$ as an object of $\Rep(H)$ and the half-braiding is the identity map. By Proposition~\ref{prop:rel-modular-Hopf}, the former is equivalent to
$\chi_\ff=\varepsilon_{H'}$ and the latter to $\ff(\sg_{H'})=\gH$.
This proves the first equivalence. The second equivalence follows because of part (i).
\end{proof}

As a corollary, we obtain the following result.

\begin{corollary}\label{cor:Hopf-Frob-description}
Suppose that $H'\subseteq H$ is a Hopf subalgebra, then:
\begin{enumerate}[{\rm (i)}]
    \item \cite[Corollary 1.8]{fischman1997frobenius} $F_{\ff}$ is Frobenius if and only if $\alpha_H(h')=\alpha_{H'}(h')$ for all $h'\in H'$.
    \item  $F_\ff$ is \Frob if and only if $\sg_{H'}=\gH$ and $\alpha_H(h')=\alpha_{H'}(h')$ for all $h'\in H'$.
\end{enumerate} 
\end{corollary}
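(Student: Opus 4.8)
The plan is to deduce this directly from Theorem~\ref{thm:Hopf-Frob-description} by specializing to the inclusion map. First I would record that when $H'$ is a Hopf subalgebra of $H$, the inclusion $\ff\colon H'\hookrightarrow H$ is a bialgebra map and, since a finite-dimensional Hopf algebra is free — in particular projective — as a module over any Hopf subalgebra \cite{nichols1989hopf}, the functor $F_{\ff}$ is perfect (this is the remark made just before the statement). Hence the hypotheses of Theorem~\ref{thm:Hopf-Frob-description} are satisfied.

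For part (i), Theorem~\ref{thm:Hopf-Frob-description}(i) gives that $F_{\ff}$ is Frobenius if and only if $\alpha_H\circ\ff=\alpha_{H'}$. Because $\ff$ is the inclusion, $\alpha_H\circ\ff$ is exactly the restriction of the functional $\alpha_H$ to the subspace $H'$, so the equality $\alpha_H\circ\ff=\alpha_{H'}$ of maps $H'\to\kk$ is precisely the condition $\alpha_H(h')=\alpha_{H'}(h')$ for all $h'\in H'$.

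For part (ii), Theorem~\ref{thm:Hopf-Frob-description}(ii) gives that $F_{\ff}$ is \Frob if and only if $\alpha_H\circ\ff=\alpha_{H'}$ and $\ff(\sg_{H'})=\gH$. Again using that $\ff$ is the inclusion, $\ff(\sg_{H'})$ is just the element $\sg_{H'}\in H'\subseteq H$, so the second condition reads $\sg_{H'}=\gH$ in $H$; combining this with the reformulation of the first condition from part~(i) yields the claim. There is essentially no obstacle here: the only point requiring a little care is the observation that the inclusion is perfect, which is already handled by the cited freeness result, together with the translation of the abstract conditions of Theorem~\ref{thm:Hopf-Frob-description} into the stated elementwise form.
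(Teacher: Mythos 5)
Your proposal is correct and follows exactly the route the paper intends: the corollary is stated as an immediate specialization of Theorem~\ref{thm:Hopf-Frob-description} to the inclusion map, with perfectness of the inclusion guaranteed by freeness of $H$ over the Hopf subalgebra $H'$ (the remark preceding the statement). Nothing is missing.
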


We end this subsection with some examples and remarks.

\begin{example}\label{ex:uqsl2-Hopf-example}
    Fix an odd integer $n=2h+1$ for $h>0$ and let $q$ be a primitive $n$-th root of unity. The Hopf algebra $u_q(\fsl_2)$ is generated, as an algebra, by $E$, $F$ and $K$ subject to the relations 
    \begin{equation*}
    K^n=1,\quad E^n=F^n=0, \quad KE = q^2 EK, \quad KF = q^{-2} FK, \quad EF - FE = \frac{K-K^{-1}}{q-q^{-1}}. 
    \end{equation*}
    The coalgebra structure is given by $\varepsilon(K) = 1 $, $\varepsilon(E) = \varepsilon(F) = 0$ and
    \begin{align*}
        \Delta(K) = K\otimes K, \; \quad & \Delta(E) = E\otimes K + 1\otimes E, \quad \Delta(F) = F\otimes 1 + K^{-1}\otimes F.
    \end{align*}
    The antipode, on the generators, assigns the following values:  $S(K) = K^{-1}$, $S(E) = -EK^{-1}$ and $S(F) = -KF$. Also, the element $\Lambda =  (\sum_{i=0}^{n-1} K^i)E^{n-1} F^{n-1}$ is a left integral. From this description, one obtains that the modular function $\alpha:u_q(\fsl_2)\rightarrow\kk$ is given by $\alpha(E)=\alpha(F)=0$ and $\alpha(K)=1$. Moreover, the distinguished grouplike element is given by $g_{u_q(\fsl_2)}=K^2$.
    Lastly, $S^2(h) = K h K^{-1}$ for all $h\in u_q(\fsl_2)$, so $\sg_{\piv} = K$ is a pivotal element of $u_q(\fsl_2)$. 
    
    \begin{itemize}
        \item If one considers the subalgebra of $u_q(\fsl_2)$ generated by $E$ and $K$, one obtains the Taft (Hopf) algebra $T\coloneqq T_n(q^2)$. The right modular function $\alpha_T$ is given by $\alpha_T(E)=0$ and $\alpha_T(K)=q^{-2}$. As $\alpha|_T\neq \alpha_T$, the functor $F\colon\Rep(u_q(\fsl_2))\rightarrow \Rep(T)$ is not Frobenius. 
        \item If one considers the subalgebra of $u_q(\fsl_2)$ generated by $K$, one obtains a Hopf subalgebra isomorphic to the group algebra, $T'\coloneqq \kk \mathbb{Z}_n$, of $\mathbb{Z}_n$. The right modular function $\alpha_{T'}$ is given by $\alpha_{T'}(K)=1$. As $\alpha|_{T'} = \alpha_{T'}$, the restriction of scalars functor $F\colon\Rep(u_q(\fsl_2)) \rightarrow \Rep(\kk \mathbb{Z}_n)$ is a Frobenius functor. However, as $g_{u_q(\fsl_2)} = K^2$ and $g_{\kk\mathbb{Z}_n} = 1$ are not the same, $F$ is not \Frobp
    \end{itemize}
\end{example}

\begin{example}\label{ex:tensor-Frob}
For a Hopf algebra $H$, take $\ff$ to be the unit $u:\kk\rightarrow H$ of $H$. Then, $\ff$ is a bialgebra map whose induced functor is just the fiber functor $F_\ff\colon\Rep(H)\rightarrow\Vect$.
\begin{enumerate}[{\rm (i)}]
    \item  As $\alpha_H$ is an algebra map, it satisfies $\alpha_H(1_H)=1$. Therefore, $\alpha_H|_{\kk} = \alpha_{\kk}$. This shows that the fiber functor $F_{\ff}$ is always a Frobenius functor.
    \item Also, note that $g_{\kk}=1$. Thus, $F_\ff$ is \Frob if and only if $\gH=\ff(\sg_\kk)=1_H$. This happens if and only if $H^*$ is unimodular. 
\end{enumerate}
For instance, for $H=u_q(\fsl_2)^*$, its dual $H^* \cong u_q(\fsl_2)$ is unimodular. Thus, the fiber functor $F\colon\Rep(u_q(\fsl_2)^*)\rightarrow \Vect$ is \Frob\!\!. On the other hand, the fiber functor $F\colon\Rep(u_q(\fsl_2))\rightarrow\Vect$ is Frobenius, but not \Frob because $u_q(\fsl_2)^*$ is not unimodular.
\end{example}

\begin{remark}\label{rem:tFrobenius_Hopf}
Let $\ff \colon H'\rightarrow H$ be a bialgebra map such that $\chi_\ff=\varepsilon$. If $H$ is unimodular, then so if $H'$. This follows from Proposition~\ref{prop:frobenius_uni}(ii) or can be checked directly. However, the converse is not true as illustrated by the following example. Take $\ff=u:\kk\rightarrow H$ for a non-unimodular Hopf algebra $H$ (like the Taft algebra or $u_q(\fsl_2)^*$). By Example~\ref{ex:tensor-Frob}, $F_\ff$ is Frobenius (and thus, $\chi_\ff=\varepsilon$) even though $H$ is not unimodular. 
\end{remark}

\begin{remark}\label{rem:tFrobenius_Hopf2}
A bialgebra map \(\ff\colon H' \to H\) is called a \textit{Frobenius extension} if \(\ff\) is injective and \(F_\ff\) is Frobenius. In \cite{flake2024frobenius2}, the concept of a {\it central Frobenius extension} for Hopf algebras \(H' \subset H\) is introduced. 
In the finite-dimensional setting, a Frobenius extension \(H' \subset H\) is central Frobenius if and only if \(F_\ff\) is \Frob\!\!. Indeed, combining \cite[Thm.~4.9]{flake2024frobenius2} and \cite[Thm.~3.17]{flake2024frobenius1}, the extension \(H \subset H'\) is central Frobenius if and only if the left and right adjoints of \(F_\ff\colon \Rep(H) \to \Rep(H')\) are isomorphic as \(\Rep(H)\)-bimodule functors - this is precisely the definition of being \Frob\!\!. 
Using this observation, many results in \cite{flake2024frobenius2} about extensions of finite-dimensional Hopf algebras can be obtained using our results. For instance, \cite[Thm.~4.21]{flake2024frobenius2} is a particular case of Corollary~\ref{cor:Hopf-Frob-description}(ii).
\end{remark}

%%%%%%%%%%%%%%%%%%%%%%%%%%%%%%%%%%%%%%%%%%%%%%%%%%%%%%%%%%%%%%%
%%%%%%%%%%%%%%%%%%%%%%%%%%%%%%%%%%%%%%%%%%%%%%%%%%%%%%%%%%%%%%%
%%%%%%%%%%%%%%%%%%%%%%%%%%%%%%%%%%%%%%%%%%%%%%%%%%%%%%%%%%%%%%%

\subsection{Applications to module categories}
In this section, we apply the results from \S\ref{sec:F-twisted} to $\Rep(H)$-module categories. 
Left module categories over $\Rep(H)$ are understood in terms of left $H$-comodule algebras $(L,\delta: L\rightarrow H \ok L)$. Namely, the category $\Rep(L)$ is endowed with a left $\Rep(H)$-module category structure, defined as follows: for $X\in\Rep(H)$ and $M\in\Rep(L)$, $X\tr M\in\Rep(L)$ is just $X\ok M$ as a vector space with $L$-action
\begin{equation*} 
l\cdot (x\ok m) = l_{-1}\cdot x \ok l_0\cdot m.
\end{equation*}
where we have used the Sweedler notation $\delta(l) = l_{-1}\ok l_0$.
We call $L$ \textit{exact} (resp. \textit{indecomposable}) if the $\Rep(H)$-module category $\Rep(L)$ is exact (resp. indecomposable).

As before, $\ff\colon H'\rightarrow H$ is a bialgebra map. Let $L$ be a left $H'$-comodule algebra with coaction $\delta: L \rightarrow H' \otimes L$. Define the map $\delta_\ff$ by the composition:
\[  L \xrightarrow{\;\delta\;} H' \otimes L \xrightarrow{\ff \otimes \id_L} H \otimes L, \qquad a \mapsto \ff(a_{-1})\ok a_0. \]
As a consequence of Proposition~\ref{prop:perfect-iff}, we obtain:
\begin{lemma}
Let $\ff\colon H'\rightarrow H$ be a perfect bialgebra map. Then $(L,\delta)$ is an exact $H'$-comodule algebra if and only if $(L,\delta_\ff)$ is an exact $H$-comodule algebra.
\end{lemma}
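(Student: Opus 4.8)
The plan is to identify the $H$-comodule algebra $(L,\delta_\ff)$ with the $F_\ff$-twist of the $H'$-comodule algebra $(L,\delta)$, and then quote Proposition~\ref{prop:perfect-iff}.

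First I would unwind both module structures. The left $\Rep(H')$-module category attached to $(L,\delta)$ has action $X'\tr M = X'\ok M$ with $L$-action $l\cdot(x'\ok m)=l_{-1}\cdot x'\ok l_0\cdot m$, where $\delta(l)=l_{-1}\ok l_0\in H'\ok L$. Pulling this $\Rep(H')$-action back along the tensor functor $F_\ff\colon\Rep(H)\rightarrow\Rep(H')$ yields the $\Rep(H)$-module category ${}_{F_\ff}\Rep(L)$ whose action is $X\trF M=F_\ff(X)\tr M=X_\ff\ok M$ with $L$-action $l\cdot(x\ok m)=\ff(l_{-1})\cdot x\ok l_0\cdot m$. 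Since by definition $\delta_\ff(l)=\ff(l_{-1})\ok l_0$, this is exactly the $\Rep(H)$-module category associated to the $H$-comodule algebra $(L,\delta_\ff)$; the module associativity constraints agree on the nose because $F_\ff$ is restriction of scalars, hence acts as the identity on underlying vector spaces and is strictly monoidal there. Thus ${}_{F_\ff}\Rep(L)=\Rep(L)$ as $\Rep(H)$-module categories, where the left-hand side carries the pulled-back structure coming from $\delta$ and the right-hand side the structure coming from $\delta_\ff$.

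Next, since $\ff$ is a perfect bialgebra map, $F_\ff$ is a perfect tensor functor by definition, so Proposition~\ref{prop:perfect-iff} applies with $\C=\Rep(H)$, $\D=\Rep(H')$, $F=F_\ff$ and $\M=(\Rep(L),\delta)$: the $\Rep(H')$-module $(\Rep(L),\delta)$ is exact if and only if the $\Rep(H)$-module ${}_{F_\ff}(\Rep(L),\delta)=(\Rep(L),\delta_\ff)$ is exact. By the definition of exactness for comodule algebras this is precisely the assertion. There is essentially no obstacle here: the only care needed is in the bookkeeping — checking that the twisted action is literally (not merely up to isomorphism) given by the comodule structure $\delta_\ff$, and keeping straight that the twisting goes along $F_\ff\colon\Rep(H)\rightarrow\Rep(H')$, so that a $\Rep(H')$-module becomes a $\Rep(H)$-module.
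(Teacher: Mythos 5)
Your proof is correct and follows exactly the route the paper intends: the paper states this lemma as an immediate consequence of Proposition~\ref{prop:perfect-iff}, with the (implicit) identification of the $\Rep(H)$-module category of $(L,\delta_\ff)$ with the $F_\ff$-twist ${}_{F_\ff}\Rep(L)$ of the $\Rep(H')$-module category of $(L,\delta)$, which you spell out carefully and correctly.
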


Next, we introduce a notion of Frobenius elements for $H$-comodule algebras.
\begin{definition}\label{defn:f-Frob-element}
Let $\ff\colon H'\rightarrow H$ be a perfect bialgebra map and $L$ an exact $H'$-comodule algebra. An \emph{$\ff$-Frobenius element} of $L$ is an invertible element $\sa \in L$ satisfying 
\begin{align}
    \sa \, l\, \sa^{-1} & = \chi_{\ff}(l_{-1})l_0, \qquad (\forall \; l\in L)  \label{eq:F-Frobenius-element-1} \\
    \ff(\sa_{-1}) \otimes \sa_0 & =   \ff(\gHp)\,\overline{\gH} \otimes \sa. \label{eq:F-Frobenius-element-2}
\end{align}
We say that $L$ is $\ff$-Frobenius if it admits an $\ff$-Frobenius element.
\end{definition}

Our choice of terminology is motivated by the following result.
\begin{proposition}\label{prop:Hopf-F-Frobenius-description}
The tensor functor $F_\ff$ is Frobenius with respect to $\Rep(L)$ if and only if $L$ admits an $\ff$-Frobenius element.
\end{proposition}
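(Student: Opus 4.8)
The plan is to apply the general criterion of Proposition~\ref{prop:chi-action}(i): the tensor functor $F_\ff$ is Frobenius with respect to $\Rep(L)$ if and only if the $\Rep(H')$-module functor $\chi_{\scriptscriptstyle F_\ff}\tr - \colon {}_{F_\ff}\Rep(L) \to {}_{F_\ff}\Rep(L)$ is isomorphic to the identity functor. So the task reduces to translating ``$\chi_{\scriptscriptstyle F_\ff}\tr-$ is trivial as a module functor'' into the existence of an element $\sa\in L$ satisfying \eqref{eq:F-Frobenius-element-1}--\eqref{eq:F-Frobenius-element-2}.

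First I would unpack what $\chi_{\scriptscriptstyle F_\ff}\tr-$ is concretely. By Proposition~\ref{prop:rel-modular-Hopf}, $\chi_{\scriptscriptstyle F_\ff} = \kk_{\chi_\ff}$ and its $F_\ff$-half-braiding $\sigma_X$ sends $1\ok x \mapsto \gH\,\ff(\overline{\sg_{H'}})\cdot x \ok 1$. For $M\in\Rep(L)$, the object $\chi_{\scriptscriptstyle F_\ff}\tr M$ is $\kk_{\chi_\ff}\ok M$ with $L$-action $l\cdot(1\ok m) = \chi_\ff(l_{-1})\ok l_0\cdot m$ (using the comodule structure $\delta$, with $\ff$ implicit), which is just the twisted module $M_{\text{twisted}}$ with action $l\bullet m = \chi_\ff(l_{-1})\,l_0\cdot m$. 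A module natural isomorphism $\chi_{\scriptscriptstyle F_\ff}\tr - \cong \id$ amounts to a family of $L$-module isomorphisms $\theta_M\colon \kk_{\chi_\ff}\ok M \to M$, natural in $M$ and compatible with the $\Rep(H')$-module structures (i.e.\ with $\sigma$ on the source and the trivial braiding on the target). By Yoneda / naturality, such a family is determined by a single invertible element $\sa\in L$ (take $M = L$, the free module, and let $\sa$ be the image of $1\ok 1$; invertibility because $\theta_L$ is an isomorphism), with $\theta_M(1\ok m) = \sa\cdot m$. The condition that $\theta_M$ is $L$-linear, $\theta_M(l\cdot(1\ok m)) = \chi_\ff(l_{-1})\,\sa\,l_0\cdot m = l\cdot\theta_M(1\ok m) = l\,\sa\cdot m$, forces $\sa\, l_0\,\chi_\ff(l_{-1}) = l\,\sa$ for all $l$, equivalently $\sa\,l\,\sa^{-1} = \chi_\ff(l_{-1})l_0$, which is exactly \eqref{eq:F-Frobenius-element-1}. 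The $\Rep(H')$-module compatibility of $\theta$ — that $\theta$ intertwines the module structure constraint of $\chi_{\scriptscriptstyle F_\ff}\tr-$ (built from $\sigma$) with that of $\id$ — then unpacks, again by evaluating on the free module and chasing the definition \eqref{eq:Hopf-sigma} of $\sigma$, into the comodule condition \eqref{eq:F-Frobenius-element-2}: the element $\sa$ must satisfy $\ff(\sa_{-1})\ok\sa_0 = \ff(\gHp)\,\overline{\gH}\ok\sa$ (the factors $\gH, \ff(\overline{\sg_{H'}})$ from $\sigma$ rearranging to give $\ff(\sg_{H'})\overline{\gH}$ after inverting, matching the stated formula up to the identity $\ff\circ S_{H'} = S_H\circ \ff$ and $\overline{\sg_{H'}} = \sg_{H'}^{-1}$). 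Conversely, given an $\ff$-Frobenius element $\sa$, I would define $\theta_M(v\ok m) := \sa\cdot m$ (for $v\in\kk_{\chi_\ff}$ the basis vector) and verify directly that it is a well-defined, natural, $\Rep(H')$-module isomorphism, using \eqref{eq:F-Frobenius-element-1} for $L$-linearity and \eqref{eq:F-Frobenius-element-2} for compatibility with the half-braiding.

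I expect the main obstacle to be the bookkeeping in the second half: getting the module-functor compatibility condition to match exactly the stated form \eqref{eq:F-Frobenius-element-2}, including correctly tracking where $\gH$, $\gHp$, the antipodes, and the inverse grouplikes enter when one passes between $\sigma$ and its effect on the free module $L$, and when one inverts $\sa$. The comodule algebra $L$ carries an $H'$-coaction but $\sigma$ is phrased via $F_\ff$ and hence involves $\ff$ and elements of $H$; reconciling these requires care with the Sweedler indices of $\delta$ versus $\delta_\ff$. A secondary, purely formal point is justifying the Yoneda-type reduction: module natural transformations between module endofunctors of $\Rep(L)$ are determined by their value on the regular module $L$ (which is a generator), and a module endofunctor isomorphic as a plain functor to one induced by $-\ok\text{(fixed object)}$ corresponds to left multiplication by an element of $\mathrm{End}_L({}_LL)\cong L$. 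Once that dictionary is set up cleanly, both directions of the equivalence are routine verifications of the two displayed identities.
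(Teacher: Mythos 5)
Your proposal is correct and follows essentially the same route as the paper: both reduce via Proposition~\ref{prop:chi-action}(i) to trivializing the module functor $\chi_{\scriptscriptstyle F_{\ff}}\act-$ on ${}_{F_{\ff}}\Rep(L)$, extract the element $\sa$ from the component at the regular module $L$, use naturality plus $L$-linearity to get \eqref{eq:F-Frobenius-element-1} and the module-functor compatibility with the half-braiding \eqref{eq:Hopf-sigma} (evaluated on the free module) to get \eqref{eq:F-Frobenius-element-2}, with the converse given by $m\mapsto\sa\cdot m$. The only (harmless) discrepancy is one of direction: you trivialize $\chi_{\scriptscriptstyle F_{\ff}}\act-\Rightarrow\id$, so your $L$-linearity computation actually yields $\sa^{-1}l\,\sa=\chi_{\ff}(l_{-1})l_0$, i.e.\ your element is the inverse of the paper's, whereas the paper uses $\id\Rightarrow\chi_{\scriptscriptstyle F_{\ff}}\act-$ and obtains \eqref{eq:F-Frobenius-element-1} on the nose; this does not affect the existence statement.
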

\begin{proof}
If $F_\ff$ is Frobenius with respect to $\Rep(L)$, we have a $\C$-module natural isomorphism $\eta\colon \id_{\FM}\Rightarrow\chiFf\act-$. Set $\sa=\eta_{L}(1_L)\in L$. Then, for every $M\in \Rep(L)$ and $m\in M$, by the naturality of $\eta$ applied to the $L$-module map 
\[ L\rightarrow M, \qquad l\mapsto l\cdot m, \]
we get that $\eta_M(m) = \sa\cdot m$. As $\eta$ is an isomorphism, $\sa$ is invertible. 
Moreover, as $\eta_L:L\rightarrow \chi_{\ff}\act L$ is a morphism in $\Rep(L)$, we obtain that $\sa$ satisfies \eqref{eq:F-Frobenius-element-1}.
Lastly, since $\eta$ is a $\Rep(H)$-module natural isomorphism, it satisfies 
\[ \eta_{H\act^F L} = (\sigma_H^{-1} \otimes \id_{L}) (\id_{H}\act^F \eta_{L}). \]
Upon evaluating both sides at $1_{H}\otimes 1_{L}$ (using the description \eqref{eq:Hopf-sigma} of $\sigma_H$), we get $\sa$ satisfies \eqref{eq:F-Frobenius-element-2}.

Conversely, let $\sa\in L$ be a $\ff$-Frobenius element. Then, the natural isomorphism $\eta: \id_{\Rep(H')}\Rightarrow \chi_{\ff}\otimes-$ given by $\eta_X(x) = \sa\cdot x$ is a $\Rep(H)$-module natural isomorphism. Thus,  $F_\ff$ is Frobenius with respect to $\Rep(L)$.
\end{proof}

Note that the space of $\ff$-Frobenius elements is $1$-dimensional. Thus, it is desirable to find some easier criteria for the existence of such elements. Next, we provide one such criteria.

\begin{corollary}\label{cor:f-Frobenius-condition}
If $F_\ff$ is Frobenius with respect to $\Rep(H')$, then $\sg_H\in \ff(H')\subset H$.
\end{corollary}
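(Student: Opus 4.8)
The plan is to translate the hypothesis via Proposition~\ref{prop:Hopf-F-Frobenius-description} into the existence of an $\ff$-Frobenius element, and then to extract the claim by applying the counit of $H'$ to the defining relation \eqref{eq:F-Frobenius-element-2}; the first relation \eqref{eq:F-Frobenius-element-1} will not be needed.

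Concretely, I would first apply Proposition~\ref{prop:Hopf-F-Frobenius-description} with $L=H'$ regarded as an $H'$-comodule algebra via the coaction $\delta=\Delta_{H'}$, so that $\Rep(L)$ is the regular $\Rep(H')$-module category (which is exact, so the proposition applies and the hypothesis is meaningful). This produces an $\ff$-Frobenius element $\sa\in H'$: an invertible element of $H'$ satisfying \eqref{eq:F-Frobenius-element-1} and \eqref{eq:F-Frobenius-element-2}. Since the coaction is now the coproduct, $\sa_{-1}\otimes\sa_0=\sa_1\otimes\sa_2$, and relation \eqref{eq:F-Frobenius-element-2} becomes $\ff(\sa_1)\otimes\sa_2=\ff(\gHp)\,\ogH\otimes\sa$ in $H\otimes H'$.

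Next I would apply $\id_H\otimes\varepsilon_{H'}$ to both sides of this identity. Counitality collapses the left-hand side to $\ff(\sa)$ and the right-hand side to $\varepsilon_{H'}(\sa)\,\ff(\gHp)\,\ogH$, giving $\ff(\sa)=\varepsilon_{H'}(\sa)\,\ff(\gHp)\,\ogH$. Because $\sa$ is invertible, $\varepsilon_{H'}(\sa)$ is a nonzero scalar (indeed $\varepsilon_{H'}(\sa)\,\varepsilon_{H'}(\sa^{-1})=\varepsilon_{H'}(1)=1$), and since $\gHp$ is grouplike it is invertible in $H'$; hence one may solve for $\ogH=\ff\bigl(\varepsilon_{H'}(\sa)^{-1}\,\gHp^{-1}\,\sa\bigr)$. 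Setting $w:=\varepsilon_{H'}(\sa)^{-1}\,\gHp^{-1}\,\sa$, this is an invertible element of $H'$ with $w^{-1}=\varepsilon_{H'}(\sa)\,\sa^{-1}\,\gHp$, so $\ff(w)^{-1}=\ff(w^{-1})$, and therefore $\sg_H=\gH=\ogH^{-1}=\ff(w^{-1})\in\ff(H')$.

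I do not expect a genuine obstacle: the argument is a short computation once Proposition~\ref{prop:Hopf-F-Frobenius-description} is invoked. The only point requiring care is the last step, namely that inverting $\ff(w)$ stays inside $\ff(H')$ — this is guaranteed precisely because $w$ itself is invertible in $H'$ (so that $\ff(w)^{-1}=\ff(w^{-1})$), rather than merely because $\ff(w)=\ogH$ happens to be invertible in $H$.
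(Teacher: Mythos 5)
Your proposal is correct and follows essentially the same route as the paper: invoke Proposition~\ref{prop:Hopf-F-Frobenius-description} for $L=H'$ with coaction $\Delta_{H'}$, apply $\id_H\otimes\varepsilon_{H'}$ to \eqref{eq:F-Frobenius-element-2} to get $\ff(\sa)=\varepsilon_{H'}(\sa)\,\ff(\sg_{H'})\,\overline{\sg_H}$, and use $\varepsilon_{H'}(\sa)\neq 0$ to conclude $\sg_H\in\ff(H')$. Your explicit remark that the inverse $\sg_H=\overline{\sg_H}^{-1}$ lies in $\ff(H')$ because $w$ is already invertible in $H'$ is a point the paper leaves implicit, but it is the same argument.
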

\begin{proof}
By our assumption, a $\ff$-Frobenius element $\sa\in H'$ exists. Applying $\id \otimes \varepsilon_{H'}$ to \eqref{eq:F-Frobenius-element-2} yields $\ff(\sa)=\ff(\sg_{H'})\overline{\sg_H} \,\varepsilon_{H'}(\sa)$. As $\sa$ invertible, $\varepsilon_{H'}(\sa)\neq 0$. Thus, $\sg_H = \ff(h')$ for some $h'\in H'$.
\end{proof}

\begin{remark}
\begin{enumerate}
    \item In Proposition~\ref{prop:chi-action}(i), we noted that if $F$ is \Frobc then it is Frobenius with respect to every module category $\M$. This is evident in the case $F=F_\ff$, because $\sa=1_L$ is an $\ff$-Frobenius element of $L$.
    \item According to Proposition~\ref{prop:chi-action}(ii), if $F_\ff$ is Frobenius with respect to $\M=\Rep(H')$, then $\chi_{\ff}=\varepsilon_{H'}$. Indeed, in this case, $H'$ admits a $\ff$-Frobenius element $\sa$ satisfying \eqref{eq:F-Frobenius-element-1}. Then, applying $\varepsilon_{H'}$ to both sides, the claim follows. 
\end{enumerate}
\end{remark}

\begin{proposition}\label{prop:pivotal-comod-subalg}
Let $\ff\colon H'\rightarrow H$ be a bialgebra map and $L$ be a $H'$-comodule algebra that admits an $\ff$-Frobenius element. If $\Rep(L)$ is endowed with a pivotal (resp., unimodular) $\Rep(H')$-module category structure, then it also inherits a pivotal (resp., unimodular) as a $\Rep(H)$-module category.
\end{proposition}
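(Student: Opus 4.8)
The plan is to obtain this as an immediate corollary of the transport machinery from \S\ref{sec:F-twisted} together with Proposition~\ref{prop:Hopf-F-Frobenius-description}. First I would identify which $\Rep(H)$-module category is meant: viewing $L$ as an $H$-comodule algebra via $\delta_\ff = (\ff\ok\id_L)\circ\delta$ and unwinding the resulting $L$-action $l\cdot(x\ok m) = \ff(l_{-1})\cdot x \ok l_0\cdot m$ shows that this module category coincides with the $F_\ff$-twisted module category ${}_{F_\ff}\Rep(L)$, where $\Rep(L)$ retains its original $\Rep(H')$-action. Indeed, $F_\ff(X)\act M$ computed in the $\Rep(H')$-module structure on $\Rep(L)$ produces precisely this action, since $F_\ff(X)=X_\ff$ has $H'$-action through $\ff$.

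The second step is to invoke Proposition~\ref{prop:Hopf-F-Frobenius-description}: the hypothesis that $L$ admits an $\ff$-Frobenius element is, by that proposition, equivalent to $F_\ff$ being Frobenius with respect to $\Rep(L)$. Here one should keep in mind that Definition~\ref{defn:f-Frob-element} already presupposes $\ff$ perfect and $L$ exact (a pivotal or unimodular $\Rep(H')$-module is in any case automatically exact), so these properties are available.

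The third step feeds this into the transfer results. In the unimodular case, $F_\ff$ is a perfect tensor functor that is Frobenius with respect to $\Rep(L)$, so Proposition~\ref{prop:uni-tensor-Frob} transports a unimodular structure on $\Rep(L)$ (as a $\Rep(H')$-module) to one on ${}_{F_\ff}\Rep(L)$ (as a $\Rep(H)$-module). In the pivotal case, the very statement that $\Rep(L)$ is a \emph{pivotal} $\Rep(H')$-module forces $\Rep(H')$ to be pivotal, and — under the assumption implicit in the statement that $\ff$ is a pivotal bialgebra map between pivotal Hopf algebras $H'$ and $H$ — the category $\Rep(H)$ is pivotal and $F_\ff$ is a pivotal tensor functor; Proposition~\ref{prop:piv-tensor-Frob} then yields a pivotal structure on ${}_{F_\ff}\Rep(L)$.

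The only point requiring care — and the closest thing to an obstacle — is the bookkeeping of these implicit hypotheses: perfectness of $\ff$ (forced by Definition~\ref{defn:f-Frob-element}), and, in the pivotal case, the pivotality of $H$, $H'$ and of $\ff$ (forced by the meaning of ``pivotal $\Rep(H')$-module category'' together with the requirement that $\Rep(H)$ be pivotal for the conclusion to be meaningful). Once these are spelled out, the argument collapses to a short citation of Propositions~\ref{prop:Hopf-F-Frobenius-description}, \ref{prop:uni-tensor-Frob} and \ref{prop:piv-tensor-Frob}, so there is no substantial computation to carry out.
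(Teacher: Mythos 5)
Your proposal is correct and follows exactly the paper's own route: invoke Proposition~\ref{prop:Hopf-F-Frobenius-description} to translate the $\ff$-Frobenius element into $F_\ff$ being Frobenius with respect to $\Rep(L)$, then apply Propositions~\ref{prop:uni-tensor-Frob} and~\ref{prop:piv-tensor-Frob} to the twisted module category ${}_{F_\ff}\Rep(L)$. Your extra bookkeeping of the implicit hypotheses (perfectness of $\ff$ built into Definition~\ref{defn:f-Frob-element}, and pivotality of $H$, $H'$, $\ff$ in the pivotal case) is accurate but does not change the argument.
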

\begin{proof}
The assumption on $\ff$ ensures that $F_\ff$ is Frobenius with respect to the module category $\Rep(L)$, see Proposition~\ref{prop:Hopf-F-Frobenius-description}. Thus, the claim follows by combining Propositions~\ref{prop:uni-tensor-Frob} and Proposition~\ref{prop:piv-tensor-Frob}.
\end{proof}

Next we provide an example of a functor that is Frobenius with respect to $\M$, but not \Frobp
\begin{example}\label{ex:uqsl2-kG}
Let $q$ be a root of unity of order $n$ and consider the small quantum group $u_q(\fsl_2)$ from Example~\ref{ex:uqsl2-Hopf-example}.
Take $H'\coloneqq\langle K \rangle$ be the Hopf subalgebra of $H=u_q(\fsl_2)$ generated by $K$ and consider the inclusion $\ff \colon H'\rightarrow u_q(\fsl_2)$. Recall that $F_\ff$ is Frobenius, but not \Frobp 

Consider $L=H'$ as a left $H'$-comodule algebra. Then, $\sa=K^{-2}$ is a $\ff$-Frobenius element. Indeed, $F_\ff$ being Frobenius implies that $\chiFf=\varepsilon_{H'}$. Thus condition \eqref{eq:F-Frobenius-element-1} is satisfied because $\sa$ is central and invertible in $H'$. Moreover,  $\ff(\sa_{-1})\otimes \sa_0 = K^{-2}\otimes K^{-2}$ and $\ff(\sg_{H'})\overline{\sg_{u_q(\fsl_2)}} \otimes \sa = K^{-2}\otimes K^{-2}$. Thus, \eqref{eq:F-Frobenius-element-2} also holds. 
Note that $K$ is a pivotal element for $H'$. Thus, taking $\M=\Rep(H')$, we get a pivotal left $\Rep(H')$-module category. By Proposition~\ref{prop:pivotal-comod-subalg}, $\Rep(H')$ is a pivotal $\Rep(u_q(\fsl_2))$-module category.
\end{example}

We end with an example of $\ff$ such that $F_\ff$ is Frobenius, but not Frobenius with respect to $\D$. 
\begin{example}\label{ex:not-f-Frob}
Let $q=e^{2\pi \iota/9}$. Consider the small quantum group $H=u_q(\fsl_2)$ and its Hopf algebra generated $H'=\langle K^3\rangle$ generated by $K^3$. We observed earlier that in this case $F_\ff$ is Frobenius. However, as $\sg_H=K^2$ does not belong to $H'$, Corollary~\ref{cor:f-Frobenius-condition} implies that $F_\ff$ is not Frobenius with respect to $\Rep(H')$.
\end{example}

%%%%%%%%%%%%%%%%%%%%%%%%%%%%%%%%%%%%%%%%%%%%%%%%%%%%%%%%%%%%%%%
%%%%%%%%%%%%%%%%%%%%%%%%%%%%%%%%%%%%%%%%%%%%%%%%%%%%%%%%%%%%%%%
%%%%%%%%%%%%%%%%%%%%%%%%%%%%%%%%%%%%%%%%%%%%%%%%%%%%%%%%%%%%%%%
%%%%%%%%%%%%%%%%%%%%%%%%%%%%%%%%%%%%%%%%%%%%%%%%%%%%%%%%%%%%%%%
%%%%%%%%%%%%%%%%%%%%%%%%%%%%%%%%%%%%%%%%%%%%%%%%%%%%%%%%%%%%%%%
%%%%%%%%%%%%%%%%%%%%%%%%%%%%%%%%%%%%%%%%%%%%%%%%%%%%%%%%%%%%%%%

\section{Applications to internal natural transformations}
\label{sec:internal-natural}
In \cite{fuchs2021internal} the authors studied an internalized version of the vector space of natural transformations between two functors. Given a finite tensor category $\C$, an appropriate internal Hom involving two module functors $H,H'\colon {}_\subC\M\rightarrow{}_\subC\N$ gives an object inside the Drinfeld center of $\C$ called the \textit{object of internal natural transformations} from $H$ to $H'$. Under certain assumptions, this object is endowed with the structure of a symmetric Frobenius algebra internal to $\Z(\C)$. In this section, the results from \S\ref{sec:F-twisted} allow us to reprove and strengthen many statements about internal natural transformations.

%%%%%%%%%%%%%%%%%%%%%%%%%%%%%%%%%%%%%%%%%%%%%%%%%%%%%%%%%%%%%%%
%%%%%%%%%%%%%%%%%%%%%%%%%%%%%%%%%%%%%%%%%%%%%%%%%%%%%%%%%%%%%%%
%%%%%%%%%%%%%%%%%%%%%%%%%%%%%%%%%%%%%%%%%%%%%%%%%%%%%%%%%%%%%%%

\subsection{Exactness and unimodularity of the \texorpdfstring{$\Z(\C)$}{Z(C)}-module category \texorpdfstring{$\Fun_{\C}(\M,\N)$}{FunC(M,N)}}\label{subsec:Fun2}

Let $\C$ be a finite tensor category and $\M$ and $\N$ exact $\C$-module categories. Recall, that the category of module functors $\Fun_{\C}(\M,\N)$ becomes an exact $(\C_\subN^*,\C_\subM^*)$-bimodule category, with action given by composition of module functors \eqref{eq:mod_fun_comp}. The internal Hom's can be described using adjoint functors:

\begin{lemma}\label{lem:intHom}
For $H,H'\in \Fun_{\C}(\M,\N)$, we have that the internal Hom's are given by
\begin{equation*}
\uHom^{\C_\subN^*}_{\Fun_{\C}(\M,\N)}(H,H')\cong H'\circ H^{\la}\;\qquad \text{ and }\qquad \uHom^{\overline{\C_\subM^*}}_{\Fun_{\C}(\M,\N)}(H,H')\cong H^{\ra}\circ H'\,.
\end{equation*}
\end{lemma}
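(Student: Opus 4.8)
The plan is to verify the asserted formulas by exhibiting the defining natural isomorphisms of the internal Hom functors. Recall that for the $\C_\subN^*$-module category $\Fun_\C(\M,\N)$ — with action $G \act H = G \circ H$ for $G \in \C_\subN^*$ and $H \in \Fun_\C(\M,\N)$ — the internal Hom $\uHom^{\C_\subN^*}_{\Fun_\C(\M,\N)}(H,H')$ is characterized by a natural isomorphism
\[
\Hom_{\Fun_\C(\M,\N)}(G \circ H, H') \;\cong\; \Hom_{\C_\subN^*}\big(G, \uHom^{\C_\subN^*}_{\Fun_\C(\M,\N)}(H,H')\big)
\]
natural in $G \in \C_\subN^*$. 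So the first step is to produce a natural isomorphism $\Hom_{\Fun_\C(\M,\N)}(G \circ H, H') \cong \Hom_{\C_\subN^*}(G, H' \circ H^\la)$. This should follow by first using that $H^\la \dashv H$ in the $2$-category $\mathbf{Mod}^{\mathrm{ex}}(\C)$ of exact $\C$-module categories (module functors between exact module categories admit adjoints that are again module functors, as used throughout \S\ref{sec:F-twisted}), so $\Hom_{\Fun_\C(\M,\N)}(G \circ H, H') \cong \Hom_{\Fun_\C(\N,\N)}(G, H' \circ H^\la)$ by the adjunction $(-\circ H) \dashv (-\circ H^\la)$ obtained by whiskering, i.e. precomposition with $H$ is right adjoint to precomposition with $H^\la$. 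Since $\Fun_\C(\N,\N) = \C_\subN^*$ as a category, this is the desired isomorphism, and one checks naturality in $G$ is inherited from naturality of the whiskered adjunction. By uniqueness of adjoints (equivalently, uniqueness of the internal Hom object up to canonical isomorphism), this identifies $\uHom^{\C_\subN^*}_{\Fun_\C(\M,\N)}(H,H') \cong H' \circ H^\la$.

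For the second formula, recall that $\Fun_\C(\M,\N)$ is a \emph{right} $\C_\subM^*$-module category, equivalently a \emph{left} $\overline{\C_\subM^*}$-module category, with action $K \act H = H \circ K$ for $K \in \C_\subM^*$. The internal Hom $\uHom^{\overline{\C_\subM^*}}_{\Fun_\C(\M,\N)}(H,H')$ is then characterized by
\[
\Hom_{\Fun_\C(\M,\N)}(H \circ K, H') \;\cong\; \Hom_{\overline{\C_\subM^*}}\big(K, \uHom^{\overline{\C_\subM^*}}_{\Fun_\C(\M,\N)}(H,H')\big),
\]
natural in $K$. Here I would use the adjunction $H^\ra \dashv H$ (i.e. $(H \circ -) \dashv (H^\ra \circ -)$ obtained by whiskering on the left): $\Hom_{\Fun_\C(\M,\N)}(H \circ K, H') \cong \Hom_{\Fun_\C(\M,\M)}(K, H^\ra \circ H')$. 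The one subtlety to track is the variance: the Hom on the right is in $\C_\subM^*$, but the internal Hom lives over $\overline{\C_\subM^*}$, and $\Hom_{\overline{\C_\subM^*}}(K, X) = \Hom_{\C_\subM^*}(K,X)$ since passing to the monoidal opposite does not change the underlying category or its Hom-spaces. Hence $\uHom^{\overline{\C_\subM^*}}_{\Fun_\C(\M,\N)}(H,H') \cong H^\ra \circ H'$, again by uniqueness of the internal Hom.

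The only real point requiring care — and the step I would single out as the main obstacle — is bookkeeping the left/right module structures and the monoidal opposites so that the adjunction is whiskered on the correct side and the resulting isomorphism is natural as a module-category statement rather than merely a Hom-set bijection. In particular one must confirm that the whiskering adjunctions $(-\circ H) \dashv (-\circ H^\la)$ and $(H\circ -) \dashv (H^\ra\circ -)$ respect the ambient module actions (composition on the appropriate side), so that the comparison isomorphisms are morphisms of the relevant module categories; this is where the hypothesis that $\M$ and $\N$ are exact enters decisively, guaranteeing the adjoints $H^\la, H^\ra$ exist as $\C$-module functors. The rest is a routine application of uniqueness of adjoints / internal Homs.
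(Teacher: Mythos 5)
Your proposal is correct and follows essentially the same route as the paper: the paper also establishes the defining natural isomorphism $\Hom_{\Fun_{\C}(\M,\N)}(H\circ F,H')\cong\Hom_{\Fun_{\C}(\M,\M)}(F,H^{\ra}\circ H')$ via the unit and counit of $H\dashv H^{\ra}$ (your whiskered adjunction $(H\circ-)\dashv(H^{\ra}\circ-)$), checks that the resulting transformations are module natural transformations, and treats the $\C_\subN^*$-case analogously. The point you flag as needing care — that the adjunction bijection restricts to module natural transformations — is exactly the step the paper verifies explicitly.
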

\begin{proof}
For $F\in\C_\subM^*$, observe that there is a natural isomorphism
\begin{equation*}
\varphi  \Colon \Hom_{\Fun_{\C}(\M,\N)}(H\circ F,H') \cong \Hom_{\Fun_{\C}(\M,\M)}(F,H^{\ra} \circ H')\,.
\end{equation*}
The assignment $\varphi$ and its inverse are defined via the formulas
\begin{equation*}
    \varphi(\alpha)= (\id_{H^{\ra}} \circ \alpha)(\eta\circ \id_F): F\Rightarrow H^{\ra}\circ H', \;\;\;\;\;\; 
    \varphi^{-1}(\beta) =  (\varepsilon\circ \id_F)(\id_H\circ \beta):H\circ F\Rightarrow H'.
\end{equation*}
where $\eta\colon\id_{\subM}\Rightarrow H^{\ra} \circ H$ and $\varepsilon\colon H\circ H^{\ra} \Rightarrow \id_{\subN}$ 
denote the unit and counit of the adjunction $H\dashv H^{\ra}$. Since $\alpha,\eta$ are $\C$-module natural transformations, we have that $\varphi(\alpha)$ is a $\C$-module functor. 
The statement for the $\C_\subN^*$-internal Hom follows analogously.
\end{proof}

Given an exact $\C$-module category $\N$ we have a tensor functor
\begin{equation}\label{psiN}
\begin{aligned}
    \PsiN\Colon\Z(\C) &\longrightarrow \C_\subN^*\\
                (c,\nu)&\longmapsto c\tr-
\end{aligned}
\end{equation}
where the half-braiding $\nu$ endows the functor $c\tr-\colon\N\to\N$ with a $\C$-module structure. The tensor structure on the functor $\PsiN$ is given by the module associativity constraints of $\N$
\begin{equation*}
    c\tr(d\tr n)\cong (c\otimes d)\tr n\;.
\end{equation*}

\begin{remark}
In \cite[\S3.1]{davydov2015functoriality}, the tensor functor \eqref{psiN} is called \(\alpha\)-induction. The \(\Z(\C)\)-action on \(\Fun_{\C}(\M, \N)\) is also studied there. However, in the literature \cite{ostrik2003module,fuchs2002tft}, \(\alpha\)-induction typically assumes that \(\C\) is braided, a condition we do not impose. Consequently, we avoid this terminology in this article.
\end{remark}

For an exact $\C$-module category $\M$, the same assignment given by \eqref{psiN} can be made into a tensor functor whose target is the monoidal opposite of the dual tensor category with respect to $\M$
\begin{equation*}
    \PsiM\Colon\Z(\C) \longrightarrow \overline{\C_\subM^*}\;, 
\end{equation*}
but with tensor structure defined by half-braidings
\begin{equation*}
c\tr(d\tr n)\cong (c\otimes d)\tr n \xrightarrow{\nu_{d}^c} (d\otimes c)\tr n \;.
\end{equation*}
Now, we can consider the twist of the $\C_\subN^*$-module category $\Fun_{\C}(\M,\N)$ by $\PsiN$. Explicitly, the $\Z(\C)$-action is given by
\[
\begin{array}{rccc}
    \tr^{\PsiN}  :& \Z(\C) \times \Fun_{\C}(\M,\N)& \rightarrow & \Fun_{\C}(\M,\N) \\ 
    & ((c,\nu)\,,\,H) & \mapsto & c\tr H  
\end{array}    
\]
the composition of the $\C$-module functors $c\tr-:\N\to\N$ and $H$, and module structure \eqref{eq:mod_fun_comp}. Similarly, if we consider $\Fun_{\C}(\M,\N)$ as a (left) $\overline{\C_\subM^*}$-module category, we obtain a $\Z(\C)$-module structure by pulling back the action along $\PsiM$:
\[
\begin{array}{rccc}
    \tr^{\PsiM}  :& \Z(\C) \times \Fun_{\C}(\M,\N)& \rightarrow & \Fun_{\C}(\M,\N) \\ 
    & ((c,\nu)\,,\,H) & \mapsto &  H(c\tr-)
\end{array}    \,.
\]
These two actions of the Drinfeld center $\Z(\C)$ on $\Fun_{\C}(\M,\N)$ are equivalent.
\begin{lemma}\label{lem:center-action-functors}
${}_{\PsiN}\!\Fun_{\C}(\M,\N)$ and ${}_{\PsiM}\!\Fun_{\C}(\M,\N)$ are equivalent as $\Z(\C)$-module categories.
\end{lemma}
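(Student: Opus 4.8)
The plan is to exhibit an explicit equivalence between the two $\Z(\C)$-module categories directly, without invoking the general machinery of $F$-twisting. The underlying category is the same on both sides, namely $\Fun_\C(\M,\N)$, so the strategy is to take the identity functor and equip it with a $\Z(\C)$-module structure. Concretely, I would define
\[
\Theta\Colon {}_{\PsiN}\!\Fun_\C(\M,\N)\xsim {}_{\PsiM}\!\Fun_\C(\M,\N),\qquad \Theta(H)=H,
\]
and for $(c,\nu)\in\Z(\C)$ and $H\in\Fun_\C(\M,\N)$ produce a natural isomorphism
\[
\theta_{(c,\nu),H}\Colon (c\tr H)\;=\;(c\tr-)\circ H\;\xsim\; H\circ(c\tr-)\;=\;H(c\tr-)
\]
of $\C$-module functors $\M\to\N$. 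The natural candidate for $\theta$ is built from the module structure of $H$ itself: for $m\in\M$, $H$ comes with a natural isomorphism $\varphi^H_{c,m}\colon H(c\tr m)\xsim c\tr H(m)$, and its inverse $(\varphi^H_{c,m})^{-1}\colon c\tr H(m)\to H(c\tr m)$ gives exactly a natural transformation from the object $c\tr H$ (evaluated at $m$, i.e. $c\tr H(m)$) to $H(c\tr-)$ (evaluated at $m$, i.e. $H(c\tr m)$). So I would set $\theta_{(c,\nu),H}:=(\varphi^H)^{-1}$, which is manifestly a natural isomorphism of the underlying functors.

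The substance of the proof is then verifying the following three points. First, that for each fixed $(c,\nu)$ and $H$, the map $\theta_{(c,\nu),H}$ is a \emph{module} natural transformation, i.e.\ it is compatible with the $\C$-module structures of the two functors $(c\tr-)\circ H$ and $H\circ(c\tr-)$ coming from \eqref{eq:mod_fun_comp}; this amounts to a hexagon-type diagram chase combining the pentagon axiom for $\varphi^H$, the half-braiding hexagon for $\nu$, and the module associativity constraints of $\M$ and $\N$. Second, that $\theta$ is natural in the $\Z(\C)$-variable: for a morphism $(c,\nu)\to(c',\nu')$ in $\Z(\C)$ (equivalently, a morphism $c\to c'$ in $\C$ commuting with the half-braidings) the obvious square commutes, which is immediate from naturality of $\varphi^H$ in the $\C$-variable. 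Third, that $\theta$ is natural in $H$: for a module natural transformation $\beta\colon H\Rightarrow H'$, the square relating $\theta_{(c,\nu),H}$ and $\theta_{(c,\nu),H'}$ commutes, which follows because module natural transformations commute with the module structures $\varphi^H,\varphi^{H'}$ by definition. Finally I would check the two coherence axioms of a module functor for $(\Theta,\theta)$: the unit axiom (using $\varphi^H_{\unit,m}=\id$ up to the unit constraint) and the associativity/compatibility axiom with the $\Z(\C)$-module associativity constraints, which on both sides are induced from the module associativity constraints of $\N$ and $\M$ respectively together with the tensor structures of $\PsiN$ and $\PsiM$ described in the text; here one must carefully track that the difference between the tensor structure of $\PsiN$ (plain associativity) and that of $\PsiM$ (associativity followed by $\nu$) is exactly absorbed by the half-braiding appearing when one pushes $\varphi^H$ past a twofold action. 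Since $\Theta$ is the identity on underlying categories and each $\theta_{(c,\nu),H}$ is invertible, $\Theta$ is automatically an equivalence once it is shown to be a well-defined $\Z(\C)$-module functor.

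The main obstacle I anticipate is the coherence axiom in the last step: unwinding the $\Z(\C)$-module associativity constraints on both sides and seeing that the mismatch in the definitions of the tensor structures on $\PsiN$ versus $\PsiM$ is precisely cancelled by an instance of the hexagon for the half-braiding $\nu$ together with the pentagon for $\varphi^H$. This is a purely diagrammatic verification with no conceptual surprises, but it is the one place where all the structures genuinely interact, so it requires care. Everything else is routine. (Alternatively, one could deduce the statement abstractly by observing that both $\PsiN$ and $\PsiM$ realize the $\Z(\C)$-action on $\Fun_\C(\M,\N)$ obtained by regarding $\Z(\C)$ as acting through the canonical central structure, but the explicit argument above is self-contained and better suited to the later applications in this section.)
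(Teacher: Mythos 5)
Your proposal is correct and is essentially the paper's own proof: the paper also takes the identity functor on $\Fun_\C(\M,\N)$ and equips it with the $\Z(\C)$-module structure given by the $\C$-module constraint $\varphi^H$ of $H$ (you use $(\varphi^H)^{-1}$, which with the paper's conventions is in fact the correctly oriented map from the $\PsiN$-twisted action to the $\PsiM$-twisted one), and then records the remaining compatibilities as a routine check. Your more detailed list of the coherence verifications is just an expansion of that same routine check.
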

\begin{proof}
We have for every $(c,\nu)\in\Z(\C)$ and $H\in\Fun_{\C}(\M,\N)$ a module natural isomorphism
\begin{equation*}
    (c,\nu)\tr^{\PsiN}H=c\tr H\xrightarrow{\quad\varphi_{c,-}\;} H(c\tr-) =(c,\nu)\tr^{\PsiM}H
\end{equation*}
given by the $\C$-module functor structure of $H$. A routine check shows that these isomorphisms furnish a $\Z(\C)$-module functor structure on the identity functor of the category $\Fun_{\C}(\M,\N)$.
\end{proof}

\begin{remark}\label{factor_psi}
Notice that the functor $\PsiN$ \eqref{psiN} can be factorized as $U\circ\mathrm{S}$, where $U\colon \Z(\C_\subN^*)\to \C_\subN^*$ is the forgetful functor, and
\begin{equation}\label{schauenburg}
\begin{aligned}
    \mathrm{S}\Colon \Z(\C)&\longrightarrow\Z(\C_\subN^*), \\
                (c,\nu)&\longmapsto c\tr-
\end{aligned}
\end{equation}
where $c\tr-\colon\N\to\N$ is the $\C$-module functor assigned by \eqref{psiN}, and the $\C_\subN^*$-half-braiding on $c\tr-$ is defined for $H\in \C_\subN^*$ by the $\C$-module structure $\varphi_{c,-}\colon c\tr -\circ H\xsim H\circ c\tr-$ of $H$. Moreover, $\mathrm{S}$ is braided and thus endows $\PsiN$ with the structure of a central tensor functor.
\end{remark}

\begin{lemma}\label{lem:psi}
The functor \eqref{psiN} is surjective. Thus, the tensor functors $\PsiN$ and $\PsiM$ are perfect.
\end{lemma}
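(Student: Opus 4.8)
The goal is to show that $\PsiN\colon\Z(\C)\to\C_\subN^*$ is surjective, i.e.\ that its image is all of $\C_\subN^*$, and then to deduce perfectness of $\PsiN$ and $\PsiM$ from Lemma~\ref{lem:psi} itself together with earlier results.

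\textbf{Plan.} First I would reduce the perfectness claims to surjectivity: by the discussion after Lemma~\ref{lem:perfect_functor}, any surjective tensor functor preserves projective objects (citing \cite[Thm.~2.5]{etingof2004finite}) and hence is perfect. Since $\PsiN$ and $\PsiM$ agree with one another up to passing to the monoidal opposite (the underlying assignment $(c,\nu)\mapsto c\tr-$ is the same, and surjectivity of a tensor functor only depends on its underlying $\kk$-linear functor), it suffices to prove that $\PsiN$ is surjective. So the real content is: every $\C$-module endofunctor of $\N$ is a subquotient of one of the form $c\tr-$ for some $c\in\C$.

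\textbf{Key step.} To prove surjectivity, I would exploit the internal-Hom description. For any $H\in\C_\subN^*=\Fun_\C(\N,\N)$, Lemma~\ref{lem:uNatPsi} (applied with $\M=\N$, $H'=H$, and $H$ replaced by $\id_\N$) gives $\Psi_\subN^\ra(H)\cong\uNat_{(\N,\N)}(\id_\N,H)\in\Z(\C)$; the counit of the adjunction $\PsiN\dashv\Psi_\subN^\ra$ then yields a morphism $\PsiN(\Psi_\subN^\ra(H))\to H$ in $\C_\subN^*$. To see that this counit is epi (so that $H$ is a quotient, hence a subquotient, of something in the essential image of $\PsiN$), I would argue that $\Psi_\subN^\ra$ is faithful: since $\PsiN$ is a tensor functor between finite tensor categories it is faithful \cite[Corollaire~2.10(ii)]{deligne2007categories}, and a tensor functor whose right adjoint is faithful is dominant in the sense recalled in the Question after Proposition~\ref{prop:Frob-mon-implies-Frobenius}. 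Concretely, faithfulness of $\Psi_\subN^\ra$ follows because the unit object $\unit_\subN=\id_\N$ is a generator of $\C_\subN^*$ as a module over itself and $\Psi_\subN^\ra$ detects nonzero maps out of it. An alternative and perhaps cleaner route: recall from \cite[\S3]{fuchs2021internal} (or directly from the definition of internal Hom) that the component of the counit $\PsiN\Psi_\subN^\ra(H)\to H$ is, after unwinding \eqref{eq:twisted_inner_hom}, the evaluation/counit of the adjunction $\PsiN\dashv\Psi_\subN^\ra$, which is epi precisely because $\Fun_\C(\N,\N)$ is generated under the $\Z(\C)$-action and subquotients by $\unit_\subN$. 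Since the essential image of $\PsiN$ is closed under subquotients (as $\PsiN$ is exact, being a tensor functor) and contains, by the surjectivity just established, a cover of every object, we conclude $\mathrm{im}(\PsiN)\simeq\C_\subN^*$.

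\textbf{Main obstacle.} The delicate point is establishing that the counit $\PsiN\Psi_\subN^\ra(H)\twoheadrightarrow H$ is an epimorphism, equivalently that $\PsiN$ is dominant. I expect this to follow from the fact that $\Z(\C)$ surjects onto $\C_\subN^*$ already at the level of the regular bimodule structure—indeed, $\C_\subN^*$ is generated as a tensor category by the image of the free-module functor, and every object of $\C_\subN^*$ receives a surjection from $c\tr-$ where $c=\uHom_\subN^\supC(n,-)$-type expressions assemble via a coend; but turning this coend description (present implicitly in \cite{fuchs2021internal}) into the clean statement "$H$ is a quotient of $c\tr-$" requires care with the half-braiding data so that the quotient map is one of $\Z(\C)$-module, i.e.\ $\C_\subN^*$-, functors. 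Once dominance is in hand, closure of $\mathrm{im}(\PsiN)$ under subquotients (immediate from exactness of the tensor functor $\PsiN$, as used in the proof of Proposition~\ref{prop:comp_rel_obj}(iii)) finishes surjectivity, and perfectness of both $\PsiN$ and $\PsiM$ is then immediate.
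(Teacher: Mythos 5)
Your reduction of perfectness to surjectivity, and of $\PsiM$ to $\PsiN$ (since both properties depend only on the underlying $\kk$-linear functor), is fine and matches the paper. The problem is that the actual surjectivity of $\PsiN$ — the only substantive content of the lemma — is never established. Your route is to show dominance, i.e.\ that the counit $\PsiN\Psi_\subN^\ra(H)\to H$ is epi, and for this you invoke faithfulness of $\Psi_\subN^\ra$. But faithfulness of the right adjoint is \emph{equivalent} to dominance of $\PsiN$ (this is exactly the equivalence from \cite[Lem.~3.1]{bruguieres2011exact} recalled in the Question after Proposition~\ref{prop:Frob-mon-implies-Frobenius}), so the argument is circular; and the stated justification, that $\id_\subN$ ``is a generator of $\C_\subN^*$ as a module over itself and $\Psi_\subN^\ra$ detects nonzero maps out of it,'' is not a proof — the monoidal unit of a finite tensor category is not a generator of the abelian category in any sense that yields faithfulness of $\uHom$-type functors, and the alternative coend sketch (``requires care with the half-braiding data'') is explicitly left unfinished. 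So the key step you yourself flag as the main obstacle remains a genuine gap.

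The paper avoids this entirely by a factorization argument already set up in Remark~\ref{factor_psi}: $\PsiN = U\circ\mathrm{S}$, where $\mathrm{S}\colon\Z(\C)\to\Z(\C_\subN^*)$ is an equivalence (Schauenburg/Shimizu, \cite[Thm.~3.13]{shimizu2020further}) and $U\colon\Z(\C_\subN^*)\to\C_\subN^*$ is the forgetful functor, which is surjective by \cite[Cor.~7.13.11]{etingof2016tensor}; surjectivity of the composite then follows from Proposition~\ref{prop:comp_rel_obj}(iii), and perfectness from the fact that surjective tensor functors preserve projectives. If you want to salvage your approach, the cleanest fix is to prove dominance by this same factorization (dominance of $U$ is the content of the EGNO result), rather than trying to extract it from properties of $\Psi_\subN^\ra$.
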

\begin{proof} In view of Remark \ref{factor_psi}, we have a factorization $U\circ\mathrm{S}$.
According to \cite[Thm.~3.13]{shimizu2020further} $\mathrm{S}$ is in particular an equivalence of $\kk$-linear categories. Furthermore, by \cite[Cor.~7.13.11]{etingof2016tensor}, the functor $U$ is surjective.
Thus, it follows by Proposition~\ref{prop:comp_rel_obj}(iii), that \eqref{psiN} is surjective. Since, composition of tensor functors is a tensor functor, considering appropriately adapted tensor structures on \eqref{schauenburg} implies that $\PsiN$ and $\PsiM$ are perfect.
\end{proof}

The internal (co)Hom's of the $\Z(\C)$-module category $\Fun_{\C}(\M,\N)$ can be described in terms of a (co)end as proved in \cite{fuchs2021internal}. In the case that $\C=\Vect$, this construction recovers, by the Yoneda lemma, the vector space of natural transformations between the functors involved.

\begin{definition}\cite[Def.\ 4]{fuchs2021internal}
Let $\M$ and $\N$ be exact $\C$-module categories. Given $H,H'\in \Fun_{\C}(\M,\N)$, their object of \textit{internal natural transformations} is defined as the internal hom
\begin{equation*}
    \uNat_{(\M,\N)}(H,H')\coloneqq \uHom^{\Z(\C)}_{\Fun_{\C}(\M,\N)}(H,H')\,.
\end{equation*}
When clear from the context, we will drop the indices $\M,\N$.
\end{definition}
In view of Lemma~\ref{lem:center-action-functors}, the $\Z(\C)$-module structure on $\Fun_{\C}(\M,\N)$ can be obtained by twisting either via the perfect tensor functor $\PsiN$ or via $\PsiM$. As a consequence, the internal natural transformations object can also be described in terms of either of these perfect tensor functors.

\begin{lemma}
\label{lem:uNatPsi}
Let $\M$ and $\N$ be exact $\C$-module categories. There is a natural isomorphism 
\begin{equation*}
\uNat_{(\M,\N)}(H,H')\cong\Phi_\subM^{\ra}(H^{\ra} \circ H')\cong\Psi_\subN^{\ra}(H' \circ H^{\la})    
\end{equation*}
for $H,\;H'\in \Fun_{\C}(\M,\N)$. 
\end{lemma}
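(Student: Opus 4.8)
The plan is to combine the ``twisted internal Hom'' formula with the identification of the $\Z(\C)$-module structure on $\Fun_\C(\M,\N)$ as a pullback along the perfect tensor functors $\PsiN$ and $\PsiM$.

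First I would invoke Lemma~\ref{lem:center-action-functors}, which tells us that ${}_{\PsiN}\!\Fun_\C(\M,\N)$ and ${}_{\PsiM}\!\Fun_\C(\M,\N)$ are equivalent as $\Z(\C)$-module categories; since the internal Hom is determined by the module category structure up to natural isomorphism, it suffices to compute the internal Hom with respect to either description. Working with the $\PsiN$-twisted structure, I would apply the general formula \eqref{eq:twisted_inner_hom} for the internal Hom of a twisted module category, namely $\uHom^{\supC}_{\,\subFM}(m,n)\cong F^\ra(\uHom^{\supD}_{\,\subM}(m,n))$, with $F=\PsiN\colon\Z(\C)\to\C_\subN^*$, $\D=\C_\subN^*$, and $\M$ replaced by the $\C_\subN^*$-module category $\Fun_\C(\M,\N)$. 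This yields
\[
\uNat_{(\M,\N)}(H,H')=\uHom^{\Z(\C)}_{{}_{\PsiN}\!\Fun_\C(\M,\N)}(H,H')\cong \PsiN^{\ra}\!\left(\uHom^{\C_\subN^*}_{\Fun_\C(\M,\N)}(H,H')\right).
\]
Then I would substitute the description of the $\C_\subN^*$-internal Hom from Lemma~\ref{lem:intHom}, namely $\uHom^{\C_\subN^*}_{\Fun_\C(\M,\N)}(H,H')\cong H'\circ H^{\la}$, to obtain $\uNat_{(\M,\N)}(H,H')\cong \PsiN^{\ra}(H'\circ H^{\la})$. Repeating the same argument with $\PsiM\colon\Z(\C)\to\overline{\C_\subM^*}$ and the $\overline{\C_\subM^*}$-internal Hom $\uHom^{\overline{\C_\subM^*}}_{\Fun_\C(\M,\N)}(H,H')\cong H^{\ra}\circ H'$ from Lemma~\ref{lem:intHom} gives $\uNat_{(\M,\N)}(H,H')\cong \PsiM^{\ra}(H^{\ra}\circ H')$, and the two results agree by Lemma~\ref{lem:center-action-functors}. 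Note that in the notation of the lemma, $\PsiM^{\ra}=\Phi_\subM^{\ra}$ and $\PsiN^{\ra}=\Psi_\subN^{\ra}$.

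The only genuine subtlety — and the step I would be most careful about — is checking that the bimodule (equivalently, the dual-category) structures line up correctly: formula \eqref{eq:twisted_inner_hom} is stated for left module categories, so one must make sure that $\Fun_\C(\M,\N)$ is being regarded as a \emph{left} $\C_\subN^*$-module (via post-composition) and as a \emph{left} $\overline{\C_\subM^*}$-module (via pre-composition, with the opposite multiplication), matching exactly the twisting functors $\PsiN$ and $\PsiM$ used in \S\ref{subsec:Fun2}. Once these identifications are in place, the proof is a direct chain of the already-established isomorphisms, so no hard estimate or construction is needed; the work is purely bookkeeping of which side the module structure acts on. I would therefore present the argument as the short composition of isomorphisms above, citing \eqref{eq:twisted_inner_hom}, Lemma~\ref{lem:intHom}, and Lemma~\ref{lem:center-action-functors}.
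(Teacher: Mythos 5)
Your proposal is correct and follows essentially the same route as the paper: the paper's proof consists precisely of combining the twisted internal Hom formula \eqref{eq:twisted_inner_hom} with the internal Hom computations of Lemma~\ref{lem:intHom}, with the identification of the two $\Z(\C)$-module structures via Lemma~\ref{lem:center-action-functors} already set up in the surrounding text. Your extra care about which side the $\C_\subN^*$- and $\overline{\C_\subM^*}$-actions act on is exactly the bookkeeping the paper leaves implicit.
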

\begin{proof}
The result follows from Lemma~\ref{lem:intHom} and isomorphism \eqref{eq:twisted_inner_hom}.
\end{proof}

We obtain the following result generalizing \cite[Thm.\ 18]{fuchs2021internal}, where we do not require the additional assumption that $\C$ is pivotal.
\begin{theorem}\label{thm:FunCexact}
Let $\C$ be a finite tensor category, and $\M$ and $\N$ be exact $\C$-module categories. The $\Z(\C)$-module category $\Fun_{\C}(\M,\N)$ is exact.
\end{theorem}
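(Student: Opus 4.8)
The plan is to use the characterization of exactness via internal Hom (exactness of a module category is equivalent to its internal Hom functor being exact, per \cite[Cor.\ 7.9.6 \& Prop.\ 7.9.7]{etingof2016tensor}) together with the description of the $\Z(\C)$-action on $\Fun_{\C}(\M,\N)$ as a twisted module category. By Lemma~\ref{lem:center-action-functors}, the $\Z(\C)$-module category $\Fun_{\C}(\M,\N)$ is ${}_{\PsiN}\!\Fun_{\C}(\M,\N)$, the twist of the $\C_\subN^*$-module category $\Fun_{\C}(\M,\N)$ along the tensor functor $\PsiN\colon\Z(\C)\to\C_\subN^*$. By Lemma~\ref{lem:psi}, $\PsiN$ is a perfect tensor functor.

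The key observation is then that $\Fun_{\C}(\M,\N)$ is an exact $\C_\subN^*$-module category: this is the standard fact \cite[Lemma 7.12.7]{etingof2016tensor} that for exact $\C$-module categories $\M,\N$ the category of module functors $\Fun_{\C}(\M,\N)$ is an exact $(\C_\subN^*,\C_\subM^*)$-bimodule category, as already recalled in \S\ref{subsec:finite-tensor}. Since $\PsiN$ is a perfect tensor functor and $\Fun_{\C}(\M,\N)$ is an exact $\C_\subN^*$-module category, Proposition~\ref{prop:perfect-iff} (or equivalently Proposition~\ref{prop:FMexact}) immediately gives that the $\Z(\C)$-module category ${}_{\PsiN}\!\Fun_{\C}(\M,\N)$ is exact. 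Combining this with the $\Z(\C)$-module equivalence of Lemma~\ref{lem:center-action-functors} yields that $\Fun_{\C}(\M,\N)$, with its $\Z(\C)$-action, is exact.

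Concretely, the proof is the following short chain: $\Fun_{\C}(\M,\N)$ is exact over $\C_\subN^*$ by \cite[Lemma 7.12.7]{etingof2016tensor}; $\PsiN$ is perfect by Lemma~\ref{lem:psi}; hence ${}_{\PsiN}\!\Fun_{\C}(\M,\N)$ is exact over $\Z(\C)$ by Proposition~\ref{prop:perfect-iff}; and ${}_{\PsiN}\!\Fun_{\C}(\M,\N)\simeq\Fun_{\C}(\M,\N)$ as $\Z(\C)$-module categories by Lemma~\ref{lem:center-action-functors}. There is no real obstacle here — all the work has been front-loaded into establishing that $\PsiN$ is perfect (Lemma~\ref{lem:psi}, which in turn rests on surjectivity of the forgetful functor $U\colon\Z(\C_\subN^*)\to\C_\subN^*$ and Schauenburg's equivalence $\mathrm{S}$) and into Proposition~\ref{prop:FMexact}. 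The only point requiring a line of care is to note that it does not matter whether one twists along $\PsiN$ or $\PsiM$, since Lemma~\ref{lem:center-action-functors} identifies the two resulting $\Z(\C)$-module categories; one could just as well run the same argument using $\PsiM$ and the exactness of $\Fun_{\C}(\M,\N)$ as a $\overline{\C_\subM^*}$-module category.

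\begin{proof}
By \cite[Lemma 7.12.7]{etingof2016tensor}, since $\M$ and $\N$ are exact $\C$-module categories, $\Fun_{\C}(\M,\N)$ is an exact left $\C_\subN^*$-module category. By Lemma~\ref{lem:psi}, the tensor functor $\PsiN\colon\Z(\C)\to\C_\subN^*$ is perfect. Hence, by Proposition~\ref{prop:perfect-iff}, the $\Z(\C)$-module category ${}_{\PsiN}\!\Fun_{\C}(\M,\N)$ obtained by twisting the $\C_\subN^*$-action along $\PsiN$ is exact. Finally, by Lemma~\ref{lem:center-action-functors}, the $\Z(\C)$-module category $\Fun_{\C}(\M,\N)$ is equivalent to ${}_{\PsiN}\!\Fun_{\C}(\M,\N)$, and therefore it is exact.
\end{proof}
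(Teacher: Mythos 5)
Your proof is correct and follows essentially the same route as the paper's: exactness of $\Fun_{\C}(\M,\N)$ as a left $\C_\subN^*$-module category, perfection of $\PsiN$ via Lemma~\ref{lem:psi}, and then Proposition~\ref{prop:FMexact} (your Proposition~\ref{prop:perfect-iff} is just its ``if'' direction), with the $\Z(\C)$-action identified with the $\PsiN$-twist. The only cosmetic difference is the citation for exactness of the $\C_\subN^*$-action (the paper invokes \cite[Prop.~7.12.14]{etingof2016tensor} rather than Lemma~7.12.7) and your explicit mention of Lemma~\ref{lem:center-action-functors}, which the paper leaves implicit.
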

\begin{proof}
By \cite[Prop.~7.12.14]{etingof2016tensor}, $\Fun_{\C}(\M,\N)$ is an exact left $\C_\subN^*$-module category.
Also, by Lemma~\ref{lem:psi}, $\PsiN$ is a perfect tensor functor.
It follows from Proposition \ref{prop:perfect-iff} that ${}_{\PsiN}\Fun_{\C}(\M,\N)$ is an exact left $\Z(\C)$-module category. 
\end{proof}

\begin{proposition}\label{prop:psi-Frobenius}
The tensor functor $\PsiN$ is \Frob iff $\N$ is unimodular iff $\Psi_{\subN}^\ra$ is Frobenius monoidal. Similarly, the tensor functor $\PsiM$ is \Frob iff $\M$ is unimodular iff $\Phi_{\subM}^\ra$ if Frobenius monoidal.
\end{proposition}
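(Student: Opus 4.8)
The plan is to reduce the statement to an application of the characterization of \Frob functors in Theorem~\ref{thm:tFrob-char} together with the relative Radford theorem for module categories (Theorem~\ref{thm:mod_Radford}). First I would recall that, by Lemma~\ref{lem:psi}, both $\PsiN$ and $\PsiM$ are perfect, so Theorem~\ref{thm:tFrob-char} applies: $\PsiN$ is \Frob if and only if its relative modular object $\chi_{\scriptscriptstyle\PsiN}$ is trivial in $\Z({}_{\scriptscriptstyle\PsiN}(\C_\subN^*)_{\scriptscriptstyle\PsiN})$, and equivalently (part (vi)) if and only if $\Z(\PsiN^\ra)$ admits a Frobenius monoidal structure. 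The remaining task is therefore to identify the condition ``$\chi_{\scriptscriptstyle\PsiN}$ is trivial'' with ``$\N$ is unimodular.''

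The key computation is to evaluate the relative modular object of $\PsiN$ using Proposition~\ref{prop:relative_mod_obj_properties}(v), namely $\chi_{\scriptscriptstyle\PsiN}\cong \PsiN(D_{\Z(\C)})\otimes D_{\C_\subN^*}^{-1}$. Since $\Z(\C)$ is unimodular, $D_{\Z(\C)}\cong\unit$, and thus $\chi_{\scriptscriptstyle\PsiN}\cong D_{\C_\subN^*}^{-1}$ as an object of $\C_\subN^*$. Therefore $\chi_{\scriptscriptstyle\PsiN}$ being trivial (as an object of $\C_\subN^*$, hence also in the centralizer) is equivalent to $D_{\C_\subN^*}\cong\unit_{\C_\subN^*}=\id_\subN$, i.e.\ to the dual tensor category $\C_\subN^*$ being unimodular. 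But by \cite[Rem.\ 3.7]{yadav2023unimodular} (cited in the excerpt), $\C_\subN^*$ is unimodular precisely when $\N$ is a unimodular $\C$-module category. Alternatively, and more directly, Proposition~\ref{prop:chi-action}(i) combined with the description $\chi_{\scriptscriptstyle\PsiN}\act- \cong D_{\C_\subN^*}^{-1}\circ(\text{id})$ gives the same conclusion: $\PsiN$ is Frobenius with respect to $\N$ exactly when $D_{\C_\subN^*}$ trivializes, which is the definition of a unimodular structure on $\N$. One should take a little care to note that for $\PsiN$, being \Frob and being Frobenius coincide here because $\PsiN$ is a central tensor functor (Remark~\ref{factor_psi}), so Proposition~\ref{prop:central-tensor-Frobenius} applies — this also makes the passage from the ``Frobenius'' characterization to the ``\Frob'' one automatic.

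For the equivalence with ``$\Psi_\subN^\ra$ is Frobenius monoidal,'' I would invoke Theorem~\ref{thm:tFrob-char} directly: (i)$\iff$(vi) there says $F$ is \Frob iff $\Z(F^\ra)$ admits a Frobenius monoidal structure; but since $\PsiN$ is central, one can instead argue via Proposition~\ref{prop:central-tensor-Frobenius}, whose part (iii) states exactly that $\PsiN$ is \Frob iff $\Psi_\subN^\ra$ admits a Frobenius monoidal structure. This closes the three-way equivalence for $\PsiN$. The statement for $\PsiM$ follows by the identical argument applied to the tensor functor $\PsiM\colon\Z(\C)\to\overline{\C_\subM^*}$, using that $D_{\overline{\C_\subM^*}}\cong D_{\C_\subM^*}$ (the distinguished invertible object is insensitive to passing to the monoidal opposite, up to the appropriate identification) and that unimodularity of $\M$ as a $\C$-module category is equivalent to unimodularity of $\C_\subM^*$.

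The main obstacle I anticipate is a bookkeeping one rather than a conceptual one: making sure the identification of $\chi_{\scriptscriptstyle\PsiN}$ as an object of $\C_\subN^*$ with $D_{\C_\subN^*}^{-1}$ is stated cleanly, and that the triviality of the half-braiding $\sigma$ on $\chi_{\scriptscriptstyle\PsiN}$ (needed for the centralizer-level statement in Theorem~\ref{thm:tFrob-char}(iii), not merely the object-level one) is genuinely automatic — this is precisely what the centrality of $\PsiN$ via Remark~\ref{factor_psi} and Proposition~\ref{prop:central-tensor-Frobenius} buys us, so the cleanest write-up routes everything through Proposition~\ref{prop:central-tensor-Frobenius} rather than through the raw centralizer computation. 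A secondary care point is the $\PsiM$ case: one must be explicit that ``$\M$ unimodular'' unpacks to ``$\C_\subM^*$ unimodular,'' and that the monoidal-opposite does not affect the distinguished invertible object, so that the same chain of equivalences goes through verbatim.
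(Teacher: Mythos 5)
Your proposal is correct and follows essentially the same route as the paper's proof: identify the relative modular object of $\PsiN$ with $D_{\C_\subN^*}^{-1}$, conclude via Proposition~\ref{prop:frobenius_uni} that $\PsiN$ is Frobenius iff $\N$ is unimodular, and then use the centrality of $\PsiN$ (Remark~\ref{factor_psi}) together with Proposition~\ref{prop:central-tensor-Frobenius} to obtain the equivalences with being \Frob and with $\Psi_{\subN}^\ra$ being Frobenius monoidal, which also disposes of the half-braiding issue you flagged. The only (harmless) difference is the computation of $\chi_{\scriptscriptstyle\PsiN}$: you apply Proposition~\ref{prop:relative_mod_obj_properties}(v) plus unimodularity of $\Z(\C)$, whereas the paper factors $\PsiN=U\circ\mathrm{S}$ and uses Proposition~\ref{prop:comp_rel_obj} with $\chi_{\mathrm{S}}$ trivial and $\chi_U\cong D_{\C_\subN^*}^{-1}$; both give the same object, and your treatment of the $\PsiM$ case (same argument, with $D_{\overline{\C_\subM^*}}=D_{\C_\subM^*}$) is likewise fine.
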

\begin{proof}
From Remark \ref{factor_psi} we have that $\PsiN=U\circ\mathrm{S}$. Thus, according to Proposition \ref{prop:comp_rel_obj}, the relative modular object of $\PsiN$ can be computed as $U(\chi_{\mathrm{S}})\circ\chi_U$. But, since $\mathrm{S}$ is an equivalence $\chi_{\mathrm{S}}\cong\id_\subN$, and since $\Z(\C_\subN^*)$ is unimodular, $\chi_U\cong D_{\C_\subN^*}^{-1}$ by Proposition \ref{prop:rel-mod-obj}. It follows that the relative modular object of $\PsiN$ is $D_{\C_\subN^*}^{-1}$, which together with Proposition \ref{prop:frobenius_uni} proves that $\PsiN$ is Frobenius iff $\N$ is unimodular. Since $\PsiN$ is a central tensor functor, Proposition~\ref{prop:central-tensor-Frobenius} ensures that $\PsiN$ is \Frob iff $\PsiN$ is Frobenius iff $\Psi_{\subN}^\ra$ is Frobenius monoidal. 
\end{proof}

\begin{proposition}\label{prop:FunCunim}
Let $\C$ be a finite tensor category, and $\M$ and $\N$ be unimodular $\C$-module categories. The $\Z(\C)$-module category $\Fun_{\C}(\M,\N)$ is unimodular.
\end{proposition}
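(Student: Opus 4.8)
The plan is to realize $\Fun_{\C}(\M,\N)$, together with its $\Z(\C)$-module structure, as the twist of a $\C_\subN^*$-module category along a perfect \Frob functor, and then to invoke Proposition~\ref{prop:uni-tensor-Frob}. By Lemma~\ref{lem:center-action-functors}, the $\Z(\C)$-module category $\Fun_{\C}(\M,\N)$ is equivalent, as a $\Z(\C)$-module category, to the $\PsiN$-twist ${}_{\PsiN}\Fun_{\C}(\M,\N)$ of the $\C_\subN^*$-module category $\Fun_{\C}(\M,\N)$, where $\PsiN\colon\Z(\C)\to\C_\subN^*$ is the tensor functor \eqref{psiN}, which is perfect by Lemma~\ref{lem:psi}. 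Hence, by the final assertion of Proposition~\ref{prop:uni-tensor-Frob}, it suffices to verify: (a) that $\PsiN$ is \Frob\!\!; and (b) that $\Fun_{\C}(\M,\N)$ is a unimodular $\C_\subN^*$-module category.

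Assertion (a) follows at once from Proposition~\ref{prop:psi-Frobenius}: since $\N$ is unimodular by hypothesis, $\PsiN$ is \Frob\!\!. For assertion (b), I would first recall that a module category ${}_\subC\M$ is unimodular if and only if its dual tensor category $\C_\subM^*$ is unimodular, a unimodular structure being precisely an isomorphism $\id_\subM\cong D_{\C_\subM^*}$ in $\C_\subM^*$ (cf.\ \cite[Rem.~3.7]{yadav2023unimodular}). Now $\Fun_{\C}(\M,\N)$ carries an exact $(\C_\subN^*,\C_\subM^*)$-bimodule structure given by pre- and post-composition of module functors (its exactness as a left $\C_\subN^*$-module was already used in the proof of Theorem~\ref{thm:FunCexact}); by Morita theory for exact module categories this bimodule is invertible, so that the dual tensor category $(\C_\subN^*)^*_{\Fun_{\C}(\M,\N)}$ is monoidally equivalent to $\C_\subM^*$ up to passing to the monoidal opposite (see \cite[\S7.12]{etingof2016tensor}, \cite[\S2.4]{douglas2018dualizable}). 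Since $\M$ is unimodular, $\C_\subM^*$ is unimodular, and hence so is its monoidal opposite $\overline{\C_\subM^*}$, because the distinguished invertible object and the Nakayama functors depend only on the underlying $\kk$-linear category, which is unaffected by reversing the tensor product. Therefore $(\C_\subN^*)^*_{\Fun_{\C}(\M,\N)}$ is unimodular, i.e.\ $\Fun_{\C}(\M,\N)$ is unimodular as a $\C_\subN^*$-module category, and (b) holds. Combining (a), (b) and Proposition~\ref{prop:uni-tensor-Frob} then yields the claim.

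The step I expect to require the most care is the identification of $(\C_\subN^*)^*_{\Fun_{\C}(\M,\N)}$ with $\C_\subM^*$ (up to monoidal opposite), i.e.\ the invertibility of the bimodule $\Fun_{\C}(\M,\N)$; this is the only genuinely external ingredient, everything else being a direct application of results established above. As a consistency check, one can instead carry out the symmetric argument using $\PsiM\colon\Z(\C)\to\overline{\C_\subM^*}$ in place of $\PsiN$: this functor is \Frob because $\M$ is unimodular, and one is then reduced to the unimodularity of $\Fun_{\C}(\M,\N)$ as an $\overline{\C_\subM^*}$-module category, which uses that $\N$ is unimodular. Thus both hypotheses on $\M$ and $\N$ genuinely enter the argument --- one via the \Frob property of the twisting functor, the other via the unimodularity of the dual tensor category --- and it is immaterial which of the two functors $\PsiN$, $\PsiM$ one uses.
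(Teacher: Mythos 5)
Your proposal is correct and takes essentially the same route as the paper: twist the $\C_\subN^*$-module $\Fun_{\C}(\M,\N)$ along the perfect functor $\PsiN$, use unimodularity of $\N$ to make $\PsiN$ \Frob via Proposition~\ref{prop:psi-Frobenius}, use unimodularity of $\M$ to get a unimodular structure on $\Fun_{\C}(\M,\N)$ as a $\C_\subN^*$-module through the identification of its dual tensor category with $\C_\subM^*$, and conclude with Proposition~\ref{prop:uni-tensor-Frob}. The only cosmetic difference is that where you justify the identification of $(\C_\subN^*)^*_{\Fun_{\C}(\M,\N)}$ with $\C_\subM^*$ (up to monoidal opposite, which indeed does not affect unimodularity) by invertibility of the bimodule via general Morita theory, the paper simply cites \cite[Thm.~3.11]{spherical2025} for this equivalence.
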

\begin{proof}
$\PsiN$ is perfect by Lemma \ref{lem:psi}. Now, assuming that $\N$ is unimodular, Proposition~\ref{prop:psi-Frobenius} implies that $\PsiN$ is \Frob\!\!. 
From \cite[Thm.\ 3.11]{spherical2025} the dual-tensor category of $\Fun_{\C}(\M,\N)$ with respect to $\C_\subN^*$ is equivalent to  $\C_\subM^*$. This means that a unimodular structure on ${}_\subC\M$ induces a unimodular structure on the $\C_\subN^*$-module category $\Fun_{\C}(\M,\N)$. Thus, by Proposition \ref{prop:uni-tensor-Frob} we can pull back the unimodular structure along $\PsiN$ to obtain a unimodular structure on the $\Z(\C)$-module category
$\Fun_{\C}(\M,\N)$.
\end{proof}

\begin{proposition}\label{prop:Frobenius_algebra}
Let $\C$ be a finite tensor category and $\M$ and $\N$ be exact $\C$-module categories. Additionally, assume that either $\M$ or $\N$ is unimodular.
\begin{enumerate}[{\rm (i)}]
    \item Let $H\in\Fun_{\C}(\M,\N)$ and $H^\ra\cong H^\la$ a $\C$-module natural isomorphism. Then the algebra $\uNat_{(\M,\N)}(H,H)$ is endowed with the structure of a Frobenius algebra in $\Z(\C)$.
    \item The algebra $\uNat(\id_{\,\subM},\id_{\,\subM})$ has the structure of a commutative Frobenius algebra in $\Z(\C)$.
\end{enumerate}
\end{proposition}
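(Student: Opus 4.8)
The plan is to realize $\uNat(H,H)$ as the image of a Frobenius algebra under a Frobenius monoidal functor, using the description of $\uNat$ from Lemma~\ref{lem:uNatPsi}. By symmetry it suffices to treat the case where $\M$ is unimodular; the case where $\N$ is unimodular is handled \emph{mutatis mutandis}, using the equivalence $\uNat(H,H)\cong\PsiN^\ra(H\circ H^\la)$ from Lemma~\ref{lem:uNatPsi} and the monad $H\circ H^\la$ on $\N$ in place of $H^\ra\circ H$ on $\M$. The first step is to check that $A\coloneqq H^\ra\circ H$ is a Frobenius algebra in the finite multitensor category $\overline{\C_\subM^*}$. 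Because $H$ is a $\C$-module functor, the units and counits of the adjunctions $H\dashv H^\ra$ and $H^\la\dashv H$ are $\C$-module natural, so the hypothesis $H^\ra\cong H^\la$ promotes $H\dashv H^\ra$ to an ambidextrous adjunction inside $\mathbf{Mod}^{\rm{ex}}(\C)$. Then $A$ is the associated monad on $\M$ and it simultaneously carries a comonad structure coming from $H^\ra$ being a left adjoint of $H$; by the standard correspondence between ambidextrous adjunctions and Frobenius monads \cite{street2004frobenius,lauda2006frobenius} these structures satisfy the Frobenius compatibility, and all of the structure morphisms are $\C$-module natural, so $A$ is a Frobenius algebra object of $\overline{\C_\subM^*}$. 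By Lemma~\ref{lem:intHom} its underlying algebra is $\uHom^{\overline{\C_\subM^*}}_{\Fun_\C(\M,\N)}(H,H)$.

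The second step is to push $A$ down to $\Z(\C)$. Since $\M$ is unimodular, Proposition~\ref{prop:psi-Frobenius} makes $\PsiM$ a \Frob functor, so Proposition~\ref{prop:tensor-Frob-ra-FrobMon} endows $\PsiM^\ra$ with a Frobenius monoidal structure; as Frobenius monoidal functors send Frobenius algebras to Frobenius algebras \cite{day2008note}, $\PsiM^\ra(A)$ is a Frobenius algebra in $\Z(\C)$. It remains to identify $\PsiM^\ra(A)$ with $\uNat(H,H)$ as algebras: applying \eqref{eq:twisted_inner_hom} to the tensor functor $\PsiM$ and the exact $\overline{\C_\subM^*}$-module category $\Fun_\C(\M,\N)$, together with Lemma~\ref{lem:intHom}, gives $\uNat(H,H)\cong\PsiM^\ra\!\big(\uHom^{\overline{\C_\subM^*}}_{\Fun_\C(\M,\N)}(H,H)\big)=\PsiM^\ra(A)$, and, as for any twisted internal-Hom algebra, the composition product on the left corresponds under this isomorphism to $\PsiM^\ra$ applied, through its lax structure, to the product of $A$. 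This gives (i).

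For (ii) one applies (i) with $\N=\M$ and $H=\id_\M$, so that $H^\ra=H^\la=\id_\M$ and the hypothesis is automatic; this equips $\uNat(\id_\M,\id_\M)\cong\PsiM^\ra(\id_\M)=\PsiM^\ra(\unit_{\overline{\C_\subM^*}})$ with a Frobenius algebra structure. Commutativity will follow from the fact that $\PsiM$ is a central tensor functor: it factors through $\Z(\overline{\C_\subM^*})$ via a braided functor (as in Remark~\ref{factor_psi}), whose right adjoint is braided lax monoidal, while the image of $\unit_{\overline{\C_\subM^*}}$ under the forgetful right adjoint $\Z(\overline{\C_\subM^*})\to\overline{\C_\subM^*}$ is the canonical (commutative) adjoint algebra; hence $\PsiM^\ra(\unit_{\overline{\C_\subM^*}})$ is commutative in $\Z(\C)$, recovering the commutativity observed in \cite{fuchs2021internal}. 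Since this commutative algebra structure shares its multiplication with the Frobenius algebra structure of (i), $\uNat(\id_\M,\id_\M)$ is a commutative Frobenius algebra in $\Z(\C)$.

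The hard part will be the first step: verifying the Frobenius compatibility for the monad $H^\ra\circ H$ built from the ambidextrous adjunction, and keeping track that every structure morphism remains $\C$-module natural so that the Frobenius algebra genuinely lives in $\overline{\C_\subM^*}$; I would also need to do the routine but somewhat fiddly bookkeeping in the second step showing that the isomorphism of Lemma~\ref{lem:uNatPsi} respects the algebra structures, and, in (ii), that the commutative and Frobenius structures are compatible.
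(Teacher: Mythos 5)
Your proposal is correct and follows essentially the same route as the paper: it realizes $\uNat(H,H)\cong\Phi_\subM^\ra(H^\ra\circ H)$ via Lemma~\ref{lem:uNatPsi}, notes that $H^\ra\cong H^\la$ makes $H^\ra\circ H$ a Frobenius algebra in the dual category, and pushes it through $\Phi_\subM^\ra$, which is Frobenius monoidal by Proposition~\ref{prop:psi-Frobenius} since $\M$ is unimodular. Your extra details (the ambidextrous-adjunction/Frobenius-monad justification, the compatibility of algebra structures under Lemma~\ref{lem:uNatPsi}, and the spelled-out centrality argument for commutativity, which the paper delegates to \cite[Prop.~8.8.8]{etingof2016tensor}) are exactly the points the paper leaves implicit, not a different method.
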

\begin{proof}
We prove (i) assuming that $\M$ is unimodular.
The assumption that $H^\la\cong H^\ra$ as $\C$-module functors implies that the composition $H^{\ra} \circ H$ has a canonical structure of a Frobenius algebra in $\C_\subN^*$. Moreover, $\M$ being unimodular implies that $\Phi_{\subM}^{\ra}$ is Frobenius monoidal according to Proposition~\ref{prop:psi-Frobenius}. Thus it preserves Frobenius algebras. 
Consequently, $\Phi_\subM^\ra(H^{\ra} \circ H)$ inherits a Frobenius algebra structure. The statement follows from Lemma~\ref{lem:uNatPsi}. The commutativity in assertion (ii) follows from \cite[Prop.~8.8.8]{etingof2016tensor}.
A similar argument shows the claim if we assume $\N$ to be unimodular instead.
\end{proof}

%%%%%%%%%%%%%%%%%%%%%%%%%%%%%%%%%%%%%%%%%%%%%%%%%%%%%%%%%%%%%%%
%%%%%%%%%%%%%%%%%%%%%%%%%%%%%%%%%%%%%%%%%%%%%%%%%%%%%%%%%%%%%%%
%%%%%%%%%%%%%%%%%%%%%%%%%%%%%%%%%%%%%%%%%%%%%%%%%%%%%%%%%%%%%%%

\subsection{Pivotal setting}
Next we explore the implications on the pivotal setting. Let $\C$ be a pivotal finite tensor category.
The pivotal structure $\fp_c \colon c\xsim c\dd$ on $\C$ induces a pivotal structure on $\Z(\C)$ via $\fp_{(c,\sigma)} \,{:=}\, \fp_c$ \cite[Ex.\,7.13.6]{etingof2016tensor}. This choice on the Drinfeld center renders the forgetful functor $U\colon\Z(\C)\to\C$ a pivotal tensor functor.
\begin{lemma}
Given a pivotal $\C$-module category $\N$, the tensor functor $\PsiN$ is pivotal.    
\end{lemma}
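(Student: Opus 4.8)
The statement asks us to show that for a pivotal $\C$-module category $\N$, the tensor functor $\PsiN\colon\Z(\C)\to\C_\subN^*$ is pivotal as a Frobenius monoidal (in fact strong monoidal) functor between pivotal tensor categories. The plan is to recall that pivotality of a strong monoidal functor $F$ between rigid monoidal categories amounts to the compatibility \eqref{eq:piv-functor} between the duality-preservation isomorphisms $\zeta_c\colon F(c\rv)\xsim F(c)\rv$ and the chosen pivotal structures on source and target. So the first step is to unwind what the dual of a $\C$-module functor of the form $c\tr -$ is inside $\C_\subN^*$, and to identify the canonical isomorphism $\zeta_{(c,\nu)}\colon \PsiN((c,\nu)\rv)\xsim \PsiN(c,\nu)\rv$. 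Since dualities in $\C_\subN^*=\Fun_\C(\N,\N)$ are given by adjoints of module functors, we have $(c\tr-)\rv \cong (c\tr-)^{\la}$ (and $\rv(c\tr-)\cong(c\tr-)^{\ra}$), and there is a standard isomorphism $(c\tr-)^{\ra}\cong c\rv\tr -$ coming from the adjunction $c\tr-\dashv c\rv\tr-$ in the module category $\N$; dually for the left adjoint. Under $\PsiN$ the dual object $(c,\nu)\rv$ in $\Z(\C)$ is $(c\rv,\nu^\vee)$ with the induced half-braiding, so the comparison $\zeta$ is essentially the identity on underlying functors, expressed through these adjunction isomorphisms.

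The second step is to trace through the two legs of the pentagon \eqref{eq:piv-functor}. On the left-hand side we have $\zeta_c\rv\circ\fp^{\C_\subN^*}_{\PsiN(c,\nu)}$, where $\fp^{\C_\subN^*}$ is the pivotal structure of $\C_\subN^*$ obtained from that of $\C$ (recalled in the previous \S, via $\dS_\subN^\supC$ together with a pivotal structure $\tfp$ on $\N$). On the right-hand side we have $\zeta_{c\rv}\circ\PsiN(\fp^{\Z(\C)}_{(c,\nu)})$, and $\fp^{\Z(\C)}_{(c,\nu)}=\fp^\supC_c$ by the choice of pivotal structure on the Drinfeld center \cite[Ex.\ 7.13.6]{etingof2016tensor}. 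Since $\PsiN$ sends $(c,\nu)$ to $c\tr-$ and $\fp^\supC_c$ to $\fp^\supC_c\tr-$, the right-hand side becomes the module natural transformation $c\tr-\Rightarrow c\dd\tr-$ induced by $\fp^\supC_c$, followed by the identification of $c\dd\tr-$ with $(c\tr-)\rv\rv$ inside $\C_\subN^*$. So the whole identity \eqref{eq:piv-functor} reduces to checking that the pivotal structure on $\C_\subN^*$ — when evaluated on functors of the special form $c\tr-$ — is computed by acting with the pivotal element $\fp^\supC_c$ of $\C$. This is precisely the statement that $\PsiN$ intertwines the pivotal structures, and it is exactly the kind of compatibility that is already encoded in how $\fp^{\C_\subN^*}$ is built from the relative Serre functor: the twisted module functor structure \eqref{eq:Serre_twisted1} of $\dS_\subN^\supC$ involves double duals $c\dd$, and pivotality of $\N$ trivializes $\dS_\subN^\supC$ to $\id_\subN$ in a way compatible with $\fp^\supC$.

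Concretely, I would carry this out by writing the pivotal structure $\textbf{P}_{c\tr-}$ of $\C_\subN^*$ (in the notation of the preceding section, with $\N_1=\N_2=\N$ and $H=c\tr-$) as the composite
\[
c\tr - \xRightarrow{\ \id\circ\tfp\ } (c\tr-)\circ\dS_\subN^\supC \xRightarrow{\ \eqref{eq:Serre_twisted}\ } \dS_\subN^\supC\circ(c\tr-)^{\lla} \xRightarrow{\ \tfp^{-1}\circ\id\ } (c\tr-)^{\lla},
\]
and then using \eqref{eq:Serre_twisted} together with the formula $(c\tr-)^{\lla}\cong c\dd\tr-$ to see that this composite is nothing but $\fp_c^\supC\tr-$ modulo the canonical identifications, after the contributions of $\tfp$ cancel. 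Comparing with the right-hand leg of \eqref{eq:piv-functor}, which is manifestly $\fp_c^\supC\tr-$ after the same identifications, gives the claim; the invertible modification filling the diagram can be taken to be the identity modification on underlying functors, the only content being the coherence of these canonical isomorphisms, which is routine.

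\textbf{Expected main obstacle.} The real work is bookkeeping: carefully identifying the isomorphism $\zeta_{(c,\nu)}$ for $\PsiN$ with the composite of adjunction isomorphisms $(c\tr-)\rv\cong(c\tr-)^{\la}\cong c\rv\tr-$, and checking that the half-braiding data on $(c,\nu)\rv$ in $\Z(\C)$ matches up coherently — i.e.\ verifying that all the hexagon/pentagon diagrams relating $\nu$, $\fp^\supC_c$, the module associativity constraints of $\N$, and the twisted structure of $\dS_\subN^\supC$ commute. No single step is deep, but assembling the identification so that \eqref{eq:piv-functor} holds on the nose (rather than up to a nontrivial modification) requires care. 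I expect this to follow cleanly from the definition of the pivotal structure on $\C_\subN^*$ via the relative Serre functor and the observation that $\PsiN$, on objects of the form $c\tr-$, simply ``remembers'' the pivotal element $\fp^\supC_c$.
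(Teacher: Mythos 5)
Your proposal is correct in outline but takes a genuinely different route from the paper. The paper's proof is a two-line factorization argument: by Remark \ref{factor_psi}, $\PsiN = U\circ\mathrm{S}$ with $\mathrm{S}\colon\Z(\C)\to\Z(\C_\subN^*)$ the braided equivalence \eqref{schauenburg}; it then cites \cite[Prop.~5.15]{spherical2025} for the pivotality of $\mathrm{S}$ (the forgetful functor $U$ being pivotal because the pivotal structure on a Drinfeld center is the one induced from the base category), and concludes since pivotal functors compose. You instead verify \eqref{eq:piv-functor} directly, using the Serre-functor description of the pivotal structure on $\C_\subN^*$, the identification $(c\tr-)^{\lla}\cong c\dd\tr-$, and the cancellation of the $\tfp$'s via module naturality — in effect re-proving the cited result in the case at hand. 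What your route buys is a self-contained, formula-level argument with no external input; what it costs is exactly the step you label ``routine'': the middle isomorphism in your composite is the instance of \eqref{eq:Serre_twisted} for the module functor $H^{\lla}\cong c\dd\tr-$, whose module structure involves the half-braiding $\nu$, and is \emph{not} simply the object-level twisted structure \eqref{eq:Serre_twisted1}; relating the two (so that the $\tfp$ contributions cancel and only $\fp^{\supC}_c$ survives, compatibly with the duality isomorphisms $\zeta$) is precisely the substance of the proposition the paper cites. So your argument is sound as a plan and the mechanism you identify (module naturality of $\tfp$ plus the half-braiding compatibility of the Serre constraint) is the right one, but it becomes a complete proof only once that diagram chase is actually carried out rather than deferred to coherence.
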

\begin{proof}
As described in Remark \ref{factor_psi} we have a factorization $\PsiN=U\circ \rm{S}$. By \cite[Prop.\ 5.15]{spherical2025}, $\rm{S}$ is pivotal, which leads to the result.
\end{proof}

Now, consider pivotal $\C$-module categories $\M$ and $\N$. The relative Serre functors of the $(\C_\subN^*,\C_\subM^*)$-bimodule category $\Fun_{\C}(\M,\N)$ can be expressed in terms of those of $\M$ and $\N$ according to \cite[Lemma 5.3]{spherical2025} and \cite[Cor.\ 4.12]{spherical2025} by
\begin{equation*}
    \Se_\subFun^{\scriptscriptstyle\C_\subN^*}(H)\cong \overline{\Se}_\subN^{\supC}\circ H\circ\Se_\subM^{\supC}\;,
    \qquad
    \Se_\subFun^{\scriptscriptstyle\overline{\C_\subM^*}}(H)\cong \Se_\subN^{\scriptscriptstyle\C}\circ H\circ\overline{\Se}_\subM^{\scriptscriptstyle\,\C}    
\end{equation*}
for $H\in \Fun_{\C}(\M,\N)$.

\begin{proposition}\label{prop:int-natural-piv-1}
Let $\M$ and $\N$ pivotal $\C$-module categories. 
\begin{enumerate}[{\rm (i)}]
\item $\Fun_{\C}(\M,\N)$ inherits the structure of a pivotal $(\C_\subN^*,\C_\subM^*)$-bimodule category via the isomorphisms
\begin{equation*}
\begin{aligned}
    \mathrm{P}_H&\Colon H\xRightarrow{\;\widetilde{\fq}^{-1}\,\circ \,\id\,\circ\, \widetilde{\fp}\;}\overline{\Se}_\subN^{\scriptscriptstyle\,\C}\circ H\circ\Se_\subM^{\scriptscriptstyle\C}
    \cong\Se_\subFun^{\scriptscriptstyle\C_\subN^*}(H)\\
    \mathrm{Q}_H&\Colon H\xRightarrow{\;\tfq\,\circ \,\id\,\circ\, \tfp^{-1}\;}\Se_\subN^{\scriptscriptstyle\,\C}\circ H\circ\overline{\Se}_\subM^{\scriptscriptstyle\C}
    \cong\Se_\subFun^{\scriptscriptstyle\overline{\C_\subM^*}}(H)
\end{aligned}
\end{equation*}
where $\tfp$ and $\tfq$ are the pivotal structures of ${}_\subC\M$ and ${}_\subC\N$ respectively.

\item For $H\in\Fun_\C(\M,\N)$, the compositions $H\circ H^\la$ and $H^\ra\circ H$ are symmetric Frobenius algebras in $\C^*_\subN$ and $\C^*_\subM$, respectively.
\end{enumerate} 
\end{proposition}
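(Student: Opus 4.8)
The plan is to prove both parts of Proposition~\ref{prop:int-natural-piv-1} by exploiting the identification of $\Fun_{\C}(\M,\N)$, viewed as a $\Z(\C)$-module category, with the $\Psi$-twisted module category from \S\ref{sec:F-twisted}, and then transporting the relevant structures along the pivotal perfect tensor functors $\PsiN$ and $\PsiM$.

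For part (i), I would first check directly that $\mathrm{P}_H$ and $\mathrm{Q}_H$ are \emph{module} natural isomorphisms for the $\C_\subN^*$- and $\overline{\C_\subM^*}$-actions respectively. Since $\Se_\subFun^{\scriptscriptstyle\C_\subN^*}$ carries a twisted bimodule functor structure (from \eqref{eq:Serre_twisted1} and the discussion following it) and the isomorphism $\Se_\subFun^{\scriptscriptstyle\C_\subN^*}(H)\cong\overline{\Se}_\subN^{\supC}\circ H\circ\Se_\subM^{\supC}$ from \cite[Lemma 5.3]{spherical2025} is an isomorphism of twisted bimodule functors, the pivotal structures $\tfp\colon\id_\subM\Rightarrow\Se_\subM^\supC$ and $\tfq\colon\id_\subN\Rightarrow\Se_\subN^\supC$ (which by Definition~\ref{def:pivmodule} are module natural isomorphisms) assemble into $\mathrm{P}_H$ as a module natural transformation; the compatibility with the $\C_\subN^*$-action amounts to the interplay of the module structures of $\tfq^{-1}$, $\id_H$ and $\tfp$ with the formula \eqref{eq:mod_fun_comp} for composition of module functors. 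The axioms of a pivotal bimodule category are then verified by naturality and the coherence of the twisted structures; this is bookkeeping rather than a conceptual obstacle. The argument for $\mathrm{Q}_H$ is entirely parallel, using $\Se_\subFun^{\scriptscriptstyle\overline{\C_\subM^*}}(H)\cong\Se_\subN^{\supC}\circ H\circ\overline{\Se}_\subM^{\supC}$.

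For part (ii), I would use Lemma~\ref{lem:intHom}: $\uHom^{\overline{\C_\subM^*}}_{\Fun_{\C}(\M,\N)}(H,H)\cong H^\ra\circ H$, which is canonically an algebra in $\C_\subM^*$ (internal endomorphism algebras always are), and likewise $H\circ H^\la$ is an algebra in $\C_\subN^*$. To produce the symmetric Frobenius structure, the natural approach is to invoke the correspondence, via \cite[Thm.\ 6.4]{shimizu2024quasi} as used in the proof of Theorem~\ref{thm:piv_symm_Frob}, between pivotal structures on a module category of the form $\moD_A(\C_\subM^*)$ and symmetric Frobenius structures on $A$. Concretely: $\Fun_{\C}(\M,\N)$ as a $\C_\subM^*$-module category is identified (up to the appropriate Morita data) with $\moD_{A}(\C_\subM^*)$ where $A=H^\ra\circ H$ (the internal End of the object $H$); the pivotal structure $\mathrm{Q}$ on the bimodule category from part (i) restricts to a pivotal structure on this module category, and under the Shimizu correspondence this is exactly a symmetric Frobenius structure on $A=H^\ra\circ H$. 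The symmetric condition is precisely the compatibility of $\mathrm{Q}_H$ with the pivotal structures, which is where part (i) is genuinely needed. The case of $H\circ H^\la$ in $\C_\subN^*$ uses $\mathrm{P}$ instead.

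The main obstacle I anticipate is keeping the two distinct dual categories ($\C_\subN^*$ acting on the left, $\C_\subM^*$ on the right, hence $\overline{\C_\subM^*}$ as a left action) and their various relative Serre functors ($\Se_\subFun$, $\overline{\Se}_\subFun$, $\Se_\subM$, $\overline{\Se}_\subM$, $\Se_\subN$, $\overline{\Se}_\subN$) correctly aligned, together with the double-dual twists: checking that the composite defining $\mathrm{P}_H$ and $\mathrm{Q}_H$ really lands in the relative Serre functor of $\Fun_{\C}(\M,\N)$ \emph{as a module endofunctor}, and that the symmetry axiom translates correctly, will require care with the twisted module functor constraints \eqref{eq:Serre_twisted1}--\eqref{eq:Serre_twisted} and the coherence isomorphisms of \cite{spherical2025}. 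I would structure the write-up so that this alignment is done once, cleanly, and then both statements follow by symmetry between $\M$ and $\N$.
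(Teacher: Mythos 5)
Your argument is correct in outline and rests on the same two pillars as the paper, but it is routed differently, and two points you treat as bookkeeping deserve explicit attention. For (i) the paper does not verify the module-naturality of $\mathrm{P}_H$ and $\mathrm{Q}_H$ by hand: it applies \cite[Prop.~5.4]{spherical2025} to the bimodule categories ${}_\subC\M_{\scriptscriptstyle\overline{\C_\subM^*}}$ and ${}_\subC\N_{\scriptscriptstyle\overline{\C_\subN^*}}$, which are pivotal bimodules by \cite[Lemma~5.8(i)]{spherical2025}. That last citation supplies an ingredient your write-up leaves implicit: to even formulate a pivotal $(\C_\subN^*,\C_\subM^*)$-bimodule structure in the sense of Definition~\ref{def:pivmodule} you must first equip $\C_\subN^*$ and $\C_\subM^*$ with the pivotal structures induced by $\widetilde{\fq}$ and $\widetilde{\fp}$, and your check that $\mathrm{P}_H$ and $\mathrm{Q}_H$ are module natural isomorphisms has to be made against the module-functor structures that exactly these induced pivotal structures place on $\Se_\subFun^{\scriptscriptstyle\C_\subN^*}$ and $\Se_\subFun^{\scriptscriptstyle\overline{\C_\subM^*}}$; this is genuine content rather than neutral bookkeeping, so either cite \cite[Lemma~5.8 \& Prop.~5.4]{spherical2025} or carry it out. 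For (ii) the paper is a one-liner: by \cite[Thm.~3.1]{shimizu2023relative} internal Ends in a pivotal module category carry canonical symmetric Frobenius structures, and by Lemma~\ref{lem:intHom} these internal Ends are $H\circ H^\la$ and $H^\ra\circ H$. Your detour through $\moD_A(\C_\subM^*)$ with $A=H^\ra\circ H$ and \cite[Thm.~6.4]{shimizu2024quasi} reaches the same conclusion, but to be airtight it needs (a) the caveat that $\Fun_\C(\M,\N)$ is equivalent to $\moD_A(\C_\subM^*)$ only when $H$ generates it, so in general you should first restrict to the module subcategory generated by $H$ (a direct summand of the exact module category, to which the pivotal structure from (i) restricts) before invoking the correspondence, and (b) attention to the fact that Lemma~\ref{lem:intHom} realizes $H^\ra\circ H$ as an internal End for the $\overline{\C_\subM^*}$-action, so the passage between $\overline{\C_\subM^*}$ and $\C_\subM^*$ (and likewise on the $\C_\subN^*$ side with $\mathrm{P}$) must be tracked. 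With these points added your proof goes through; the paper's citations simply absorb them.
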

\begin{proof}
Part (i) follows by applying \cite[Prop.\ 5.4]{spherical2025} to the bimodule categories ${}_\subC\M_{\scriptscriptstyle\overline{\C_\subM^*}}$ and ${}_\subC\N_{\scriptscriptstyle\overline{\C_\subN^*}}$, that are pivotal bimodules in view of \cite[Lemma 5.8 (i)]{spherical2025}. By \cite[Thm.~3.1]{shimizu2023relative}, the internal ends, which are $H\circ H^\la$ and $H^\ra\circ H$ due to Lemma~\ref{lem:intHom}, are symmetric Frobenius algebras.
\end{proof}

\begin{proposition}\label{prop:int-natural-piv-2}
Let $\C$ be a pivotal finite tensor category, and $\M$ and $\N$ pivotal $\C$-module categories. Additionally, assume that either $\M$ or $\N$ or both are unimodular. The $\Z(\C)$-module category $\Fun_\C(\M,\N)$ is endowed with a pivotal structure.
\end{proposition}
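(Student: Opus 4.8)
The plan is to realize the $\Z(\C)$-module category $\Fun_{\C}(\M,\N)$ as the pullback, along a pivotal \Frob functor, of a pivotal module category, and then invoke Proposition~\ref{prop:piv-tensor-Frob}. I would argue according to the two cases in the hypothesis. Suppose first that $\N$ is unimodular. Here $\C_\subN^*$ is a pivotal finite tensor category (cf.\ \cite[Lemma~5.8]{spherical2025}), and by Proposition~\ref{prop:int-natural-piv-1}(i) the category $\Fun_{\C}(\M,\N)$, regarded as a left $\C_\subN^*$-module category via composition of module functors, carries a pivotal structure. On the functor side, $\PsiN\colon\Z(\C)\to\C_\subN^*$ is perfect by Lemma~\ref{lem:psi}, it is pivotal as noted earlier, and — because $\N$ is unimodular — it is \Frob by Proposition~\ref{prop:psi-Frobenius}; moreover $\Z(\C)$ carries its canonical pivotal structure $\fp_{(c,\sigma)}:=\fp_c$.

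Then Proposition~\ref{prop:piv-tensor-Frob} applies directly with $F=\PsiN$, target $\D=\C_\subN^*$, and the pivotal $\D$-module category $\Fun_{\C}(\M,\N)$: since $\PsiN$ is \Frobc the $\PsiN$-twisted module category ${}_{\PsiN}\Fun_{\C}(\M,\N)$ is a pivotal $\Z(\C)$-module category, with pivotal structure given by \eqref{eq:piv_transported}. By the construction in \S\ref{subsec:Fun2} this twisted module category is precisely the $\Z(\C)$-module category $\Fun_{\C}(\M,\N)$ appearing in the statement, which finishes this case. If instead $\M$ is unimodular, I would repeat the argument with $\PsiM\colon\Z(\C)\to\overline{\C_\subM^*}$ in place of $\PsiN$ (it is perfect by Lemma~\ref{lem:psi}, pivotal, and \Frob by Proposition~\ref{prop:psi-Frobenius}) together with the pivotal left $\overline{\C_\subM^*}$-module structure on $\Fun_{\C}(\M,\N)$ coming from Proposition~\ref{prop:int-natural-piv-1}(i); Lemma~\ref{lem:center-action-functors} then identifies ${}_{\PsiM}\Fun_{\C}(\M,\N)$ with the $\Z(\C)$-module category $\Fun_{\C}(\M,\N)$.

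I expect the substantive content to lie entirely in the results being invoked rather than in the assembly: the crucial step is the \Frob property of $\PsiN$ (resp.\ $\PsiM$) provided by Proposition~\ref{prop:psi-Frobenius}, and this is exactly where the unimodularity hypothesis enters — without it $\chi_{\PsiN}\tr-$ need not be trivial and Proposition~\ref{prop:piv-tensor-Frob} produces no pivotal structure on the twist. The only genuine bookkeeping will be to confirm that the canonical pivotal structure on $\C_\subN^*$ (resp.\ $\overline{\C_\subM^*}$) used to make $\PsiN$ (resp.\ $\PsiM$) pivotal is the same one under which $\Fun_{\C}(\M,\N)$ is a pivotal module category in Proposition~\ref{prop:int-natural-piv-1}, and that the $\Z(\C)$-module equivalence of Lemma~\ref{lem:center-action-functors} transports pivotal structures — the latter is immediate, since a module equivalence identifies the relevant relative Serre functors and hence carries $\id\Rightarrow\dS$ to $\id\Rightarrow\dS$.
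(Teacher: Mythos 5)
Your proposal is correct and follows the paper's own argument: unimodularity of $\N$ makes $\PsiN$ \Frob by Proposition~\ref{prop:psi-Frobenius}, $\PsiN$ is pivotal and perfect, and the pivotal $\C_\subN^*$-module structure on $\Fun_\C(\M,\N)$ from Proposition~\ref{prop:int-natural-piv-1} is then transported via Proposition~\ref{prop:piv-tensor-Frob}. The only difference is that you spell out the $\M$-unimodular case explicitly with $\PsiM$ and Lemma~\ref{lem:center-action-functors}, which the paper leaves implicit ("a similar argument").
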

\begin{proof}
Since $\N$ is unimodular, by Proposition~\ref{prop:psi-Frobenius}, $\PsiN$ is \Frob \!\!. On the other hand, $\PsiN$ is pivotal. As the $\C^*_\subN$-module $\Fun_\C(\M,\N)$ is pivotal, the claim follows from Proposition \ref{prop:piv-tensor-Frob}. 
\end{proof}

In the pivotal setting, the objects of internal natural transformations are not only algebras, but symmetric Frobenius algebras.
\begin{corollary}
\label{cor:uNatsymm}
Let $\C$ be a pivotal finite tensor category and $\M,\N$ be pivotal $\C$-module categories. Assume that $\N$ (or $\M$) is unimodular. 
\begin{enumerate}[{\rm (i)}]
    \item The algebra $\uNat(H,H)$ has the structure of a symmetric Frobenius algebra in $\Z(\C)$ for any $H\in \Fun_{\C}(\M,\N)$.
    \item $\uNat(\id_{\,\subM},\id_{\,\subM})$ has the structure of a commutative symmetric Frobenius algebra in $\Z(\C)$. 
    \item The functors $\Psi_\subN^\ra$ and $\Phi_\subM^\ra$ are pivotal Frobenius monoidal. 
\end{enumerate}
\end{corollary}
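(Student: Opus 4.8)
The plan is to deduce all three parts from the two perfect tensor functors $\PsiN\colon\Z(\C)\to\C_\subN^*$ and $\PsiM\colon\Z(\C)\to\overline{\C_\subM^*}$ of \eqref{psiN}. Both are pivotal (shown above), and by hypothesis at least one of $\M,\N$ is unimodular, so by Proposition~\ref{prop:psi-Frobenius} the corresponding functor among $\PsiN,\PsiM$ is \Frob\!\!; by symmetry I would argue with $\PsiN$ (assuming $\N$ unimodular), the case of $\PsiM$ being identical. I would prove part~(iii) first and then read off parts~(i) and~(ii) from it.

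For part~(iii): since $\PsiN$ is \Frobc Proposition~\ref{prop:tensor-Frob-ra-FrobMon} (equivalently Proposition~\ref{prop:psi-Frobenius}) already equips $\Psi_\subN^\ra$ with a Frobenius monoidal structure, so the only remaining point is the pivotal compatibility~\eqref{eq:piv-functor} of that structure with the pivotal structures of $\C_\subN^*$ and $\Z(\C)$. I would verify it by specialising the machinery of \S\ref{sec:F-twisted} to module categories of the form $\moD_A(\C_\subN^*)$: for an exact symmetric Frobenius algebra $A\in\C_\subN^*$ the module category $\moD_A(\C_\subN^*)$ is pivotal by \cite[Thm.~6.4]{shimizu2024quasi}, Proposition~\ref{prop:piv-tensor-Frob} transports its pivotal structure along $\PsiN$ to a pivotal structure on ${}_{\PsiN}(\moD_A(\C_\subN^*))\simeq\moD_{\Psi_\subN^\ra(A)}(\Z(\C))$, and unwinding the transported structure~\eqref{eq:piv_transported} through the Serre-functor formula~\eqref{eq:serre_FM} identifies it with the symmetric Frobenius structure on $\Psi_\subN^\ra(A)$ coming from the Frobenius monoidal structure of $\Psi_\subN^\ra$; this is exactly the content of~\eqref{eq:piv-functor}. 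Alternatively one checks~\eqref{eq:piv-functor} directly, the duality constraint $\zeta_X\colon\Psi_\subN^\ra(X\rv)\xsim\Psi_\subN^\ra(X)\rv$ being assembled from the unit and counit of $\PsiN\dashv\Psi_\subN^\ra$ together with the \Frob isomorphism, the required identity then reducing to pivotality of $\PsiN$ itself. This compatibility is the step I expect to be the main obstacle; the rest is formal.

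For parts~(i) and~(ii): by Lemma~\ref{lem:uNatPsi} we have $\uNat(H,H)\cong\Psi_\subN^\ra(H\circ H^{\la})\cong\Phi_\subM^\ra(H^{\ra}\circ H)$, and Proposition~\ref{prop:int-natural-piv-1}(ii) exhibits $H\circ H^{\la}$ (resp.\ $H^{\ra}\circ H$) as a symmetric Frobenius algebra in $\C_\subN^*$ (resp.\ $\C_\subM^*$). Applying whichever of $\Psi_\subN^\ra,\Phi_\subM^\ra$ is pivotal Frobenius monoidal by part~(iii), and using that pivotal Frobenius monoidal functors send symmetric Frobenius algebras to symmetric Frobenius algebras, yields that $\uNat(H,H)$ is a symmetric Frobenius algebra in $\Z(\C)$; this proves~(i). (Alternatively, since $H\circ H^{\la}=\uHom^{\C_\subN^*}_{\Fun_\C(\M,\N)}(H,H)$ is an exact algebra, Theorem~\ref{thm:piv_symm_Frob}(i) applied to $\PsiN$ gives the same conclusion.) For~(ii) one specialises to $H=\id_\subM$ with $\M=\N$, so that part~(i) provides the symmetric Frobenius structure on $\uNat(\id_\subM,\id_\subM)$, and its commutativity follows from \cite[Prop.~8.8.8]{etingof2016tensor}, exactly as in the proof of Proposition~\ref{prop:Frobenius_algebra}(ii).
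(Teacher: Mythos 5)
Your parts (i) and (ii) are correct and essentially coincide with the paper's own argument: (i) combines Lemma~\ref{lem:uNatPsi}, Proposition~\ref{prop:int-natural-piv-1}(ii) and Theorem~\ref{thm:piv_symm_Frob}(i) (your parenthetical ``alternative'' is precisely the paper's proof and does not depend on (iii)), while (ii) comes down to centrality of $\PsiM$ and \cite[Prop.~8.8.8]{etingof2016tensor}, which is also how Theorem~\ref{thm:piv_symm_Frob}(ii) is used there.

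The genuine gap is part (iii), which you yourself flag as the main obstacle. You reduce it to the compatibility \eqref{eq:piv-functor} for $\Psi_\subN^\ra$ and offer two sketches, but neither closes it. The first sketch establishes at best that $\Psi_\subN^\ra$ sends exact symmetric Frobenius algebras $A\in\C_\subN^*$ to symmetric Frobenius algebras --- that is Theorem~\ref{thm:piv_symm_Frob}(i) again, needing only that $\PsiN$ is pivotal and \Frob --- and then asserts that this ``is exactly the content of \eqref{eq:piv-functor}''. It is not: \eqref{eq:piv-functor} is an object-wise identity relating the duality constraint $\zeta_X\colon\Psi_\subN^\ra(X\rv)\xsim\Psi_\subN^\ra(X)\rv$ to the pivotal structures for \emph{every} object $X$ of $\C_\subN^*$, and you give no argument passing from preservation of symmetric Frobenius algebras (or from agreement of the transported pivotal structure on $\moD_{\Psi_\subN^\ra(A)}(\Z(\C))$ with the Frobenius-monoidal image structure, which you also do not verify) to that identity. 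The second sketch, that \eqref{eq:piv-functor} ``reduces to pivotality of $\PsiN$ itself'', is likewise unsubstantiated: $\zeta$ is assembled from the lax and oplax structures of $\Psi_\subN^\ra$, and the oplax part depends on the chosen \Frob trivialization, which pivotality of the strong monoidal functor $\PsiN$ alone does not control. The paper takes neither route: it deduces (iii) from (ii) by invoking \cite[Thm.~1.2(ii)]{yadav2024frobenius}, i.e.\ the pivotality of these Frobenius monoidal functors rests on a separate theorem (in effect, that in this central setting a commutative symmetric Frobenius structure on the image of the unit forces pivotality of the Frobenius monoidal functor), not on a formal unwinding of the constructions of \S\ref{sec:F-twisted}. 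As written, your proposal leaves (iii) --- and hence your primary derivation of (i), though not its stated alternative --- unproven.
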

\begin{proof}
By Lemma~\ref{lem:uNatPsi}, $\uNat(H,H)=\Psi_\subN^\ra(H\circ H^\la)$. So part (i) follows by combining Proposition~\ref{prop:int-natural-piv-1}(ii) which tells that $H\circ H^\la$ is symmetric Frobenius and Theorem~\ref{thm:piv_symm_Frob}(i) which tells that $\Psi_\subN^\ra$ preserves symmetric Frobenius algebras. Part (ii) follows is a similar manner using Theorem~\ref{thm:piv_symm_Frob}(ii). Lastly, by \cite[Thm.~1.2(ii)]{yadav2024frobenius}, part (ii) implies part (iii).
\end{proof}

Under the additional requirement that $\C$ is unimodular and spherical, any pivotal $\C$-module category is unimodular \cite[Thm.\ 5.18]{spherical2025}.

\begin{corollary}\label{cor:int-nat-spherical}
    Let $\C$ be a spherical $($unimodular$)$ tensor category and $\M,\N$ are pivotal $\C$-module categories. Then $\Fun_{\C}(\M,\N)$ is a pivotal $\Z(\C)$-module category.
\end{corollary}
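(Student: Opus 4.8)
The plan is to deduce Corollary~\ref{cor:int-nat-spherical} by combining Proposition~\ref{prop:int-natural-piv-2} with the cited result \cite[Thm.~5.18]{spherical2025}. First I would observe that since $\C$ is a spherical (in particular unimodular) tensor category and $\M$ and $\N$ are pivotal $\C$-module categories, \cite[Thm.~5.18]{spherical2025} immediately upgrades them to unimodular module categories: a pivotal module category over a unimodular spherical tensor category is automatically unimodular. Thus the extra hypothesis ``$\M$ or $\N$ is unimodular'' appearing in Proposition~\ref{prop:int-natural-piv-2} is satisfied for free in this setting.

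Next, with $\M$ (say) now known to be unimodular, Proposition~\ref{prop:int-natural-piv-2} applies verbatim and yields that the $\Z(\C)$-module category $\Fun_\C(\M,\N)$ carries a pivotal structure. This is essentially all that is asserted. I would just need to make sure the hypotheses of Proposition~\ref{prop:int-natural-piv-2} line up: it requires $\C$ pivotal (which holds, since spherical implies pivotal), $\M$ and $\N$ pivotal $\C$-module categories (given), and one of them unimodular (now established via \cite[Thm.~5.18]{spherical2025}).

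There is essentially no obstacle here; the only thing worth a sentence is recording that ``spherical tensor category'' in the sense of Definition~\ref{spherical_mod}'s surrounding discussion includes unimodularity, so that \cite[Thm.~5.18]{spherical2025} is applicable. The proof is then a two-line citation chain.

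\begin{proof}
Since $\C$ is spherical, it is in particular a unimodular pivotal tensor category. By \cite[Thm.~5.18]{spherical2025}, every pivotal module category over a unimodular spherical tensor category is unimodular; in particular $\M$ (and likewise $\N$) is a unimodular $\C$-module category. Hence the hypotheses of Proposition~\ref{prop:int-natural-piv-2} are met, and that proposition gives a pivotal structure on the $\Z(\C)$-module category $\Fun_{\C}(\M,\N)$.
\end{proof}
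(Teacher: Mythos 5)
Your proposal is correct and matches the paper's own argument: the paper likewise invokes \cite[Thm.~5.18]{spherical2025} (phrased there via the automatic unimodularity of the dual categories $\C_\subM^*$ and $\C_\subN^*$, which is equivalent to $\M$, $\N$ being unimodular module categories) and then concludes by Proposition~\ref{prop:int-natural-piv-2}. No gaps; the citation chain is exactly the intended proof.
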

\begin{proof}
According to \cite[Thm.\ 5.18]{spherical2025}, $\C_\subN^*$ and $\C_\subM^*$ are automatically unimodular under unimodularity and sphericality of $\C$. Thus, $\N$ is unimodular as a $\C$-module category and the claim follows by Proposition~\ref{prop:int-natural-piv-2}.
\end{proof}
\begin{remark}
In the situation of Corollary \ref{cor:int-nat-spherical}, a trivialization $\tfu_{\subC}:\unit\xsim D_\subC$ together with the pivotal structure of $\M$ form a unimodular structure on $\M$ via the composition
\begin{equation*}
\tfu_{\subM}^{\tfp}\Colon\id_\subM\xRightarrow{~\;{\tfp}^{\,2}\;~}
\left(\Se^\supC_\subM\right)^2\xRightarrow{\;{}^{\tfu{}_{\subC}}\tr\id\;}
D_\subC^{-1}\tr\left(\Se^\supC_\subM\right)^2\xRightarrow{\;\eqref{eq:Radford_mod}\;}D_{\overline{\C_\subM^*}}^{-1} \;.
\end{equation*}
With this choice of unimodular structure, $\M$ is $(\tfu_{\subC},\tfu^{\tfp}_\subM)$-spherical. Moreover, the pivotal $(\C_\subN^*,\C_\subM^*)$-bimodule category $\Fun_{\C}(\M,\N)$ becomes $(\tfu_{\subN}^{\tfq},\tfu_{\subM}^{\tfp})$-spherical. It follows from Proposition \ref{prop:sph-Frob} that $\Fun_{\C}(\M,\N)$ is $({}_{{}_\Psi}\!\tfu_{\subN}^{\tfq},{}_{{}_\Phi}\!\tfu_{\subM}^{\tfp})$-spherical as a $\Z(\C)$-bimodule category.
\end{remark}

%%%%%%%%%%%%%%%%%%%%%%%%%%%%%%%%%%%%%%%%%%%%%%%%%%%%%%%%%%%%%%%
%%%%%%%%%%%%%%%%%%%%%%%%%%%%%%%%%%%%%%%%%%%%%%%%%%%%%%%%%%%%%%%
%%%%%%%%%%%%%%%%%%%%%%%%%%%%%%%%%%%%%%%%%%%%%%%%%%%%%%%%%%%%%%%
%%%%%%%%%%%%%%%%%%%%%%%%%%%%%%%%%%%%%%%%%%%%%%%%%%%%%%%%%%%%%%%
%%%%%%%%%%%%%%%%%%%%%%%%%%%%%%%%%%%%%%%%%%%%%%%%%%%%%%%%%%%%%%%
%%%%%%%%%%%%%%%%%%%%%%%%%%%%%%%%%%%%%%%%%%%%%%%%%%%%%%%%%%%%%%%

\bibliographystyle{abbrv}
\bibliography{references}

\end{document}